\DeclareMathOperator{\counit}{\thicc{c}}
\DeclareMathOperator{\Counit}{\thicc{C}}
\DeclareMathOperator{\cyc}{cyc}
\DeclareMathOperator{\ring}{R}
\DeclareMathOperator{\Aug}{Aug}
\DeclareMathOperator{\Det}{det}
\DeclareMathOperator{\com}{com}
\DeclareMathOperator{\LagGras}{Lag}
\DeclareMathOperator{\Unitary}{U}
\DeclareMathOperator{\Orthog}{O}
\DeclareMathOperator{\SOrthog}{SO}
\DeclareMathOperator{\inc}{inc}
\DeclareMathOperator{\ord}{ord}
\DeclareMathOperator{\Morse}{Mo}
\DeclareMathOperator{\Crit}{Crit}
\DeclareMathOperator{\aug}{\thicc{\epsilon}}
\DeclareMathOperator{\spec}{spec} 
\DeclareMathOperator{\im}{im} 
\DeclareMathOperator{\ind}{ind} 
\DeclareMathOperator{\Aut}{Aut}
\DeclareMathOperator{\Id}{Id}
\DeclareMathOperator{\Maslov}{Mas}
\DeclareMathOperator{\Flow}{Flow}
\DeclareMathOperator{\rot}{rot}
\DeclareMathOperator{\lk}{lk}
\DeclareMathOperator{\tb}{tb}
\DeclareMathOperator{\word}{w}
\DeclareMathOperator{\Diag}{Diag}
\DeclareMathOperator{\const}{const}
\DeclareMathOperator{\SympBase}{W}
\newcommand{\Variety}{\mathcal{V}}
\newcommand{\Stab}{\mathcal{S}}
\newcommand{\filtration}{\mathcal{F}}
\newcommand{\filtLevel}{\ell}
\newcommand{\tensorAlg}{\mathcal{T}}
\newcommand{\tensorAlgCom}{\wedge}
\newcommand{\pt}{pt} 
\newcommand{\B}{\mathbb{B}}
\newcommand{\C}{\mathbb{C}}
\newcommand{\R}{\mathbb{R}}
\newcommand{\Z}{\mathbb{Z}}
\newcommand{\disk}{\mathbb{D}}
\newcommand{\grad}{\nabla}
\newcommand{\region}{\mathcal{R}}
\newcommand{\action}{\mathfrak{E}}
\newcommand{\Cinfty}{\mathcal{C}^{\infty}}
\newcommand{\Rthree}{(\R^{3},\xi_{std})}
\newcommand{\Rtwonminusone}{(\R^{2n - 1}, \xi_{std})}
\newcommand{\Circle}{\R/\Z}
\newcommand{\Circlepi}{\R/\pi\Z}
\newcommand{\Circletwopi}{\R/2\pi\Z}
\newcommand{\half}{\frac{1}{2}}
\newcommand{\be}{\begin{enumerate}}
\newcommand{\ee}{\end{enumerate}}
\newcommand{\Mxi}{(M,\xi)}
\newcommand{\orientation}{\mathfrak{o}}
\newcommand{\thicc}[1]{\pmb{#1}}
\newcommand{\ModSpace}{\mathcal{M}}
\newcommand{\tree}{\thicc{T}}
\newcommand{\vertex}{\thicc{V}}
\newcommand{\edge}{\thicc{E}}
\newcommand{\orbitVS}{V}
\newcommand{\ContForm}{\alpha}
\newcommand{\leg}{\lambda}
\newcommand{\Leg}{\Lambda}
\newcommand{\LegGrouped}{(\Leg, \Partition)}
\newcommand{\LegUp}{\Lambda^{+}}
\newcommand{\LegDown}{\Lambda^{-}}
\newcommand{\LegUpDown}{\Leg^{\pm}}
\newcommand{\LegGroupedUp}{(\Lambda^{+}, \Partition^{+})}
\newcommand{\LegGroupedDown}{(\Lambda^{-}, \Partition^{-})}
\newcommand{\Lag}{L}
\newcommand{\LagGrouped}{(\Lag, \Partition)}
\newcommand{\Partition}{\mathcal{P}}
\newcommand{\Chords}{K}
\newcommand{\ChordsUpDown}{\Chords^{\pm}}
\newcommand{\chord}{\kappa}
\newcommand{\Algebra}{\mathcal{A}}
\newcommand{\AlgebraOther}{\mathcal{B}}
\newcommand{\AlgebraUp}{\Algebra^{+}}
\newcommand{\AlgebraDown}{\Algebra^{-}}
\newcommand{\newRSFT}{PDH}
\newcommand{\newRSFTA}{PDA}
\newcommand{\planarDiagram}{\mathcal{PD}}
\newcommand{\cappingPath}{\eta}
\newcommand{\splittingArc}{\vec{a}}
\newcommand{\boundaryPunc}{z}
\newcommand{\orderedBoundaryPunc}{\thicc{z}}
\newcommand{\holoDom}{\disk_{\orderedBoundaryPunc}}
\newcommand{\boundaryWord}{\word^{\partial}}
\newcommand{\boundaryWordThicc}{\thicc{\word}^{\partial}}
\newcommand{\field}{\mathbb{F}}
\newcommand{\CE}{CE}
\newcommand{\LoopSpace}{\mathcal{L}}
\newtheorem{thm}{Theorem}[section]
\newtheorem{ex}[thm]{Example}
\newtheorem{assump}[thm]{Assumptions}
\newtheorem{prop}[thm]{Proposition}
\newtheorem{defn}[thm]{Definition}
\newtheorem{lemma}[thm]{Lemma}
\newtheorem{cor}[thm]{Corollary}
\newtheorem{rmk}[thm]{Remark}
\newtheorem{edits}[thm]{Editor notes}
\newtheorem{notation}[thm]{Notation}
\title{A filtered generalization of the Chekanov-Eliashberg algebra}
\author{Russell Avdek}
\date{\today}
\begin{document}

\begin{abstract}
We define a new algebra associated to a Legendrian submanifold $\Lambda$ of a contact manifold of the form $\R_{t} \times \SympBase$, called the \emph{planar diagram algebra} and denoted $\newRSFTA(\Leg, \Partition)$. It is a non-commutative, filtered, differential graded algebra whose \emph{filtered stable tame isomorphism class} is an invariant of $\Lambda$ together with a partition $\Partition$ of its connected components. When $\Lambda$ is connected, $\newRSFTA$ is the Chekanov-Eliashberg algebra. In general, the $\newRSFTA$ differential counts holomorphic disks with multiple positive punctures using a combinatorial framework inspired by string topology.
\end{abstract}
\maketitle

\numberwithin{equation}{subsection}
\setcounter{tocdepth}{1}
\tableofcontents
\pagebreak

\section{Introduction}

The Chekanov-Eliashberg algebra $\CE(\Leg)$ \cite{Chekanov:LCH, Eliashberg:LCH} is an indispensable tool in the study of Legendrian knots \cite{EtnyreNg:LCHSurvey}, smooth knots \cite{Ng:KCHIntro}, and Weinstein manifolds \cite{BEE:LegendrianSurgery}. It is a differential graded algebra (DGA) assigned to a Legendrian submanifold $\Leg$ of a contact manifold, constructed within the framework of symplectic field theory (SFT) \cite{EGH:SFTIntro}. The \emph{stable tame isomorphism class} of $\CE$ and the isomorphism class of its homology -- \emph{Legendrian contact homology}, denoted $LCH$ -- are invariants of the Legendrian isotopy class of $\Leg$.\footnote{Stable tame isomorphism is a refinement of homotopy equivalence of DGAs, analogous to simple homotopy equivalence of cellular complexes refining homotopy equivalence.}

This article describes a generalization of $\CE$ which enhances its fundamental algebraic properties. We call our new invariant the \emph{planar diagram algebra}, denoted
\begin{equation*}
\newRSFTA = (\Algebra, \partial, \filtration),
\end{equation*}
which is a DGA $(\Algebra, \partial)$ equipped with a special ascending filtration $\filtration$. More precisely $\newRSFTA$ is a \emph{max-filtered DGA} (mfDGA), a type of algebraic object which is manifest in algebraic topology and symplectic geometry but which did not (to the author's knowledge) previously have a name.

The planar diagram algebra depends on a partition $\Partition$ of the connected components of $\Leg$. A Legendrian isotopy of $(\Leg, \Partition)$ is a $1$ parameter family $(\Leg_{T}, \Partition_{T})$ with $\Leg_{0}=\Leg$ for which the partition assignment function is continuous. If $\Partition$ is trivial (eg. when $\Leg$ is connected), $\newRSFTA = \CE$. Each filtered piece
\begin{equation*}
\filtration^{\filtLevel}\newRSFTA = (\filtration^{\filtLevel}\Algebra, \partial, \filtration)
\end{equation*}
of $\newRSFTA$ is itself a mfDGA.

The filtration $\filtration$ determines $\Z_{> 0} \cup \{\infty\}$-valued torsion invariants $\tau_{H}, \tau_{A}$ \cite{KMVW16, LW:Torsion, MZ:Torsion, Wendl:Torsion} of $\LegGrouped$ and each $\filtration^{\filtLevel}\newRSFTA$ automatically comes equipped with rich, well-studied algebraic structures such as associated augmentation varieties \cite{Ng:ComputableInvariants} and bilinearized (co)homologies \cite{BC:Bilinearized, CEKSW:AugCat}. When combined, augmentations and filtrations generate a wealth of easy-to-compare invariants of $\LegGrouped$ from $\newRSFTA$ which we package as graphs and polynomials in \S \ref{Sec:mfDGA}. From a computational point of view, $\newRSFTA$ is accessible using the $\CE$ toolkit combined with spectral sequences and analysis of crossingless matchings on disks.

Differentials and cobordism maps for $\newRSFTA$ are defined by counting the holomorphic disks of Ekholm's RSFT \cite{Ekholm:Z2RSFT} in a new way, thereby determining our new algebraic structures. The story here is analogous to that of Hutchings' ``$q$-variables only'' closed string RSFT \cite{Hutchings:QOnly, MZ:Torsion, Siegel:RSFT} reformulating the closed string RSFT of Eliashberg-Givental-Hofer \cite{EGH:SFTIntro}.

Because we are using the disks of \cite{Ekholm:Z2RSFT}, we avoid having to worry about multiple covers and string topological degenerations of disks \cite{CL:SFTStringTop, Ng:RSFT}. The way we organize counts of holomorphic disks is however motivated by Chas-Sullivan's ``chord diagram'' reformulation of string topology \cite{StringTopology, CS:StringDiagrams}.

Before further motivating $\newRSFTA$ and reviewing its construction, we state our main results:

\begin{thm}\label{Thm:Main}
Let $\Leg$ be a Legendrian submanifold with a partition $\Partition$ of its connected components inside of a contactization $\R_{t} \times W$ of an exact symplectic manifold $(W, \beta)$ with finite geometry at infinity. Then $\newRSFTA(\Leg, \Partition)$ is a max-filtered DGA whose filtered stable tame isomorphism class is an invariant of the Legendrian isotopy class of $(\Leg, \Partition)$.

An exact Lagrangian cobordism $\Lag \subset \R_{s} \times \R_{t} \times \SympBase$ with positive end $\Leg^{+}$, negative end $\Leg^{-}$, and partition $\Partition$ inducing partitions $\Partition^{\pm}$ of the $\Leg^{\pm}$ induces a morphism of mfDGAs,
\begin{equation*}
\newRSFTA(\Lag, \Partition): \newRSFTA\LegGroupedUp \rightarrow \newRSFTA\LegGroupedDown.
\end{equation*}
The chain homotopy class of the morphism $\newRSFTA(\Lag, \Partition)$ is an invariant of $(\Lag, \Partition)$.
\end{thm}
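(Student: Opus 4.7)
The plan is to follow the established SFT invariance scheme \cite{Chekanov:LCH, Ekholm:Z2RSFT}: show that $(\Algebra, \partial, \filtration)$ is genuinely an mfDGA by analyzing the boundaries of $1$-dimensional RSFT moduli spaces; construct cobordism chain maps and chain homotopies by the same moduli-theoretic mechanism with one additional parameter; and finally upgrade filtered chain homotopy equivalence to filtered stable tame isomorphism via an elementary-move decomposition of Legendrian isotopies.

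First I would verify that $\partial$ respects $\filtration$ and that $\partial^{2} = 0$. Both reduce to a combinatorial accounting of how the ends of $1$-dimensional Ekholm RSFT moduli spaces distribute across the crossingless-matching/planar-diagram decomposition that defines $\newRSFTA$. The content beyond $\CE$ is that Ekholm's SFT breaking analysis must be re-parsed in the planar-diagram framework so that the two broken disks appear with the correct partition-indexed monomial in $\Algebra$, while the filtration estimate reduces to the combinatorial observation that concatenating two planar diagrams with boundary of level $\leq \filtLevel$ cannot raise the level. A parameterized version of the same moduli analysis in a partitioned exact Lagrangian cobordism produces the chain map $\newRSFTA(\Lag, \Partition)$ and verifies that it respects $\filtration$.

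For invariance under Legendrian isotopy, I would trace out the isotopy as a partitioned exact Lagrangian cylinder and apply cobordism functoriality. The three key identities -- composition of cobordism maps corresponds to the map of the composed cobordism up to filtered chain homotopy, trivial cylinders induce the identity up to filtered chain homotopy, and homotopic cobordisms with fixed ends induce chain homotopic maps -- all follow from counting index $-1$ disks in $1$-parameter families of almost complex structures or $1$-parameter families of cobordisms, with the planar-diagram bookkeeping imposing the same filtration constraints as at the chain level. Concatenating an isotopy cylinder with its reverse reduces to the trivial cylinder, so the induced maps are invertible up to filtered chain homotopy.

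The main obstacle is promoting filtered chain homotopy equivalence to \emph{filtered stable tame isomorphism}, which is strictly finer. My plan is to decompose a Legendrian isotopy into elementary pieces (Legendrian Reidemeister moves, cusp births and deaths, triple-point crossings) and construct by hand a filtered stable tame isomorphism realizing the cobordism map of each elementary piece, as in Chekanov's original treatment of $\CE$ \cite{Chekanov:LCH}. The new bookkeeping is twofold: each tame generator-pair and each stabilization must carry partition labels compatible with $\Partition$, and the elementary moves must not raise the filtration level above what the planar-diagram combinatorics demand. Verifying this move by move is delicate but purely combinatorial, and it is the step where the interaction between $\Partition$, the planar diagram data, and the tame-move language is most sensitive.
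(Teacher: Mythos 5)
Your first two steps coincide with the paper's: filtration preservation is exactly the combinatorial Lemma \ref{Lemma:FiltrationPreservation} on inscriptions, $\partial^{2}=0$ follows from splitting the ends of $\ind=2$ moduli spaces into connected two-level breakings (genuine boundary points of a compact $1$-manifold) and pairs of disjoint $\ind=1$ disks (which cancel over $\Z/2\Z$ under swapping the order of the pair), and the cobordism map counts $\ind=0$ disks exactly as you describe. The genuine gap is in your invariance step. The list of elementary moves you give (Reidemeister moves, cusp births and deaths, triple points) is the $\dim\Leg=1$ list. For $\dim\Leg>1$ the codimension-one strata of a generic path $(\Leg(T),J)$ are not combinatorial moves but (i) \emph{handle-slide} moments, at which an isolated rigid $\ind=0$ disk appears, and (ii) self-tangency (double-point birth/death) moments. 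A handle slide admits no front-diagram description from which to write down a tame automorphism by hand; the paper instead takes the exact Lagrangian cobordisms $\Lag_{\pm\epsilon}$ over a short time interval around the handle slide and shows, via the chain homotopy $\Id - \Phi_{-\epsilon}\Phi_{\epsilon} = h\partial + \partial h$ together with an energy and Gromov-compactness argument as $\epsilon\to 0$, that the diagonal coefficients $\langle\Phi_{\epsilon}\word,\word\rangle$ are nonzero, so $\Phi_{\epsilon}$ is itself a tame isomorphism. Some argument of this kind is required and is absent from your plan.

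Second, your description of the double-point move underestimates what must be done. When a $\Partition$-mixed chord pair $a,b$ dies, the generators of $\newRSFTA$ that disappear are not $a$ and $b$ themselves but all admissible words $(\word_{l}a\word_{r})$ and $(\word_{l}b\word_{r})$, so a single geometric move forces many simultaneous stabilizations, at different filtration levels and with interleaved actions. The Chekanov-style induction must therefore be organized by the global action ordering of \emph{all} generators as in Equation \eqref{Eq:DoublePointActionOrdering}, and verifying that the initial map $\Phi_{0}$ intertwines the differentials on words not containing $a$ or $b$ requires a splitting argument for disks passing through the degenerating chord (Lemma \ref{Lemma:PartialChain}), not merely bookkeeping of partition labels on tame moves. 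This management of multiple simultaneous stabilizations is where most of the technical content of the paper's invariance proof lives, and your plan should identify it explicitly rather than subsuming it under ``delicate but purely combinatorial.''
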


Here \emph{filtered stable tame isomorphism} is a refinement of stable tame isomorphism for mfDGAs described in \S \ref{Sec:mfDGA}. Using the techniques of \cite{Ekholm:Z2RSFT}, chain homotopy invariance of $\newRSFTA$ is easily established. Filtered stable tame isomorphism provides a more precise notion of invariance, which simplifies analysis of augmentations. The proof is addressed in Appendix \ref{App:Invariance}. We say that a pair $(\Lag, \Partition)$ as described in the theorem is a \emph{partitioned Lagrangian cobordism} which we'll denote 
\begin{equation*}
(\Lag, \Partition): (\LegUp, \Partition^{+}) \rightarrow (\LegDown, \Partition^{-}).
\end{equation*}
Our convention is that Lagrangian cobordisms point downward in symplectizations so that $CE$ and $\newRSFTA$ are covariant functors.

Applying homology to a $\filtration^{\filtLevel}\newRSFTA$, we obtain a \emph{max-filtered graded algebra} (mfGA), which we call the \emph{$\filtLevel$th planar diagram homology}, denoted
\begin{equation*}
\newRSFT^{\filtLevel} = \newRSFT^{\filtLevel}(\Leg, \Partition).
\end{equation*}
Here is the homological version of the above theorem:

\begin{thm}
The mfGA isomorphism class of $\newRSFT^{\filtLevel}(\Leg, \Partition)$ is an invariant of the Legendrian isotopy class of $\LegGrouped$. An exact Lagrangian cobordism $(\Lag, \Partition)$ as above induces a mfGA morphism
\begin{equation*}
\newRSFT(\Lag, \Partition): \newRSFT^{\filtLevel}\LegGroupedUp \rightarrow \newRSFT^{\filtLevel}\LegGroupedDown,
\end{equation*}
which is an invariant of $(\Lag, \Partition)$ up to homotopy as in Theorem \ref{Thm:Main}.
\end{thm}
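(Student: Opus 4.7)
The plan is to derive this statement as a purely formal consequence of Theorem \ref{Thm:Main} by applying homology term-by-term. First I would verify that the mfGA structure on $\newRSFT^{\filtLevel}(\Leg, \Partition)$ is well-defined: the differential $\partial$ on $\filtration^{\filtLevel}\Algebra$ respects the ascending filtration $\filtration$, so the filtration descends to $H_\ast(\filtration^{\filtLevel}\Algebra, \partial)$, the product on $\Algebra$ passes to a product on homology, and the grading survives, giving the homological analog of an mfDGA without differential. This step should be essentially tautological given the definition of mfDGA and its filtered morphisms from Section \ref{Sec:mfDGA}.

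For the invariance of the mfGA isomorphism class, I would show that filtered stable tame isomorphism of mfDGAs induces mfGA isomorphism on homology. Tame isomorphisms are genuine filtered DGA isomorphisms, so they descend immediately. A stabilization adjoins a pair $(e, f)$ of generators with $\partial e = f$ placed at a specified filtration level; the resulting acyclic summand contributes nothing to homology and does not alter the filtered homology of the rest, so stabilization induces the identity on the mfGA. Combining these two observations with the filtered stable tame isomorphism invariance from Theorem \ref{Thm:Main} yields the first assertion.

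For the cobordism-induced map, Theorem \ref{Thm:Main} provides an mfDGA morphism $\newRSFTA(\Lag, \Partition)$ whose chain homotopy class is an invariant of $(\Lag, \Partition)$. Passing to homology, any filtered chain homotopy between two such morphisms identifies the induced maps on the filtered homology groups; because the product structure, grading, and filtration are preserved in the process, one obtains an honest mfGA morphism $\newRSFT(\Lag, \Partition): \newRSFT^{\filtLevel}\LegGroupedUp \rightarrow \newRSFT^{\filtLevel}\LegGroupedDown$ depending only on the chain homotopy class of $\newRSFTA(\Lag, \Partition)$. Functoriality of homology combined with Theorem \ref{Thm:Main}'s homotopy invariance under partitioned exact Lagrangian cobordism homotopies with fixed ends then yields the desired invariance.

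The main technical point to check carefully is that every equivalence used -- tame isomorphism, stabilization, and chain homotopy through the cobordism argument -- is compatible with the ascending filtration $\filtration$ and not merely with the DGA structure of $\Algebra$. This is precisely what the notion of \emph{filtered} stable tame isomorphism in Section \ref{Sec:mfDGA} guarantees at the chain level, and the analogous compatibility for chain homotopies is established in Appendix \ref{App:Invariance}. With filtered compatibility in hand, the passage to the mfGA level is mechanical; without it, one risks obtaining only an isomorphism of graded algebras forgetting $\filtration$, losing the torsion information that motivates the construction.
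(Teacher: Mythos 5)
Your proposal is correct and follows essentially the same route as the paper: this theorem is deduced from Theorem \ref{Thm:Main} together with the purely algebraic facts recorded in Section \ref{Sec:mfDGA} — that the homology of an mfDGA is an mfGA, that a free mfDGA is chain homotopy equivalent to its stabilizations, and that mfDGA chain homotopies preserve the filtration and hence descend to the filtered homology level. One small caution: since a stabilization is a free product rather than a direct sum, your claim that the new generators form an ``acyclic summand'' contributing nothing should be replaced by the inclusion/projection chain homotopy equivalence of Equation \eqref{Eq:StabChainHomotopy} (following \cite[Corollary 3.11]{ENS:Orientations}), which is exactly what the paper invokes.
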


The following is then immediate from the above theorems and the definition of the torsions in \S \ref{Sec:mfDGA}.

\begin{thm}
The torsions $\tau_{A}, \tau_{H} \in \Z_{\geq 0} \cup \{ \infty\}$ are such that $\tau_{A}(\emptyset) = \tau_{H}(\emptyset) = \infty$ and $\tau_{A}\LegGrouped \leq \tau_{H}\LegGrouped$ for all $\LegGrouped$. If there exists a $\LagGrouped: \LegGroupedUp \rightarrow \LegGroupedDown$ then
\begin{equation*}
\tau_{A}\LegGroupedDown \leq \tau_{A}\LegGroupedUp, \quad \tau_{H}\LegGroupedDown \leq \tau_{H}\LegGroupedUp.
\end{equation*}
In particular if one of $\tau_{A}\LegGrouped$ or $\tau_{H}\LegGrouped$ is finite, then $\LegGrouped$ can not be filled by any $\LagGrouped$.
\end{thm}

As expected, $\newRSFTA$ will depend on choices of almost complex structures which become irrelevant in light of deformation invariance. We assume throughout that $c_{1}(\SympBase) = 0$. When $H_{1}(\SympBase) \neq 0$, the gradings of the algebras vary according to homotopy classes of trivializations of some circle bundles over $\SympBase$. See \S \ref{Sec:Gradings}.

In this article we work with coefficients $\ring = \field$ or $\ring = \field[H_{1}(\Leg)]$ with $\field = \Z/2\Z$.\footnote{Other twisted systems -- such as $\field[H_{2}(M, \Leg)]$ used in knot contact homology -- are also available but will not be addressed.} As is the case with $\CE$, choices of coefficient systems together with Maslov classes of Legendrians impose restrictions on the possible gradings. Although this will not be addressed here, our invariants can be upgraded to $\Z$ and $\Z[H_{1}(\Leg)]$ coefficients using the techniques of \cite{EES:Orientations, ENS:Orientations, Karlsson:Orientations}.

Basic computations of $\newRSFTA$ and $\newRSFT$ are carried out -- mostly for $2$-component links -- in \S \ref{Sec:Computations}. Perhaps surprisingly, $\newRSFT$ vanishes for many partitioned links $\LegGrouped$ for which $LCH(\Leg) \neq 0$, meaning that that $\Leg$ does not admit a disconnected Lagrangian filling. See \cite{CGKS:Polyfilling} for examples of disconnected fillings, one of which we study in \S \ref{Sec:Polyfilling}.

\subsection{Background and motivation}

Now we delve into slightly more detail, assuming familiarity with the basics of symplectic topology and $\CE$. To simplify, we use $\field$ coefficients for the remainder of this section. Our intention is that by the end of this introduction, readers who are intimately familiar with $\CE$ will have a good idea as to how $\newRSFTA$ works and be able to skip ahead to \S \ref{Sec:mfDGA} to read more about the underlying algebra or \S \ref{Sec:Computations} to see some examples. Readers can also perform their own calculations using software we've written to accompany this project \cite{Avdek:Software}.

Fix a $(2n-1)$-dimensional contact manifold $\Mxi$ with contact form $\alpha$ containing an oriented Legendrian $\Leg$. Throughout this paper we'll assume that
\begin{equation*}
M = \R_{t} \times W, \quad \alpha = dt + \beta, \quad \xi = \ker \alpha
\end{equation*}
for an exact symplectic manifold $(W, \beta)$ so that the Reeb vector field is $\partial_{t}$. See \S \ref{Sec:AmbientSetup} for details. The reader is encouraged to pretend that $\Mxi$ is specifically the standard contact Euclidean space $\Rtwonminusone$ defined by
\begin{equation*}
\R^{2n-1} = \R_{t} \times \C_{x, y}^{n-1}, \quad \alpha_{std} = dt - \sum y_{i}dx_{i}, \quad \xi_{std} = \ker(\alpha_{std}).
\end{equation*}
Further specifying $n = 2$ is even better. The usual genericity and transversality assumptions for chords and holomorphic curves are in play for now and will be addressed in detail later in the text.

The $\CE$ differential is defined by counting holomorphic disks with one positive puncture and any number of negative punctures \cite{Chekanov:LCH, Eliashberg:LCH}. Like products in wrapped Floer homology \cite{AS:Wrapped} or differentials of Cthulhu invariants \cite{Cthulhu, Legout:Cthulhu}, the differentials of Legendrian RSFT invariants count disks with multiple positive punctures \cite{CL:SFTStringTop, Ekholm:Z2RSFT, Ng:RSFT}.

When multiple positive boundary punctures of a holomorphic map are allowed to touch the same connected component of a Lagrangian cylinder $\R \times \Leg$ in a symplectization, families of index $2$ disks may degenerate into single level SFT buildings \cite{SFTCompactness} whose domains are nodal Riemann surfaces. As described in \cite{CL:SFTStringTop}, such degenerations can be organized using string topology \cite{StringTopology}. String topological degenerations are dealt with combinatorially for $(n-1 = 1)$-dimensional Legendrians in \cite{Ng:RSFT}, resulting in \emph{curved} DGAs with $\partial^{2} \neq 0$ in general.\footnote{Cyclic versions of Ng's RSFT are non-curved DGAs.} The $n - 1 > 1$ case requires analysis of high dimensional moduli spaces of holomorphic disks. Moreover RSFT disks may be multiply covered, further complicating the underlying analytical theory.

\begin{figure}[h]
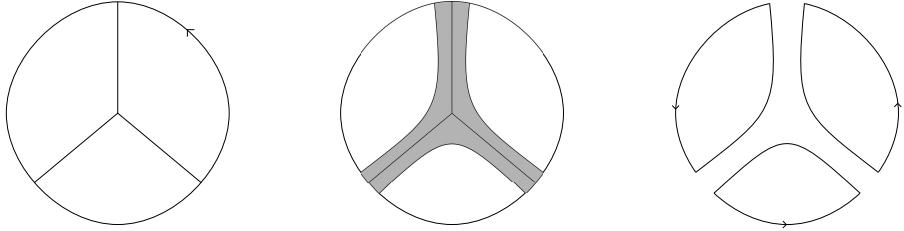

\begin{overpic}[scale=.35]{string_cop.eps}
\end{overpic}
\caption{A string triple coproduct $\mu_{3}^{1}$ is described by a chord diagram (left). The thickening of the diagram (center) together with cylinders over input and output circles (right) yields a genus $0$ cobordism.}
\label{Fig:StringCop}
\end{figure}

Like the RSFT of \cite{Ekholm:Z2RSFT}, $\newRSFTA$ is designed to avoid these string topological degeneracies (and subsequent analytical difficulties) by only counting disks with multiple positive punctures when they touch distinct connected Legendrians. This is not so unnatural from a Floer or Morse theoretical perspective, where multiple Lagrangians or Morse functions are required to define product structures \cite{AS:Wrapped, BC:Bilinearized, Legout:Cthulhu, AugSheaves, Schwarz:Morse}. Partitions of Legendrians also appear in the link gradings of  \cite{Mishachev, AugSheaves}. String topology still plays an essential motivating role in defining the $\newRSFTA$ differential and its cobordism maps.

Let $L$ be a smooth, oriented manifold with free loop space $\LoopSpace$. In \cite{CS:StringDiagrams} Chas and Sullivan use certain fatgraphs they call \emph{chord diagrams} to organize string topology operations \cite{StringTopology} on (tensor powers of) the homology of $\LoopSpace$. The left-hand side of Figure \ref{Fig:StringCop} depicts a chord diagram for coproduct operations
\begin{equation*}
\mu_{k}^{1}: H_{d}(\LoopSpace) \rightarrow \bigoplus \left( H_{i_{1}}(\LoopSpace) \otimes \cdots \otimes H_{i_{k}}(\LoopSpace) \right), \quad \sum_{j = 1}^{k} i_{j} = d - k\dim(L)
\end{equation*}
in the case $k = 3$.

The outer loop in the diagram represents a $d$-chain in $\LoopSpace$ with interior edges representing self-intersections of loops in $L$. When a loop in the $d$-chain satisfies the self-intersection determined by the interior edges, we split it up into segments determining $k$ loops in $L$. The thickening of the chord diagram sweeps out a cobordism between the input loop and the output loops.

\subsection{Overview of $\newRSFTA$}

A slight variation of the chord diagram picture for the loop coproduct operators takes a holomorphic disk with any numbers of positive and negative punctures and defines from it a ``one in, many out'' operator on an algebra of Reeb chords associated to a Legendrian submanifold of $\Mxi$. In more colorful language, we'll get a curved $A_{\infty}$ coalgebra, the type of algebraic object underlying $\CE$. An advantage of finding inspiration in \cite{CS:StringDiagrams} is that pictures will perform a great deal of mathematical labor for us.

Let $\Leg = \sqcup_{1}^{N} \Leg_{k}$ be a Legendrian submanifold of $\Mxi$. The $\Leg_{i}$ are closed, connected, and grouped together as follows: Let $\sim_{\Partition}$ be an equivalence relation on $\{1, \dots, N\}$ which partitions it into $N^{\Partition} \geq 1$ non-empty subsets $\Partition_{1} \sqcup \dots \sqcup \Partition_{N^{\Partition}} = \{1, \dots, N\}$. Then write
\begin{equation*}
\Leg^{\Partition}_{j} = \cup_{i \in \Partition_{j}} \Leg_{i}.
\end{equation*}
The $\Leg^{\Partition}_{i}$ are all connected if and only if $N^{\Partition} = N$. We say that an ordered pair of chords $\chord_{1}, \chord_{2}$ on $\Leg$ are \emph{composable} if $\chord_{1}$ ends on the same $\Leg_{i}$ on which $\chord_{2}$ begins. The pair is \emph{$\Partition$ composable} if $\chord_{1}$ ends on the same $\Leg_{j}^{\Partition}$ on which $\chord_{2}$ begins. A $\chord$ is \emph{pure} if it is composable with itself and \emph{mixed} otherwise. A $\chord$ is \emph{$\Partition$ pure} if it is $\Partition$ composable with itself and \emph{$\Partition$ mixed} otherwise.

Let $\word = (\chord_{1}\cdots\chord_{\filtLevel})$ be a \emph{$\Partition$ cyclic word of chords} on $\Leg$. This means that each pair $\chord_{i}, \chord_{i+1}$ is $\Partition$ composable (with $k$ modulo $\filtLevel$). Say that each $\chord_{i}$ ends on $\Leg^{\Partition}_{j_{i}}$. To avoid the aforementioned string topological degenerations we'll want the $j_{i}$ to be pairwise distinct in which case we say that $\word$ is \emph{admissible}. In order that our algebra be non-commutative, it is essential that we \emph{do not} consider cyclic permutations of words to be equivalent: $(\chord_{1}\chord_{2}\cdots\chord_{\filtLevel}) \nsim (\chord_{2}\cdots\chord_{\filtLevel}\chord_{1})$.

\begin{figure}[h]
\vspace{3mm}
\begin{overpic}[scale=.5]{cyc_word.eps}
\put(-8, 18){$\chord_{3}$}
\put(42, 18){$\chord_{1}$}
\put(18, 42){$\chord_{2}$}
\put(18, -5){$\chord_{4}$}
\put(32, 32){$j_{1}$}
\put(5, 32){$j_{2}$}
\put(5, 6){$j_{3}$}
\put(32, 6){$j_{4}$}

\put(62, 18){$\chord_{6}^{-}$}
\put(86, 26){$\chord_{5}^{-}$}
\put(102, 18){$\chord_{1}^{+}$}
\put(79, 42){$\chord_{2}^{+}$}
\put(79, 1){$\chord_{7}^{-}$}
\end{overpic}
\vspace{3mm}
\caption{On the left, a planar diagram $\planarDiagram(\word)$ for some $\word = (\chord_{1}\chord_{2}\chord_{3}\chord_{4})$. Integers indicate boundary labels so that $\chord_{1}$ begins on $\Leg_{j_{4}}$ and ends on $\Leg_{j_{1}}$. The marker at the tip of $\chord_{1}$ indicates the beginning of the word. On the right, a planar diagram $\planarDiagram(u)$ for some holomorphic disk $u$. Signs indicate which boundary punctures asymptotics are positive and negative.}
\label{Fig:CycWord}
\end{figure}

Our word $\word$ can be represented as the boundary of $2\filtLevel$-gon, $\planarDiagram(\word) \subset \C$, with half of its boundary segments drawn as dashed arcs representing the $\chord_{i}$ traversing $\partial \planarDiagram(\word)$ according to the (counter-clockwise) boundary orientation. The remaining boundary segments represent (indeterminate) capping paths -- as typically considered in $\CE$ -- connecting the ending point of $\chord_{i}$ to the starting point of $\chord_{i + 1}$.\footnote{When using enhanced coefficient systems, we will want to keep track of homotopy classes of capping paths. When the $\Leg^{\Partition}_{j}$ are disconnected, the capping paths may need to jump across components of $\Leg$. This can be managed with additional choices of \emph{connecting paths}. See \S \ref{Sec:ChordBasics}.} We say that $\planarDiagram(\word)$ is the \emph{planar diagram} for $\word$.\footnote{Readers familiar with \cite{Ekholm:Z2RSFT} can think of a planar diagram as a drawing of a ``formal disk''.} See the left-hand side of Figure \ref{Fig:CycWord}.

Let $u = (u_{\R}, u_{M}): \disk \setminus \{ \boundaryPunc_{j} \} \rightarrow \R_{s} \times M$ be a holomorphic map from a disk with boundary punctures $\boundaryPunc_{j} \in \partial \disk$ into the symplectization of $M$, with its boundary arcs mapped to $\Lag = \R_{s} \times \Leg$ and the $\boundaryPunc_{j}$ asymptotic to $\partial_{t}$ chords of $\Leg$ via $u$. Here and throughout, we use $\disk$ for the unit disk in $\C$ whose interior is denoted $\B$. A $\boundaryPunc_{j}$ is declared \emph{positive} (\emph{negative}) if $\lim_{z \rightarrow \boundaryPunc_{j}} u_{\R} = \infty$ (respectively, $-\infty$). A \emph{planar diagram for $u$} is a $2\#(z_{j})$-gon, $\planarDiagram(u) \subset \C$ with dashed boundary segments associated to the $\boundaryPunc_{j}$ and solid boundary segments associated with components of $\disk \setminus \{ \boundaryPunc_{j} \}$. Dashed arcs are decorated with chords and signs indicating positive or negative asymptotics. See the right-hand side of Figure \ref{Fig:CycWord}.

\begin{figure}[h]
\vspace{3mm}
\begin{overpic}[scale=.5]{differential.eps}
\put(42, 20){$\chord_{1}$}
\put(18, 42){$\chord_{2}$}
\put(-6, 20){$\chord_{3}$}
\put(10, 20){$\chord_{6}$}
\put(18, 20){$\chord_{5}$}
\put(18, 8){$\chord_{7}$}
\put(18, -3){$\chord_{4}$}
\put(50, 21){\vector(1, 0){6}}
\put(59, 20){$\chord_{3}$}
\put(74, 20){$\chord_{6}$}
\put(83, 20){$\chord_{5}$}
\put(83, 8){$\chord_{7}$}
\put(83, -3){$\chord_{4}$}
\end{overpic}
\vspace{3mm}
\caption{An inscription of planar diagrams from Figure \ref{Fig:CycWord} determines an association $(\chord_{1}\chord_{2}\chord_{3}\chord_{4}) \mapsto (\chord_{5} )\cdot (\chord_{6} \chord_{3}) \cdot (\chord_{7}\chord_{4})$.}
\label{Fig:Differential}
\end{figure}

\begin{defn}\label{Def:Inscription}
An \emph{inscription of $\planarDiagram(u)$ into $\planarDiagram(\word)$} is an inclusion $\planarDiagram(u) \subset \planarDiagram(\word)$ such that the arcs in $\planarDiagram(u)$ representing positive chords are sent to their corresponding chords in $\planarDiagram(\word)$ and such that the arcs in $\planarDiagram(u)$ representing components of $\partial (\disk \setminus \{ \boundaryPunc_{j} \})$ are contained within the (non-dashed) arcs of $\partial \planarDiagram(\word)$ not labeled by chords.
\end{defn}

If an inscription $\planarDiagram(u) \subset \planarDiagram(\word)$ exists it is unique up to isotopy. Observe that the way in which positive chords and non-chord arcs of $\planarDiagram(u)$ are mapped into $\planarDiagram(\word)$ completely determines how the negative chords of $\planarDiagram(u)$ are mapped into $\planarDiagram(\word)$.

\begin{ex}\label{Ex:LengthOne}
If a $\word$ consists only of $\Partition$ pure chords, then it must have word length $1$ and $\planarDiagram(\word)$ is a bigon with one dashed arc and one solid arc. Say $\word = (\chord)$ is such a word with $\chord$ beginning and ending on some $\Leg^{\Partition}_{j}$. Given a holomorphic disk $u$, $\planarDiagram(u)$ can be inscribed in $\planarDiagram(\word)$ if and only if $u$ has a single positive puncture given by $\chord$ and all of the negative punctures of $u$ are chords which also begin and end on $\Leg^{\Partition}_{j}$. In other words, $u$ contributes to the differential of $\chord$ for $\CE(\Leg^{\Partition}_{j})$.
\end{ex}

Provided an inscription $\planarDiagram(u) \subset \planarDiagram(\word)$ with $\word$ admissible, the complement $\planarDiagram(\word) \setminus \planarDiagram(u)$ is a disjoint union of planar diagrams for admissible $\Partition$ cyclic words of chords, and these words come with an ordering determined by the boundary orientation of $\planarDiagram(u)$. Details are provided in \S \ref{Sec:MuOperators}.

Say this ordered collection of words is $\word_{1}, \cdots, \word_{m_{-}}$ where $m_{-}$ is the number of negative punctures of $u$. We view the inscription as associating $\word \mapsto 1$ when $m_{-} = 0$ (whence $\planarDiagram(u) = \planarDiagram(\word))$ and $\word \mapsto \word_{1}\cdots \word_{m_{-}}$ otherwise. See Figure \ref{Fig:Differential}.

Writing $\orbitVS$ for the free $\field = \Z/2\Z$ module generated by the admissible cyclic words of chords, our disk $u$ determines an operator
\begin{equation*}
\mu_{u}: \orbitVS \rightarrow (\orbitVS)^{\otimes m_{-}}
\end{equation*}
such that $\mu_{u}(\word) = 0$ if we cannot inscribe $\planarDiagram(u)$ into $\planarDiagram(\word)$ and with the convention that $\orbitVS^{\otimes 0} = \field$. After grading the $\word$, weighted counts of the $\mu_{u}(\word)$ over all $\ind = 1$ holomorphic maps $u$ determine a differential operator
\begin{equation*}
\partial: \Algebra \rightarrow \Algebra, \quad \Algebra = \tensorAlg(\orbitVS), \quad \deg(\partial) = -1, \quad \partial^{2} = 0
\end{equation*}
on the tensor algebra of $\orbitVS$, extending the domain of $\partial$ from $\orbitVS \subset \Algebra$ to all of $\Algebra$ by the Leibniz rule.

The admissibility condition for words together with the fact that $\Leg$ has only a finite number of chords implies that $\orbitVS$ is finitely generated. It comes equipped with an ascending filtration
\begin{equation*}
0 = \filtration^{0}\orbitVS \subset \filtration^{1}\orbitVS \subset\cdots \subset \filtration^{N^{\Partition}}\orbitVS = \filtration^{N^{\Partition} + 1}\orbitVS = \cdots = \orbitVS
\end{equation*}
where each $\filtration^{\filtLevel}\orbitVS$ is generated by words of length $\leq \filtLevel$. The induced filtration on $\Algebra$ is also denoted $\filtration$,
\begin{equation}\label{Eq:AlgebraFilt}
\filtration^{\filtLevel}\Algebra = \tensorAlg(\filtration^{\filtLevel}\orbitVS), \quad \field = \filtration^{0}\Algebra \subset \filtration^{1}\Algebra \subset \cdots \subset \filtration^{N^{\Partition}}\Algebra = \filtration^{N^{\Partition} + 1}\Algebra = \cdots = \Algebra.
\end{equation}
By construction the filtration satisfies
\begin{equation}\label{Eq:FiltrationProduct}
\filtration^{\filtLevel}\Algebra \cdot \filtration^{\filtLevel'}\Algebra \subset \filtration^{\max(\filtLevel, \filtLevel')}\Algebra
\end{equation}
and the definition of inscription ensures that our differential is filtration preserving,
\begin{equation}\label{Eq:FiltrationPreservingDel}
\partial (\filtration^{\filtLevel}\Algebra) \subset \filtration^{\filtLevel}\Algebra,
\end{equation}
so that each $(\filtration^{\filtLevel}\Algebra, \partial)$ is a unital DGA and each inclusion $\filtration^{\filtLevel}\Algebra \rightarrow F^{\filtLevel+1}\Algebra$ is a unital DGA morphism.

\begin{defn}
A filtered graded algebra $(\Algebra, \filtration)$ satisfying Equation \eqref{Eq:FiltrationProduct} is a \emph{max-filtered graded algebra} (mfGA). A triple $(\Algebra, \partial, \filtration)$ for which $(\Algebra, \filtration)$ is a max-filtered algebra and for which $(\Algebra, \partial)$ is a DGA satisfying Equation \eqref{Eq:FiltrationPreservingDel} is a \emph{max-filtered DGA} (mfDGA).
\end{defn}

See \S \ref{Sec:mfDGA} for further details. With the above notation, we define the planar diagram algebra and planar diagram homologies,
\begin{equation*}
\newRSFTA = (\Algebra, \partial, \filtration), \quad \filtration^{\filtLevel}\newRSFTA = (\filtration^{\filtLevel}\Algebra, \partial, \filtration), \quad \newRSFT^{\filtLevel} = H(\filtration^{\filtLevel}\newRSFTA).
\end{equation*}
By forgetting the orderings of words in $\Algebra$ we obtain a commutative mfDGA, $\newRSFTA^{\com}$. By forgetting both the orderings of words and the cyclic orderings of chords in each word, a cyclic version, $\newRSFTA^{\cyc}$, is determined. The three algebras are related by quotient maps
\begin{equation*}
( \newRSFTA = \tensorAlg(\orbitVS) ) \rightarrow ( \newRSFTA^{\com} = \tensorAlgCom(\orbitVS) ) \rightarrow ( \newRSFTA^{\cyc} = \tensorAlgCom(\orbitVS^{\cyc}) ).
\end{equation*}
where $\tensorAlgCom$ is the exterior algebra and $\orbitVS^{\cyc}$ is $\orbitVS$ modulo cyclic equivalence of words. There are analogous relationships between the algebras of \cite{Ng:RSFT}. See \S \ref{Sec:DiffsAlgStructures}.

\subsection{Subalgebra structures}

Passing to homology, the (injective) inclusion morphisms of Equation \eqref{Eq:AlgebraFilt} determine a sequence of (not necessarily injective) mfGA morphisms,
\begin{equation*}
\field = \newRSFT^{0} \rightarrow \newRSFT^{1} \rightarrow \cdots \rightarrow \newRSFT^{N^{\Partition}} = \newRSFT^{N^{\Partition}+1} = \cdots \newRSFT^{\infty} =: \newRSFT.
\end{equation*}

In addition to using word length filtrations, geometrically relevant subalgebras of $\newRSFTA$ are generated by pieces of $\Leg$. While relabeling of indices $j$ of $\Leg^{\Partition}_{j}$ induces isomorphisms of $\newRSFTA$, we can such encode subalgebras by fixing this indexing data. An ordering $\ord$ of the indices $j$ for the $\Leg^{\Partition}_{j}$ determines a filtration $\filtration_{\ord}$ of $\newRSFTA$ where $\filtration^{k}_{\ord}\newRSFTA$ is generated by admissible $\Partition$ cyclic chords with endpoints on $\sqcup_{j=1}^{k} \Leg^{\Partition}_{j}$. Then the inclusion morphisms for the $\filtration$ and $\filtration_{\ord}$ fit together as
\begin{equation*}
\begin{tikzcd}
\filtration^{\filtLevel}\filtration_{\ord}^{k}\Algebra \arrow[r]\arrow[d] & \filtration^{\filtLevel + 1 }\filtration_{\ord}^{k}\Algebra \arrow[d]\\
\filtration^{\filtLevel}\filtration_{\ord}^{k+1}\Algebra \arrow[r] & \filtration^{\filtLevel + 1}\filtration_{\ord}^{k+1}\Algebra,
\end{tikzcd} \quad \filtration^{\filtLevel}\filtration_{\ord}\Algebra = \filtration_{\ord}\filtration^{\filtLevel}\Algebra.
\end{equation*}
Clearly each $\filtration^{k}_{\ord}\newRSFTA = (\filtration_{\ord}^{k}\Algebra, \filtration_{\ord})$ is a mfGA and the definition of inscription ensures that $\partial$ preserves $\filtration_{\ord}$. So each $(\filtration^{\filtLevel}\filtration_{\ord}^{k}\Algebra, \partial)$ is a DGA with two max-filtration structures and the above equation is a commutative diagram of mfDGA morphisms.

In particular, we have mfDGA functoriality for $\newRSFTA$ and mfGA functoriality for $\newRSFT$ with respect to inclusions,
\begin{equation*}
\begin{gathered}
\newRSFTA(\sqcup_{j=1}^{k}\Leg^{\Partition}_{j}, \Partition) \rightarrow \newRSFTA(\sqcup_{j=1}^{k+1}\Leg^{\Partition}_{j}, \Partition),\\
\newRSFT^{\filtLevel}(\sqcup_{j=1}^{k}\Leg^{\Partition}_{j}, \Partition) \rightarrow \newRSFT^{\filtLevel}(\sqcup_{j=1}^{k+1}\Leg^{\Partition}_{j}, \Partition).
\end{gathered}
\end{equation*}
Consequently $\newRSFTA$ is amenable to direct limit constructions by consideration of Legendrians with infinitely many pieces. Note that $LCH$ and the RSFT of \cite{Ng:RSFT} are not functorial with respect to inclusions of Legendrians into one another.\footnote{Consider for example a two copy $\Leg_{2}$ of a stabilized Legendrian knot $\Leg$ so that $LCH(\Leg) = 0$. As $\Leg_{2}$ is fillable by an annulus (cf. \cite{EHK:LagrangianCobordisms, NT:Torus}), $LCH(\Leg_{2}) \neq 0$ so there cannot exist a unital morphism $LCH(\Leg) \rightarrow LCH(\Leg_{2})$.}

\subsection{Filtration-enhanced numerical invariants}

Due to their non-commutativity, (free) DGAs are difficult to use directly as invariants even when differentials can be explicitly computed. Thankfully the $\CE$ literature provides a variety of techniques from commutative algebra to distinguish stable tame isomorphism classes of free DGAs which can be implemented by hand (in simple cases) or computer. See \cite{Ng:ComputableInvariants}.

In \S \ref{Sec:mfDGA} we demonstrate how known invariants of DGAs can be upgraded for mfDGAs such as $\newRSFTA$:
\be
\item The vanishing or non-vanishing of the homology of a DGA is upgraded to the \emph{H-torsion}, $\tau_{H}(\Algebra, \partial, \filtration)$.
\item The existence (or lack thereof) of an augmentation is upgraded to the \emph{A-torsion}, $\tau_{A}(\Algebra, \partial, \filtration)$.
\item The set of homotopy classes of augmentations \cite{Chekanov:LCH} is upgraded to the \emph{augmentation tree}, $\tree_{\Aug}$.
\ee
These are all filtered stable tame isomorphism invariants.

A pair of augmentations $\aug_{0}, \aug_{1}$ of some $\filtration^{\filtLevel}\newRSFTA$ determines a bilinearized chain complex \cite{BC:Bilinearized, Chekanov:LCH} and so a single-variable Poincar\'{e} polynomial
\begin{equation*}
P_{\aug_{0}, \aug_{1}}, P^{\aug_{0}, \aug_{1}} \in \Z[t^{\pm 1}]
\end{equation*}
which count dimensions of associated homology and cohomology groups. Since the bilinearized complexes are filtered by $\filtration$, these Poincar\'{e} polynomials can be upgraded to three-variable polynomials
\begin{equation}\label{Eq:SpecPolynomial}
P^{\spec}_{\aug_{0}, \aug_{1}}, P_{\spec}^{\aug_{0}, \aug_{1}} \in \Z[t^{\pm 1}, x, y]
\end{equation}
by counting dimensions of spectral sequence homology and cohomology groups $E^{r}_{p, q}(\aug_{0}, \aug_{1})$ and $E^{p, q}_{r}(\aug_{0}, \aug_{1})$. As our chain complexes are finitely generated, the spectral sequences converge after $N^{\Partition}$ pages and as we're working over a field, the $P^{\spec}_{\aug_{0}, \aug_{1}}, P_{\spec}^{\aug_{0}, \aug_{1}}$ determine the $P_{\aug_{0}, \aug_{1}}, P^{\aug_{0}, \aug_{1}}$. These polynomials are invariants of the homotopy classes of $\aug_{0}, \aug_{1}$ and the collections of all such polynomials is a filtered stable tame isomorphism invariant. It's well known that the single variable polynomials associated to linearized homologies (when $\aug_{0} = \aug_{1}$) are powerful enough to distinguish many smoothly isotopic but Legendrian non-isotopic knots \cite{KnotAtlas}.

\subsection{Further comparison with existing theories}

While $\newRSFTA$ is an RSFT invariant in the sense that it counts disks with multiple positive punctures, its algebraic properties indicate that it is closer in nature to $\CE$ than existing RSFT constructions. We further compare with \cite{Ekholm:Z2RSFT} and \cite{Ng:RSFT}, assuming familiarity with the Weyl algebra formalism for (closed string) RSFT from \cite{EGH:SFTIntro} which assigns two variables $p_{\gamma}$ and $q_{\gamma}$ to each closed Reeb orbit in a contact manifold (not necessarily of the form $\R_{t} \times \SympBase$).

First let's address analytical issues: The admissibility of the $\word$ ensures that when $\mu_{u}(\word) \neq 0$ then $u$ must be admissible in the sense of \cite{Ekholm:Z2RSFT} and in particular $u$ must be somewhere injective. Hence we can prove that $\partial^{2} = 0$ without having to worry about string-topological corrections or multiple covers. In fact, $\newRSFTA$ moduli spaces for disks contributing to $\partial$ are homeomorphic to $\CE$ moduli spaces as described in Lemma \ref{Lemma:LCHShift} (which is dependent on the fact that our ambient contact manifold is a contactization). Therefore well-established compactness and transversality results are applicable to holomorphic curves contributing to $\partial$ and cobordism maps \cite{Ekholm:Z2RSFT, EES:LegendriansInR2nPlus1, EES:PtimesR}. There are no analytical innovations in this article.

As previously mentioned, $\newRSFTA$ counts the same holomorphic disks used to define the RSFT of \cite{Ekholm:Z2RSFT} which we will call $RSFT_{E}$. It is a spectral sequence which can be packed as a triply graded homology group $RSFT_{E} = \oplus_{p, q, r}(RSFT_{E})^{p, q}_{r}$. The chain level generators of $RSFT_{E}$ are ``formal disks'' whose boundaries are sequences of positive and negative chords spanning a vector space we'll call $\orbitVS_{E}$. Modulo our reorganization of homotopy classes of maps using group ring coefficients, the generators of $\orbitVS_{E}$ correspond to the \emph{boundary words} of \S \ref{Sec:BoundaryWords} which are considered equivalent up to cyclic rotation both here and in \cite{Ekholm:Z2RSFT}.

The generators of $\orbitVS_{E}$ with only positive chords corresponds to our quotient $\orbitVS^{\cyc}$ of $\orbitVS$ \cite[\S 2.1]{Ekholm:Z2RSFT}. Cyclic rotations aside, this roughly parallels the way in which the closed string RSFT of \cite{Hutchings:QOnly, MZ:Torsion, Siegel:RSFT} uses only the $q$-variables of the RSFT of \cite{EGH:SFTIntro} to generate its underlying algebra. The RSFT of \cite{Ng:RSFT}, which we'll call $RSFT_{N}$, also uses both $p$ and $q$ variables -- one for each chord.

The algebraic formalisms of $RSFT_{E}$ and $RSFT_{N}$ are such that differentials are not action decreasing and so the differential of a single generator may contain infinitely many summands. Like $\CE$, closed string contact homology \cite{BH:ContactDefinition, Pardon:Contact}, or the RSFT of \cite{Hutchings:QOnly, MZ:Torsion, Siegel:RSFT}, the $\newRSFTA$ differential is action decreasing so that only finite numbers of holomorphic disks are counted. Therefore there is no need for (Novikov or $p$-adic) completion to define $\newRSFTA$.\footnote{Note that for $RSFT_{E}$ holomorphic disks are counted cohomologically (with ``inputs'' being negative chords and ``outputs'' being positive chords) so that differentials are not action decreasing and Novikov completion is employed.}

On the subject of commutativity: Like $RSFT_{N}$, $\newRSFTA$ is non-commutative whereas $RSFT_{E}$ is an abelian object. Because $\newRSFTA$ is non-commutative and finitely generated, the constructions of bilinearized (co)homology theories and augmentation categories of \cite{BC:Bilinearized, CEKSW:AugCat} can be applied directly without modification.\footnote{We do not address possible analogues of the $\Aug_{+}$ category of \cite{AugSheaves} for $\newRSFTA$ in this article. Currently $\Aug_{+}$ is only defined for $\CE$ with $1$-dimensional Legendrians.}

The linearized cohomology for $\newRSFTA^{\cyc}$ (associated to a single augmentation) recovers the Lagrangian invariants of \cite{Ekholm:Z2RSFT} as follows: Suppose that $\aug_{\Lag}$ is an augmentation of $\newRSFTA^{\cyc}$ determined by an exact, partitioned Lagrangian filling $\LagGrouped$. Equip $\orbitVS^{\cyc}$ with a descending filtration $\filtration^{\filtLevel}_{op}$, where the $\filtration^{\filtLevel}_{op}\orbitVS^{\cyc}$ are additively generated by words of word length $\geq \filtLevel$. Then the linearized cohomology differential
\begin{equation*}
d_{\aug_{L}}: \orbitVS^{\cyc}_{\ast} \rightarrow \orbitVS^{\cyc}_{\ast + 1}
\end{equation*}
preserves the ascending filtration, yielding a filtered abelian cochain complex and spectral sequence cohomology groups
\begin{equation*}
E^{p, q}_{r}(\orbitVS^{\cyc}, d_{\aug_{\Lag}}) = (RSFT_{E})^{p, q}_{r}.
\end{equation*}
Indeed $(\orbitVS^{\cyc}, d_{\aug_{\Lag}})$ is by definition the filtered complex of \cite{Ekholm:Z2RSFT} determined by $\LagGrouped$ and so determines the same homology.

To our knowledge, the constructions of bilinearized theories and augmentation categories of a free DGA over a finite field (such as $\field = \Z/2\Z$) depend essentially on non-commutativity, which is a feature of $\newRSFTA$. Linearization with chain level $A_{\infty}$ Fukaya-Massey product structures associated to a single augmentation are still available when working with a commutative free DGA such as $\newRSFTA^{\cyc}$ and the second $A_{\infty}$ relation states that the ``two-input, one-output'' operation descends to cohomology. Hence the $RSFT_{E}$ of a partitioned Lagrangian filling is naturally equipped with a pair-of-pants product (after a grading shift so that the product has degree $0$), making it a multiplicative spectral sequence.

\subsection{Organization of this article}

\S \ref{Sec:GeometricSetup} outlines the geometric structures of this article, establishes notation, and defines gradings on the generators of $\newRSFTA$. In \S \ref{Sec:BoundaryWords} we describe the $\mu$ operators in detail and describe moduli spaces of holomorphic disks required to define differentials and cobordism maps for $\newRSFTA$. In \S \ref{Sec:DiffsAlgStructures} we define these differentials and cobordism maps and then outline some basic algebraic structures of $\newRSFTA$. Essential properties of abstract mfDGAs -- including the definition of filtered stable tame isomorphism -- are covered in \S \ref{Sec:mfDGA}. \S \ref{Sec:Computations} covers elementary computations including generalities for working with links in $\Rthree$. Finally, invariance proofs appear in Appendix \ref{App:Invariance}.

\subsection{Acknowledgments}

\begin{wrapfigure}{r}{0.07\textwidth}
\centering
\includegraphics[width=.07\textwidth]{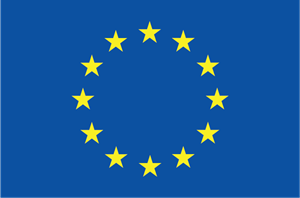}
\end{wrapfigure}

The first version of this article was completed at Uppsala University, where the author was partly supported by grant KAW 2016.0198 from the Knut and Alice Wallenberg Foundation. During revisions the author was supported by Cofund MathInGreaterParis and the Fondation Mathématique Jacques Hadamard as a member of the Laboratoire de Math\'{e}matiques d'Orsay at Universit\'{e} Paris-Saclay and by the CNRS as a member of Institute Math\'{e}matiques de Jussieu at Sorbonne Université. This project has received funding from the European Union’s Horizon 2020 research and innovation programme under the Marie Skłodowska-Curie grant agreement No 101034255.

We thank Georgios Dimitroglou Rizell and Lenhard Ng for interesting discussions and their guidance. We also thank Lu\'{i}s Diogo, Roman Golovko, and No\'{e}mie Legout for providing feedback on early drafts of this article as well as Joshua Sabloff for sharing Python code with us. This article has greatly benefited from the careful reading and thoughtful comments of our anonymous referee.

\section{Geometric setup, chords, and gradings}\label{Sec:GeometricSetup}

Here we describe the geometric objects of interest in this article.  Most of the material in this section is standard, following \cite{EES:PtimesR}. However, we'll want to pay special attention to labelings and partitions of components of Legendrian and Lagrangians as described in the introduction and in \cite{Ekholm:Z2RSFT}.

\subsection{Ambient geometric setup}\label{Sec:AmbientSetup}

Contact manifolds will be written $\Mxi$. The dimension of $M$ will always be $2n - 1$ and we assume $M$ is equipped with a contact form $\ContForm$ for which $\xi = \ker \ContForm$. We say that $\Mxi$ is a \emph{contactization} if
\begin{equation*}
M = \R_{t} \times \SympBase, \quad \ContForm = dt + \beta
\end{equation*}
for $\beta\in \Omega^{1}(\SympBase)$ a \emph{Liouville form}, meaning that $d\beta$ is symplectic on $\SympBase$. This implies that the Reeb vector field is $\partial_{t}$. Denote by $J_{\SympBase}$ a complex structure in $\SympBase$ which is compatible with $d\beta$ in the sense that $d\beta(J_{\SympBase}\ast, \ast)$ is a $J_{\SympBase}$-invariant Riemannian metric on $\SympBase$.

\begin{assump}
Contact manifolds in this paper will always be contactizations. We require that $(\SympBase, \beta, J_{\SympBase})$ has \emph{finite geometry at infinity} in the sense of \cite[Definition 2.1]{EES:PtimesR} and vanishing first Chern class, $c_{1}(\SympBase) = 0\in H^{2}(\SympBase, \Z)$.
\end{assump}

\begin{rmk}
As in \cite[Remark 1.4]{EES:PtimesR}, the $c_{1} = 0$ hypothesis could be removed at the expense of complicating our exposition. This condition is satisfied in the most studied cases when $M$ is a $1$-jet space of an oriented manifold \cite{Chekanov:LCH, EES:LegendriansInR2nPlus1}. This also includes the cases of unit cotangent bundles of Euclidean spaces studies in knot contact homology \cite{Ng:KCHIntro}.
\end{rmk}

The symplectization of $M$ is
\begin{equation*}
\R_{s} \times M \simeq \C_{s, t}\times \SympBase
\end{equation*}
which is equipped with the symplectic potential $e^{s}\alpha$. Except when we are defining cobordism maps, symplectizations are always equipped with almost complex structures $J$ determined by $J_{\SympBase}$ via the equations
\begin{equation*}
J\partial_{s} = \partial_{t}, \quad J|_{T\SympBase} = J_{\SympBase}.
\end{equation*}
With this choice,
\be
\item $J$ is invariant under $s$ and $t$ translations and
\item both of the projections onto the factors of $\C_{s, t} \times \SympBase$ are holomorphic.
\ee

\subsection{Legendrians and Lagrangian cobordisms}

Let $\Leg^{\pm}$ be Legendrians in $\Mxi$.

\begin{defn}
An \emph{exact Lagrangian cobordism}
\begin{equation*}
L: \LegUp \rightarrow \LegDown
\end{equation*}
is a submanifold $L \subset \R_{s} \times M$ such that there exists some $C \gg 0$ satisfying the following conditions:
\be
\item $L$ is a collar of the $\Leg^{\pm}$ outside of $[-C, C]\times M$:
\begin{equation*}
\begin{gathered}
\big((-\infty, -C]\times M \big) \cap L = (-\infty, -C] \times \Leg^{-} = L^{-},\\
\big([C, \infty) \times M\big) \cap L = [C, \infty) \times \Leg^{+} = L^{+}.
\end{gathered}
\end{equation*}
\item There exists $f \in \Cinfty(L)$ such that $e^{s}\alpha|_{TL} = df$ and there are constants $f^{\pm}$ such that $f|_{L^{\pm}} = f^{\pm}$.
\ee
\end{defn}

See \cite{Chantraine:DisconnctedEnds} regarding the second condition. Various types of cobordisms have special importance:
\be
\item $\Lag$ is an \emph{exact Lagrangian filling} if $\LegDown = \emptyset$.
\item $\Lag$ is a \emph{concordance} if it is homeomorphic to $\R_{s}\times \Leg^{+}$.
\item $\Lag$ is a \emph{homology cobordism} for $i=0, 1$ the inclusion maps $H_{i}(\LegUpDown) \rightarrow H_{i}(\Lag)$ are isomorphisms.
\ee
When $n=\dim \Lag = 2$, concordance and homology cobordism are equivalent notions. The most important example of a concordance is the \emph{Lagrangian cylinder}
\begin{equation*}
L_{\Leg} = \R_{s} \times \Leg
\end{equation*}
associated to a Legendrian $\Leg$. The trace of a Legendrian isotopy sweeps out a Lagrangian concordance as described in \cite[Theorem 1.1]{Chantraine:Concordance}, \cite[\S 6.1]{EHK:LagrangianCobordisms}.

\begin{assump}
Henceforth we assume that Legendrians and Lagrangians are equipped with partitions $\Partition$ as described in the introduction. Legendrians and Lagrangians are also equipped with orientations so that the orientation $\orientation(\Lag)$ of $\Lag: \Leg^{+} \rightarrow \Leg^{-}$ induces orientations on the $\Leg^{\pm}$ for which $\orientation(\Lag) = \partial_{s} \wedge \orientation(\Leg^{\pm})$ over collared ends.
\end{assump}

\subsection{Chords and words of chords}\label{Sec:ChordBasics}

The chords of $\Leg$ will be denoted by $\chord$ with actions
\begin{equation*}
\action (\chord) = \int_{\chord} \alpha > 0.
\end{equation*}
so that $\chord$ is naturally parameterized by $[0, \action(\chord)]$. For a chord $\chord$, we'll write the initial and terminal points of $\chord$ as $q^{-}(\chord)$ and $q^{+}(\chord)$ respectively with $q^{\pm}(\chord) \in \Leg$.

The chords of $\Leg$ are in one-to-one correspondence with double points of $\pi_{\SympBase}\Lambda$ where $\pi_{\SympBase}$ is the \emph{Lagrangian projection},
\begin{equation*}
\pi_{\SympBase}: \R_{t} \times \SympBase \rightarrow \SympBase
\end{equation*}
For each $\chord$, $\pi_{\SympBase}(\chord)$ is the associated double point. For such a $\chord$, subspaces $\pi_{\SympBase}T_{q^{\pm}(\chord)}\Leg \subset T_{\pi_{\SympBase}(\chord)}\SympBase$ are Lagrangian. We say that $\Leg$ is \emph{non-degenerate} if these subspaces are transverse.

The following technical definition helps us when studying moduli spaces of holomorphic curves. See example \cite{DR:Lifting, EES:LegendriansInR2nPlus1, EES:PtimesR}: The pair $(\Leg, J_{\SympBase})$ is \emph{admissible} if for each $\chord$ there is a neighborhood $U(\chord) \subset \SympBase$ about $\pi_{\SympBase}(\chord) \in \SympBase$ within which $J_{\SympBase}$ is integrable and $\pi_{\SympBase}\Leg$ is real-analytic.

\begin{assump}
Legendrians are non-degenerate and $(\Leg, J_{\SympBase})$ is admissible unless otherwise indicated.
\end{assump}

We have two ways of partitioning the chords of $\Leg$:
\begin{equation*}
\begin{gathered}
\Chords = \bigsqcup_{1 \leq j^{-}, j^{+} \leq N} \Chords_{j^{-}, j^{+}} = \bigsqcup_{1 \leq j^{-}, j^{+} \leq N_{\Partition}} \Chords^{\Partition}_{j^{-}, j^{+}},\\
\Chords_{j^{-}, j^{+}} = \{ \chord \ :\ q^{\pm}(\chord) \in \Leg_{j^{\pm}} \}, \quad \Chords_{j^{-}, j^{+}}^{\Partition} = \{ \chord \ :\ q^{\pm}(\chord) \in \Leg_{j^{\pm}}^{\Partition} \}
\end{gathered}
\end{equation*}

Recall the notions of \emph{pure}, \emph{mixed}, \emph{$\Partition$ pure}, and \emph{$\Partition$ mixed}, \emph{cyclic words of chords}, and \emph{admissible cyclic words of chords} from the introduction. We'll take the following notational shortcut moving forward:

\begin{notation}
Henceforth, a \emph{word} will refer to an admissible $\Partition$ cyclic word of chords.
\end{notation}

For a word $\word = (\chord_{1}\cdots \chord_{\filtLevel})$ the action is defined
\begin{equation*}
\action(\word) = \sum_{i=1}^{\filtLevel} \action (\chord_{i}).
\end{equation*}

\subsection{Maslov classes and gradings}\label{Sec:Gradings}

The words $\word$ are going to generate our algebra $\newRSFTA$ so we'll have to assign them gradings. The story here is analogous to that of Legendrian contact homology:
\be
\item $\Z/2\Z$ gradings are available for any choice of coefficient system.
\item Suppose the $\Leg^{\Partition}_{j}$ are each connected and $2\rho \in \Z$ is the divisibility of the Maslov class of $\Leg$ -- to be defined momentarily. Then we will get $\Z/2\rho \Z$ gradings when using $\field$ coefficients.
\item When the $\Leg^{\Partition}_{j}$ are each connected, $\field[H_{1}(\Leg, \Z)]$ coefficients yield $\Z$ gradings.
\item When the $\Leg^{\Partition}_{j}$ are not connected, we can still get $\Z/2\rho\Z$ gradings with $\field$ coefficients or $\Z$ gradings with $\field[H_{1}(\Leg, \Z)]$ coefficients with additional data, to be described below.
\ee
Again, matters are simplified by the assumption $c_{1}(T\SympBase) = 0$. 

To assign Maslov numbers and $\newRSFTA$ gradings to words $\word$ we will need to work out the ``additional data'' described above -- special paths and a trivialization of a circle bundle over $W$. The following constructions are standard in relative SFT.

\subsubsection{Special paths}

Choose a basepoint $q(\Leg_{i})$ in each connected component $\Leg_{i}$ of $\Leg$ which we assume is not a $q^{\pm}(\chord)$. If some $\Leg^{\Partition}_{j}$ is disconnected, for each pair of connected components $\Leg_{i_{1}}, \Leg_{i_{2}} \subset \Leg^{\Partition}_{j}$ choose an unoriented path $\gamma[i_{1}, i_{2}] = \gamma[i_{2}, i_{1}] \subset M$ connecting $q(\Leg_{i_{1}})$ to $q(\Leg_{i_{2}})$ whose interior lies in $M \setminus \Leg$. This determines oriented paths $\gamma(i_{1}, i_{2})$ and $\gamma(i_{2}, i_{1})$ from $q(\Leg_{i_{1}})$ to $q(\Leg_{i_{2}})$ and vice-versa, respectively, whose images agree with $\gamma[i_{1}, i_{2}]$. We call the $\gamma(i_{1}, i_{2})$ \emph{connecting paths}.

\begin{rmk}
To simplify matters, the reader may want to work under the assumption that the $\Leg^{\Partition}_{j}$ are connected in which case the connecting paths may be ignored.
\end{rmk}

Let $\chord \in \Chords$. Choose a path $\eta^{+}(\chord)$ in $\Leg$ starting at $q^{+}(\chord)$ and ending at the basepoint of $\Leg$ belonging to the same connected component of $\Leg$ as $q^{+}(\chord)$. Similarly choose a path $\eta^{-}(\chord)$ in $\Leg$ which starts at a basepoint and ends at $q^{-}(\chord)$. We call the $\eta^{\pm}(\chord)$ \emph{preferred base paths}.

Let $\chord_{i_{1}}, \chord_{i_{2}}$ be a $\Partition$ composable pair of chords with $\chord_{i_{1}} \in \Chords_{j_{1}, j}$ and $\chord_{i_{2}} \in \Chords_{j, j_{2}}$ for some index $j$. The \emph{preferred capping path for $\chord_{i_{1}}, \chord_{i_{2}}$} is the oriented path
\begin{equation*}
\cappingPath(i_{1}, i_{2}): [0, 1] \rightarrow M
\end{equation*}
which begins at $q^{+}(\chord_{i_{1}})$, travels to the basepoint $q(\Leg_{j})$ via $\eta^{+}(\chord_{i_{1}})$, traverses a connecting path if necessary, and ends at $q^{-}(\chord_{i_{2}})$ by traveling along $\eta^{-}(\chord_{i_{2}})$.

Given a word $\word = (\chord_{i_{1}}\dots\chord_{i_{\filtLevel}})$ we say that $(\cappingPath(i_{1}, i_{2}), \dots, \cappingPath(i_{\filtLevel}, i_{1}))$ is the \emph{preferred capping path} for $\word$ and denote it by $\cappingPath(\word)$. The chords and preferred capping paths can then be concatented to give a closed loop $\gamma(\word)$ in $M$,
\begin{equation}\label{Eq:WordLoop}
\gamma(\word) = \cappingPath(i_{1}, i_{2})\# \chord_{i_{2}}\# \dots \# \cappingPath(i_{\filtLevel}, i_{1}) \# \chord_{i_{1}}.
\end{equation}

When $\Lag: \LegUp \rightarrow \LegDown$ is a homology cobordism with connected components $\Lag_{i}$ we also choose paths $\gamma_{i}$ connecting the basepoint of $\Leg^{-}_{i}$ to the basepoint of $\Leg^{+}_{i}$. When $\Lag$ is more specifically a $\Lag_{\Leg} = \R_{s} \times \Leg$ we assume that connecting paths are of the form $\R_{s} \times \{q(\Leg_{i})\}$.

\subsubsection{Clockwise rotations and $\Z/2\Z$ gradings}

Let $V_{0}^{+}, V_{1}^{+}$ be an ordered pair of transverse Lagrangian subspaces of $\C^{n-1}$. We write $V_{k}, k=0, 1$ for the $V_{k}^{+}$ with their orientations forgotten. Following \cite[\S 3.1.2]{EES:Orientations}, we say that a Hermitian coordinate system $z = x + J y$ on $\C^{n-1}$ is a \emph{canonical coordinate system} for the pair $V^{+}_{0}, V^{+}_{1}$ if $V^{+}_{0} = \R^{n-1}_{x}$ (equipped with the orientation determined by a standard basis) and
\begin{equation}\label{Eq:LocalRotation}
V_{1} = e^{J\vec{\theta}}V_{0} = \Diag(e^{J\theta_{1}}, \dots, e^{J\theta_{n}})V_{0}, \quad \theta_{k} \in (-\pi, 0).
\end{equation}
We emphasize that the $\theta_{k}$ are negative.

The preceding data is sufficient to define $\Z/2\Z$ Maslov numbers and gradings to all words (without the assumption that the $\Leg^{\Partition}_{j}$ are connected). Assume that at each $\chord_{i}$ we have chosen canonical coordinate systems on $T_{\pi_{\SympBase}(\chord_{i})}\SympBase$.

\begin{defn}\label{Def:PosNegLagrangianSubspaces}
Define $\sigma(V^{+}_{0}, V^{+}_{1}) \in \{ 0, 1\}$ by
\begin{equation*}
e^{J\vec{\theta}}V^{+}_{0} = (-1)^{\sigma(V^{+}_{0}, V^{+}_{1})}V^{+}_{1}
\end{equation*}
as oriented spaces. For a chord $\chord$ define
\begin{equation*}
\sigma(\chord) = \sigma(\pi_{\SympBase}T_{q^{-}(\chord)}\Leg, \pi_{\SympBase}T_{q^{+}(\chord)}\Leg)
\end{equation*}
using the aforementioned canonical coordinate systems. For a word $\word = (\chord_{1}\cdots \chord_{\filtLevel})$, define
\begin{equation*}
\begin{gathered}
\Maslov_{2}(\word) = \sum_{i} \sigma(\chord_{i}) \in \Z/2\Z,\\
|\word|_{2} = n-3 + \filtLevel + \Maslov_{2}(\word) \in \Z/2\Z.
\end{gathered}
\end{equation*}
\end{defn}

\subsubsection{$\Z$ gradings}

Now we upgrade the $\Z/2\Z$-valued gradings, $|\word|_{2}$, to $\Z$-valued gradings, $|\word|$. Write $\LagGras_{n-1}$ for the space of linear Lagrangian subspaces of $\C^{n-1}$ and $\LagGras_{n-1}^{+}$ for the double cover of oriented Lagrangian subspaces. Using the $V_{k}, V^{+}_{k}$ and $\vec{\theta}$ as above, we say that
\begin{equation*}
e^{J\vec{\theta}t}V_{0}: [0, 1]_{t} \rightarrow \LagGras_{n-1}
\end{equation*}
is a \emph{CW rotation from $V_{0}$ to $V_{1}$}. We can also view $CW$ rotations as paths in $\LagGras_{n-1}^{+}$.

There are naturally defined bundles over $M$
\begin{equation*}
\LagGras^{+}_{n-1} \rightarrow \LagGras^{+}_{\SympBase} \rightarrow M, \quad \LagGras_{n-1} \rightarrow \LagGras_{\SympBase} \rightarrow M
\end{equation*}
whose fibers at $(t, q)$ are the Lagrangian Grassmanians of $T_{q}\SympBase$. Identifying
\begin{equation*}
\LagGras_{n-1}^{+}\simeq \Unitary(n-1)/\SOrthog(n-1),\quad \LagGras_{n-1}\simeq \Unitary(n-1)/\Orthog(n-1)
\end{equation*}
the $\C^{\ast}$-valued determinants $\det: \LagGras_{n-1}^{+} \rightarrow \Circletwopi$ induce isomorphisms on $H_{1}$ \cite[\S 2.2]{MS:SymplecticIntro}. This determines determinant circle bundles $\Det_{\SympBase}$ and $\Det^{+}_{\SympBase}$ fitting into a diagram
\begin{equation*}
\begin{tikzcd}
\Circletwopi \arrow[r] \arrow[d] & \Det^{+}_{\SympBase} \arrow[r] \arrow[d] &\SympBase \arrow[d, "\Id"]\\
\Circlepi \arrow[r] & \Det_{\SympBase} \arrow[r]& \SympBase
\end{tikzcd}
\end{equation*}
where each row is a ``fiber, total space, base space'' diagram and the vertical arrows forget orientations of Lagrangian subspaces. As $c_{1}(\SympBase) = 0$, these determinant circle bundles admit trivializations over all of $\SympBase$ and we'll assume that such trivializations are fixed.

The (oriented) Lagrangian Gauss map ($G^{+}$) $G$ is the section of ($\LagGras^{+}_{\SympBase}$) $\LagGras_{\SympBase}$ over $\Leg$ which assigns to each $q \in \Leg$ its (oriented) tangent space. Extend $G$ over each $\chord$ by a CW rotation. This extension is unique up to boundary-relative homotopy. Choose an extension of $G^{+}$ over the connecting paths $\gamma[i, i']$.

Consider a loop in the union of $\Leg$ with the chords and connecting paths,
\begin{equation*}
\gamma: \Circle \rightarrow \Leg \cup \Chords \cup \left( \cup \gamma[i, i'] \right).
\end{equation*}
Then $\det G\gamma$ is a map from $\Circle$ to $\Circlepi$. We define
\begin{equation*}
\Maslov(\gamma) \in \Z
\end{equation*}
to be the degree of this map. If $\im(\gamma) \subset \Leg$, then we can factor through $G^{+}$ as 
\begin{equation*}
\deg \det G \gamma = 2\deg \Det^{+} G^{+}\gamma
\end{equation*}
so $\Maslov(\gamma) \in 2\Z$. The result will depend on $[\gamma] \in H_{1}(\Leg, \Z)$ and vanishes on torsion elements.

\begin{defn}
The map $\Maslov: H_{1}(\Leg, \Z) \rightarrow \Z$ is the Maslov homomorphism and $\rho = \rho(\Leg)$ is defined by $\im(\Maslov) = 2\rho\Z \subset \Z$. For elements $h \in H_{1}(\Leg, \Z)$ we define $\Z$ gradings by
\begin{equation*}
|h| = \Maslov(h).
\end{equation*}
For a word $\word$ of length $\filtLevel$, define $\Z$-valued Maslov numbers and gradings
\begin{equation*}
\Maslov(\word) = \Maslov(\gamma(\word)), \quad |\word| = n - 3 + \filtLevel + \Maslov(\word)
\end{equation*}
where $\gamma(\word)$ is as defined in Equation \eqref{Eq:WordLoop}.
\end{defn}

The following lemma is evident from tracking the orientation changes of $T\Leg$ as $\gamma(\word)$ traverses chords, where $CW$ rotations are applied.

\begin{lemma}
$\Maslov(\word) \bmod_{2} = \Maslov_{2}(\word)$.
\end{lemma}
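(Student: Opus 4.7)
The plan is to identify both $\Maslov(\word) \bmod 2$ and $\Maslov_{2}(\word)$ with the same $\Z/2\Z$-valued obstruction to lifting the loop $G\gamma(\word)$ in $\LagGras_{\SympBase}$ through the orientation double cover $\LagGras^{+}_{\SympBase} \to \LagGras_{\SympBase}$.

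First I would invoke the commutative square of bundles from Section \ref{Sec:Gradings}: the determinant maps $\Det : \LagGras^{+}_{n-1} \to \Circletwopi$ and $\Det : \LagGras_{n-1} \to \Circlepi$ induce isomorphisms on $H_{1}$, and both $\LagGras^{+}_{\SympBase} \to \LagGras_{\SympBase}$ and $\Circletwopi \to \Circlepi$ are the non-trivial $\Z/2\Z$-covers, corresponding to each other under $\Det$. Consequently, for any loop $\gamma$ in $\LagGras_{\SympBase}$, the $\Z/2\Z$-obstruction to lifting $\gamma$ through $\LagGras^{+}_{\SympBase}$ equals the obstruction to lifting $\Det\gamma$ through $\Circletwopi \to \Circlepi$, which in turn is $\deg(\Det\gamma) \bmod 2 \in \Z/2\Z$. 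Applied to $\gamma = G\gamma(\word)$, this identifies $\Maslov(\word) \bmod 2$ with the lifting obstruction.

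Next I would compute this obstruction directly by attempting a piecewise lift of $G\gamma(\word)$ through the double cover. On the chord-free portions of $\gamma(\word)$, namely the preferred base-path segments in $\Leg$ and the connecting-path segments in $M \setminus \Leg$, the Gauss map $G$ already has the canonical lift $G^{+}$ fixed in Section \ref{Sec:Gradings}. On each chord $\chord_{i_{k}}$, I would lift via the CW rotation in $\LagGras^{+}_{n-1}$ starting from the oriented tangent $T^{+}\Leg|_{q^{-}(\chord_{i_{k}})}$. By Definition \ref{Def:PosNegLagrangianSubspaces} this rotation terminates at $(-1)^{\sigma(\chord_{i_{k}})} T^{+}\Leg|_{q^{+}(\chord_{i_{k}})}$, so it matches $G^{+}$ on the subsequent capping path iff $\sigma(\chord_{i_{k}}) = 0$. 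Concatenating around the loop, the total monodromy is $(-1)^{\sum_{k} \sigma(\chord_{i_{k}})} = (-1)^{\Maslov_{2}(\word)}$, so the lift closes up iff $\Maslov_{2}(\word) = 0$. Hence the obstruction equals $\Maslov_{2}(\word)$, and combining this with the previous paragraph yields $\Maslov(\word) \bmod 2 = \Maslov_{2}(\word)$.

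The only point requiring care -- and where I expect the main book-keeping effort -- is verifying that the sign $(-1)^{\sigma(\chord_{i_{k}})}$ produced by the CW rotation in $\LagGras^{+}_{n-1}$ really measures the orientation discrepancy described above, given the convention $\theta_{k} \in (-\pi, 0)$ from Equation \eqref{Eq:LocalRotation} and the canonical coordinate system chosen at each double point. Once this sign convention is pinned down, the rest of the argument is essentially formal, being just an instance of the standard correspondence between mod-$2$ Maslov indices and orientability of Lagrangian loops.
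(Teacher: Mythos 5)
Your argument is correct and is essentially the paper's: the paper simply asserts the lemma is "evident from tracking the orientation changes of $T\Leg$ as $\gamma(\word)$ traverses chords, where CW rotations are applied," and your write-up via the lifting obstruction for the double cover $\LagGras^{+}_{\SympBase}\to\LagGras_{\SympBase}$ is a careful formalization of exactly that orientation-tracking, with the sign at each chord correctly read off from Definition \ref{Def:PosNegLagrangianSubspaces}.
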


A few more details on Maslov homomorphisms: Suppose that $\Lag: \LegUp \rightarrow \LegDown$ is a homology cobordism. Since the $i_{\pm}: H_{1}(\LegUpDown) \rightarrow H_{1}(\Lag)$ are isomorphisms, so is the composition
\begin{equation}\label{Eq:HomologyCobIso}
(i_{-})^{-1}i_{+}: H_{1}(\LegUp) \rightarrow H_{1}(\LegDown).
\end{equation}

\begin{lemma}\label{Lemma:MaslovInvariance}
For $h \in H_{1}(\LegUp)$, $\Maslov(h) = \Maslov\left((i_{-})^{-1}i_{+}h\right)$.
\end{lemma}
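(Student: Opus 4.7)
The plan is to extend the Maslov homomorphism to the Lagrangian cobordism $\Lag$ itself, and then show that this extension restricts correctly to the Maslov homomorphisms on each end. Once this is done, the lemma falls out immediately from the definition of the isomorphism $(i_{-})^{-1}i_{+}$.

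First, I would construct a Maslov homomorphism $\Maslov_{\Lag}: H_{1}(\Lag, \Z) \to \Z$. Since the symplectization $\R_{s} \times M \simeq \C_{s,t} \times \SympBase$ has $c_{1}(T(\R\times M)) = 0$ (using $c_{1}(T\SympBase)=0$ and the triviality of $\C_{s,t}$), the oriented Lagrangian Grassmannian bundle $\LagGras^{+}(\R \times M) \to \R \times M$ has a determinant circle bundle $\Det^{+}_{\R \times M}$ which admits a trivialization. I would fix such a trivialization compatibly with the chosen trivialization of $\Det^{+}_{\SympBase}$ and the standard orientation of the $\C_{s,t}$-factor. Then the oriented Gauss map $G^{+}_{\Lag}: \Lag \to \LagGras^{+}(\R \times M)$ (sending $x \in \Lag$ to $T_{x}\Lag$) composed with $\det$ yields a map $\Lag \to \Circletwopi$, and $\Maslov_{\Lag}(h)$ is defined as the degree of this map along a loop representing $h$.

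Second, I would verify the compatibility $\Maslov_{\Lag} \circ i_{\pm} = \Maslov$ on $H_{1}(\Leg^{\pm})$. On the collared end $[C,\infty)\times \Leg^{+}$, the tangent space at $(s,t,q)$ is $\R\partial_{s} \oplus T_{q}\Leg^{+}$. Using the splitting $T_{(s,t,q)}(\R\times M) = \langle\partial_{s},\partial_{t}\rangle \oplus \xi_{q}$ and the isomorphism $\xi_{q} \cong T_{\pi_{\SympBase}(q)}\SympBase$ induced by $\pi_{\SympBase}$, the Lagrangian $\R\partial_{s}\oplus T_{q}\Leg^{+}$ corresponds to $\R \oplus \pi_{\SympBase}T_{q}\Leg^{+}$ inside $\C \oplus T_{\pi_{\SympBase}(q)}\SympBase$. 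With compatible trivializations, $\det$ of this Lagrangian is the product of $\det(\R) = 1$ and $\det(\pi_{\SympBase}T_{q}\Leg^{+})$. Hence for a loop $\gamma \subset \Leg^{+}$ (placed at any fixed $s$), the winding of $\det G^{+}_{\Lag}\gamma$ equals the winding of $\det G^{+}\gamma$, so $\Maslov_{\Lag}(i_{+}[\gamma]) = \Maslov([\gamma])$. The identical argument applies to $\Leg^{-}$.

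With these two steps in hand the conclusion is formal: setting $h' = (i_{-})^{-1}i_{+}h$, we have $i_{+}h = i_{-}h'$ in $H_{1}(\Lag)$, so
\begin{equation*}
\Maslov(h) \;=\; \Maslov_{\Lag}(i_{+}h) \;=\; \Maslov_{\Lag}(i_{-}h') \;=\; \Maslov(h').
\end{equation*}
The only real obstacle is the bookkeeping in the second step: one has to arrange the trivialization of $\Det^{+}_{\R\times M}$ so that it genuinely restricts to the pullback of the trivialization of $\Det^{+}_{\SympBase}$ on the collared ends. This is a routine check once one commits to viewing $\R \times M$ as the symplectic product $\C_{s,t} \times \SympBase$ and uses $\pi_{\SympBase}$ to identify $\xi$ with the pullback of $T\SympBase$; everything else reduces to the elementary observation that the determinant of a product of oriented Lagrangians is the product of their determinants.
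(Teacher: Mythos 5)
Your proposal is correct and follows essentially the same route as the paper: define the Maslov homomorphism on $H_{1}(\Lag)$ via the Lagrangian Grassmannian bundle over the symplectization, observe that over the collared ends the Gauss map splits as $\R_{s}\oplus G$ so that additivity of the Maslov index under direct sum gives $\Maslov(i_{\pm}h) = \Maslov(h)$, and conclude formally from $i_{+}h = i_{-}h'$. Your extra care with the compatibility of trivializations is exactly the bookkeeping the paper delegates to the direct-sum additivity cited from \cite{MS:SymplecticIntro}.
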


\begin{proof}
Consider the Lagrangian Grassmannian bundles $\LagGras_{\C \times \SympBase} \rightarrow \C \times \SympBase$ over the symplectization with Gauss map section $\widetilde{G}: \Lag \rightarrow \LagGras_{\C \times \SympBase}$ using which we may define and compute $\Maslov: H_{1}(\Lag) \rightarrow \Z$.

Along $[C, \infty) \times \LegUp$ and $(-\infty, -C]\times \LegDown$, the Gauss map is $\widetilde{G} = \R_{s} \oplus G$. Therefore $\widetilde{G}$ over the collars of the $\LegUpDown$ computes $\Maslov: H_{1}(\LegUpDown, \Z) \rightarrow \Z$ and $\Maslov(h) = \Maslov(i_{\pm}h)$ for $h \in H_{1}(\LegUpDown)$ by the additivity of $\Maslov$ under direct sum \cite{MS:SymplecticIntro}. This is a restatement of what we wanted to prove.
\end{proof}

In the case $\dim \Leg = 1$, this is one of Chantraine's results \cite{Chantraine:Concordance} on preservation of classical invariants ($\tb ,\rot$) of Legendrian knots under Lagrangian concordance.

\section{Boundary words and their moduli spaces}\label{Sec:BoundaryWords}

In this section we further develop the language of planar diagrams and inscriptions introduced in the introduction. We also incorporate $H_{1}(\Leg)$ and $H_{1}(\Lag)$ data into the story described there. Then the moduli spaces relevant to $\newRSFTA$ are described. To simplify matters, readers may want to ignore $H_{1}$ data upon a first reading.

Throughout $\LagGrouped: \LegGroupedUp \rightarrow \LegGroupedDown$ will be an exact Lagrangian cobordism with a partition $\Partition$ of its connected components inducing partitions $\Partition^{\pm}$ of its ends. We assume that connecting paths and preferred capping paths have been chosen for all pairs of $\Partition^{\pm}$ composable chords on the $\LegUpDown$ so that each word $\word$ has a preferred capping path. Recall that we use $\Lag_{\LegUpDown}$ to denote Lagrangian cylinders over the $\LegUpDown$.

As described in the introduction, we have vector spaces $\orbitVS = \orbitVS(\Leg)$ spanned by the admissible $\Partition$ cyclic words associated to a $\LegGrouped$. We say that
\begin{equation*}
\tensorAlg(\orbitVS), \quad \field[H_{1}(\Leg)]\otimes \tensorAlg(\orbitVS)
\end{equation*}
are the $\newRSFTA$ algebras for $\Leg$ with $\field$ and $\field[H_{1}(\Leg)]$ coefficients, respectively. Either will denoted by $\Algebra = \Algebra(\Leg)$. When working with homology cobordisms, we implicitly identify the $\field[H_{1}(\LegUpDown)]$ using Equation \eqref{Eq:HomologyCobIso}.

\subsection{Boundary words and homological decoration}

A \emph{$H_{1}$ decorated boundary word for $\Lag$} is a sequence
\begin{equation}\label{Eq:ExBoundaryWord}
\boundaryWord = (\chord^{a_{1}}_{i_{1}}\eta_{1}\chord^{a_{2}}_{i_{2}}\cdots\chord^{a_{m}}_{i_{m}}\eta_{m})
\end{equation}
where $a_{i}$ is a $\pm$ sign, each $\chord^{\pm}_{i_{j}}$ is a chord of $\LegUpDown$ (with the $\pm$ signs matching), and each $\eta_{j}$ is an endpoint-relative homotopy class of paths in $\Lag$. We require that at least one of the $a_{i}$ is positive. The endpoints $\eta_{j}$ are constrained as follows: If $a_{j} = +$ ($a_{j} = -$) then $\eta_{j}$ begins at $q^{+}(\chord_{i_{j}})$ (respectively, $q^{-}(\chord_{i_{j}})$). If $a_{j+1} = +$ ($a_{j+1} = -$) then $\eta_{j}$ ends at $q^{-}(\chord_{i_{j + 1}})$ (respectively, $q^{+}(\chord_{i_{j+1}})$). A \emph{boundary word for $\Lag$} is obtained by deleting the $\eta_{j}$.

Unlike $\Partition$ cyclic words of chords, we consider two $\boundaryWord$ to be equivalent if they can be obtained from one another by cyclic rotation. A $H_{1}$ decorated boundary word $\boundaryWord$ has a planar diagram $\planarDiagram\left(\boundaryWord\right) \subset \C$, which is as described in the introduction but with additional decorations of the $\eta_{j}$ along the non-dashed boundary arcs.

When $\Lag = \Lag_{\Leg}$ is a cylinder over a Legendrian and $\chord$ is a chord of $\Leg$, we say that $(\chord^{+}q^{+}\chord^{-}q^{-})$ is the \emph{trivial strip over $\chord$}, where the $q^{\pm}$ are constant paths at the ends of $\chord$.

Maslov numbers of boundary words for $\Lag$ are defined using the notation of the proof of Lemma \ref{Lemma:MaslovInvariance} as follows: Let $\boundaryWord$ be as in Equation \eqref{Eq:ExBoundaryWord}. Each $\eta_{i}$ is lifted to $\LagGras_{\C \times \SympBase}$ by apply the Lagrangian Gauss map and the end of each $\eta_{i}$ is joined to the beginning of the next using a CW rotation along a chord. Applying the determinant, we have a map $\Circle \rightarrow \Circlepi$ whose degree is
\begin{equation*}
\Maslov\left(\boundaryWord\right) \in \Z.
\end{equation*}
We then define the \emph{index of a boundary word} as
\begin{equation*}
\ind\left(\boundaryWord\right) = n-3  + m + \Maslov\left(\boundaryWord\right).
\end{equation*}
Observe that when all of the $a_{i}$ are $+$ signs and the $\eta_{i}$ are preferred capping paths, then $\boundaryWord$ corresponds to a generator $\word \in \orbitVS(\Leg^{+})$ for which
\begin{equation*}
\ind\left(\boundaryWord\right) = |\word|.
\end{equation*}

\subsection{Operators $\mu_{\boundaryWordThicc}$}\label{Sec:MuOperators}

Let $\orbitVS^{\pm}$ be the vector spaces of words associated to the $\LegUpDown$ as described in the introduction. We will now formally define the operators sketched there. To do so we'll need a modified notion of inscription which we'll call \emph{full inscription} requiring consideration of multiple boundary words simultaneously. This is necessary to define cobordism maps and will help us to track markers needed to define our differential $\partial$.

\subsubsection{Disconnected boundary words and full inscriptions}

We use
\begin{equation*}
\boundaryWordThicc = \{ \boundaryWord_{i} \}
\end{equation*}
to denote an unordered, finite collection of boundary words. The planar diagram $\planarDiagram(\boundaryWordThicc)$ is the disjoint union of the $\planarDiagram(\boundaryWord_{i})$ and an inscription $\planarDiagram(\boundaryWordThicc) \subset \planarDiagram(\word)$ is a collection of inscriptions $\planarDiagram(\boundaryWord_{i}) \subset \planarDiagram(\word)$ whose images are disjoint in $\planarDiagram(\word)$.

\begin{defn}
A \emph{full inscription} $\planarDiagram(\boundaryWordThicc) \subset \planarDiagram(\word)$ is an inscription for which every $\chord_{i} \in \word$ appears as a positive puncture of some $\boundaryWord_{j}$.
\end{defn}

In the case $\Lag = \Lag_{\Leg}$, a word $\word$, and a single $\boundaryWord$, an inscription $\planarDiagram\left(\boundaryWord\right) \subset \planarDiagram(\word)$ can be extended to a full inscription by adding trivial strips over each chord in $\word$ which is not a positive chord of $\boundaryWord$. Compare Figures \ref{Fig:Differential} and \ref{Fig:DifferentialFullInscription}.

\begin{figure}[h]
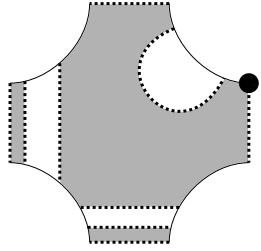

\begin{overpic}[scale=.5]{differential_full_inscription.eps}
\end{overpic}
\caption{The inscription of Figure \ref{Fig:Differential} is extended to a full inscription by adding trivial strips.}
\label{Fig:DifferentialFullInscription}
\end{figure}

\begin{lemma}\label{Lemma:AdmissibleComplement}
Let $\word$ be a word of $\LegUp$ chords. Provided a full inscription $\planarDiagram(\boundaryWordThicc) \subset \planarDiagram(\word)$, the complement $\planarDiagram(\word) \setminus \planarDiagram(\boundaryWordThicc)$ is a collection of planar diagrams $\planarDiagram(\word_{i})$ of admissible words $\word_{i}$ for $\LegDown$.
\end{lemma}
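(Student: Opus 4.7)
The plan is a four-step argument: topological decomposition of the complement, identification of the boundary combinatorics of each complementary polygon, verification of $\Partition^-$-composability, and proof of admissibility.

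First, since $\planarDiagram(\word)$ is a closed topological disk and the inscribed $\planarDiagram(\boundaryWord_j)$'s are finitely many pairwise disjoint closed topological subdisks with controlled boundary behavior on $\partial \planarDiagram(\word)$, the complement $\planarDiagram(\word) \setminus \planarDiagram(\boundaryWordThicc)$ decomposes into finitely many open polygonal components, which I will call $\region$. By Definition \ref{Def:Inscription}, solid arcs of each $\planarDiagram(\boundaryWord_j)$ are contained in solid arcs of $\partial \planarDiagram(\word)$, while by the full inscription hypothesis every dashed arc of $\partial \planarDiagram(\word)$ is covered by a positive-chord arc of some $\planarDiagram(\boundaryWord_j)$. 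Therefore each $\partial \region$ alternates between two types of arcs: interior dashed arcs, which must be negative-chord arcs of $\boundaryWord_j$'s and hence represent chords on $\LegDown$, and solid segments lying inside single solid arcs of $\partial \planarDiagram(\word)$---i.e., subarcs of capping paths on specific $\Partition^+$-components of $\LegUp$. This realizes $\region$ as a planar diagram for a $\Partition^-$-cyclic word $\word_i$ of $\LegDown$ chords.

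Next, I would verify $\Partition^-$-composability of $\word_i$. Let $\sigma$ be a solid segment of $\partial \region$ between two consecutive dashed arcs, and let $\Leg^{+, \Partition}_{j}$ denote the $\Partition^+$-component carrying the capping path whose subarc is $\sigma$. The two inscribed disks adjacent to $\sigma$ both have positive chord endpoints on $\Leg^{+, \Partition}_{j}$. Because each $\boundaryWord_j$ lives in a single connected component of $\Lag$---its $\eta$-paths form a continuous chain in $\Lag$---it is contained in the unique $\Partition$-class of $\Lag$ whose positive end contains $\Leg^{+, \Partition}_{j}$. So both adjacent $\boundaryWord_j$'s lie in the same $\Partition$-class of $\Lag$, whose negative end is a single $\Partition^-$-component of $\LegDown$ containing the two negative chord endpoints adjacent to $\sigma$. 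This is the required $\Partition^-$-composability.

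The admissibility of $\word_i$ is the principal technical step and the main obstacle. The key observation is that admissibility of $\word$ implies each $\Partition^+$-component of $\LegUp$ is touched by exactly one chord junction of $\word$, so distinct solid arcs of $\partial \planarDiagram(\word)$ correspond to distinct $\Partition^+$-components of $\LegUp$ and hence to distinct $\Partition$-classes of $\Lag$. If two chord junctions of $\word_i$ lay on the same $\Partition^-$-component of $\LegDown$, the two associated solid segments of $\partial \region$ would, by the identification above, sit inside the same solid arc of $\partial \planarDiagram(\word)$. The final step is a planarity argument on the disk $\planarDiagram(\word)$: two distinct subarcs of a single boundary arc of $\planarDiagram(\word)$ appearing in $\partial \region$ force the intervening inscribed $\planarDiagram(\boundaryWord_j)$'s to form a bridge separating $\region$ into multiple pieces, contradicting the fact that $\region$ is a single open polygonal disk. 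Carrying out this planarity argument cleanly, and tracking precisely which $\boundaryWord_j$'s occupy which portion of $\partial \planarDiagram(\word)$, is the trickiest part of the proof.
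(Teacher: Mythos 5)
Your overall route is the same as the paper's: identify each component of the complement as the planar diagram of a $\Partition^{-}$-cyclic word whose letters are negative chords of the inscribed disks, then deduce admissibility from the admissibility of $\word$ by matching negative chords to distinct solid arcs of $\partial\planarDiagram(\word)$ and hence to distinct pieces of $\Lag$. However, one intermediate claim in your composability step is false: a boundary word $\boundaryWord_{j}$ does \emph{not} in general live in a single connected component (or even a single $\Partition$-class) of $\Lag$, and its $\eta$-paths do not form a continuous chain in $\Lag$ --- they are interrupted by the chords, which jump between components. The disk $u^{1,2}$ in the Hopf link computation of Section \ref{Sec:HopfLink3d}, whose boundary alternates between the two cylinders $\R_{s}\times\Leg_{1}$ and $\R_{s}\times\Leg_{2}$, is a counterexample. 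The conclusion you want survives by a purely local argument: the corner of $\planarDiagram(\boundaryWord_{j})$ at either end of a solid segment $\sigma\subset A_{k}$ is shared between a negative chord arc and a single $\eta$-path of $\boundaryWord_{j}$ that is inscribed into $A_{k}$; that one path lies in one component of $\Lag$, and the inscription forces its Lagrangian boundary label to be the $\Partition$-class whose positive end contains the $\Leg^{\Partition}_{j_{k}}$ labelling $A_{k}$. You should replace the global claim with this local one.

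For admissibility, your contrapositive-plus-planarity argument (two junctions on the same $\Partition^{-}$-piece force two solid segments of $\partial\region$ inside one solid arc $A_{k}$, which is impossible) is sound, and the nesting obstruction is exactly that any inscribed disk trapped between the two segments would be confined to a region meeting $\partial\planarDiagram(\word)$ only in a subarc of $A_{k}$, so its positive chord could not be glued to any dashed arc --- contradicting the requirement that every boundary word has a positive puncture. You correctly flag this as the crux but leave it unexecuted. The paper organizes the same fact differently and more directly: from the positive end of each negative chord $\chord^{-}_{i}$ of $\partial\region$ it walks along $\partial\planarDiagram(\word)$ to the next positive chord $\chord^{+}_{i}$ of $\word$, and uses admissibility of $\word$ to conclude the $\chord^{+}_{i}$, hence the $\chord^{-}_{i}$, touch pairwise distinct pieces of $\Lag$. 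Either bookkeeping works; the paper's avoids the separation argument at the cost of leaving the injectivity of $\chord^{-}_{i}\mapsto\chord^{+}_{i}$ to the reader, which is the same planarity fact you isolate.
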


\begin{proof}
The full inscription condition ensures that all of the chords in the boundary of $\planarDiagram(\word) \setminus \planarDiagram(\boundaryWordThicc)$ are chords of $\LegDown$. We can view all of the negative chords of $\boundaryWordThicc$ as properly embedded arcs in $\planarDiagram(\word)$.

If some connected component of $\planarDiagram(\word) \setminus \planarDiagram(\boundaryWordThicc)$ is a bigon, then its boundary must consist of a single non-dashed arc corresponding to some piece $\Lag^{\Partition}_{j}$ of $\Lag$ and a single negative chord, $\chord^{-}_{i}$. Because $\chord^{-}_{i}$ connects $\Lag^{\Partition}_{j}$ to itself and $\Partition$ induces the partition $\Partition^{-}$ of $\LegDown$, $\chord^{-}_{i}$ must be $\Partition^{-}$ pure. Therefore our bigon is the planar diagram for $(\chord^{-}_{i}) \in \orbitVS^{-}$.

Now suppose that a connected component of $\planarDiagram(\word) \setminus \planarDiagram(\boundaryWordThicc)$ is a $2m$-gon for $m > 1$. Say that $\chord^{-}_{i}, i=1, \dots, m$ are the corresponding negative chords, ordered counter clockwise. At the positive end of each $\chord^{-}_{i}$, traverse the boundary of $\planarDiagram(\word)$ clockwise until you hit the endpoint of some $\chord^{+}_{i} \in \word$. The $\chord^{+}_{i}$ must be distinct and lie on distinct pieces $\Lag^{\Partition}_{j}$ of $\Lag$. Since the piece of $\Lag$ touched by the positive endpoint of $\chord^{+}_{i}$ is the same as the piece of $L$ that is touched by the positive endpoint of $\chord^{-}_{i}$ and the endpoints of the $\chord^{+}_{i}$ lie on distinct pieces of $\Lag$ (by admissibility), it follows that the $\chord^{-}_{i}$ all end on distinct pieces of $\Lag$. Since $\Partition$ induces the partition $\Partition^{-}$ of $\Leg^{-}$, $\word^{-} = (\chord^{-}_{1}\dots\chord^{-}_{m})$ is an admissible generator of $\orbitVS^{-}$. The connected component of $\planarDiagram(\word) \setminus \planarDiagram(\boundaryWordThicc)$ which we are studying is a planar diagram for this $\word^{-}$.
\end{proof}

\begin{lemma}\label{Lemma:FiltrationPreservation}
Let $\planarDiagram(\boundaryWordThicc) \subset \planarDiagram(\word)$ and let $\word_{i}$ be as in the previous lemma so that $\planarDiagram(\word_{i})$ corresponds to some connected component of $\planarDiagram(\word) \setminus \planarDiagram(\boundaryWordThicc)$. Then $\filtLevel(\word_{i}) \leq \filtLevel(\word)$.
\end{lemma}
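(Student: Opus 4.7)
The plan is to reduce the claim to the injective assignment already constructed inside the proof of Lemma \ref{Lemma:AdmissibleComplement}. Since the filtration level of a word equals its length (the number of chord letters it contains), what I need to show is that the number of negative chord arcs bounding the $2m$-gon component of $\planarDiagram(\word) \setminus \planarDiagram(\boundaryWordThicc)$ representing $\word_i$ is at most the number of positive chord arcs of $\partial \planarDiagram(\word)$, which is $\filtLevel(\word)$.

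First I would enumerate the negative chord arcs on the boundary of the chosen component as $\chord^{-}_1, \dots, \chord^{-}_m$, so that $m = \filtLevel(\word_i)$. Then I would invoke the construction from the proof of Lemma \ref{Lemma:AdmissibleComplement}: starting at $q^{+}(\chord^{-}_j)$ and traveling along the boundary of $\planarDiagram(\word)$ in a fixed direction, one reaches the endpoint of a positive chord arc in $\word$, call it $\chord^{+}_j$. That same proof already verifies that the resulting $\chord^{+}_j$ are pairwise distinct (they even lie on distinct pieces $\Lag^{\Partition}_j$ of $\Lag$), so the assignment $\chord^{-}_j \mapsto \chord^{+}_j$ is an injection from the negative chord arcs of the fixed component into the positive chord arcs of $\word$. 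This injection immediately yields $\filtLevel(\word_i) = m \leq \filtLevel(\word)$.

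The main, and really only, subtlety will be confirming that the traversal is well defined: that starting from $q^{+}(\chord^{-}_j)$ one actually encounters a positive chord arc of $\partial \planarDiagram(\word)$, rather than, for instance, returning to a negative chord arc of the same component. This is ensured by two facts already built into the setup. First, the definition of boundary word requires that each inscribed $\boundaryWord_{j'}$ contain at least one positive chord letter, so the traversal along $\partial \planarDiagram(\boundaryWord_{j'})$ cannot avoid all positive arcs. Second, the full inscription hypothesis identifies every positive chord arc of every $\boundaryWord_{j'}$ with a positive chord arc of $\partial \planarDiagram(\word)$. Together these guarantee that the traversal exits along a positive chord arc of $\word$, so the injection $\chord^{-}_j \mapsto \chord^{+}_j$ is unambiguously defined and the inequality follows.
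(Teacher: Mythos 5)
Your argument is correct, but it takes a genuinely different route from the paper's. You build an injection $\chord^{-}_{j} \mapsto \chord^{+}_{j}$ from the negative chord arcs bounding the fixed component into the chords of $\word$, citing the traversal construction and the distinctness assertion from the proof of Lemma \ref{Lemma:AdmissibleComplement}, and conclude $\filtLevel(\word_{i}) = m \leq \filtLevel(\word)$ by pigeonhole. The paper instead reduces to inscribing a single $\boundaryWord$ at a time and counts: each complementary region then contains exactly one negative chord of $\boundaryWord$ together with some of the positive chords of $\word$ not consumed by the positive punctures of $\boundaryWord$; since $\boundaryWord$ has at least one positive puncture, at most $\filtLevel(\word) - 1$ chords of $\word$ are distributed among the regions, so each output word has length at most $1 + (\filtLevel(\word) - 1) = \filtLevel(\word)$. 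Your version handles all of $\boundaryWordThicc$ simultaneously and avoids the slightly informal one-at-a-time induction, at the cost of leaning on the distinctness of the $\chord^{+}_{j}$, which the previous lemma asserts rather tersely; the paper's version is self-contained for this step but needs the reduction. One small correction to your second paragraph: the well-definedness of the traversal has little to do with each $\boundaryWord_{j'}$ containing a positive chord letter. It holds simply because $q^{+}(\chord^{-}_{j})$ lies on a non-dashed arc of $\partial\planarDiagram(\word)$ (full inscription guarantees the dashed arcs are all covered by inscribed diagrams), and every non-dashed arc terminates, in the boundary orientation, at a chord of $\word$. The ``at least one positive puncture'' condition is what powers the paper's counting argument, not the existence of your injection.
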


\begin{proof}
Because we can insert each of the $\planarDiagram\left(\boundaryWord\right)$ into $\planarDiagram(\word)$ one at a time for $\boundaryWord \in \boundaryWordThicc$, it suffices to work out the case when there is a single $\boundaryWord$. In this case, there is one connected region $\region$ in $\planarDiagram(\word) \setminus \planarDiagram\left(\boundaryWord\right)$ for each negative chord of $\boundaryWord$. There is one negative chord of $\boundaryWord$ and some number of positive chords of $\word$ in $\partial \region$. The result then follows from the fact that $\boundaryWord$ has at least one positive puncture.
\end{proof}

\subsubsection{Markers and orderings of output words}

Now we define an operator
\begin{equation*}
\mu_{\boundaryWordThicc}: \orbitVS^{+} \rightarrow \AlgebraDown
\end{equation*}
for each finite collection $\boundaryWordThicc = \{ \boundaryWord_{i} \}$ of boundary words. Here $\Algebra^{\pm} = \tensorAlg(\orbitVS^{\pm})$ are the algebras associated to the positive and negative ends of a cobordism.

For each $\word$ and full inscription $\planarDiagram(\boundaryWordThicc) \subset \planarDiagram(\word)$, the proof of Lemma \ref{Lemma:AdmissibleComplement} provides us words $\word^{-}_{i}$ as possible outputs for $\mu_{\boundaryWordThicc}(\word)$. However the $\word^{-}_{i}$ are only (so far) defined up to cyclic rotation. To provide a formal description of $\mu_{\boundaryWordThicc}(\word)$, we need to address this ambiguity, as well as how the $\word_{i}^{-}$ should be ordered in a tensor product.

Suppose that we have a full inscription with input $\word = (\chord_{i_{1}}\cdots \chord_{i_{\filtLevel}})$. Index the negative chords $\chord^{-}_{i_{j}} \in \Chords(\LegDown)$ of $\boundaryWordThicc$ (ordering the $j$s) so that when we traverse the boundary of $\planarDiagram(\word)$ counterclockwise starting at the endpoint of $\chord_{i_{1}}$, we encounter the endpoints of the $\chord^{-}_{i_{j}}$ in order.

\begin{defn}\label{Def:FullInscriptionOperator}
For each connected component of $\planarDiagram(\word) \setminus \planarDiagram(\boundaryWordThicc)$ we get a word $\word_{j}^{-} = (\chord^{-}_{i_{1}}\cdots \chord^{-}_{i_{m}})$ by requiring that $i_{1}$ is the least index with respect to the aforementioned ordering. The indices $j$ of the $\word^{-}_{j}$,
\begin{equation*}
\word^{-}_{1} = \left(\chord^{-}_{i_{1,1}}\cdots\right),\dots,\word^{-}_{m} = \left(\chord^{-}_{i_{m,1}}\cdots\right).
\end{equation*}
are determined by the $\chord^{-}_{i_{j, 1}}$ as follows: When we traverse the boundary of $\planarDiagram(\word)$ counterclockwise starting at the endpoint of $\chord_{i_{i}}$, we first encounter the endpoint of $\chord^{-}_{i_{1, 1}}$, then the positive endpoint of $\chord^{-}_{i_{2, 1}}$, and so on.

For each $\boundaryWordThicc$ define
\begin{equation*}
\mu_{\boundaryWordThicc}: \orbitVS^{+} \rightarrow \AlgebraDown, \quad \mu_{\boundaryWordThicc}(\word) = \begin{cases}
\word^{-}_{1}\cdots \word^{-}_{m} & \planarDiagram(\boundaryWordThicc) \subset \planarDiagram(\word) \\
0 & \text{otherwise}.
\end{cases}
\end{equation*}
\end{defn}

The definitions above are easier to see in pictures than in formulas. The reader may want to refer back to Figure \ref{Fig:Differential}, add in trivial strips as shown in Figure \ref{Fig:DifferentialFullInscription}, and ensure that the input and output words for the $\mu_{\boundaryWordThicc}$ are as described in the caption of Figure \ref{Fig:Differential}.

\subsubsection{$H_{1}$ twisted coefficients}\label{Sec:TwistedCoeffs}

When $\Lag: \LegUp \rightarrow \LegDown$ is a homology cobordism (eg. a cylinder $\Lag_{\Leg}$) we associate an element of $H_{1} = \field[H_{1}(\Lag)] \simeq \field[H_{1}(\LegUpDown)]$ to each $H_{1}$ decorated $\boundaryWord =  (\chord^{a_{1}}_{i_{1}}\eta_{1}\chord^{a_{2}}_{i_{2}}\cdots\chord^{a_{m}}_{i_{m}}\eta_{m})$. Each $\eta_{j} \in \boundaryWord$ is an oriented path between starting or ending points of chords. Following preferred basepaths (using forward or backwards orientations depending on the $a_{j}, a_{j+1}$), start at a basepoint, travel to the start of $\eta_{j}$, traverse $\eta_{j}$, then travel back to a basepoint. If necessary, follow a concordance path to obtain a closed loop in $\Lag$. This yields an element $h\left(\boundaryWord\right) \in H_{1}(\Lag)$. Applying exponential notation, write
\begin{equation*}
e^{h(\boundaryWordThicc)} = e^{\sum_{\boundaryWord \in \boundaryWordThicc} h\left(\boundaryWord\right)}= \prod_{\boundaryWord \in \boundaryWordThicc} e^{h\left(\boundaryWord\right)} \in \field[H_{1}(\Leg)].
\end{equation*}

When using $\field[H_{1}(\Leg)]$ coefficients we upgrade the definition of $\mu_{\boundaryWordThicc}$ to
\begin{equation}\label{Eq:H1Operator}
\mu_{\boundaryWordThicc}: \orbitVS^{+} \rightarrow \AlgebraDown, \quad \mu_{\boundaryWordThicc}(\word) = \begin{cases}
e^{h(\boundaryWordThicc)}\word^{-}_{1}\cdots \word^{-}_{m} & \planarDiagram(\boundaryWordThicc) \subset \planarDiagram(\word) \\
0 & \text{otherwise}.
\end{cases}
\end{equation}

\subsection{Moduli spaces}

In this article we will mostly need two types of moduli spaces:
\be
\item Those associated to Lagrangian projections $\pi_{\SympBase}\Leg \subset \SympBase$.
\item Those associated to general Lagrangian cobordisms $\Lag \subset \R_{s} \times M$.
\ee
Moduli spaces associated to $1$-parameter families of Legendrians and Lagrangians will also be considered in the invariance proofs of Appendix \ref{App:Invariance}.

Let $\orderedBoundaryPunc = (\boundaryPunc_{1},\dots, \boundaryPunc_{m})$ be a tuple for which the $\boundaryPunc_{k} \in \partial \disk$ are pairwise distinct, ordered counterclockwise, and have $\orderedBoundaryPunc_{1} = 1$. The moduli space of such $\orderedBoundaryPunc$ has dimension $m - 3$ for $m \geq 3$ and $0$ otherwise. A fixed $\orderedBoundaryPunc$ has automorphism group of dimension $3 - m$ for $m \leq 3$ and otherwise consists only of the identity map. For such $\orderedBoundaryPunc$, write $\holoDom = \disk \setminus \orderedBoundaryPunc$. Let for $k = 1, \dots, m$, let $I_{k} \subset \partial \holoDom$ be the open interval whose closure $\overline{I}_{k}$ has oriented boundary $\partial I_{k} = \boundaryPunc_{k+1} - \boundaryPunc_{k}$.

Let $\boundaryWord$ be a boundary word as described in Equation \eqref{Eq:ExBoundaryWord}. From the data of $\boundaryWord$, we declare that $\boundaryPunc_{k} \in \orderedBoundaryPunc$ is \emph{positive} (negative) if $a_{k} = +$ (respectively, $a_{k} = -$). Likewise, we declare that a $\boundaryPunc_{k} \in \orderedBoundaryPunc$ is pure, mixed, $\Partition$ pure, or $\Partition$ mixed if the corresponding chord $\chord_{i_{k}}\in \boundaryWord$ has the corresponding property. We label each $I_{k} \subset \partial\holoDom$ with a $j_{k}$ index of some $\Lag^{\Partition}_{j_{k}}$ as follows. If $a_{k} = +$ and $\chord_{i_{k}}$ ends on $\Lag^{\Partition}_{j}$, then we set $j_{k} = j$. If $a_{k} = -$ and $\chord_{i_{k}}$ begins on $\Lag^{\Partition}_{j}$, then we set $j_{k} = j$. We say that the $j_{k}$ associated to the $I_{k}$ are \emph{Lagrangian boundary labels}. The paths $\eta_{k}$ associated to each $I_{k}$ is the \emph{relative path label}. Note that these $\eta_{k}$ can not jump across connected components of $\Lambda$ and will not necessarily coincide with preferred capping paths. When $\Lag = \Lag_{\Leg}$ we can view the $\eta_{k}$ as paths in $\Leg$ via the projection $\R_{s} \times \Leg \rightarrow \Leg$.

\begin{defn}
Given a boundary word $\boundaryWord$ for $\Leg$, define $\ModSpace_{\Leg}\left(\boundaryWord\right)$ to be the moduli space of pairs $(\orderedBoundaryPunc, u)$ with $u: \holoDom \rightarrow \SympBase$ satisfying
\be
\item $u$ is holomorphic with respect to $J_{\SympBase}$,
\item $\lim_{z \rightarrow z_{k}}u(z) = \pi_{\SympBase}\chord_{i_{k}}$,
\item $u(I_{k}) \subset \pi_{\SympBase}\Leg$ for all $k$,
\item $u|_{I_{k}}$ is boundary-relative homotopic to $\pi_{\SympBase}\eta_{k}$ for each $k$, and
\item $u$ admits a continuous lift $\widetilde{u}$ to $M$ so that $\pi_{\SympBase}\widetilde{u} = u$
\ee
modulo the relation $(\orderedBoundaryPunc, u) \sim (\orderedBoundaryPunc', u')$ if there is some $\phi \in \Aut(\disk)$ for which $\orderedBoundaryPunc' = \phi\orderedBoundaryPunc, u' = u\phi$.
\end{defn}

The continuous lift condition ensures that $u$ cannot jump across double points along the interiors of the $I_{k}$. We define the \emph{energy} of such a $u$ as $\action(u) = \int_{\holoDom}u^{\ast}d\beta$. It follows from Stokes' theorem that
\begin{equation*}
\action(u) = \sum_{k} a_{k}\action(\chord_{i_{k}}) < \infty.
\end{equation*}

Now we modify the above notation to deal with Lagrangian cobordisms $\Lag: \Leg^{+} \rightarrow \Leg^{-}$. Let $\ChordsUpDown$ be the chords of the $\LegUpDown$.

\begin{defn}
Given a boundary word $\boundaryWord$ for $\Lag$, define $\ModSpace_{\Lag}\left(\boundaryWord\right)$ to be the moduli space of pairs $(\orderedBoundaryPunc, u)$ with $u: \holoDom \rightarrow \R_{s} \times M$ satisfying
\be
\item $u$ is holomorphic with respect to $J$,
\item each $z_{k}$ is positively (negatively) asymptotic to $\chord_{i_{k}}$ if $a_{k} = +$ (respectively, $a_{k} = -$), and
\item $u(I_{k}) \subset \Lag$ for all $k$.
\ee
modulo the relation $(\orderedBoundaryPunc, u) \sim (\orderedBoundaryPunc', u')$ if there is some $\phi \in \Aut(\disk)$ for which $\orderedBoundaryPunc' = \phi\orderedBoundaryPunc, u' = u\phi$.
\end{defn}

For a $\boundaryWordThicc = \{ \boundaryWord_{i}\}$, we set $\ModSpace_{\Lag}(\boundaryWordThicc) = \prod \ModSpace_{\Lag}\left(\boundaryWord\right)$. For more details on convergence to chords near boundary punctures, see \cite{DR:Lifting, Ekholm:Z2RSFT, RS:Strips, EES:LegendriansInR2nPlus1}. For the following, see \cite[\S 3.1]{Ekholm:Z2RSFT}:

\begin{lemma}
The expected dimension of $\ModSpace_{\Lag}(\boundaryWordThicc)$ is $\sum \ind(\boundaryWord_{i})$. If $\word \in \orbitVS^{+}$ bounds a disk $u_{\word}$, then
\begin{equation*}
|\word| = \ind(u_{\word}).
\end{equation*}
\end{lemma}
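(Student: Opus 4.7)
The statement packages two claims: additivity of the expected dimension over the components of $\boundaryWordThicc$, and the equality $|\word|=\ind(u_{\word})$ for generators of $\orbitVS$. My plan is to reduce both to a single index computation for one $\boundaryWord$, and then unpack the definitions of $\Maslov(\boundaryWord)$ and $|\word|$ from Section \ref{Sec:Gradings} so that the standard SFT Fredholm formula delivers the result on the nose.

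First I would observe that $\ModSpace_{\Lag}(\boundaryWordThicc)$ is, by convention, the product $\prod_{i}\ModSpace_{\Lag}(\boundaryWord_{i})$, so additivity of Fredholm indices immediately reduces the first assertion to showing $\expdim \ModSpace_{\Lag}(\boundaryWord)=\ind(\boundaryWord)$ for a single boundary word. For this I would invoke the Fredholm index formula for punctured holomorphic disks with Lagrangian boundary asymptotic to Reeb chords, as worked out in \cite{EES:LegendriansInR2nPlus1, EES:PtimesR} and adapted to the many-positive-puncture setup in \cite[Section 3]{Ekholm:Z2RSFT}. In the contactization setting with $J\partial_{s}=\partial_{t}$, the projection to $\SympBase$ is holomorphic and the linearized $\delbar$ at $u$ decomposes into a trivial factor along $\C_{s,t}$ and a Cauchy--Riemann operator on the pullback bundle on the disk with Lagrangian boundary conditions coming from $\pi_{\SympBase}\Lag$. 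The index then takes the schematic form
\begin{equation*}
\ind(u)=(n-1)\chi(\holoDom)-\dim\Aut(\holoDom,\orderedBoundaryPunc)+\mu(\text{boundary framing})+\sum_{k}\epsilon_{k}\CZ(\chord_{i_{k}}),
\end{equation*}
where $\epsilon_{k}=\pm 1$ according to the sign $a_{k}$, and where the Conley--Zehnder terms are computed with respect to the canonical coordinate systems of Definition \ref{Def:PosNegLagrangianSubspaces}.

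Second, I would translate this formula into the $\ind(\boundaryWord)$ appearing in Section \ref{Sec:Gradings}. The constant $n-3$ is precisely $(n-1)\chi(\disk)-\dim\Aut(\disk)=(n-1)-2$ together with the reparameterization dimension bookkeeping, while the contribution of $+1$ per puncture records the remaining marked-point dimensions once automorphisms are factored out; summed over the $\filtLevel$ boundary punctures this produces the $\filtLevel$ summand. The remaining Maslov and Conley--Zehnder terms are exactly what is computed by the degree of the determinant of the loop obtained from the relative path lifts $\eta_{k}$ joined by CW rotations with angles $\theta_{j}\in(-\pi,0)$: the choice $\theta_{j}<0$ converts the analytic Conley--Zehnder index (with its usual sign) into the degree contribution of a CW rotation, and the signs $\epsilon_{k}$ are absorbed by lifting $\eta_{k}$ via the Gauss map using the orientation conventions of that section. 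Matching these numerics yields $\ind(u)=n-3+\filtLevel+\Maslov(\boundaryWord)=\ind(\boundaryWord)$.

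For the second sentence of the lemma, if $\word\in\orbitVS$ bounds a disk $u_{\word}$ then $\word$ corresponds to the boundary word in which every $a_{k}=+$ and each $\eta_{k}$ is the preferred capping path $\cappingPath(i_{k},i_{k+1})$ from Section \ref{Sec:ChordBasics}. In this case the concatenated loop lifted to $\LagGras_{\SympBase}$ is exactly $\gamma(\word)$ of Equation \eqref{Eq:WordLoop}, so $\Maslov(\boundaryWord)=\Maslov(\word)$ and consequently $\ind(\boundaryWord)=n-3+\filtLevel+\Maslov(\word)=|\word|$. Combined with the first part this gives $|\word|=\ind(u_{\word})$.

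The main obstacle I anticipate is bookkeeping rather than analysis: one has to verify that the normalization $\theta_{j}\in(-\pi,0)$ chosen for the CW rotations, the orientation conventions for preferred base paths and connecting paths, and the sign of the Conley--Zehnder index entering the Fredholm formula all line up so that no spurious constant or sign remains. Once that is done, the statement is a direct arithmetic consequence of the index formulas of \cite{EES:PtimesR, Ekholm:Z2RSFT}; no new analytic input is required, in keeping with the paper's stated reliance on the existing framework.
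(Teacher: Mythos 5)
Your proposal is correct and follows essentially the same route as the paper: the paper offers no independent argument for this lemma but simply points to the Fredholm index computation of \cite[Section 3.1]{Ekholm:Z2RSFT} (with the analytic underpinnings from \cite{EES:LegendriansInR2nPlus1, EES:PtimesR}), which is exactly the formula you unpack, and the identification $\ind(\boundaryWord) = |\word|$ for an all-positive boundary word with preferred capping paths is already recorded in the paper immediately after the definition of $\ind(\boundaryWord)$. Your flagging of the convention-matching (CW rotation angles in $(-\pi,0)$, automorphism and marked-point dimension bookkeeping) as the only remaining work is consistent with the level of detail the paper itself supplies.
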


Our choice of gradings of words is designed so that the degree of the operator $\mu_{\boundaryWordThicc}$ computes the Fredholm index. The following is an immediate consequence of additivity of the Fredholm index under gluing along pairs of positive and negative punctures:

\begin{lemma}\label{Lemma:MuDegree}
The operator $\mu_{\boundaryWordThicc}$ of Equation \eqref{Eq:H1Operator} satisfies $\deg \mu_{\boundaryWordThicc} = \sum \ind(\boundaryWord_{i})$.
\end{lemma}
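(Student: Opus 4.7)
The statement is essentially a bookkeeping consequence of the grading conventions from Section \ref{Sec:Gradings} combined with the topological additivity of the Fredholm index formula $\ind = n - 3 + \ell + \Maslov$. As the paper hints in the sentence just preceding the lemma, the gradings on words have been set up precisely so that $\deg \mu_{\boundaryWordThicc}$ records the expected dimension of the relevant moduli space. My plan is to verify this by applying additivity to the decomposition of the formal disk $\planarDiagram(\word)$ induced by a full inscription.

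Fix $\word \in \orbitVS^{+}$ on which $\mu_{\boundaryWordThicc}$ acts nontrivially, and consider a full inscription $\planarDiagram(\boundaryWordThicc) \subset \planarDiagram(\word)$ with complementary output words $\word_{1}^{-}, \dots, \word_{m}^{-}$ provided by Lemma \ref{Lemma:AdmissibleComplement}. I view $\planarDiagram(\word)$ as obtained by gluing the $\planarDiagram(\boundaryWord_{i})$'s to the $\planarDiagram(\word_{j}^{-})$'s (each regarded as an all-positive boundary word decorated with preferred capping paths) along shared chord arcs; each such arc is a negative puncture of some $\boundaryWord_{i}$ glued to a positive puncture of some $\word_{j}^{-}$. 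The Euler characteristic of $\planarDiagram(\word)$ and of each constituent piece is $1$; the puncture counts add up correctly, since each shared arc contributes to $\ell(\boundaryWord_{i})$ and to $\ell(\word_{j}^{-})$ but not to $\ell(\word)$; and along each shared chord the two abutting CW rotations (one contributed from the $\boundaryWord_{i}$ side, one from the $\word_{j}^{-}$ side) traverse the chord in opposite senses and cancel in the total Maslov loop. Collecting these topological facts gives the identity
\begin{equation*}
\ind(\planarDiagram(\word)) \;=\; \sum_{i} \ind(\boundaryWord_{i}) \;+\; \sum_{j} \ind(\planarDiagram(\word_{j}^{-})).
\end{equation*}
By the preceding lemma, viewing $\word$ and each $\word_{j}^{-}$ as all-positive boundary words with preferred capping paths identifies the left side with $|\word|$ and each $\ind(\planarDiagram(\word_{j}^{-}))$ with $|\word_{j}^{-}|$.

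The Maslov loops used to compute $|\word|$ and the $|\word_{j}^{-}|$ employ preferred capping paths, whereas the Maslov loop of the glued configuration follows the relative path labels $\eta_{k}$ of the $\boundaryWord_{i}$'s along their non-chord boundary arcs. The discrepancy between these two loops is, by construction in Section \ref{Sec:TwistedCoeffs}, precisely the class $h(\boundaryWordThicc) \in H_{1}(\Lag)$, whose grading $|h(\boundaryWordThicc)| = \Maslov(h(\boundaryWordThicc))$ appears in $|\mu_{\boundaryWordThicc}(\word)| = |h(\boundaryWordThicc)| + \sum_{j} |\word_{j}^{-}|$. Rearranging yields $|\mu_{\boundaryWordThicc}(\word)| - |\word| = \sum_{i} \ind(\boundaryWord_{i})$, which is manifestly independent of $\word$, so $\mu_{\boundaryWordThicc}$ has a well-defined degree equal to $\sum_{i} \ind(\boundaryWord_{i})$. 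The main obstacle is not conceptual but notational: the cancellation of opposite CW rotations along each shared chord and the matching of the path-label discrepancy with $h(\boundaryWordThicc)$ must be verified against the precise conventions fixed in Sections \ref{Sec:Gradings} and \ref{Sec:TwistedCoeffs}.
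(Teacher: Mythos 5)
Your argument is essentially the paper's own proof: the paper disposes of this lemma in one line, citing additivity of the Fredholm index under gluing along matched positive/negative punctures, which is exactly the content of your displayed identity $\ind(\planarDiagram(\word)) = \sum_i \ind(\boundaryWord_i) + \sum_j \ind(\planarDiagram(\word_j^-))$ together with the identification $\ind = |\cdot|$ for all-positive words and the bookkeeping of the $H_{1}$ decoration. One small internal inconsistency: your final line $|\mu_{\boundaryWordThicc}(\word)| - |\word| = \sum_i \ind(\boundaryWord_i)$ has the wrong sign relative to your own displayed identity (which gives $|\mu_{\boundaryWordThicc}(\word)| - |\word| = -\sum_i \ind(\boundaryWord_i)$); the latter is what is needed for $\partial$, built from $\ind(\boundaryWordThicc)=1$ configurations, to have degree $-1$ as the paper asserts, so ``$\deg$'' in the lemma should be read as the drop in grading.
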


The moduli space $\ModSpace_{\Lag_{\Leg}}\left(\boundaryWord\right)$ admits an $\R_{s}$ action by shifting in the symplectization coordinate. The action is free away from trivial strips whose corresponding $\ModSpace_{\Leg}\left(\boundaryWord\right)$ elements are constant maps to $\pi_{\SympBase}\chord_{i}$.

\begin{lemma}\label{Lemma:Lifting}
The map $u \mapsto \pi_{\SympBase}u$ induces a homeomorphism
\begin{equation*}
\ModSpace_{\Lag_{\Leg}}\left(\boundaryWord\right)/\R_{s} \rightarrow \ModSpace_{\Leg}\left(\boundaryWord\right).
\end{equation*}
\end{lemma}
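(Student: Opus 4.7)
The plan is to construct both the forward projection and an inverse lift, then verify continuity in the natural topology, leveraging the fact that the almost complex structure $J$ on the symplectization is built so that $\pi_{\SympBase}$ is $(J, J_{\SympBase})$-holomorphic.

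\textbf{Forward direction.} Given $u \in \ModSpace_{\Lag_{\Leg}}(\boundaryWord)$, let $\bar{u} = \pi_{\SympBase} u: \holoDom \to \SympBase$. Because $\pi_{\SympBase}$ is holomorphic, $\bar{u}$ is $J_{\SympBase}$-holomorphic; the boundary condition $u(I_k) \subset \R_s \times \Leg$ pushes forward to $\bar{u}(I_k) \subset \pi_{\SympBase}\Leg$; asymptotic convergence of $u$ to $\chord_{i_k}$ projects to convergence of $\bar{u}$ to the double point $\pi_{\SympBase}(\chord_{i_k})$; and the required continuous lift for $\ModSpace_{\Leg}(\boundaryWord)$ is supplied by the $M$-component of $u$. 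The $\R_s$-shift on the symplectization fixes $\bar{u}$, so the assignment descends to the quotient, and continuity is clear since $\pi_{\SympBase}$ is smooth.

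\textbf{Constructing the inverse.} Given $\bar{u} \in \ModSpace_{\Leg}(\boundaryWord)$ with continuous lift $\tilde{u} = (t_0, \bar{u}): \holoDom \to M$, the Legendrian condition $(dt + \beta)|_{T\Leg} = 0$ combined with $\tilde{u}(I_k) \subset \Leg$ gives $d(t_0|_{I_k}) = -(\bar{u}|_{I_k})^{*}\beta$. The plan is to promote $\tilde{u}$ to a $J$-holomorphic map $u = (s, t, \bar{u}): \holoDom \to \R_s \times M$ by supplying the $s$-coordinate. Since $J|_{T\SympBase} = J_{\SympBase}$ and $J\partial_s = \partial_t$, and $\bar{u}$ is already $J_{\SympBase}$-holomorphic, $J$-holomorphicity of $u$ is equivalent to $s + it: \holoDom \to \C$ being holomorphic. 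I therefore solve the Dirichlet problem to extend $t_0|_{\partial\holoDom}$ to a function $t: \holoDom \to \R$ harmonic in the interior, and then define $s$ by $ds = -{*}dt$. The additive constant of integration for $s$ is a global real parameter, matching precisely the $\R_s$-quotient on the target.

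\textbf{Verifications and main obstacle.} With $u = (s, t, \bar{u})$ assembled as above, the Cauchy-Riemann equations for $s + it$ together with holomorphicity of $\bar{u}$ give $J$-holomorphicity of $u$, and the boundary identity $t|_{\partial\holoDom} = t_0|_{\partial\holoDom}$ forces $u(I_k) \subset \R_s \times \Leg$. The delicate point is the asymptotic behavior at each puncture $z_k$: the boundary values $t_0$ jump across $z_k$ by $\pm\action(\chord_{i_k})$, which via the Cauchy-Riemann equations forces $s$ to diverge to $\pm\infty$ at $z_k$, and one must check that the divergence rate and angular wrapping of $t$ precisely reproduce a half-infinite cylinder over $\chord_{i_k}$ rather than some other singular behavior. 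This is the main obstacle, and it is handled by passing to the explicit half-strip local model near each puncture in which the $(s,t)$-lift is tautological and patching that model to the interior Dirichlet extension using elliptic regularity. Continuity of the inverse in the $\mathcal{C}^{\infty}_{\mathrm{loc}}$ topology follows from continuous dependence of Dirichlet solutions and their harmonic conjugates on their boundary data, together with the standard puncture analysis of \cite{DR:Lifting, EES:LegendriansInR2nPlus1, Ekholm:Z2RSFT}; bijectivity of the projection-and-lift pair is then manifest, giving the claimed homeomorphism.
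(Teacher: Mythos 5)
Your argument is correct and is precisely the argument the paper relies on: the paper's proof of this lemma consists of citing \cite[Theorem 2.1]{DR:Lifting} (and \cite[Theorem 7.7]{ENS:Orientations} for $\SympBase = \C$), and your construction --- projecting for the forward map, then recovering the $\C_{s,t}$-component of the lift by harmonically extending the boundary values of $t$ forced by the Legendrian condition $dt|_{I_k} = -(\bar{u}|_{I_k})^{*}\beta$, taking the harmonic conjugate for $s$ (with the free additive constant accounting for the $\R_{s}$-quotient), and controlling the puncture asymptotics via the half-strip model --- is exactly how those references proceed. No gaps.
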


The proof is identical to \cite[Theorem 2.1]{DR:Lifting}. See also \cite[Theorem 7.7]{ENS:Orientations} for the case $\SympBase = \C$. We have the immediate corollary:

\begin{lemma}\label{Lemma:SympBaseIndex}
The expected dimension of $\ModSpace_{\Leg}(\boundaryWordThicc)$ is the sum of $\ind(\boundaryWord_{i}) - 1$ over all $\boundaryWord_{i} \in \boundaryWordThicc$ which are not trivial strips.
\end{lemma}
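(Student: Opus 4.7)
The plan is to treat this as a corollary of Lemma \ref{Lemma:Lifting} combined with the dimension formula for $\ModSpace_{\Lag_{\Leg}}(\boundaryWordThicc)$ stated in the preceding unnumbered lemma (that $\expdim \ModSpace_{\Lag}(\boundaryWordThicc) = \sum_i \ind(\boundaryWord_i)$), applied to each $\boundaryWord_i \in \boundaryWordThicc$ separately. Since the boundary words in $\boundaryWordThicc$ are independent pieces of holomorphic data, $\ModSpace_{\Leg}(\boundaryWordThicc)$ decomposes as the product $\prod_i \ModSpace_{\Leg}(\boundaryWord_i)$, so the expected dimension is additive and I can handle each factor individually.

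First, for a $\boundaryWord_i$ which is \emph{not} a trivial strip, I would apply Lemma \ref{Lemma:Lifting} to get the homeomorphism $\ModSpace_{\Lag_{\Leg}}(\boundaryWord_i)/\R_s \cong \ModSpace_{\Leg}(\boundaryWord_i)$. The paragraph immediately preceding Lemma \ref{Lemma:Lifting} asserts that the $\R_s$ action on the numerator is free away from trivial strips. Combined with $\expdim \ModSpace_{\Lag_{\Leg}}(\boundaryWord_i) = \ind(\boundaryWord_i)$, this yields
\begin{equation*}
\expdim \ModSpace_{\Leg}(\boundaryWord_i) \;=\; \ind(\boundaryWord_i) - 1.
\end{equation*}

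Second, for a trivial strip $\boundaryWord_i = (\chord^{+} q^{+}(\chord)\, \chord^{-} q^{-}(\chord))$, the same paragraph already observes that $\ModSpace_{\Leg}(\boundaryWord_i)$ consists only of the constant map to the double point $\pi_{\SympBase}(\chord)$ by action considerations, so it is $0$-dimensional and contributes nothing to the sum. Assembling the factors gives
\begin{equation*}
\expdim \ModSpace_{\Leg}(\boundaryWordThicc) \;=\; \sum_{\boundaryWord_i \text{ not a trivial strip}} \bigl(\ind(\boundaryWord_i) - 1\bigr),
\end{equation*}
which is the claimed formula.

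The hard part is essentially nonexistent: the analytic content (the lifting homeomorphism) is inherited from Lemma \ref{Lemma:Lifting}, itself proved as in \cite{DR:Lifting}, and the Fredholm index computation in the symplectization is standard and already quoted. The only thing to be careful about is that trivial strips are precisely the fixed locus of the $\R_s$ action and must be excluded from the naive ``dimension minus one'' count, which is exactly what the statement records.
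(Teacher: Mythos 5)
Your proof is correct and follows exactly the route the paper intends: the paper states this lemma as an ``immediate corollary'' of Lemma \ref{Lemma:Lifting} together with the index formula for $\ModSpace_{\Lag}(\boundaryWordThicc)$, with the free $\R_{s}$ action accounting for the $-1$ on each non-trivial-strip factor and trivial strips contributing a rigid point. Your write-up just makes explicit the product decomposition and the trivial-strip bookkeeping that the paper leaves implicit.
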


\begin{notation}
For $u \in \ModSpace_{\Leg}(\boundaryWordThicc)$ we use $\ind(u)$ for the index of the corresponding element of $\ModSpace_{\Lag_{\Leg}}(\boundaryWordThicc)$.
\end{notation}

\subsection{Admissibility of disks}

A \emph{splitting arc} in $\holoDom$ is a compact, embedded, oriented arc $\splittingArc \subset \holoDom$ with boundary on $\partial \holoDom$ with the interior of $\splittingArc$ contained in $\B$. Splitting arcs are oriented as follows: $\holoDom \setminus \splittingArc$ has two connected components $D_{0}$ and $D_{1}$ with $1 \in \partial D_{1}$. We declare that the orientation of $\splittingArc$ agrees with the boundary orientation of the closure $\overline{D_{0}}$ of $D_{0}$.

The following definition and lemma are from \cite{Ekholm:Z2RSFT}.

\begin{defn}\label{Def:AdmissibleDisk}
We say that $\boundaryWord$ is an \emph{admissible boundary word} if for every splitting arc $\splittingArc$ for which the Lagrangian boundary labels $\Lag^{\Partition}_{j}$ at the endpoints of $\splittingArc$ are the same, one component of the complement of $\splittingArc$ has either
\be
\item no boundary punctures or
\item only negative, $\Partition$ pure chords.
\ee
A holomorphic disk $u \in \ModSpace_{\Leg}\left(\boundaryWord\right)$ or $u \in \ModSpace_{\Lag}\left(\boundaryWord\right)$ is \emph{admissible} if $\boundaryWord$ is an admissible boundary word.
\end{defn}

\begin{lemma}\label{Lemma:NoPurePosForAdmissible}
Suppose that $\boundaryWord$ is an admissible boundary word. If $\boundaryWord$ has a $\Partition$ pure positive boundary puncture, then all of its other boundary punctures must be $\Partition$ pure negative punctures. A holomorphic map $u \in \ModSpace_{\Lag}\left(\boundaryWord\right)$ is somewhere injective.
\end{lemma}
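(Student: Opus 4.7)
For part (1), the plan is to construct a splitting arc isolating the hypothesized pure positive puncture and read off the admissibility condition. Suppose $\boundaryPunc_k$ is a $\Partition$ pure positive puncture, so $\chord_{i_k}$ begins and ends on the same piece $\Lag^{\Partition}_j$; by the labeling convention introduced before Definition \ref{Def:AdmissibleDisk}, both adjacent boundary arcs $I_{k-1}$ and $I_k$ then carry the Lagrangian label $j$. Choose a splitting arc $\splittingArc$ from an interior point of $I_{k-1}$ to an interior point of $I_k$ that bounds, together with a short arc of $\partial\holoDom$, a small neighborhood of $\boundaryPunc_k$ in $\holoDom$. Since both endpoints of $\splittingArc$ carry the label $j$, admissibility of $\boundaryWord$ applies to $\splittingArc$. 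The component containing $\boundaryPunc_k$ has exactly one boundary puncture, which is positive, so neither alternative in Definition \ref{Def:AdmissibleDisk} holds on that side. Consequently the opposite component, which contains every remaining puncture of $\boundaryWord$, must satisfy one of the alternatives, giving either that $\boundaryWord$ has no further punctures or that all remaining punctures are $\Partition$ pure negative chords. Either way the conclusion of (1) follows.

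For part (2), the plan is the standard branched-cover argument for pseudoholomorphic maps. Assume for contradiction that $u$ is not somewhere injective. By the structure theory of $J$-holomorphic maps with Lagrangian boundary and non-degenerate Reeb chord asymptotics (cf. \cite{DR:Lifting, EES:LegendriansInR2nPlus1, Ekholm:Z2RSFT}), $u$ factors as $u = \tilde{u}\circ\phi$, where $\phi:\holoDom\to\tilde{\holoDom}$ is a holomorphic branched covering of degree $k\geq 2$ between punctured disks and $\tilde{u}$ is somewhere injective. Interior branch points of $\phi$ are allowed, but the asymptotic condition at each puncture of $u$ forces $\phi$ to be a local diffeomorphism there, and similarly $\phi$ is unramified along $\partial\holoDom$. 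Hence the boundary map $\partial\phi$ is the standard unramified $k$-fold cover $S^1\to S^1$, and each boundary arc $I\subset\partial\tilde{\holoDom}$ pulls back to exactly $k$ boundary arcs of $\partial\holoDom$ sharing the Lagrangian label of $I$.

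To get a contradiction, fix such an $I$ and let $I_1,I_2\subset\partial\holoDom$ be two cyclically adjacent preimages. Choose a splitting arc $\splittingArc$ joining an interior point of $I_1$ to an interior point of $I_2$; its endpoint labels agree, so admissibility applies. The boundary segment of $\partial\holoDom$ from $I_1$ to $I_2$ traces out a single fundamental domain of the cyclic boundary cover, so one component of $\holoDom\setminus\splittingArc$ contains one preimage of every boundary arc and puncture of $\tilde{\holoDom}$, while the opposite component contains the remaining $k-1\geq 1$ preimages. Since $\boundaryWord$ has at least one positive puncture, so does $\tilde{u}$, and therefore each side of $\splittingArc$ contains at least one positive puncture of $u$. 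A positive puncture is not a negative $\Partition$ pure chord, so neither side satisfies either alternative of Definition \ref{Def:AdmissibleDisk}, contradicting admissibility of $\boundaryWord$. Thus $u$ is somewhere injective.

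The only technically subtle point is the initial factorization $u = \tilde{u}\circ\phi$; this is the standard output of the theory of non-somewhere-injective pseudoholomorphic curves with Lagrangian boundary and chord asymptotics in the cited works, so I would appeal to that literature rather than reproving it. Once the factorization is in hand, the splitting-arc argument is elementary and essentially combinatorial.
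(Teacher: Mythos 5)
Your proof of the first assertion is correct and is the expected argument: since the chord at a $\Partition$ pure positive puncture begins and ends on the same $\Lag^{\Partition}_{j}$, both adjacent boundary arcs carry the label $j$, and a splitting arc cutting off a neighborhood of that puncture isolates a component with a single positive puncture, which satisfies neither alternative of Definition \ref{Def:AdmissibleDisk}; the opposite component must therefore satisfy one of them, which is exactly the claim. (The paper itself offers no proof here, deferring to \cite{Ekholm:Z2RSFT}.)

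The second assertion has a genuine gap, and it sits precisely at the step you flag as the ``only technically subtle point.'' The global factorization $u = \tilde{u}\circ\phi$ of a non-somewhere-injective curve through a branched cover of a somewhere injective curve is a theorem for closed curves and for punctured curves \emph{without} boundary, but it is known to \emph{fail} for $J$-holomorphic disks with Lagrangian boundary: the work of Lazzarini and of Kwon--Oh shows that a nowhere injective disk in general only admits a decomposition of its domain into subdomains, each multiply covering a simple disk, with no global factorization (for instance, a disk can cover one portion of its image twice and another portion once, the sheets merging along the Lagrangian). None of the references you cite establishes the factorization in the punctured-boundary setting, so it cannot simply be outsourced. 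The standard way to close this, and in effect how \cite{Ekholm:Z2RSFT} argues, avoids the factorization altogether: first note that admissibility forbids two positive punctures asymptotic to the same chord $\chord$ (if $\chord$ is $\Partition$ mixed, the two boundary arcs terminating at the two punctures both approach $q^{-}(\chord)$ and hence carry the same Lagrangian label, so a splitting arc joining them leaves a positive puncture on each side; if $\chord$ is $\Partition$ pure, your part (1) already shows the word has a unique positive puncture). Then the exponential convergence of $u$ to the trivial strip over $\chord$ at its unique positive puncture, combined with the fact that for $C$ large only neighborhoods of positive punctures map into $\{s > C\}$, shows a neighborhood of that end is embedded and disjoint from the image of the rest of the domain, so $u$ is somewhere injective. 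Your splitting-arc analysis of a hypothetical $k$-fold cover is sound \emph{given} a factorization, but the factorization itself is the missing ingredient.
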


\begin{lemma}\label{Lemma:AdmissabilityEquiv}
Let $\boundaryWord$ be a boundary word with all $a_{k}$ positive. Then the word of chords $\boundaryWord$ is admissible if and only if it is an admissible boundary word.
\end{lemma}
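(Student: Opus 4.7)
The plan is to unwind both notions of admissibility in the all-positive setting and show they match directly; the argument is essentially combinatorial, because with $a_k = +$ for all $k$ there are no negative chords available, so clause (2) in Definition \ref{Def:AdmissibleDisk} degenerates into clause (1). Thus, once I make that observation, admissibility of $\boundaryWord$ as a boundary word reduces to the following condition ($\ast$): \emph{for every splitting arc $\splittingArc$ whose two endpoints lie on arcs $I_{k_1}, I_{k_2}$ with $j_{k_1} = j_{k_2}$, one component of $\holoDom \setminus \splittingArc$ contains no boundary punctures}. The target is to show ($\ast$) is equivalent to pairwise distinctness of the labels $j_1, \dots, j_\filtLevel$.

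For the forward direction, assume $\boundaryWord$ is admissible as a cyclic word of chords, i.e. the $j_k$ are pairwise distinct. Let $\splittingArc$ be any splitting arc with endpoints on $I_{k_1}$ and $I_{k_2}$ and matching Lagrangian boundary labels $j_{k_1} = j_{k_2}$. Since the $j_k$ are pairwise distinct, we must have $k_1 = k_2$, so both endpoints of $\splittingArc$ lie on a single arc $I_{k_1}$. Then $\splittingArc$ together with a subarc of $I_{k_1}$ bounds a component of $\holoDom \setminus \splittingArc$ whose boundary meets no puncture $\boundaryPunc_j$, verifying ($\ast$).

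For the reverse direction, suppose $\boundaryWord$ is admissible as a boundary word but, toward a contradiction, $j_{k_1} = j_{k_2}$ for some $k_1 \neq k_2$. Choose any embedded arc $\splittingArc \subset \holoDom$ with interior in $\B$ joining an interior point of $I_{k_1}$ to an interior point of $I_{k_2}$; this is a splitting arc with matching endpoint labels. Taking $k_1 < k_2$ cyclically, the two components of $\holoDom \setminus \splittingArc$ contain the puncture sets $\{\boundaryPunc_{k_1+1}, \dots, \boundaryPunc_{k_2}\}$ and $\{\boundaryPunc_{k_2+1}, \dots, \boundaryPunc_{k_1}\}$ respectively, both non-empty because $k_1 \neq k_2 \pmod{\filtLevel}$. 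Since all punctures are positive, neither component satisfies clause (1) or clause (2), contradicting ($\ast$). Hence the $j_k$ must be pairwise distinct.

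There is no real obstacle here: the only point requiring a small check is that a splitting arc with prescribed endpoint arcs $I_{k_1}, I_{k_2}$ actually exists in $\holoDom$, which is immediate from the disk topology together with the stipulation $\splittingArc \setminus \partial \splittingArc \subset \B$. The argument does not require any holomorphic input, only the combinatorics of the labels $j_k$ around $\partial\planarDiagram(\boundaryWord)$.
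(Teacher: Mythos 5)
Your proof is correct and follows essentially the same route as the paper's: boundary-parallel arcs in the forward direction, and an arc joining two same-labeled boundary arcs to trap positive punctures on both sides in the reverse direction. The only difference is cosmetic — the paper splits the reverse direction into cases according to whether one of the offending chords is $\Partition$ pure or both are $\Partition$ mixed, whereas your observation that non-admissibility of the cyclic word is exactly the coincidence $j_{k_1}=j_{k_2}$ of two Lagrangian boundary labels handles both cases uniformly.
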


\begin{proof}
Suppose $\vec{a} \subset \planarDiagram(\word)$ is a splitting arc with both endpoints on the same $\Lag^{\Partition}_{j}$. If $\boundaryWord$ is an admissible $\Partition$ cyclic word, then $\vec{a}$ must be boundary parallel. Therefore $\boundaryWord$ is an admissible boundary word.

If $\boundaryWord$ -- as a word of chords -- is not admissible, then two distinct chords in $\boundaryWord$ end on the same $\Lag^{\Partition}_{j}$. If one of the two chords is $\Partition$ pure, make a $\vec{a} \subset \planarDiagram\left(\boundaryWord\right)$ which is parallel to this chord. Then both components of $\planarDiagram\left(\boundaryWord\right)$ will have positive chords. If both chords are $\Partition$ mixed, we get two non-dashed boundary arcs for $\planarDiagram\left(\boundaryWord\right)$ with the same Lagrangian label and we can choose an arc $\vec{a}$ connecting them to split $\planarDiagram\left(\boundaryWord\right)$ into two components, each containing a positive chord. Thus $\boundaryWord$ is not admissible as a boundary word.
\end{proof}

\begin{lemma}\label{Lemma:InscriptionAdmiss}
Let $\word$ be admissible $\Partition$ cyclic word for $\Leg^{+}$ and let $\boundaryWordThicc$ be a collection of boundary words for $\Lag$. If there exists an inscription $\planarDiagram(\boundaryWordThicc) \subset \planarDiagram(\word)$, then each $\boundaryWord_{i} \in \boundaryWordThicc$ is admissible in the sense of Definition \ref{Def:AdmissibleDisk}.
\end{lemma}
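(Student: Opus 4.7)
My plan is to exploit the admissibility of $\word$ to force any splitting arc of $\planarDiagram(\boundaryWord_{i})$ with equal endpoint labels to cut out a sub-disk of $\planarDiagram(\word)$ whose intersection with $\partial\planarDiagram(\word)$ lies inside a single $\Leg^{\Partition}_{j}$-labeled non-chord arc. Fix $\boundaryWord_{i} \in \boundaryWordThicc$ and let $\splittingArc \subset \planarDiagram(\boundaryWord_{i})$ be a splitting arc whose two endpoints carry the same Lagrangian label $\Leg^{\Partition}_{j}$. Via the inscription I would regard $\splittingArc$ as an embedded arc in $\planarDiagram(\word)$ with both endpoints on non-chord (solid) boundary arcs of $\planarDiagram(\word)$, each labeled $\Leg^{\Partition}_{j}$. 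Because $\word$ is admissible, distinct non-chord boundary arcs of $\planarDiagram(\word)$ carry distinct Lagrangian labels, so both endpoints of $\splittingArc$ must in fact lie on the \emph{same} non-chord arc of $\planarDiagram(\word)$. Consequently $\splittingArc$ together with a sub-arc $s$ of that non-chord arc bounds a closed sub-disk $R \subset \planarDiagram(\word)$ with $R \cap \partial\planarDiagram(\word) = s$.

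Setting $R_{i} := R \cap \planarDiagram(\boundaryWord_{i})$ -- one of the two components of $\planarDiagram(\boundaryWord_{i}) \setminus \splittingArc$ -- I would show that $R_{i}$ contains only negative $\Partition$ pure chord edges of $\boundaryWord_{i}$ (possibly none), which suffices to verify Definition \ref{Def:AdmissibleDisk} for $\boundaryWord_{i}$. No positive chord edge of $\boundaryWord_{i}$ can lie in $R_{i}$, since such an edge inscribes onto a dashed boundary arc of $\planarDiagram(\word)$ whereas $R \cap \partial\planarDiagram(\word) = s$ is a solid arc. Moreover every solid edge of $\boundaryWord_{i}$ contained in $R_{i}$ must inscribe into $s$, because every solid edge of $\boundaryWord_{i}$ already lies on $\partial\planarDiagram(\word)$ by definition of inscription; in particular every such solid edge carries the Lagrangian label $\Leg^{\Partition}_{j}$.

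The previous step then yields $\Partition$ purity for free: any negative chord edge of $\boundaryWord_{i}$ inside $R_{i}$ has its two endpoints adjacent to solid edges of $\boundaryWord_{i}$ lying in $R_{i}$, and both of these solid edges carry the label $\Leg^{\Partition}_{j}$ by the preceding paragraph. Hence the chord is $\Partition$ pure, completing the verification. I expect the main obstacle to be purely bookkeeping -- correctly tracking the alternation of dashed (chord) and solid (non-chord) boundary arcs of $\planarDiagram(\boundaryWord_{i})$ and $\planarDiagram(\word)$ and how they interact under the inscription; the only geometric input, admissibility of $\word$, is deployed exactly once to force the two endpoints of $\splittingArc$ to lie on a common non-chord arc of $\planarDiagram(\word)$. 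Note in particular that fullness of the inscription is not needed for this argument.
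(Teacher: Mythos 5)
Your proposal is correct and follows essentially the same route as the paper: use the inscription to view the splitting arc inside $\planarDiagram(\word)$, invoke admissibility of $\word$ to cut off a sub-disk meeting $\partial\planarDiagram(\word)$ only in a single $\Leg^{\Partition}_{j}$-labeled solid arc, and conclude that the corresponding component of the complement of the splitting arc in $\planarDiagram(\boundaryWord_{i})$ contains only $\Partition$ pure negative chords. Your write-up simply spells out the label-matching and dashed/solid bookkeeping that the paper leaves implicit.
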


\begin{proof}
Let $\vec{a}$ be a splitting arc in some $\boundaryWord_{i}$ with both of its endpoints associated to the same piece $\Lag^{\Partition}_{j}$ of $\Lag^{\Partition}$. Using the inscription we can view $a$ as being contained in $\planarDiagram(\word)$. By the fact that $\word$ is admissible, there is a bigon $b \subset \planarDiagram(\word)$ whose (unoriented) boundary is the union of $a$ with an in arc in $\Lag_{j}^{\Partition}$ connecting the endpoints of $a$. Then $b \cap \planarDiagram(\boundaryWord_{i})$ gives us one of the connected components of $\word \setminus a$. This subset can only contain $\Partition$ pure negative chords both of whose endpoints live in $\Lag^{\Partition}_{j}$. Therefore $\boundaryWord$ is admissible.
\end{proof}

\subsection{Bubble trees and compactness}

We use language similar to that of \cite{BH:ContactDefinition, Pardon:Contact} to describe gluing operations on boundary words.

\begin{defn}
A \emph{bubble tree of boundary words for $\Lag$} is a connected, directed acyclic graph
\begin{equation*}
\tree = (\{\vertex_{i}\}, \{\edge_{j}\})
\end{equation*}
with the following labeling data and conditions:
\be
\item An assignment $\Lag(\vertex_{i}) \in \{\Lag, \Lag_{\LegUp}, \Lag_{\LegDown}\}$ to each vertex $\vertex_{i}$.
\item An assignment of a boundary word $\boundaryWord(\vertex_{i})$ for $\Lag(\vertex_{i})$ to each vertex.
\item A chord $\chord(\edge_{j})$ is assigned to each edge $\edge_{j}: \vertex_{j^{-}} \rightarrow \vertex_{j^{+}}$ which is a positive chord of $\boundaryWord(\vertex_{j^{-}})$ and a negative chord of $\boundaryWord(\vertex_{j^{+}})$.
\item For all $\boundaryWord(\vertex_{i})$, each $\chord_{j} \in \boundaryWord(\vertex_{i})$ is associated to at most one edge.
\item If $\chord_{i_{k}} \in \boundaryWord(\vertex_{i})$ is not touched by any edge, then it is either a positive chord of $\Leg^{+}$ or a negative chord of $\Leg^{-}$.
\ee
When $\Leg = \Leg^{-} = \Leg^{+}$, we simply say that $\tree$ is a bubble tree for $\Leg$.
\end{defn}

To simplify notation, when $\Lag$ is understood we'll simply call a bubble tree of boundary words for $\Lag$ a \emph{bubble tree}. We define and draw a \emph{planar diagram of a bubble trees of boundary words $\tree$} by adjoining the planar diagrams of the $\boundaryWord(\vertex_{i})$ in $\C$ according to the matchings determined by the $\edge_{j}$. See Figure \ref{Fig:BubbleTree}. We denote the planar diagram $\planarDiagram(\tree)$. Clearly all of the data of $\tree$ can be recovered from the picture $\planarDiagram(\tree)$.

\begin{figure}[h]
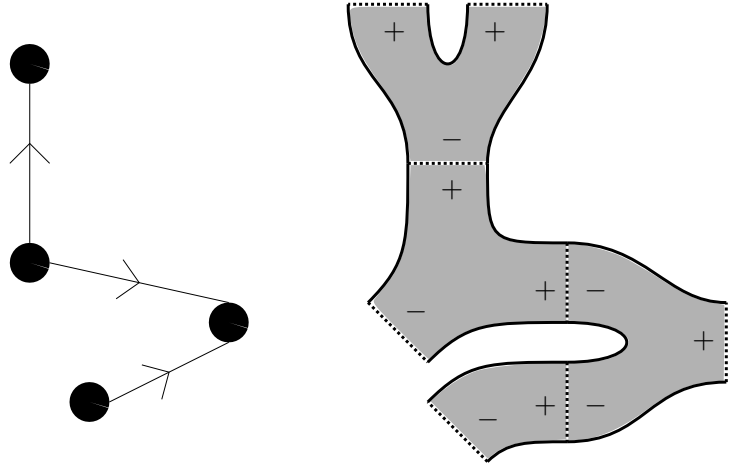

\begin{overpic}[scale=.5]{bubble_tree.eps}
\put(52, 59){$+$}
\put(66, 59){$+$}
\put(60, 44){$-$}
\put(60, 37){$+$}
\put(73, 23){$+$}
\put(80, 23){$-$}
\put(73, 7){$+$}
\put(80, 7){$-$}
\put(95, 16){$+$}
\put(55, 20){$-$}
\put(65, 5){$-$}
\end{overpic}
\caption{A bubble tree of boundary words represented as a graph (left) and a planar diagram (right). For simplicity, only signs of chords are indicated.}
\label{Fig:BubbleTree}
\end{figure}

For an $\edge_{j}$ of a $\tree$, define a new bubble tree $\#_{\edge_{j}}\tree$ whose planar diagram is obtained by deleting $\chord(\edge_{j})$ from $\planarDiagram(\tree)$ and concatenating capping paths according to the boundary orientation of non-dashed arcs of the planar diagram. The operation $\#_{\edge_{j}}$ is associative. By applying $\#_{\edge_{j}}$ to each $\edge_{j}$ we obtain an $H_{1}$ decorated boundary word, we'll call $\#(\tree)$.

\begin{defn}
A \emph{$\ModSpace_{\Lag}$ bubble tree} for $\Lag: \Leg^{+} \rightarrow \Leg^{-}$ is a bubble tree $\tree$ of boundary words for $\Lag$ with an assignment of an unparameterized holomorphic maps
\begin{equation*}
u(\vertex_{i}) \in \ModSpace_{\Lag(\vertex_{i})}(\boundaryWord(\vertex_{i}))
\end{equation*}
to each vertex. A \emph{$\ModSpace_{\Leg}$ bubble tree} is a bubble tree $\tree$ for $\Leg$ with an assignment of a holomorphic map
\begin{equation*}
u(\vertex_{i}) \in \ModSpace_{\Leg}(\boundaryWord(\vertex_{i}))
\end{equation*}
to each vertex.
\end{defn}

\begin{thm}
Let $u_{n}, n\in \Z_{\geq 0}$ be a sequence of holomorphic disks in $\ModSpace_{\Lag}\left(\boundaryWord\right)$ (respectively, $\ModSpace_{\Leg}\left(\boundaryWord\right)$). Then there is a subsequence $u_{k}$ which converges in the sense of \cite{SFTCompactness} to a $\ModSpace_{\Lag}\left(\boundaryWord\right)$ bubble tree (respectively, a $\ModSpace_{\Leg}\left(\boundaryWord\right)$ bubble tree).
\end{thm}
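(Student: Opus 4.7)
The plan is to reduce both cases to a direct invocation of the SFT compactness theorem of \cite{SFTCompactness}, applied in the specific geometric setting of contactizations with exact Lagrangian cobordisms, and then to translate the combinatorics of the resulting limit building into the bubble-tree language defined just above.

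First I would dispose of the $\ModSpace_{\Leg}$ case by reducing it to the $\ModSpace_{\Lag_{\Leg}}$ case. Given a sequence $u_n \in \ModSpace_{\Leg}(\boundaryWord)$, Lemma \ref{Lemma:Lifting} lets me lift the $u_n$ to a sequence $\widetilde{u}_n \in \ModSpace_{\Lag_{\Leg}}(\boundaryWord)$, unique up to $\R_s$-translation. I would normalize the translation freedom (say, by fixing $\widetilde{u}_n(\boundaryPunc_1)$ to have $s$-coordinate zero) and then argue that SFT compactness for the lifted sequence projects via $\pi_{\SympBase}$ to the desired limit for the $\ModSpace_{\Leg}$ case. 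So the essential content is compactness in $\ModSpace_{\Lag}(\boundaryWord)$.

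Next I would check the input hypotheses of SFT compactness. The a priori energy bound comes from Stokes' theorem: each $u_n$ has energy $\action(u_n) = \sum_k a_k \action(\chord_{i_k})$ depending only on $\boundaryWord$, and the Hofer-type energy in the symplectization direction is controlled by the cylindrical behavior of $J$ together with the finite geometry at infinity of $(\SympBase, \beta, J_\SympBase)$ required in our standing assumptions. The boundary conditions $u_n(I_k) \subset \Lag$ are preserved by $C^\infty_{loc}$ convergence, and $\Lag$ is a collar outside of $[-C,C]\times M$, so no boundary escapes to infinity in the $M$-factor. With these in place, \cite{SFTCompactness} produces a subsequence converging in the SFT sense to a holomorphic building in a multi-level broken symplectization $(\R_s \times M)_{\Lag} = (\R_s \times M)^{N_+} \sqcup \R_s \times M \sqcup (\R_s \times M)^{N_-}$ with matching asymptotic Reeb chords between adjacent levels.

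Then I would translate this building into a bubble tree. Each irreducible holomorphic disk component of the building in a positive (resp.\ negative) level is a map to $\R_s \times M$ with Lagrangian boundary on $\Lag_{\LegUp}$ (resp.\ $\Lag_{\LegDown}$) and gives a vertex $\vertex_i$ with $\Lag(\vertex_i) = \Lag_{\LegUpDown}$; each component in the middle level gives a vertex with $\Lag(\vertex_i) = \Lag$. The boundary asymptotic data at each positive/negative boundary puncture of such a component, together with the relative homotopy classes of boundary arcs (which converge because $\Lag$ is a $K(\pi,1)$-type in the relevant sense along the levels), produces the boundary word $\boundaryWord(\vertex_i)$. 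Each pair of matched punctures between adjacent levels becomes an edge $\edge_j: \vertex_{i^-} \to \vertex_{i^+}$ decorated by the common asymptotic chord $\chord(\edge_j)$, and the unmatched punctures of the outermost levels reassemble into the original $\boundaryWord$ after concatenating capping paths along the non-dashed arcs, yielding $\#(\tree) = \boundaryWord$. The acyclicity and connectedness of $\tree$ are standard topological properties of SFT buildings arising from limits of disks.

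The main obstacle I expect is ruling out ``spurious'' components of the limit that do not fit into the bubble-tree framework — namely interior sphere bubbles, closed disk bubbles (disks with no punctures and constant boundary), and components lacking any positive puncture. Interior sphere bubbles are ruled out because $(W,\beta)$ and all symplectization levels are exact, so every $J$-holomorphic sphere has zero symplectic area and must be constant; likewise closed disk bubbles would have $\int u^*d\beta = 0$ by Stokes' theorem applied to the exact Lagrangian $\Lag$, forcing them to be constant. Trivial cylinders/strips over Reeb chords that appear in the middle of the building are exactly the allowed trivial strips in the bubble tree, and by admissibility of $\boundaryWord$ together with Lemma \ref{Lemma:NoPurePosForAdmissible} every non-trivial component inherits at least one positive puncture from the ancestors in $\tree$, so item (3) of the bubble tree definition is satisfied. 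With these degenerations excluded, the limiting building is precisely a $\ModSpace_\Lag(\boundaryWord)$ bubble tree in the sense of the definition, completing the proof.
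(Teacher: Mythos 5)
Your proposal is correct, and for the $\ModSpace_{\Lag}(\boundaryWord)$ case it is essentially the paper's argument: the paper simply cites \cite[Appendix B]{Ekholm:Z2RSFT}, which is exactly the SFT compactness package you reassemble by hand (a priori energy bound from Stokes' theorem and the fixed asymptotics of $\boundaryWord$, exclusion of sphere and closed disk bubbles by exactness of $\beta$ and of $\Lag$, and translation of the limit building into a tree of boundary words with matched punctures as edges). Where you genuinely diverge is the $\ModSpace_{\Leg}(\boundaryWord)$ case. The paper does \emph{not} lift to the symplectization; it instead invokes Lemma \ref{Lemma:LCHShift} to replace $\Leg$ by a shifted Legendrian $\widetilde{\Leg}$ with the same Lagrangian projection for which $\boundaryWord$ becomes $\CE$ type (one positive puncture), and then quotes the compactness results for $\CE$ moduli spaces in $\R_{t}\times\SympBase$ from \cite{EES:PtimesR, EES:LegendriansInR2nPlus1}, which are stated directly for disks in $\SympBase$ with boundary on $\pi_{\SympBase}\Leg$. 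Your route — lift via Lemma \ref{Lemma:Lifting}, normalize the $\R_{s}$-action, run SFT compactness upstairs, and project the multi-level building back to $\SympBase$ — also works and is arguably more self-contained, at the cost of having to check that each level of the limit building projects component-wise to a $\ModSpace_{\Leg}$ disk; the paper's route avoids this by staying in $\SympBase$ and outsourcing the analysis to the established $\CE$ literature, which is the point of Lemma \ref{Lemma:LCHShift}.

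Two small corrections, neither fatal. First, you cannot normalize by fixing the $s$-coordinate of $\widetilde{u}_{n}(\boundaryPunc_{1})$, since $\boundaryPunc_{1}$ is a puncture and the map is asymptotic to $\pm\infty$ there; normalize instead by, say, the supremum of $s$ on a fixed compact part of the domain, or use the standard SFT normalization of levels. Second, the claim that every non-trivial limit component has a positive puncture should be attributed to positivity of the $d\beta$-energy ($\action(u)=\sum_{k}a_{k}\action(\chord_{i_{k}})\geq 0$ forces at least one $a_{k}=+$ on any non-constant component), not to admissibility or Lemma \ref{Lemma:NoPurePosForAdmissible}, which concern a different point (purity constraints and somewhere-injectivity).
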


Since we are using the same holomorphic disks as \cite{Ekholm:Z2RSFT}, the case $\ModSpace_{\Lag}\left(\boundaryWord\right)$ follows immediately from \cite[Appendix B]{Ekholm:Z2RSFT}. The case for $\ModSpace_{\Leg}\left(\boundaryWord\right)$ follows from the results of the next section, combined with the corresponding results for $\CE$ moduli spaces in $\R_{t} \times \SympBase$ (using the finite geometry at infinity hypothesis) described in \cite{EES:PtimesR}, with details appearing in \cite{EES:LegendriansInR2nPlus1}. Observe that in either case the fact that all elements of our sequences are contained in a single moduli space entails that the energy bounds required for compactness are automatically satisfies. See \cite[Appendix B]{Ekholm:Z2RSFT} for details.

\subsection{$\CE$ type boundary words}

Say that $\boundaryWord$ is of \emph{$\CE$ type} if it has exactly one positive chord.  The following lemma states that each moduli space of admissible disks is homeomorphic to a moduli space of $\CE$ type curves. The proof applies shifts to the connected components of $\Leg$ in the $t$-coordinate of $M = \R_{t} \times W$, as indicated in Figure \ref{Fig:LCHShift}. This shifting preserves the Lagrangian projection. The particular type of $\CE$ moduli space produced by the lemma is not relevant: The lemma will be used to manage analytical aspects of moduli spaces of admissible holomorphic disks, since $\CE$ moduli spaces well understood by the results of \cite{EES:LegendriansInR2nPlus1, EES:PtimesR}.

\begin{figure}[h]
\begin{overpic}[scale=.4]{lch_shift.eps}
\put(-5, 75){$\Leg^{\Partition}_{1}$}
\put(11, 63){$\Leg^{\Partition}_{2}$}
\put(28, 70){$\Leg^{\Partition}_{3}$}
\put(3, 51){$\Leg^{\Partition}_{4}$}

\put(85, 43){$\chord_{i_{4}}^{-}$}
\put(101, 60){$\chord_{i_{1}}^{+}$}
\put(85, 77){$\chord_{i_{2}}^{-}$}
\put(68, 60){$\chord_{i_{3}}^{+}$}

\put(19, 70){$\chord_{i_{2}}$}
\put(2, 63){$\chord_{i_{1}}$}
\put(36, 67){$\chord_{i_{3}}$}
\put(44, 60){$\chord_{i_{4}}$}

\put(-5, 38){$\widetilde{\Leg}^{\Partition}_{1}$}
\put(11, 23){$\widetilde{\Leg}^{\Partition}_{2}$}
\put(28, 12){$\widetilde{\Leg}^{\Partition}_{3}$}
\put(3, 1){$\widetilde{\Leg}^{\Partition}_{4}$}

\put(85, 0){$\widetilde{\chord}_{i_{4}}^{-}$}
\put(101, 17){$\widetilde{\chord}_{i_{1}}^{+}$}
\put(85, 34){$\widetilde{\chord}_{i_{2}}^{-}$}
\put(68, 17){$\widetilde{\chord}_{i_{3}}^{-}$}

\put(19, 30){$\widetilde{\chord}_{i_{2}}$}
\put(2, 12){$\widetilde{\chord}_{i_{1}}$}
\put(36, 18){$\widetilde{\chord}_{i_{3}}$}
\put(44, 6){$\widetilde{\chord}_{i_{4}}$}

\put(81, 60){$\planarDiagram(u)$}
\put(81, 16){$\planarDiagram(\widetilde{u})$}
\end{overpic}
\caption{A schematic for the proof of Lemma \ref{Lemma:LCHShift} for $m = 4$ and all $\chord_{i_{k}}$ $\Partition$ mixed. Relative heights of the $\Leg^{\Partition}_{j}$ and $\widetilde{\Leg}^{\Partition}_{j}$ in neighborhoods of chords are indicated on the left. Planar diagrams for holomorphic maps are shown on the right.}
\label{Fig:LCHShift}
\end{figure}

\begin{lemma}\label{Lemma:LCHShift}
For every admissible boundary word $\boundaryWord = (\chord_{i_{1}}^{+}\chord_{i_{2}}^{a_{2}}\cdots\chord_{i_{m}}^{a_{m}})$ there is a Legendrian $\widetilde{\Leg} \subset M$ and an admissible boundary word $\widetilde{\boundaryWord} = (\widetilde{\chord}_{i_{1}}^{+}\widetilde{\chord}_{i_{2}}^{-}\cdots\widetilde{\chord}_{i_{m}}^{-})$ for $\widetilde{\Leg}$ of $\CE$ type such that
\begin{equation*}
\pi_{\SympBase}\widetilde{\Leg} = \pi_{\SympBase}\Leg, \quad \pi_{\SympBase}\widetilde{\chord}_{i_{k}} = \pi_{\SympBase}\chord_{i_{k}}
\end{equation*}
for all $k=1, \dots, m$ and we have homeomorphisms of moduli spaces
\begin{equation*}
\ModSpace_{\Leg}\left(\boundaryWord\right) \simeq \ModSpace_{\widetilde{\Leg}}\left(\widetilde{\boundaryWord}\right),\quad \ModSpace_{\Lag_{\Leg}}\left(\boundaryWord\right) \simeq \ModSpace_{\Lag_{\widetilde{\Leg}}}\left(\widetilde{\boundaryWord}\right).
\end{equation*}
\end{lemma}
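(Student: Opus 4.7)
The construction of $\widetilde{\Leg}$ will be obtained by vertically translating each connected component $\Leg_k$ of $\Leg$ by a real constant $c_k$, setting $\widetilde{\Leg} = \bigcup_k (\Leg_k + c_k \partial_t)$. Since translation in $\R_t$ preserves the Lagrangian projection and its double points, both $\pi_\SympBase \widetilde{\Leg} = \pi_\SympBase \Leg$ and $\pi_\SympBase \widetilde{\chord}_{i_k} = \pi_\SympBase \chord_{i_k}$ are automatic; the shifts only affect the $\R_t$-orientation of each Reeb chord.

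The first step is to choose the shifts so that $\widetilde{\boundaryWord}$ has the signature of a $\CE$-type word. The sign of the asymptotic Reeb chord at $z_k$ is determined by which of the two Legendrian components adjacent to the puncture is higher in $\R_t$ at the relevant double point; for pure chords (both boundary arcs on the same connected component) this sign is intrinsic and cannot be changed. Labelling the boundary arcs $I_1,\dots,I_m$ cyclically and denoting by $\ell_k$ the connected component containing $I_k$, I would pick the $c_k$'s so as to realize a chain $c_{\ell_1} < c_{\ell_2} < \cdots < c_{\ell_m}$, with strict inequality at each non-pure chord and equality (forced by $\ell_{k-1} = \ell_k$) at pure chords. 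With the sign convention illustrated in Figure \ref{Fig:LCHShift}, this makes $z_1$ a positive puncture and every $z_k$ with $k \geq 2$ a negative puncture.

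The second step is to verify that the chain can be consistently realized. If $\chord_{i_1}$ is $\Partition$-pure, then Lemma \ref{Lemma:NoPurePosForAdmissible} forces $\boundaryWord$ to be of $\CE$ type already and I take $\widetilde{\Leg} = \Leg$. Otherwise $\chord_{i_1}$ is $\Partition$-mixed; the only possible obstruction to the chain is a repeated label $\ell_a = \ell_b$ with $a < b$ and at least one non-pure chord among $\chord_{i_{a+1}},\dots,\chord_{i_b}$. I would analyze this obstruction using the splitting arc joining $I_a$ to $I_b$: its endpoints lie in the same $\Partition$-class, and the $\Partition$-mixed positive puncture $z_1$ lies on the opposite side, so admissibility of $\boundaryWord$ forces every $z_{a+1},\dots,z_b$ to be $\Partition$-pure negative. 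A short case analysis of the original sign data in $\boundaryWord$ restricted to this block then forces each intermediate chord to be pure, collapsing $\ell_a = \ell_{a+1} = \cdots = \ell_b$ and eliminating the obstruction.

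The third step is the moduli space identification. The identity on the underlying map $u : \holoDom \to \SympBase$ induces a homeomorphism $\ModSpace_\Leg(\boundaryWord) \simeq \ModSpace_{\widetilde{\Leg}}(\widetilde{\boundaryWord})$, since the defining conditions -- holomorphicity with respect to $J_\SympBase$, the asymptotic limits at punctures, the boundary containment in $\pi_\SympBase \Leg$, and existence of a continuous lift to $M$ -- depend only on $\pi_\SympBase \Leg = \pi_\SympBase \widetilde{\Leg}$. The continuous lift transfers between the two setups because the shifts are constant on each boundary arc and the asymptotic chord orientations at the punctures match by our choice of shifts. The analogous homeomorphism $\ModSpace_{\Lag_\Leg}(\boundaryWord) \simeq \ModSpace_{\Lag_{\widetilde{\Leg}}}(\widetilde{\boundaryWord})$ then follows from Lemma \ref{Lemma:Lifting}. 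The main obstacle is the combinatorial consistency check in the second step; the rest is essentially bookkeeping.
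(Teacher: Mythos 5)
Your proposal is correct in substance and follows essentially the same route as the paper: translate pieces of $\Leg$ in the Reeb direction so that, in the cyclic order dictated by the planar diagram, all punctures after the first become negative, with admissibility guaranteeing the required ordering of boundary labels is consistent, and then identify moduli spaces through the common Lagrangian projection (plus Lemma \ref{Lemma:Lifting} for the symplectization version). One small slip in your second step: when a label repeats, admissibility only forces the intermediate punctures to be $\Partition$-pure negative, not pure, so if the $\Leg^{\Partition}_{j}$ are disconnected the connected-component labels $\ell_{a+1},\dots,\ell_{b}$ need not collapse as you claim; the clean fix -- which is what the paper does -- is to shift entire partition pieces $\Leg^{\Partition}_{j}$ rather than connected components, since a common shift leaves the sign of every $\Partition$-pure chord (already negative by admissibility) unchanged.
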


\begin{proof}
If all of the $\chord_{i_{k}}$ are $\Partition$ pure there is nothing to prove, since in this case the admissibility criterion would imply that $\boundaryWord$ is of $\CE$ type. Likewise, admissibility implies that if there are any $\Partition$ mixed chords, then at least one of them must be positive. So we assume that $\chord_{i_{1}}$ is $\Partition$ mixed. Traverse the boundary of $\planarDiagram\left(\boundaryWord\right)$ starting at the endpoint of $\chord_{i_{1}}$ while tracking Lagrangian boundary labels. This yields a length $l$ tuple $(\Leg^{\Partition}_{j_{1}}, \cdots, \Leg^{\Partition}_{j_{l}})$ of pairwise distinct Legendrians. For simplicity, let's assume that their union is all of $\Leg$ so that $l = N^{\Partition}$ and that $j_{k} = k$ for $k=1, \dots, N^{\Partition}$.

The remainder of the proof is summarized in Figure \ref{Fig:LCHShift}. For $C \gg 0$ consider
\begin{equation*}
\widetilde{\Leg}^{\Partition}_{j} = \Flow^{(1- j)C}_{\partial_{t}}\Leg^{\Partition}_{j} \quad \widetilde{\Leg} = \sqcup \widetilde{\Leg}^{\Partition}_{j}.
\end{equation*}
Choose $C$ large enough so that $\min t|_{\widetilde{\Leg}^{\Partition}_{j}} > \max t_{\widetilde{\Leg}^{\Partition}_{j+1} }$ for $j=1, \dots, N^{\Partition} -1$. Then $\widetilde{\Leg}$ is  embedded and there will be no chords from $\widetilde{\Leg}^{\Partition}_{i}$ to $\widetilde{\Leg}^{\Partition}_{j}$ if $i < j$ where the $i, j$ are \emph{not} taken modulo $N^{\Partition}$.

Clearly $\pi_{\SympBase}\widetilde{\Leg} = \pi_{\SympBase}\Leg$ and as chords are in bijective correspondence with double points in $\SympBase$, the chords of $\Leg$ are in bijective correspondence with those of $\widetilde{\Leg}$. For each $\chord_{i}$ of $\Leg$ write $\widetilde{\chord}_{i}$ for the corresponding chord of $\widetilde{\Leg}$.

For a holomorphic $u \in \ModSpace_{\Leg}\left(\boundaryWord\right)$, we get a holomorphic $\widetilde{u}$ associated to $\widetilde{\Leg}$ with associated boundary word $\widetilde{\boundaryWord}$ as described in the statement of the lemma. The planar diagram for $\widetilde{u}$ is the same as for $u$ except that all chords in $\planarDiagram(\widetilde{u})$ except for $\chord_{i_{1}}$ are negative. The identification of $\ModSpace_{\Leg}$ moduli spaces is evident from $\pi_{\SympBase}\widetilde{\Leg} = \pi_{\SympBase}\Leg$. The corresponding statement relating the $\ModSpace_{\Lag_{\Leg}}$ and $\ModSpace_{\Lag_{\widetilde{\Leg}}}$ then follows from Lemma \ref{Lemma:Lifting}.
\end{proof}

\subsection{Transversality}

Now we address transversality for generic adapted almost complex structures:

\begin{lemma}\label{Lemma:SympBaseTransversality}
For a fixed $\Leg$ and generic choice of $J_{\SympBase}$, the compactified moduli space $\overline{\ModSpace_{\Leg}\left(\boundaryWord\right)}$ is a smooth manifolds of its expected dimension $\ind\left(\boundaryWord\right) - 1$ with corners, provided $\ind\left(\boundaryWord\right) \leq 2$. With such $J_{\SympBase}$ all admissible $\ind(u) = 0$ holomorphic disks are trivial trips.
\end{lemma}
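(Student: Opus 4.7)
The plan is to reduce to the standard Chekanov--Eliashberg transversality results of \cite{EES:LegendriansInR2nPlus1, EES:PtimesR} via the shift construction of Lemma \ref{Lemma:LCHShift}. Given an admissible $\boundaryWord$, that lemma produces a Legendrian $\widetilde{\Leg}$ with $\pi_{\SympBase}\widetilde{\Leg} = \pi_{\SympBase}\Leg$ and an admissible $\CE$-type boundary word $\widetilde{\boundaryWord}$, together with a homeomorphism $\ModSpace_{\Leg}(\boundaryWord) \simeq \ModSpace_{\widetilde{\Leg}}(\widetilde{\boundaryWord})$. Since the Lagrangian projections agree, genericity of $J_{\SympBase}$ for $\Leg$ coincides with genericity for $\widetilde{\Leg}$, and the two moduli spaces are smooth of the same dimension at the same time.

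Once in the $\CE$-type regime, the transversality statement is standard. Admissibility combined with Lemma \ref{Lemma:NoPurePosForAdmissible} guarantees that every element of $\ModSpace_{\widetilde{\Leg}}(\widetilde{\boundaryWord})$ is somewhere injective. The admissibility hypothesis on the pair $(\widetilde{\Leg}, J_{\SympBase})$ from Section \ref{Sec:ChordBasics} (real-analyticity of $\pi_{\SympBase}\widetilde{\Leg}$ near chords and integrability of $J_{\SympBase}$ near each double point) plus the finite geometry hypothesis on $\SympBase$ let us apply the $\delbar$-perturbation machinery of \cite[Sections 4--6]{EES:PtimesR} and \cite{EES:LegendriansInR2nPlus1}: on a neighborhood of a somewhere-injective point, the linearized $\delbar_{J_{\SympBase}}$ operator can be made surjective by varying $J_{\SympBase}$ through compatible almost complex structures, and a Sard-Smale argument yields a comeager set of $J_{\SympBase}$ for which the open moduli space is a manifold of dimension $\ind(\boundaryWord) - 1$.

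For the compactification, the bubble tree compactness theorem stated earlier in this section identifies the boundary strata of $\overline{\ModSpace_{\Leg}(\boundaryWord)}$ with products of moduli spaces of admissible boundary words of strictly smaller index. Since the hypothesis $\ind(\boundaryWord) \leq 2$ bounds the dimension of the ambient moduli space by $1$, only finitely many combinatorial types of bubble tree can appear as boundary; applying the previous step simultaneously to this finite list (genericity is preserved under finite intersection) gives a corner structure in the expected dimension via the standard SFT gluing theorems. The main obstacle in this step is ensuring that the components of a breaking are themselves admissible (so that Lemma \ref{Lemma:NoPurePosForAdmissible} still applies and no multiply covered strata appear), which is handled by Lemma \ref{Lemma:InscriptionAdmiss} applied to the planar diagrams of the pieces.

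Finally, for the assertion that admissible disks with $\ind(u) = 0$ are trivial strips: by Lemma \ref{Lemma:Lifting} and the convention $\ind(u) = \ind$ of the lift, the claim translates into a statement about $\ModSpace_{\Lag_{\Leg}}(\boundaryWord)$. The $\R_{s}$-action acts freely on any non-trivial-strip disk (its orbit is genuinely $1$-dimensional and not absorbed by $\Aut(\holoDom)$), whereas on a trivial strip the action is absorbed by the $\R$-family of Möbius transformations of the $2$-punctured disk. Transversality from the first two steps forces the Fredholm-index-zero component of $\ModSpace_{\Lag_{\Leg}}(\boundaryWord)$ to be $0$-dimensional, which is incompatible with a free $\R_{s}$-orbit; the only remaining possibility is a trivial strip.
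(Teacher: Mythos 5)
Your proposal is correct and follows essentially the same route as the paper, which disposes of the lemma in one line by combining Lemma \ref{Lemma:LCHShift} with the $\CE$-type transversality result \cite[Proposition 2.3]{EES:PtimesR}. The extra detail you supply on somewhere injectivity, admissibility of broken pieces, and the negative-expected-dimension argument ruling out non-trivial index-$0$ disks is consistent with the cited machinery and fills in what the paper leaves implicit.
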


Provided Lemma \ref{Lemma:LCHShift}, this is a consequence of \cite[Proposition 2.3]{EES:PtimesR}. We can similarly use a fixed $J_{\SympBase}$ and generic $\Cinfty$ small perturbations of $\Leg$ through Legendrian isotopy. The following is the corresponding statement for holomorphic curves in symplectizations following \cite[Lemma B.6]{Ekholm:Z2RSFT}:

\begin{lemma}\label{Lemma:CobTransversality}
For a fixed $\Lag$ and generic choice of $J$, the compactified moduli space $\overline{\ModSpace_{\Lag}\left(\boundaryWord\right)}$ is a smooth manifolds of its expected dimension $\ind\left(\boundaryWord\right)$ with corners, provided $\ind\left(\boundaryWord\right) \leq 1$.
\end{lemma}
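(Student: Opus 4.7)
The strategy is a standard Sard–Smale transversality argument in the style of \cite{EES:LegendriansInR2nPlus1,EES:PtimesR} combined with \cite[Lemma B.6]{Ekholm:Z2RSFT}, with admissibility playing the essential role. First I would restrict attention to admissible $\boundaryWord$ (since by Lemma \ref{Lemma:InscriptionAdmiss} these are the only boundary words whose moduli spaces enter $\newRSFTA$). By Lemma \ref{Lemma:NoPurePosForAdmissible}, every $u \in \ModSpace_{\Lag}(\boundaryWord)$ is somewhere injective, so the standard machinery of perturbing a cylindrical almost complex structure $J$ at an injective point applies without having to confront multiply covered curves.

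The second step is to set up the Banach space of cylindrical, compatible almost complex structures $\J$ on $\R_s \times M$ which agree with the reference $J$ outside some large compact set (with exponential weight norms as in \cite{EES:PtimesR}), and the universal moduli space $\ModSpace^{\mathrm{univ}}(\boundaryWord) \to \J$. Somewhere injectivity together with the standard perturbation argument shows that the linearized $\delbar$ operator together with the $J$-variation is surjective on $\ModSpace^{\mathrm{univ}}(\boundaryWord)$, and Sard–Smale then produces a comeager set of regular $J \in \J$ for which $\ModSpace_{\Lag}(\boundaryWord)$ is a smooth manifold of dimension $\ind(\boundaryWord)$. Since there are only countably many admissible $\boundaryWord$ for the fixed $\Lag$, intersecting these comeager sets yields a single generic $J$ that simultaneously achieves transversality for every such moduli space.

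To upgrade to a manifold-with-corners structure for $\ind(\boundaryWord) \le 1$, I would invoke the compactness theorem of the preceding subsection: any sequence in $\ModSpace_{\Lag}(\boundaryWord)$ converges to a bubble tree whose vertices lie in moduli spaces over $\Lag$, $\Lag_{\LegUp}$, or $\Lag_{\LegDown}$. By Lemma \ref{Lemma:InscriptionAdmiss} and additivity of the Fredholm index, each bubble component is itself an admissible moduli space with nonnegative expected dimension, and at the generic $J$ above (combined with the generic $J_{\SympBase}$ of Lemma \ref{Lemma:SympBaseTransversality}) the $\R_s$-translation quotient of every symplectization component has dimension at least $0$. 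In the regime $\ind(\boundaryWord) \le 1$, the only boundary stratum compatible with these dimension counts is codimension-one SFT breaking off of a single symplectization level at a negative puncture, yielding the expected corners; all other bubbling (disk bubbles, interior bubbles, string-topology-style degenerations at pure positive punctures) is ruled out by the admissibility constraint inherited through Lemma \ref{Lemma:AdmissableComplement}-style reasoning together with the $c_1=0$ index calculation.

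The main obstacle is the bookkeeping in the last step: one must check that \emph{every} admissible boundary word $\boundaryWord'$ arising as a component of a boundary stratum of $\ModSpace_{\Lag}(\boundaryWord)$ with $\ind(\boundaryWord)\le 1$ has $\ind(\boundaryWord')\ge 0$ (respectively $\ge 1$ for symplectization components after quotienting by $\R_s$), so that no unexpected degenerations sneak in and the standard gluing theorem of \cite{EES:PtimesR,Ekholm:Z2RSFT} can be applied to produce the smooth corner charts. This is where admissibility is crucial: it prevents the string-topological degenerations at positive punctures that would otherwise require new analytic input, and reduces the argument to the $\CE$ situation already handled in \cite[Lemma B.6]{Ekholm:Z2RSFT}, exactly parallel to how Lemma \ref{Lemma:LCHShift} reduces $\ModSpace_\Leg$ transversality to the $\CE$ case.
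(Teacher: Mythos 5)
Your argument is correct and is essentially the paper's: the paper proves this lemma by direct citation to \cite[Lemma B.6]{Ekholm:Z2RSFT}, and your outline (admissibility forces somewhere injectivity via Lemma \ref{Lemma:NoPurePosForAdmissible}, then Sard--Smale over a universal moduli space of cylindrical perturbations, then SFT compactness and gluing for the corner strata) is exactly the content of that reference. Your explicit restriction to admissible boundary words is consistent with how the lemma is actually used, since by Lemma \ref{Lemma:InscriptionAdmiss} only admissible words contribute to $\newRSFTA(\Lag, \Partition)$.
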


\section{Differentials, cobordism morphisms, and basic structural features}\label{Sec:DiffsAlgStructures}

Now that we've defined the $\mu_{\boundaryWordThicc}$ operators and the moduli spaces $\ModSpace_{\Leg}\left(\boundaryWord\right)$ and $\ModSpace_{\Lag}\left(\boundaryWord\right)$, we are ready to state the definitions of $\partial$ and cobordism maps. We use $\field[H_{1}]$ coefficient systems unless otherwise indicated.

\subsection{Definition of $\partial$}

Here we work with a fixed chord generic $\Leg$ with associated $\field$ module $\orbitVS$ spanned by the admissible $\Partition$ cyclic words with word length filtration and algebra $\Algebra = \field[H_{1}(\Leg)]\otimes \tensorAlg(\orbitVS)$ with filtrations
\begin{equation*}
\field[H_{1}] = \filtration^{0}\Algebra \subset \filtration^{1}\Algebra \cdots \subset \filtration^{N^{\Partition}}\Algebra = \filtration^{N^{\Partition}+1}\Algebra = \cdots = \Algebra, \quad \filtration^{\filtLevel}\Algebra = \field[H_{1}]\otimes \tensorAlg(\filtration^{\filtLevel}\orbitVS).
\end{equation*}

Suppose that $J_{\SympBase}$ is chosen so that the conclusions of Lemma \ref{Lemma:SympBaseTransversality} are satisfied. According to Lemma \ref{Lemma:SympBaseIndex}, if $\ModSpace_{\Leg}(\boundaryWordThicc)$ is nonempty and $\ind(\boundaryWordThicc) = 1$, then $\boundaryWordThicc = \{\boundaryWord_{i} \}$ consists of one $\boundaryWord_{i} \in \boundaryWordThicc$ having $\ind(\boundaryWord_{i}) = 1$ with all other elements of $\boundaryWordThicc$ having $\ind = 0$, so that they are trivial strips. In this case the moduli space must consist of a finite number of points by compactness. We define
\begin{equation*}
\partial: \orbitVS \rightarrow \Algebra, \quad \partial = \sum_{\ind(\boundaryWordThicc) = 1} \#\ModSpace_{\Leg}(\boundaryWordThicc)\mu_{\boundaryWordThicc}
\end{equation*}
To extend $\partial$ to all of $\Algebra$, we set $\partial 1 = 0$, apply the Leibniz rule
\begin{equation*}
\partial(x \cdot y) = (\partial x)\cdot y + x \cdot (\partial y),
\end{equation*}
and extend linearly over $\field[H_{1}]$.

\begin{thm}
$(\Algebra, \partial, \filtration)$ is a max-filtered DGA.
\end{thm}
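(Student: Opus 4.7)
The plan is to verify the three defining conditions of a max-filtered DGA: the algebra axiom \eqref{Eq:FiltrationProduct}, the filtration-preservation \eqref{Eq:FiltrationPreservingDel} of $\partial$ (together with $\partial$ having degree $-1$), and $\partial^2=0$. The first two are structural consequences of the setup; $\partial^2=0$ is the substantive analytic input.

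For \eqref{Eq:FiltrationProduct}, the key observation is that $\filtration^\filtLevel \Algebra = \field[H_1(\Leg)] \otimes \tensorAlg(\filtration^\filtLevel \orbitVS)$ with $\filtration^\filtLevel \orbitVS$ spanned by words of length at most $\filtLevel$; since multiplication in $\Algebra$ is tensor concatenation, a product of elements of $\filtration^\filtLevel\Algebra$ and $\filtration^{\filtLevel'}\Algebra$ is a tensor all of whose factors have word length at most $\max(\filtLevel,\filtLevel')$, which is exactly $\filtration^{\max(\filtLevel,\filtLevel')}\Algebra$. For filtration preservation of $\partial$ on generators, I would apply Lemma \ref{Lemma:FiltrationPreservation}: each output word $\word^-_j$ in $\mu_{\boundaryWordThicc}(\word)$ has length at most $\filtLevel(\word)$, hence $\mu_{\boundaryWordThicc}(\word) \in \filtration^{\filtLevel(\word)}\Algebra$, and the Leibniz extension inherits this property thanks to \eqref{Eq:FiltrationProduct}. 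The $-1$ degree assertion on $\partial$ is Lemma \ref{Lemma:MuDegree} specialized to the $\ind(\boundaryWordThicc)=1$ contributions defining $\partial$.

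For $\partial^2=0$, I would expand $\partial^2 \word$ as a sum indexed by composable pairs of full inscriptions $\planarDiagram(\boundaryWordThicc_2) \subset \planarDiagram(\word_j^-) \subset \planarDiagram(\word) \setminus \planarDiagram(\boundaryWordThicc_1)$ with each $\ind = 1$, and identify each such nested pair with a two-level broken holomorphic configuration for a single admissible boundary word $\boundaryWord$ of index $2$ obtained by gluing $\boundaryWordThicc_1$ and $\boundaryWordThicc_2$ at their shared chord. By Lemma \ref{Lemma:SympBaseTransversality} and the compactness theorem stated above, $\overline{\ModSpace_\Leg(\boundaryWord)}$ is a compact $1$-manifold with corners whose corner points are precisely these two-level breakings. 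Admissibility from Lemma \ref{Lemma:InscriptionAdmiss} together with Lemma \ref{Lemma:NoPurePosForAdmissible} ensures somewhere-injectivity, ruling out the multiple covers and string-topological degenerations that complicate more general RSFT setups. Via the shift trick of Lemma \ref{Lemma:LCHShift} combined with Lemma \ref{Lemma:Lifting}, one reduces to the $\CE$ gluing framework of \cite{EES:PtimesR, EES:LegendriansInR2nPlus1, Ekholm:Z2RSFT}, in which each broken configuration corresponds to a unique boundary end of the compactified $1$-manifold. Counting modulo $2$ yields $\partial^2 \word = 0$; the $H_1$ decorations behave correctly because the capping loops associated to the glued $\boundaryWord$ are concatenations of those of $\boundaryWord_1$ and $\boundaryWord_2$.

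The hard part, I expect, will not be the analytic gluing --- inherited wholesale from the cited $\CE$ literature via Lemma \ref{Lemma:LCHShift} --- but rather the combinatorial verification that the algebraic $\partial^2$ matches the geometric count with the correct ordering of output words. Because $\orbitVS$ is not quotiented by cyclic rotation, one must confirm that the nested canonical orderings of Definition \ref{Def:FullInscriptionOperator}, applied first to $\word$ via $\mu_{\boundaryWordThicc_1}$ and then to the relevant factor via $\mu_{\boundaryWordThicc_2}$, coincide with the single canonical ordering associated to the glued boundary word $\boundaryWord$. Geometrically this associativity is visible directly from the planar diagrams, but it is the step that deserves the most careful written verification.
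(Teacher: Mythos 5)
Your handling of the filtration axioms, the degree count, and the reduction to $\partial|_{\orbitVS}$ via the Leibniz rule matches the paper's argument, and your gluing/compactness analysis is the right mechanism for one of the two sources of terms in $\partial^{2}\word$. But there is a genuine gap: it is not true that every composable pair of inscriptions contributing to $\partial^{2}\word$ can be identified with a two-level broken configuration glued ``at their shared chord.'' A full inscription into $\planarDiagram(\word)$ covers the positive chords of $\word$ not touched by the nontrivial index-$1$ disk $u_{1}$ with trivial strips, so the output words $\word_{j}^{-}$ still contain those untouched chords of $\word$. Consequently the second index-$1$ disk $u_{2}$ may have \emph{all} of its positive punctures among the positive chords of $\word$ rather than among the negative punctures of $u_{1}$. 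In that case $\planarDiagram(u_{1})$ and $\planarDiagram(u_{2})$ are disjoint in $\planarDiagram(\word)$, there is no shared chord to glue along, and the pair is not a boundary point of any index-$2$ moduli space, so compactness says nothing about it. This disjoint case has no analogue in $\CE$, where the generator is a single chord and the first disk necessarily consumes the unique positive puncture; here it is generic, and your proof as written does not cancel these terms.

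The paper disposes of them by a purely algebraic mechanism: for a disjoint pair the positive punctures of $u_{1}$ and $u_{2}$ are distinct chords of $\word$, and one verifies (by tracking the canonical orderings of Definition \ref{Def:FullInscriptionOperator} around $\partial\planarDiagram(\word)$) that applying $\mu_{\boundaryWord(u_{1})}$ first and then $\mu_{\boundaryWord(u_{2})}$ to the appropriate tensor factor produces the same monomial as the opposite order. Swapping $(u_{1}, u_{2})\mapsto(u_{2}, u_{1})$ is therefore a free $\Z/2\Z$ action on the set of such contributions, and they cancel over $\field = \Z/2\Z$. The ordering/associativity verification you flag at the end of your proposal is precisely what is needed here — but it must be carried out for the disjoint pairs, not only for the nested/glued ones. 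Once you add this second case, and keep your gluing argument for the case where some positive puncture of $u_{2}$ matches a negative puncture of $u_{1}$, your proof coincides with the paper's.
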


\begin{proof}
Clearly $\Algebra$ is a max-filtered graded algebra. Because $\partial$ is defined using $\mu_{\boundaryWordThicc}$ operators, it preserves the word length filtration by Lemma \ref{Lemma:FiltrationPreservation}. That $\partial$ has degree $-1$ follows from Lemma \ref{Lemma:MuDegree}. Indeed $|\word| - |\partial \word|$ is computed as the index of the holomorphic disks whose count contributes to $\partial$, which is always $1$. To establish that $\partial$ squares to zero, it suffices to analyze $\partial|_{\orbitVS}$ as we're using the Leibniz rule.

For a word $\word$, an element $h \in H_{1}(\Leg)$, and an ordered collection $\word^{-}_{1}, \dots , \word^{-}_{m}$ of boundary words for which $|\word| - 1 = \sum |\word^{-}_{i}|$ write $\thicc{\word}$ for the product $\thicc{\word} = \word^{-}_{1}\cdots \word^{-}_{m}$ and $\langle \partial^{2}\word, e^{h}\thicc{\word} \rangle \in \field$ for the coefficient of $e^{h}\thicc{\word}$ in $\partial^{2}\word$. Ignoring trivial strips, contributions to $\langle \partial^{2}\word, \thicc{\word}\rangle$ are ordered pairs $u_{1}, u_{2}$ of holomorphic disks with
\begin{equation*}
\begin{gathered}
\ind(u_{1}) = \ind(u_{2}) = 1,\quad h(u_{1}) + h(u_{2}) = h,\\
\planarDiagram(u_{1}) \subset \planarDiagram(\word), \quad \planarDiagram(u_{2}) \subset \left(\planarDiagram(\word) \setminus \planarDiagram(u_{1})\right).
\end{gathered}
\end{equation*}
Recall that the $h(u_{k})$ are defined in \S \ref{Sec:TwistedCoeffs}. The inclusions in the second row indicate of the above equation indicate inscriptions of planar diagrams. There are two cases to consider.

First, if some positive puncture of $u_{2}$ connects to a negative puncture of $u_{1}$, then $\planarDiagram(u_{2}) \cup \planarDiagram(u_{1})$ forms a connected region in $\planarDiagram(\word)$ whose complement is the union of the $\planarDiagram(\word_{i}^{-})$. By gluing, a bubble tree with vertices assigned to the $u_{k}$ and a single edge determines a point in the boundary of the moduli space of $\ind(u) = 2$ admissible curves $\ModSpace_{\Leg}\left(\boundaryWord\right)$ with $\mu_{\boundaryWord}\word = e^{h}\thicc{\word}$. It follows that $\langle \partial^{2}\word, e^{h}\thicc{\word} \rangle$ counts points in $\partial \overline{\ModSpace_{\Leg}\left(\boundaryWord\right)}$. By the compactness results of Lemma \ref{Lemma:SympBaseTransversality}, this is a count of points in the boundary of a compact $1$-manifold and so is zero.

Second, all positive punctures of $u_{2}$ may be sent to positive punctures of $\word$ in which case the $\planarDiagram(u_{k}) \subset \planarDiagram(\word)$ for $k=1, 2$ are disjoint and the positive punctures of the $u_{k}$ must all be distinct. Tracking the sequential orderings of the endpoints of negative chords around the boundary of $\planarDiagram(\word)$, it follows that there are $m_{l}, m_{r}, m'_{l}, m'_{r} \in \Z_{\geq 0}$ for which
\begin{equation*}
(1^{\otimes m_{l}} \otimes \mu_{\boundaryWord(u_{1})}\circ 1^{\otimes m_{r}})\circ \mu_{\boundaryWord(u_{2})}\word =(1^{\otimes m'_{l}} \otimes \mu_{\boundaryWord(u_{2})}\circ 1^{\otimes m'_{r}})\mu_{\boundaryWord(u_{1})}\word.
\end{equation*}
So there is a $\Z/2\Z$ action on the pairs $(u_{1}, u_{2})$ of index $1$ disks contributing to $\langle \partial^{2}\word, \thicc{\word} \rangle$ given by swapping the order of the pair. Since this action is free and we are working over $\field = \Z/2\Z$, $\langle \partial^{2}\word, \thicc{\word} \rangle = 0$.
\end{proof}

\subsection{Definition of cobordism morphisms}

Now we work with a partitioned exact Lagrangian cobordism $\LagGrouped: \LegGroupedUp \rightarrow \LegGroupedDown$ with associated filtered $\field$ modules and algebras $\Algebra^{\pm}$. When $\Lag$ is a homology cobordism, we work with $\field[H_{1}(\Lag)]$. Otherwise we use $\field$ coefficients.

Assume that an almost complex structure $J$ on $\R_{s} \times M$ has been chosen which satisfies the conclusions of Lemma \ref{Lemma:CobTransversality}. Then if $\ModSpace_{\Lag}(\boundaryWordThicc)$ is non-empty and satisfies $\ind(\boundaryWordThicc) = 0$, then each $\boundaryWord \in \boundaryWordThicc$ has $\ind\left(\boundaryWord\right) = 0$. Again, the moduli space must consist of a number of points by compactness. We define
\begin{equation*}
\newRSFTA\LagGrouped: \orbitVS^{+} \rightarrow \AlgebraDown, \quad \newRSFTA\LagGrouped = \sum_{\ind(\boundaryWordThicc)=0} \#\ModSpace(\boundaryWordThicc)\mu_{\boundaryWordThicc}
\end{equation*}
and extend to a unital algebra morphism,
\begin{equation*}
\newRSFTA\LagGrouped 1_{\AlgebraUp} = 1_{\AlgebraDown}, \quad \newRSFTA\LagGrouped(x \cdot y) = \newRSFTA\LagGrouped x \cdot \newRSFTA\LagGrouped y.
\end{equation*}

\begin{thm}
$\newRSFTA\LagGrouped$ is a filtration preserving degree $0$ chain map,
\begin{equation}\label{Eq:ThmCobChain}
\partial^{-}\circ \newRSFTA\LagGrouped - \newRSFTA\LagGrouped \circ \partial^{+} = 0.
\end{equation}
\end{thm}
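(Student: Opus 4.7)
The plan is to mirror the $\partial^{2} = 0$ proof above, replacing the $1$-parameter family of $\ind = 2$ admissible disks in the symplectization of $\Leg$ by a $1$-parameter family of $\ind = 1$ admissible configurations in $\Lag$. Degree and filtration properties are immediate: $\newRSFTA\LagGrouped$ is built from operators $\mu_{\boundaryWordThicc}$ with $\ind(\boundaryWordThicc) = 0$, so Lemma \ref{Lemma:MuDegree} yields $\deg \newRSFTA\LagGrouped = 0$ and Lemma \ref{Lemma:FiltrationPreservation} yields filtration preservation; both properties pass to the multiplicative extension. Because $\partial^{\pm}$ satisfy the Leibniz rule and $\newRSFTA\LagGrouped$ is a unital algebra map, Equation \eqref{Eq:ThmCobChain} only needs to be verified on generators $\word \in \orbitVS^{+}$.

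For the chain map identity on such a $\word$, I would fix a target monomial $e^{h}\thicc{\word}^{-}$ with $\thicc{\word}^{-} = \word^{-}_{1}\cdots \word^{-}_{m}$ and $\sum_{j}|\word^{-}_{j}| = |\word| - 1$, and interpret the coefficient
\begin{equation*}
\langle (\partial^{-}\circ \newRSFTA\LagGrouped - \newRSFTA\LagGrouped \circ \partial^{+})\word,\ e^{h}\thicc{\word}^{-}\rangle \in \field
\end{equation*}
as a mod-$2$ count of boundary points of the compactified $\ind = 1$ moduli spaces $\overline{\ModSpace_{\Lag}(\boundaryWordThicc)}$ realizing this input-output data. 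By Lemma \ref{Lemma:CobTransversality} these are compact $1$-manifolds with corners. The SFT compactness theorem of Section \ref{Sec:BoundaryWords}, together with Lemma \ref{Lemma:InscriptionAdmiss}, identifies their boundary as two-level bubble trees of two types: (a) an $\ind = 1$ admissible disk in $\R_{s} \times \Leg^{-}$ attached at a positive puncture to a negative puncture of an $\ind = 0$ configuration in $\Lag$, counted by $\langle \partial^{-}\circ \newRSFTA\LagGrouped\word, e^{h}\thicc{\word}^{-}\rangle$; and (b) an $\ind = 1$ admissible disk in $\R_{s} \times \Leg^{+}$ attached at a negative puncture to a positive puncture of an $\ind = 0$ configuration in $\Lag$, counted by $\langle \newRSFTA\LagGrouped\circ \partial^{+}\word, e^{h}\thicc{\word}^{-}\rangle$. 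Standard SFT gluing, accessible via Lemma \ref{Lemma:LCHShift} to reduce to the $\CE$ setup of \cite{EES:LegendriansInR2nPlus1, EES:PtimesR, Ekholm:Z2RSFT}, identifies each such bubble tree with exactly one boundary point of exactly one $\overline{\ModSpace_{\Lag}(\boundaryWordThicc)}$. The mod-$2$ count of boundary points of a compact $1$-manifold being zero then yields Equation \eqref{Eq:ThmCobChain}.

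The main obstacle I expect is combinatorial bookkeeping: under the gluing identifications of strata (a) and (b), the sequential ordering of output words prescribed by Definition \ref{Def:FullInscriptionOperator} must exactly reproduce the ordering produced by applying $\partial^{-}$ via Leibniz to $\newRSFTA\LagGrouped\word$ (respectively, by applying $\newRSFTA\LagGrouped$ multiplicatively to $\partial^{+}\word$), including the contributions from trivial strips filling in unvisited positive chords of $\word$ as in Figure \ref{Fig:DifferentialFullInscription}. This is where the admissibility hypothesis on $\word$ is essential: it forces every full inscription into $\planarDiagram(\word)$ to be unique up to isotopy, so the nested planar diagram picture of a two-level bubble tree determines the sequential ordering of output words unambiguously, and no spurious cancellations or miscounts arise. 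With this matched, the theorem reduces to a translation of the moduli-theoretic input from \cite{EES:LegendriansInR2nPlus1, EES:PtimesR, Ekholm:Z2RSFT}, and no further analytical work is needed.
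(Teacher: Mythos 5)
Your proposal is correct and follows essentially the same route as the paper's proof: degree and filtration preservation from Lemmas \ref{Lemma:MuDegree} and \ref{Lemma:FiltrationPreservation}, reduction to generators via multiplicativity, and identification of the boundary of the compactified $\ind = 1$ moduli spaces $\overline{\ModSpace_{\Lag}(\boundaryWordThicc)}$ with two-level bubble trees whose $\ind = 1$ vertex lives in one of the two cylindrical ends (contributing to $\partial^{-}\circ\newRSFTA\LagGrouped$ or $\newRSFTA\LagGrouped\circ\partial^{+}$ respectively), so that the mod-$2$ boundary count vanishes. Your closing remark on the ordering bookkeeping is exactly the point the paper disposes of by reading the nested planar diagram of the bubble tree.
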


\begin{proof}
Filtration preservation and the degree $0$ condition again follow from Lemmas \ref{Lemma:FiltrationPreservation} and \ref{Lemma:MuDegree}. The chain map condition can again be checked on individual elements of $\orbitVS^{+}$ using the compactness and transversality results of Lemma \ref{Lemma:CobTransversality} and will follow from standard compactness and gluing arguments.

For $\ind(\boundaryWordThicc) = 1$, the boundary of the compactification of the moduli space $\ModSpace_{\Lag}(\boundaryWordThicc)$ will consist of collections of bubble trees $\{ \tree_{i} \}$ having $1$ vertex of one $\tree_{i}$ assigned an $\ind(u) = 1$ holomorphic map and the remaining vertices assigned $\ind(u) = 0$ holomorphic maps.

The $\ind = 1$ vertex must correspond to a holomorphic map in one of the $\Lag_{\Leg^{\pm}}$. If not, then it corresponds to an element in the interior of $\ModSpace_{\Lag}$, which is impossible since we are at the boundary of the moduli space.

The $\ind = 0$ vertices must all either correspond to trivial strips or $\ModSpace_{\Lag}$ disks. Otherwise, we could take the corresponding disk, project it down to $\SympBase$ and obtain a disk living in a $\ModSpace_{\Leg^{\pm}}$ of expected dimension $-1$ in violation of our transversality assumptions. Therefore all of the holomorphic disks associated to the $\ind = 0$ vertices give us a $\newRSFTA\LagGrouped$ contribution.

If the $\ind = 1$ holomorphic disk is a $\Lag_{\Leg^{+}}$ disk, then by looking at the planar diagram for the bubble tree it contributes to $\partial^{+}\word$ (after applying Lemma \ref{Lemma:Lifting}) and the $\ind = 0$ disks contribute to $\newRSFTA\LagGrouped \partial^{+}\word$. Otherwise the $\ind = 0$ disks contribute to $\newRSFTA\LagGrouped\word$ and the $\ind = 1$ disk contribute to $\partial^{-}\newRSFTA\LagGrouped\word$ (again via Lemma \ref{Lemma:Lifting}).

So points in the boundary of our compactified $\ind = 1$ moduli spaces contribute to $\partial^{-}\newRSFTA\LagGrouped - \newRSFTA\LagGrouped\partial^{+}$. By gluing, this is exactly the count of points in the boundary of the $\ModSpace_{\Lag}(\boundaryWordThicc)$ spaces of $\ind = 1$. Since these are compact $1$-manifolds by Lemma \ref{Lemma:CobTransversality}, the proof is complete.
\end{proof}

\section{max-filtered DGAs and their invariants}\label{Sec:mfDGA}

In this section we describe basic properties of max-filtered DGAs and their invariants. Throughout $\rho$ is a non-negative integer and $\ring$ is a unital $\field$ algebra.

\subsection{Basic definitions}

\begin{defn}
A $\Z/2\rho\Z$-graded \emph{differential graded algebra} (DGA) over a unital ring $\ring$ is an algebra $\Algebra$ over $\ring$ with graded components and differential $\partial$
\begin{equation*}
\Algebra = \oplus_{i \in \Z/2\rho\Z}\Algebra_{i}, \quad \partial: \Algebra_{i} \rightarrow \Algebra_{i-1}
\end{equation*}
satisfying the relations
\begin{equation*}
\Algebra_{i} \cdot \Algebra_{i'} \subset \Algebra_{i + i'}, \quad \partial (xy) = (\partial x)y + (-1)^{|x|}x(\partial y), \quad \partial^{2} = \partial 1 = 0.
\end{equation*}
\end{defn}

We also consider DGAs whose differentials have degree $1$ in which case we use $d$ in place of $\partial$. max-filtered graded algebras (mfGAs) and max-filtered DGAs (mfDGAs) have already been defined in the introduction. The following example demonstrates the abundance of mfDGAs in algebraic topology.

\begin{ex}
Set $\field = \R$ or $\C$, $\rho=0$, and consider a compact manifold $L$ equipped with an exhaustion by a countable collection of nested open sets
\begin{equation*}
U^{0} \subset U^{1} \subset \cdots L.
\end{equation*}
The open cover induces a filtration $\filtration$ on the de Rham complex $(\Omega^{\ast} = \Omega^{\ast}(L), d)$ with $\filtration^{\filtLevel}\Omega^{\ast}$ being the space of forms with compact support contained in $U^{\filtLevel}$. With the wedge product $\wedge$, $(\Omega^{\ast}, d, \filtration)$ is a mfDGA as
\begin{equation*}
\filtration^{\filtLevel}\Omega^{\ast} \wedge \filtration^{\filtLevel'}\Omega^{\ast} \subset \filtration^{\min(\filtLevel, \filtLevel')}\Omega^{\ast} \subset \filtration^{\max(\filtLevel, \filtLevel')}\Omega^{\ast}.
\end{equation*}
This set up can be used to derive classical cohomological spectral sequences such as the Leray-Serre sequence for a fibration. See for example \cite{BottTu}.
\end{ex}

A \emph{morphism of mfGAs}
\begin{equation*}
\phi: (A, \filtration_{A}) \rightarrow (B, \filtration_{B})
\end{equation*}
is a degree zero algebra morphism which preserves the unit and filtration structures:
\begin{equation*}
\phi(a_{1}a_{2}) = \phi(a_{1})\phi(a_{2}),\quad \phi(1_{A}) = 1_{B}, \quad \phi (\filtration^{\filtLevel}_{A}A) \subset \filtration^{\filtLevel}_{B}B.
\end{equation*}
A \emph{morphism} of mfDGAs
\begin{equation*}
\phi: (\Algebra, \partial_{\Algebra}, \filtration_{\Algebra}) \rightarrow (\AlgebraOther, \partial_{\AlgebraOther}, \filtration_{\AlgebraOther})
\end{equation*}
is a mfGA morphism which is also a chain map, $\partial_{\AlgebraOther}\phi = \phi \partial_{\Algebra}$.

A \emph{chain homotopy} of DGA morphisms $\phi, \psi: \Algebra \rightarrow \AlgebraOther$ is a degree $1$ map $h: \Algebra \rightarrow \AlgebraOther$ satisfying
\begin{equation*}
\phi - \psi = \partial_{\AlgebraOther}\circ h + h \circ \partial_{\Algebra}.
\end{equation*}
A \emph{chain homotopy of mfDGA morphisms} is a chain homotopy of DGA morphisms which preserves the filtration structure,
\begin{equation*}
h (\filtration^{\filtLevel}_{\Algebra}\Algebra ) \subset \filtration^{\filtLevel}_{\AlgebraOther}\AlgebraOther.
\end{equation*}
A \emph{chain homotopy equivalence} between (DGAs or mfDGAs) $\Algebra$ and $\AlgebraOther$ is a pair of morphism
\begin{equation*}
\begin{tikzcd}
\Algebra \arrow[r, bend left=50, "\phi"] & \AlgebraOther \arrow[l, bend left=50, "\psi"]
\end{tikzcd}
\end{equation*}
such that $\psi \phi$ is chain homotopic to $\Id_{\Algebra}$ and $\phi\psi$ is chain homotopic to $\Id_{\AlgebraOther}$.

Each filtered piece $\filtration^{\filtLevel}\Algebra$ of $\Algebra$ is itself an mfDGA and the canonical examples of mfDGA morphisms are inclusion maps
\begin{equation}\label{Eq:FiltrationSequence}
\filtration^{0}\Algebra \rightarrow \filtration^{1}\Algebra \rightarrow \cdots \rightarrow \Algebra.
\end{equation}
Provided a morphism $\phi$ as above, inclusions of the filtered pieces of $\Algebra$ and $\AlgebraOther$ fit together in a commutative diagram of mfDGA morphisms
\begin{equation}\label{Eq:FiltrationLadder}
\begin{tikzcd}
\filtration^{0}_{\Algebra}\Algebra \arrow[r]\arrow[d] & \filtration^{1}_{\Algebra}\Algebra \arrow[d] \arrow[r] & \cdots \arrow[r] & \Algebra \arrow[d]\\
\filtration^{0}_{\AlgebraDown}\AlgebraOther \arrow[r] & \filtration^{1}_{\AlgebraOther}\AlgebraOther \arrow[r] & \cdots \arrow[r] & \AlgebraOther
\end{tikzcd}
\end{equation}

Because mfDGA chain homotopies are filtration preserving, a chain homotopy $h$ between pairs of mfDGA morphisms $\phi, \psi: \Algebra \rightarrow \AlgebraOther$ will induce chain mfDGA chain homotopies between filtered pieces
\begin{equation*}
\phi, \psi: \filtration^{\filtLevel}_{\Algebra}\Algebra \rightarrow \filtration^{\filtLevel}_{\AlgebraOther}\AlgebraOther.
\end{equation*}
Consequently if $\Algebra$ is mfDGA homotopic to $\AlgebraOther$, then each $\filtration^{\filtLevel}_{\Algebra}\Algebra$ is homotopic to $\filtration^{\filtLevel}_{\AlgebraOther}\AlgebraOther$ for each $\filtLevel$.

\subsection{Homology and torsion}

The homology of a mfDGA is a mfGA which is an invariant of the mfDGA chain homotopy class of $\Algebra$. The inclusion morphisms of Equation \eqref{Eq:FiltrationSequence} induce mfGA morphisms
\begin{equation*}
H(\filtration^{0}\Algebra) \rightarrow H(\filtration^{1}\Algebra) \rightarrow \cdots \rightarrow H(\Algebra).
\end{equation*}
Applying homology to Equation \eqref{Eq:FiltrationLadder} produces a commutative diagram of mfGA morphisms
\begin{equation*}
\begin{tikzcd}
H(\filtration^{0}\Algebra) \arrow[r]\arrow[d] & H(\filtration^{1}\Algebra) \arrow[d] \arrow[r] & \cdots \arrow[r] & H(\Algebra) \arrow[d]\\
H(\filtration^{0}\AlgebraOther) \arrow[r] & H(\filtration^{1}\AlgebraOther) \arrow[r] & \cdots \arrow[r] & H(\AlgebraOther).
\end{tikzcd}
\end{equation*}

The homology $H(\Algebra)$ of a DGA vanishes if and only if $1 \in \Algebra$ is exact, $1 \in \im \partial$. So if we have a mfDGA morphism $\phi$ as above, then
\begin{equation}\label{Eq:VanishingImpliesVanishing}
H(\Algebra) = 0 \implies H(\AlgebraOther) = 0.
\end{equation}
So if $H(\filtration^{\filtLevel}\Algebra) = 0$ for some $\filtLevel$, then $H(\filtration^{\filtLevel'}\Algebra) = 0$ for all $\filtLevel' \geq \filtLevel$.

\begin{defn}
The \emph{H-torsion of an mfDGA},
\begin{equation*}
\tau_{H} = \tau_{H}(\Algebra) \in \Z_{\geq 0} \cup \{\infty\},
\end{equation*}
is the greatest $\filtLevel$ for which $H(\filtration^{\filtLevel}\Algebra) \neq 0$.
\end{defn}

Clearly $\tau_{H}$ is a quasi-isomorphism invariant of an mfDGA and the existence of a morphism $\phi$ implies
\begin{equation*}
\tau_{H}(\AlgebraOther) \leq \tau_{H}(\Algebra),
\end{equation*}
refining Equation \eqref{Eq:VanishingImpliesVanishing}.

\subsection{Augmentations and A-torsion}

\begin{defn}
Let $\Algebra$ be a $\Z/2\rho$-graded DGA. An \emph{augmentation of $\Algebra$} is a DGA morphism
\begin{equation*}
\aug: \Algebra \rightarrow \field,
\end{equation*}
where $\field$ is viewed as a $\Z/2\rho$-graded DGA with trivial differential.
\end{defn}

\begin{rmk}
While we only discuss augmentations with target $\field$, representations to other rings are interesting and useful in applications \cite{NR:Helix, Sivek:NoAugs}. Interesting augmentations also come from reducing gradings of some $\Z/2\rho'$ graded DGA to $\Z/\rho$ with $\rho$ not necessarily even. See \cite{EtnyreNg:LCHSurvey, NS:AugRuling} for further discussion and results.
\end{rmk}

The primary utility of augmentations is to establish the non-vanishing of DGAs \cite{Chekanov:LCH}:

\begin{lemma}\label{Lemma:AugImpliesHomologyNonzero}
The existence of an $\aug$ implies that $H(\Algebra) \neq 0$.
\end{lemma}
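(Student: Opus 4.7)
The plan is to observe that the existence of $\aug$ obstructs the exactness of the unit $1 \in \Algebra$, which in turn forces $[1] \neq 0 \in H(\Algebra)$. Since $\aug$ is a unital DGA morphism and $\field$ is equipped with the trivial differential, the chain map condition gives $\aug \circ \partial = \partial_{\field} \circ \aug = 0$. Thus $\aug$ vanishes on the subspace of boundaries $\im \partial \subset \Algebra$.

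Next I would suppose for contradiction that $H(\Algebra) = 0$. Then $1 \in \Algebra$ would be exact, say $1 = \partial y$ for some $y \in \Algebra$. Applying $\aug$ yields
\begin{equation*}
1_{\field} = \aug(1) = \aug(\partial y) = 0,
\end{equation*}
where the first equality uses unitality of $\aug$ and the last equality uses that $\aug$ kills boundaries. Since $1_{\field} \neq 0$ in $\field$, this is a contradiction, so $H(\Algebra) \neq 0$.

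There is essentially no obstacle; the only subtle point to verify is that "DGA morphism" in the author's convention requires unitality, which is implicit in the previous subsection where mfDGA morphisms are explicitly required to preserve the unit. Assuming that convention, the proof is a one-line computation. I would present it in roughly three sentences without further embellishment.
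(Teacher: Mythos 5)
Your proof is correct and is the standard argument (the paper itself only cites Chekanov for this fact, and earlier records exactly the ingredient you use, namely that $H(\Algebra)=0$ iff $1\in\im\partial$). The unitality point you flag is indeed built into the paper's definition of a (mf)DGA morphism, so there is nothing missing.
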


When $\Algebra$ is an mfDGA, the filtration structure allows us to pull back augmentations along inclusion morphisms: If $\aug$ is an augmentation of $\filtration^{\filtLevel}\Algebra$, then there are induced augmentations
\begin{equation}\label{Eq:AugPullback}
\begin{tikzcd}
\filtration^{0}\Algebra \arrow[r] \arrow[d, dashed] & \filtration^{1}\Algebra \arrow[r]\arrow[d, dashed] & \cdots \arrow[r] & \filtration^{\filtLevel}\Algebra \arrow[d, "\aug"]\\
\field \arrow[r, "\Id"] & \field \arrow[r, "\Id"] & \cdots \arrow[r, "\Id"] & \field
\end{tikzcd}
\end{equation}
For a given $\Algebra$, we can ask which $\filtration^{\filtLevel}\Algebra$ admit augmentations and package the answer with the following:

\begin{defn}
The \emph{A-torsion}
\begin{equation*}
\tau_{A} = \tau_{A}(\Algebra, \partial, \filtration) \in \Z_{\geq 0} \cup \{\infty\}
\end{equation*}
is the greatest $\filtLevel$ for which $\filtration^{\filtLevel}\Algebra$ admits an augmentation.
\end{defn}

So $\tau_{A} = \infty$ if and only if $\Algebra$ admits an augmentation. According to Lemma \ref{Lemma:AugImpliesHomologyNonzero},
\begin{equation*}
\tau_{A}(\Algebra) \leq \tau_{H}(\Algebra).
\end{equation*}
By our ability to pull back augmentations along $\phi$, the existence of a mfDGA morphism $\phi: \Algebra \rightarrow \AlgebraOther$ implies
\begin{equation*}
\tau_{A}(\Algebra) \geq \tau_{A}(\AlgebraOther).
\end{equation*}
The same reasoning implies that the A-torsion is a mfDGA chain homotopy invariant.

For a given $\aug: \filtration^{\filtLevel}\Algebra \rightarrow \field$ it is natural to ask if an extension exists
\begin{equation}\label{Eq:AugExtension}
\begin{tikzcd}
\filtration^{\filtLevel}\Algebra \arrow[r]\arrow[dr, "\aug"] & \filtration^{\filtLevel + 1}\Algebra \arrow[d, dashed, "\exists ?"]\\
& \field,
\end{tikzcd}
\end{equation}
and if so, when is such an extension unique. The \emph{augmentation tree}, defined in \S \ref{Sec:AugTrees} below, will package the answer to this question on a homotopical level.

\subsection{Free mfDGAs}

Let $\ring$ be a graded $\field$ algebra. A mfDGA $(\Algebra, \partial, \filtration)$ is \emph{free} if there is a collection of generators $a_{i}^{\filtLevel}, \filtLevel\geq 0$ with degree gradings $|a_{i}^{\filtLevel}| \in \Z/2\rho\Z$ from which we can define a filtered, graded $\ring$ module
\begin{equation*}
\filtration^{0}\orbitVS \subset \filtration^{1}\orbitVS \subset \cdots \subset \orbitVS, \quad \filtration^{\filtLevel}\orbitVS_{\deg} = \bigoplus_{\filtLevel' \leq \filtLevel,\  |a^{\filtLevel'}_{i}|=\deg}\ring a_{i}^{\filtLevel'}, \quad \filtration^{\filtLevel}\orbitVS = \bigoplus_{\deg \in \Z/2\rho\Z} \filtration^{\filtLevel}V_{\deg}.
\end{equation*}
so that $(\Algebra, \filtration)$ is the tensor algebra with the induced filtration
\begin{equation*}
\filtration^{0}\Algebra \subset \filtration^{1}\Algebra \subset \cdots \subset \Algebra, \quad \filtration^{\filtLevel}\Algebra = \tensorAlg_{\ring}(\filtration^{\filtLevel}\orbitVS)
\end{equation*}
where $\tensorAlg_{\ring}$ is the tensor algebra over $\ring$. We require that the differential $\partial$ satisfies the Leibniz rule with respect to tensor multiplication. A free mfDGA is \emph{finitely generated} if $\#(a_{i}^{\filtLevel}) \leq \infty$. If $(\Algebra, \partial, \filtration)$ is free, then $\partial$ is entirely determined by $\partial|_{\orbitVS}$. Free DGAs are defined similarly with filtrations ignored \cite{Chekanov:LCH}. Free commutative mfDGAs $\Algebra^{\com}$ are defined similarly, using the exterior algebra $\filtration^{\filtLevel}\Algebra^{\com} = \tensorAlgCom(\filtration^{\filtLevel}\orbitVS)$ instead of the full tensor algebra.

\begin{rmk}
For $\newRSFTA$ we can take $\ring$ to be $\field$ or $\field[H_{1}(\Leg)]$. In the latter case we can identify
\begin{equation*}
\tensorAlg_{\ring}(\oplus \ring \word_{i}) = \ring \otimes_{\field} \left(\tensorAlg_{\field}(\oplus \field \word_{i})\right)
\end{equation*}
so that the notation of this section matches that of the rest of the paper.
\end{rmk}

\begin{ex}
Any free DGA becomes a free mfDGA by declaring that $\orbitVS = \filtration^{0}\orbitVS$ or $\filtration^{1}\orbitVS$. The Chekanov-Eliashberg algebra of a Legendrian $\Leg$ in a contact manifold of the form $\R \times \SympBase$ is the canonical example of a free DGA. The contact homology algebra $CH\Mxi$ of a contact manifold $\Mxi$ is the canonical example of a free commutative DGA.
\end{ex}

\begin{ex}
Let $\Leg$ be a Legendrian submanifold whose chords $\chord_{i}$ have action $\action_{i} = \action(\chord_{i})$ with respect to some contact form. Assume that the $\chord_{i}$ are indexed so that the $\action_{i}$ are monotonically ordered $\action_{i} \leq \action_{i+1}$. The we can label $a^{i}_{i} = \chord_{i}$ to obtain a filtered $\filtration$ vector space spanned by the $\chord_{i}$ as above. This induces an mfDGA structure on $\CE(\Leg)$ by the fact that $\partial_{\CE}$ is action decreasing. Such action filtrations are of frequent use in quantitative applications \cite{DRS:Persistence} or direct limit construction in symplectic field theory, cf. \cite{BH:ContactDefinition, Siegel:RSFT}. Using alternative language, $\tau_{A}$ appears in \cite[Theorem 1.1]{DRS:Persistence}.
\end{ex}

\begin{assump}
Throughout the remainder of the article, mfDGAs are free and finitely generated.
\end{assump}

Let $\Algebra$ have generating set $\{ a^{\filtLevel}_{i} \}$ and $\AlgebraOther$ have generating set $\{b^{\filtLevel}_{i}\}$. A mfDGA isomorphism $\phi: \Algebra \rightarrow \AlgebraOther$ is \emph{elementary} if there is some $j$ for which
\begin{equation*}
\phi(a^{\filtLevel}_{i}) = \begin{cases}
b^{\filtLevel}_{i} & i \neq j\\
\const b^{\filtLevel}_{j} + v_{j} & i = j
\end{cases}
\end{equation*}
for some unit $\const \in \ring$ and element $v_{j} \in \AlgebraOther$ which does not have $b^{\filtLevel}_{i}$ as a factor. A mfDGA isomorphism is \emph{tame} if it is a composition of elementary isomorphisms.

\begin{defn}
For a free $(\Algebra, \partial, \filtration)$ with generating set $\{ a_{i}^{\filtLevel}\}$, define the \emph{$(f, \deg)$-stabilization $(\Algebra, \partial, \filtration)$} to be the mfDGA $(\Stab_{f,\deg}\Algebra, \partial, \filtration)$ with
\be
\item underlying graded vector space $\Stab_{f,\deg}\orbitVS = \orbitVS \oplus \ring e^{f}_{\deg} \oplus \ring e^{f}_{\deg-1}$ where $|e^{f}_{j}| = j$,
\item filtration $\filtration^{\filtLevel}S_{f,\deg}\orbitVS = \filtration^{\filtLevel}\orbitVS$ for $\filtLevel < f$ and $\filtration^{\filtLevel}S_{f,\deg}\orbitVS = \filtration^{\filtLevel}\orbitVS \oplus  \ring e^{f}_{\deg} \oplus \ring e^{f}_{\deg-1}$ for $\filtLevel \geq f$,
\item differential extended by $\partial e_{\deg}^{f} = e_{\deg-1}^{f}$ and $\partial e_{\deg-1}^{f} = 0$.
\ee
\end{defn}

\begin{defn}\label{Def:FilteredStableTameIso}
A pair of mfDGAs $(\Algebra, \partial_{\Algebra}, \filtration_{\Algebra})$, $(\AlgebraOther, \partial_{\AlgebraOther}, \filtration_{\AlgebraOther})$ are \emph{filtered stable tame isomorphic} if there are finite sequences
\begin{equation*}
(f_{a, k}, \deg_{a, k}), k=1, \dots, n_{a}, \quad (f_{b, k}, \deg_{b, k}), k=1, \dots, n_{b}
\end{equation*}
and tame isomorphisms
\begin{equation*}
\phi: (\Stab_{f_{a, n_{a}},\deg_{a, n_{a}}}\cdots \Stab_{f_{a, 1},\deg_{a, 1}}\Algebra, \partial_{\Algebra}, \filtration_{\Algebra}) \rightarrow (\Stab_{f_{b, n_{b}},\deg_{b, n_{b}}}\cdots \Stab_{f_{b, 1},\deg_{b, 1}}\AlgebraOther, \partial_{\AlgebraOther}, \filtration_{\AlgebraOther}).
\end{equation*}
\end{defn}

Provided a stabilization $\Stab_{f, \deg}\Algebra$ of a mfDGA, we have a naturally defined pair of mfDGA morphisms
\begin{equation*}
\begin{tikzcd}
\Stab_{f, \deg}\Algebra \arrow[r, bend left=50, "\pi"] & \Algebra \arrow[l, bend left=50, "\inc"]
\end{tikzcd},
\quad \pi(v) = \begin{cases}
0 & v = e^{f}_{\deg}, e^{f}_{\deg-1}, \\
v & \text{otherwise}.
\end{cases}
\end{equation*}
Clearly $\pi \inc = \Id_{\Algebra}$. According to \cite[Corollary 3.11]{ENS:Orientations}, $\inc \pi$ is chain homotopic to $\Id_{\Stab_{f, \deg}\Algebra}$.\footnote{The proof in \cite{ENS:Orientations} for DGAs extends to mfDGAs without modification. The homotopy is given below in Equation \eqref{Eq:StabChainHomotopy} when $\field = \Z/2\Z$.} This implies the following basic result.

\begin{lemma}
A free mfDGA is homotopic to its stabilization. Therefore the homology of an mfDGA is a stable tame isomorphism invariant.
\end{lemma}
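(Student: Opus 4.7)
The plan is first to construct an explicit filtered chain homotopy $h$ from $\Id_{\Stab_{f,\deg}\Algebra}$ to $\inc \circ \pi$, which together with the tautology $\pi \circ \inc = \Id_\Algebra$ exhibits the mfDGA chain homotopy equivalence $\Algebra \simeq \Stab_{f,\deg}\Algebra$. The second sentence then follows by a short diagram chase: a filtered stable tame isomorphism is by definition a tame (hence honest) isomorphism between iterated stabilizations, and tame isomorphisms induce isomorphisms on the homology of every filtered piece, while stabilizations preserve homology by the first part; iterating through the finitely many stabilizations in the definition gives the result at each filtered level compatibly with the inclusions.

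To produce $h$, I would follow \cite{ENS:Orientations} and take $h$ to be the $(\Id, \inc\pi)$-derivation on the tensor algebra, i.e.\ $h(xy) = h(x)\,y + (\inc\pi)(x)\,h(y)$ with no signs since $\field = \Z/2\Z$. It is determined by specifying on generators: $h(a) = 0$ for every generator $a$ of the submodule $\orbitVS \subset \Stab_{f,\deg}\orbitVS$, $h(e^f_\deg) = 0$, and $h(e^f_{\deg-1}) = e^f_\deg$. The identity $\partial h + h\partial = \Id - \inc\pi$ can be checked on generators since both sides are $(\Id, \inc\pi)$-derivations: on $a \in \orbitVS$ both sides vanish because $\partial a$ lies in $\Algebra$ and $h$ is zero on every $\orbitVS$-generator; on $e^f_\deg$ one computes $\partial h(e^f_\deg) + h \partial(e^f_\deg) = 0 + h(e^f_{\deg-1}) = e^f_\deg = (\Id - \inc\pi)(e^f_\deg)$, and the case of $e^f_{\deg-1}$ is analogous.

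The main point requiring attention is the filtration bookkeeping, which is the one ingredient not directly imported from the DGA statement of \cite{ENS:Orientations}. Observe that $h$ sends each generator to either zero or to $e^f_\deg \in \filtration^f\Stab_{f,\deg}\orbitVS$, and that the new generators $e^f_\deg, e^f_{\deg-1}$ themselves lie in $\filtration^f$. A monomial in $\filtration^\ell \Stab_{f,\deg}\Algebra$ decomposes into factors each in $\filtration^\ell$; in the derivation expansion one factor gets replaced by its $h$-image while the remaining factors stay in $\filtration^\ell$, so by \eqref{Eq:FiltrationProduct} each summand lies in $\filtration^{\max(\ell, f)}$. This equals $\filtration^\ell$ when $\ell \geq f$, while when $\ell < f$ every summand vanishes outright because $h$ kills all generators available at those filtration levels. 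Hence $h$ preserves the filtration and is a valid mfDGA chain homotopy, yielding both assertions of the lemma.
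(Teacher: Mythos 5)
Your proof is correct and takes essentially the same route as the paper: the paper cites \cite[Corollary 3.11]{ENS:Orientations} and records the identical homotopy $h$ (the twisted derivation determined by $e^{f}_{\deg-1}\mapsto e^{f}_{\deg}$ and zero on all other generators, written out on monomials in Equation \eqref{Eq:StabChainHomotopy}). Your filtration bookkeeping supplies exactly the detail the paper dismisses with ``extends to mfDGAs without modification.''
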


As chain homotopies of mfDGAs preserve filtrations, we have the following obvious corollary:

\begin{cor}
The mfGA isomorphism class of each $H(\filtration^{\filtLevel}\Algebra)$ is a filtered stable tame isomorphism invariant of $\Algebra$.
\end{cor}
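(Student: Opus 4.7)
The plan is to chain together the two mechanisms that constitute filtered stable tame isomorphism: tame isomorphisms (which are honest mfDGA isomorphisms) and stabilizations (which are mfDGA chain homotopy equivalences by the preceding lemma). Both should induce mfGA isomorphisms on each $H(\filtration^{\filtLevel}\Algebra)$, and hence so does any composition of them.

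First I would handle the tame-isomorphism case. An elementary isomorphism $\phi: \Algebra \to \AlgebraOther$ is by definition an mfDGA morphism whose inverse is also an mfDGA morphism (inspection of the defining formula shows $\phi^{-1}$ is again elementary, since one can solve for the replaced generator). Therefore a tame isomorphism $\phi$ restricts, for each $\filtLevel$, to a mutually inverse pair of mfDGA morphisms $\filtration^{\filtLevel}\Algebra \leftrightarrows \filtration^{\filtLevel}\AlgebraOther$, and consequently induces an mfGA isomorphism $H(\filtration^{\filtLevel}\Algebra) \cong H(\filtration^{\filtLevel}\AlgebraOther)$ at the level of homology.

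Next I would handle stabilization. The previous lemma provides the morphisms $\pi: \Stab_{f,\deg}\Algebra \to \Algebra$ and $\inc: \Algebra \to \Stab_{f,\deg}\Algebra$ with $\pi \circ \inc = \Id_{\Algebra}$ and with $\inc \circ \pi$ chain homotopic to $\Id_{\Stab_{f,\deg}\Algebra}$ via an mfDGA chain homotopy $h$ (filtration-preserving, as noted in the paragraph preceding the corollary). Because $\pi$, $\inc$, and $h$ all preserve the filtration, they restrict to each filtered piece and exhibit a chain homotopy equivalence
\begin{equation*}
\filtration^{\filtLevel}\Stab_{f,\deg}\Algebra \simeq \filtration^{\filtLevel}\Algebra
\end{equation*}
of mfDGAs for every $\filtLevel$. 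Taking homology yields an mfGA isomorphism $H(\filtration^{\filtLevel}\Stab_{f,\deg}\Algebra) \cong H(\filtration^{\filtLevel}\Algebra)$, using the standard fact that chain-homotopic algebra morphisms induce the same map on homology (the algebra structure on homology is that induced by the underlying DGA maps, which are strict algebra morphisms).

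Finally, unwinding the definition of filtered stable tame isomorphism, any such equivalence between $\Algebra$ and $\AlgebraOther$ factors as a finite chain of stabilizations on each side followed by a tame isomorphism. Composing the mfGA isomorphisms produced in the previous two paragraphs at each filtration level $\filtLevel$ gives the desired mfGA isomorphism $H(\filtration^{\filtLevel}\Algebra) \cong H(\filtration^{\filtLevel}\AlgebraOther)$. The only point requiring care -- and what I would call the main, though modest, obstacle -- is verifying that the homotopy $h$ from the previous lemma actually preserves the filtration (so that it restricts to $\filtration^{\filtLevel}$); this is automatic from the explicit form of the homotopy (which sends the stabilization generators $e^{f}_{\deg}, e^{f}_{\deg-1}$ to elements introduced only at filtration level $\geq f$, and vanishes on the rest), and matches the general definition of mfDGA chain homotopy given earlier.
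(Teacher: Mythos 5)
Your proposal is correct and follows essentially the same route as the paper, which derives the corollary from the preceding lemma together with the observation that mfDGA chain homotopies (in particular the stabilization homotopy $h$) preserve filtrations and hence restrict to each $\filtration^{\filtLevel}$. Your extra verification that tame isomorphisms restrict to mfDGA isomorphisms of the filtered pieces is a point the paper leaves implicit but is entirely consistent with its argument.
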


\subsection{Augmentations of free mfDGAs}

An augmentation of a free DGA $\Algebra$ is completely determined by its values on $\orbitVS_{0}$, the $\deg = 0$ summand of $\orbitVS$.

\subsubsection{Augmentation varieties}

For each generator $a \in \orbitVS_{1}$ define a polynomial $D_{a} \in \field[\orbitVS_{0}]$ (the ring of polynomial functions on $V_{0}$) by taking $\partial a$, eliminating all terms which are not in $\orbitVS_{0}$, and making all variables commute. The algebraic variety
\begin{equation*}
\Variety_{\Aug} = \Variety_{\Aug}\Algebra= \{ v \in \orbitVS_{0}\ :\ D_{a}(v) = 0\ \forall a \in \orbitVS_{1} \} \subset \orbitVS_{0}
\end{equation*}
is the \emph{augmentation variety of $\Algebra$} \cite{Ng:ComputableInvariants}. By construction the points in $\Variety_{\Aug}\Algebra$ are exactly the augmentations of $\Algebra$. Indeed, suppose that $V_{0}$ is generated by some $a_{0, i}$. Given $v = \sum \epsilon_{i} a_{0, i}$ we have an augmentation
\begin{equation*}
\aug_{v}: \Algebra \rightarrow \field, \quad \aug_{v}(a) = \begin{cases}
\epsilon_{i} & a = a_{0, i} \\
0 & |a| \neq 0
\end{cases}
\end{equation*}
and the condition $\aug \partial = 0$ determines a point of $\Variety_{\Aug}$ from each $\aug$. These correspondences are bijective.

If $\Algebra$ is a mfDGA, then each $\filtration^{\filtLevel}\Algebra$ has an augmentation variety $\Variety_{\Aug}^{\filtLevel}$. Each inclusion morphism $\filtration^{\filtLevel}\Algebra \rightarrow \filtration^{\filtLevel + 1}\Algebra$ induces a morphism of varieties via Equation \eqref{Eq:AugPullback}
\begin{equation*}
\pi_{\filtLevel + 1}: \Variety_{\Aug}^{\filtLevel + 1} \rightarrow \Variety_{\Aug}^{\filtLevel}
\end{equation*}
so that $\im \pi_{\filtLevel} \subset \Variety_{\Aug}^{\filtLevel}$ is exactly the set of $\aug: \filtration^{\filtLevel}\Algebra \rightarrow \field$ which admit extensions solving Equation \eqref{Eq:AugExtension}.

Augmentation varieties are non-trivially modified by stabilization. It is not difficult to check that
\begin{equation*}
\Variety_{\Aug}\Stab_{f, \deg}\Algebra \simeq \begin{cases}
\Variety_{\Aug}\Algebra & \deg  \neq 0 \\
\field \times \Variety_{\Aug}\Algebra & \deg = 0
\end{cases}
\end{equation*}
where the new $\field$ factor in the case $\deg = 0$ comes from possible assignments of the stabilized variable $e^{f}_{0}$ to elements of $\field$. For an $\aug \in \Variety_{\Aug}(\Algebra)$ and $c \in \field$ write $(c, \aug)$ for the corresponding element of $\Variety_{\Aug}(\Stab_{f, 0})$. The proof of the following is easy using Equation \eqref{Eq:StabChainHomotopy}. 

\begin{lemma}
Each of the $(c, \aug)$ are chain homotopic as DGA morphisms. Therefore for each $\filtLevel$, the set augmentations of $\filtration^{\filtLevel}\Algebra \rightarrow \field$ modulo DGA chain homotopy is a stable tame isomorphism invariant of $\Algebra$.
\end{lemma}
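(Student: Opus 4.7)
The claim has two parts: chain homotopy of the lifted augmentations, and the resulting invariance statement. For the first part, the plan is to exhibit an explicit chain homotopy between any two lifts $(c,\aug)$ and $(c',\aug)$ of a given augmentation $\aug$ of $\Algebra$. Define a degree $+1$, $((c,\aug),(c',\aug))$-derivation
\begin{equation*}
K: \Stab_{f, 0}\Algebra \to \field, \quad K(e^{f}_{-1}) = c - c',
\end{equation*}
declare $K$ to vanish on every other algebra generator, and extend it over products by the twisted graded Leibniz rule. Checking the homotopy identity $K \partial = (c, \aug) - (c', \aug)$ on each generator is routine: on $e^{f}_{0}$ both sides equal $c - c'$; on $e^{f}_{-1}$ both sides vanish; on any generator $a$ of the original $\Algebra$, $\partial a$ lies in $\Algebra$ so $K(\partial a) = 0$, matching the fact that $(c,\aug)$ and $(c',\aug)$ agree on $a$. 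Since $K\partial - ((c,\aug) - (c',\aug))$ is itself a $((c,\aug),(c',\aug))$-derivation, vanishing on generators forces vanishing everywhere. A slicker alternative is to invoke the chain homotopy $\inc \circ \pi \simeq \Id_{\Stab_{f, 0}\Algebra}$ of Equation \eqref{Eq:StabChainHomotopy}, precompose with $(c,\aug)$, and observe that $(c,\aug)\circ\inc\circ\pi = \aug\circ\pi = (0,\aug)$, so every $(c,\aug)$ is chain homotopic to $(0,\aug)$.

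For the second part, it suffices to verify invariance of the set of chain homotopy classes of augmentations of $\filtration^{\filtLevel}\Algebra$ under each generating move of filtered stable tame isomorphism: filtered tame isomorphism and stabilization. A filtered tame isomorphism $\phi: \filtration^{\filtLevel}\Algebra \xrightarrow{\simeq} \filtration^{\filtLevel}\AlgebraOther$ yields a bijection $\aug \mapsto \aug \circ \phi^{-1}$ on augmentations; a chain homotopy $K$ between $\aug_{1}, \aug_{2}$ pulls back to the chain homotopy $K \circ \phi^{-1}$, so this bijection descends to chain homotopy classes. For stabilization, $\filtration^{\filtLevel}\Stab_{f,\deg}\Algebra = \filtration^{\filtLevel}\Algebra$ when $\filtLevel < f$, while for $\filtLevel \geq f$ the augmentation variety picks up at most one extra $\field$ factor (and only when $\deg = 0$) as recorded in the discussion preceding the lemma. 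The first part of the lemma then collapses this extra $\field$ factor on chain homotopy classes, so stabilization leaves the set of classes unchanged.

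The main obstacle, such as it is, is a bookkeeping one in the first part: confirming that $K$ extends uniquely and consistently over the free tensor algebra via the twisted Leibniz rule, and that the homotopy identity need only be checked on generators because both sides are $((c,\aug),(c',\aug))$-derivations. No new analytical or geometric input is required.
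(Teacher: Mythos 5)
Your proposal is correct, and your ``slicker alternative'' is exactly the paper's intended argument: the text states only that the proof is easy using Equation \eqref{Eq:StabChainHomotopy} (i.e.\ postcompose the homotopy $\Id - \inc\pi = \partial h + h\partial$ with $(c,\aug)$, noting $(c,\aug)\inc\pi = (0,\aug)$ and $(c,\aug)\partial = 0$). Your explicit derivation $K$ with $K(e^{f}_{-1}) = c - c'$ is just an unwinding of that same homotopy, and the invariance bookkeeping in your second paragraph matches the standard argument the paper relies on.
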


\begin{rmk}\label{Rmk:AugHomotopy}
Observe that if the degree $-1$ summand $\orbitVS_{-1}$ of $\orbitVS$ is trivial, $\orbitVS_{-1} = 0$, then chain homotopy classes of augmentations are exactly the augmentations themselves.
\end{rmk}

\subsubsection{Augmentation trees}\label{Sec:AugTrees}

Results of \cite{BG:Bilinearized} discussed in the next subsection provide us a means of distinguishing homotopy classes of augmentations. The chain homotopy classes of augmentations of the $\filtration^{\filtLevel}\Algebra \subset \Algebra$ can be organized as a rooted tree
\begin{equation*}
\tree_{\Aug} = \tree_{\Aug}(\Algebra) = (\{\vertex_{i}\}, \{\edge_{j} \})
\end{equation*}
which encodes all restriction and extensions of augmentations described in Equations  \eqref{Eq:AugPullback} and \eqref{Eq:AugExtension} (modulo chain homotopy) as follows:
\be
\item Each vertex $\vertex_{i}$ has a depth $\filtLevel(\vertex_{i}) \in \Z_{\geq 0}$.
\item There is exactly one $\vertex_{i}$ for each DGA homotopy class $\Aug(\vertex_{i}) = [\aug]$ of $\aug: \filtration^{\filtLevel(\vertex_{i})}\Algebra \rightarrow \field$.
\item The root vertex is the trivial augmentation $\Id_{\field}: (\filtration^{0}\Algebra = \field) \rightarrow \field$ with depth $0$.
\item We add an edge $\edge_{j}: \vertex_{i} \rightarrow \vertex_{i'}$ if $\filtLevel(\vertex_{i}) = \filtLevel(\vertex_{i'}) + 1$ and $\Aug(\vertex_{i})$ induces $\Aug(\vertex_{i'})$ via the inclusion $\filtration^{\filtLevel(\vertex_{i'})}\Algebra \rightarrow \filtration^{\filtLevel(\vertex_{i})}\Algebra$.
\ee

\begin{defn}
$\tree_{\Aug}(\Algebra)$ is the \emph{augmentation tree of $\Algebra$}.
\end{defn}

By composing stable tame isomorphisms with chain homotopies, is it clear that $\tree_{\Aug}$ is a filtered stable tame isomorphism invariant of a free mfDGA $\Algebra$. Therefore the following is a consequence of Theorem \ref{Thm:Main}.

\begin{thm}
$\tree_{\Aug}(\newRSFTA)$ is a homotopy invariant of $\LegGrouped$ in the sense of Theorem \ref{Thm:Main} and so we can write $\tree_{\Aug}\LegGrouped$. A partitioned exact Lagrangian cobordism $\LagGrouped: \LegGroupedUp \rightarrow \LegGroupedDown$ induces a morphism of trees
\begin{equation*}
\tree_{\Aug}\LagGrouped: \tree_{\Aug}\LegGroupedDown \rightarrow \tree_{\Aug}\LegGroupedUp
\end{equation*}
which is a homotopy invariant of $\LagGrouped$.
\end{thm}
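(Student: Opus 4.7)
The plan is to reduce the theorem to the already-noted fact that $\tree_{\Aug}$ is a filtered stable tame isomorphism invariant of free mfDGAs together with Theorem \ref{Thm:Main}. The first statement is essentially a restatement of the quoted observation: Theorem \ref{Thm:Main} says that the filtered stable tame isomorphism class of $\newRSFTA\LegGrouped$ depends only on the Legendrian isotopy class of $\LegGrouped$, so the functorial construction $\Algebra \mapsto \tree_{\Aug}(\Algebra)$ yields an invariant of $\LegGrouped$.

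For the cobordism statement, I would first define, on the level of individual chain models, a pullback map on augmentations. Given a mfDGA morphism $\phi = \newRSFTA\LagGrouped : \newRSFTA\LegGroupedUp \to \newRSFTA\LegGroupedDown$ and an augmentation $\aug : \filtration^{\filtLevel}\newRSFTA\LegGroupedDown \to \field$, set
\begin{equation*}
\phi^{\ast}\aug = \aug \circ \phi|_{\filtration^{\filtLevel}\newRSFTA\LegGroupedUp} : \filtration^{\filtLevel}\newRSFTA\LegGroupedUp \rightarrow \field.
\end{equation*}
Because $\phi$ is a unital, filtration-preserving, degree $0$ chain map of algebras, $\phi^{\ast}\aug$ is again an augmentation, and the construction commutes with the inclusions $\filtration^{\filtLevel}\Algebra \hookrightarrow \filtration^{\filtLevel+1}\Algebra$ on either side. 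Thus $\phi^{\ast}$ sends the compatibility edges of $\tree_{\Aug}\LegGroupedDown$ to the compatibility edges of $\tree_{\Aug}\LegGroupedUp$, with the arrow reversed as in the statement.

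Next I would check that $\phi^{\ast}$ descends to chain homotopy classes of augmentations. If $h$ is a chain homotopy between augmentations $\aug_{0},\aug_{1}$ of $\filtration^{\filtLevel}\newRSFTA\LegGroupedDown$, then $h \circ \phi|_{\filtration^{\filtLevel}}$ is a chain homotopy from $\phi^{\ast}\aug_{0}$ to $\phi^{\ast}\aug_{1}$, using that $\phi$ is a filtration preserving chain map. Similarly, if $\phi_{0},\phi_{1}$ are mfDGA chain homotopic via $H$, then $\aug \circ H$ witnesses a chain homotopy $\phi_{0}^{\ast}\aug \simeq \phi_{1}^{\ast}\aug$. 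Combining these shows that the induced map on vertices
\begin{equation*}
\tree_{\Aug}\LagGrouped : \tree_{\Aug}\LegGroupedDown \rightarrow \tree_{\Aug}\LegGroupedUp
\end{equation*}
depends only on the chain homotopy class of $\newRSFTA\LagGrouped$. Invoking the cobordism-invariance half of Theorem \ref{Thm:Main}, the chain homotopy class of $\newRSFTA\LagGrouped$ is itself an invariant of $\LagGrouped$ up to homotopy through partitioned exact Lagrangian cobordisms with fixed ends, so $\tree_{\Aug}\LagGrouped$ inherits the same invariance.

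The main obstacle, such as it is, is not really geometric but a small bookkeeping issue: verifying that chain homotopies of mfDGA morphisms (and chain homotopies of augmentations produced by stabilization) truly preserve the filtration level $\filtLevel$ at every step, so that the composition $\aug \circ H$ lands in the correct $\filtration^{\filtLevel}$ piece. This is guaranteed by the requirement in our definition of mfDGA chain homotopy that $h(\filtration^{\filtLevel}_{\Algebra}\Algebra) \subset \filtration^{\filtLevel}_{\AlgebraOther}\AlgebraOther$, so no new input is needed; all geometric content has already been absorbed into Theorem \ref{Thm:Main} and the construction of $\tree_{\Aug}$.
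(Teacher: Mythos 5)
Your proposal is correct and follows essentially the same route as the paper, which simply observes that $\tree_{\Aug}$ is a filtered stable tame isomorphism invariant of a free mfDGA (by composing stable tame isomorphisms with chain homotopies) and then invokes Theorem \ref{Thm:Main}. Your write-up merely fills in the details of the pullback of augmentations along $\newRSFTA\LagGrouped$ and its compatibility with filtration inclusions and chain homotopies, all of which are the intended (and valid) verifications.
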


By a morphism of trees, we mean maps between vertex sets and edge sets satisfying the obvious compatibility conditions and preserving the depths of vertices. When $\Lambda^{-} = \emptyset$ so that $\LagGrouped$ is a Lagrangian filling, $\tree_{\Aug}(\Lambda^{-}, \Partition^{-})$ is just a single root vertex and $\tree_{\Aug}\LagGrouped$ sends this vertex to the vertex of $\tree_{\Aug}(\Leg^{+}, \Partition)$ corresponding to the augmentation of $PDA(\Leg^{+}, \Partition^{+})$ induced by $\LagGrouped$.

\subsubsection{Comparison with $\CE$ augmentations}

We'll see in \S \ref{Sec:Computations} that $\newRSFTA\LegGrouped$ often does not have augmentations even when $\Leg$ can not be destabilized. On the other hand, the following lemma shows how partitions can be used to help find augmentations of $\CE$ algebras of disconnected Legendrians. The following is a restatement of \cite[Proposition 2.1]{BC:Bilinearized} whose proof we include for completeness.

\begin{lemma}\label{Lemma:CEAugCombo}
Let $\Partition$ be a partition of $\Leg \subset \Mxi$ such that each $\CE(\Leg^{\Partition}_{j})$ has an augmentation $\aug^{\Partition}_{j}$. Then the $\aug^{\Partition}_{j}$ can naturally be combined to define an augmentation
\begin{equation*}
\aug: \CE(\Leg) \rightarrow \field, \quad \aug(\chord) = \begin{cases}
\aug^{\Partition}_{j}(\chord) & \chord \in \Chords^{\Partition}_{j, j} \text{ for some } j,\\
0 & \text{otherwise}.
\end{cases}
\end{equation*}
\end{lemma}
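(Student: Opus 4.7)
The plan is to verify that $\aug$, extended as a unital algebra morphism from its definition on the chord generators, is actually a DGA morphism to $(\field, 0)$. The unital and algebra-morphism properties are automatic from the extension; the grading condition holds because each $\aug^{\Partition}_{j}$ is graded and vanishes on chords of nonzero degree. So the only nontrivial content is to check $\aug \partial_{\CE(\Leg)} \chord = 0$ on every generator $\chord \in \Chords$.

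The core geometric observation I would establish first is the following: let $u$ be a holomorphic disk contributing to the $\CE(\Leg)$ differential, with positive asymptote $\chord^{+}$ and negative asymptotes $\chord^{-}_{1}, \ldots, \chord^{-}_{k}$. If $\chord^{+}$ and every $\chord^{-}_{l}$ is $\Partition$-pure, then there exists a single index $j$ for which all of $\chord^{+}, \chord^{-}_{1}, \ldots, \chord^{-}_{k}$ lie in $\Chords^{\Partition}_{j, j}$, and $u$ is then a disk whose boundary lies entirely on $\R_{s} \times \Leg^{\Partition}_{j}$ and hence contributes to $\partial_{\CE(\Leg^{\Partition}_{j})} \chord^{+}$. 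The argument is topological: trace $\partial u$ starting at any point; along a boundary arc one stays on a fixed connected component of $\Leg$ and in particular on a fixed $\Leg^{\Partition}_{j}$, and at a chord asymptote the $\Partition$-purity of that chord forces its two endpoints to be on the same $\Leg^{\Partition}_{j}$. Connectedness of $\partial u$ then forces every chord touched by $u$ to be based on the same $\Leg^{\Partition}_{j}$. Conversely, any disk for $\CE(\Leg^{\Partition}_{j})$ is tautologically a disk for $\CE(\Leg)$ with the same positive/negative asymptote data, so the two relevant counts agree.

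With this claim the verification splits into two cases. If $\chord^{+}$ is $\Partition$-mixed, then no term of $\partial_{\CE(\Leg)} \chord^{+}$ can have all of its negative chords $\Partition$-pure without contradicting the claim; hence every monomial in $\partial \chord^{+}$ contains at least one $\Partition$-mixed factor and is killed by $\aug$. If $\chord^{+} \in \Chords^{\Partition}_{j, j}$, then the monomials on which $\aug$ is potentially nonzero come precisely from disks whose negative chords are all $\Partition$-pure, which by the claim are exactly the disks contributing to $\partial_{\CE(\Leg^{\Partition}_{j})} \chord^{+}$, so
\begin{equation*}
\aug\bigl(\partial_{\CE(\Leg)} \chord^{+}\bigr) \;=\; \aug^{\Partition}_{j}\bigl(\partial_{\CE(\Leg^{\Partition}_{j})} \chord^{+}\bigr) \;=\; 0,
\end{equation*}
using that $\aug^{\Partition}_{j}$ is an augmentation of $\CE(\Leg^{\Partition}_{j})$.

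The main obstacle is the geometric claim about decomposing pure-asymptote disks; once that is in hand, the rest of the verification is bookkeeping. A subtle point worth checking carefully is that the bijection between pure-asymptote $\CE(\Leg)$ disks and $\CE(\Leg^{\Partition}_{j})$ disks respects the moduli-space counts defining the two differentials, which follows because the almost complex structure used to define $\CE(\Leg)$ restricts to one of the admissible structures used to define $\CE(\Leg^{\Partition}_{j})$ (after, if necessary, a generic perturbation shared by both sides).
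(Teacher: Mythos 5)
Your proof is correct and follows essentially the same route as the paper's: decompose the differential according to whether all negative chords are $\Partition$-pure, kill the terms with a $\Partition$-mixed negative chord using the definition of $\aug$, and identify the all-pure disks with disks contributing to $\partial_{\CE(\Leg^{\Partition}_{j})}$ via the boundary-tracing argument. You are in fact slightly more explicit than the paper, which leaves implicit both the topological claim that pure-asymptote disks stay on a single $\Leg^{\Partition}_{j}$ and the observation that a $\Partition$-mixed positive chord cannot bound a disk whose negative chords are all $\Partition$-pure.
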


\begin{proof}
Using the Leibniz rule, it suffices to establish $\aug\partial_{\CE} = 0$ when restricted to the vector space of individual chords. Restricting to this space, write $\partial_{\CE} = \partial_{m} + \partial_{p}$ where $\partial_{p}$ counts holomorphic disks all of whose negative chords $\Partition$ pure and $\partial_{m}$ counts disks with at least one $\Partition$ mixed negative chord. By definition, $\aug \partial_{m} = 0$ as $\aug$ annihilates $\Partition$ mixed chords. Likewise, $\partial_{p}$ preserves the subalgebra $\CE(\Leg_{j}) \subset \CE(\Leg)$ of $\Partition$ pure chords which begin and end on a given $\Leg^{\Partition}_{j}$. Therefore
\begin{equation*}
\aug\partial_{p}|_{\CE_{j}} = \aug^{\Partition}_{j}\partial_{\CE_{j}} = 0.
\end{equation*}
\end{proof}

\subsection{Bilinearization and spectral sequences}

We briefly review the bilinearized homology and augmentation category constructions of \cite{BC:Bilinearized, CEKSW:AugCat} which enhance the linearization construction of \cite{Chekanov:LCH}. There are a few ways to carry out the constructions and we do so in a way which we feel is most straightforward computationally. To simplify we'll set $\ring = \Z/2\Z$ so that we don't have to worry about signs.

\subsubsection{Review of the constructions}

For a free DGA $(\Algebra = \tensorAlg(\orbitVS), \partial)$ we can break up the differential $\partial|_{\orbitVS}$ into a collection of $\field$-linear maps
\begin{equation*}
\partial|_{\orbitVS} = \sum_{0}^{\infty} \partial_{k}, \quad \partial_{k}: \orbitVS \rightarrow \orbitVS^{\otimes k}.
\end{equation*}
Then $\partial^{2}=0$ is equivalent to $(\orbitVS, \partial_{k})$ being a curved $A_{\infty}$ coalgebra structure. Here ``curved'' means $\partial_{0}$ is not necessarily $0$ and ``$A_{\infty}$ coalgebra'' means that we have the $A_{\infty}$ algebra relations ``upside down'',
\begin{equation*}
0 = \sum_{i + j - 1 = k,\ a + b +1 = i} \left(1^{\otimes b} \otimes \partial_{j}\otimes 1^{\otimes a} \right)\circ  \partial_{i}: \orbitVS \rightarrow \orbitVS^{\otimes k}.
\end{equation*}
When $\Algebra = \CE$ or $\filtration^{\filtLevel}\newRSFTA$, energy bounds imply that only some finite number of the $\partial_{k}$ are non-zero.

Let $\aug_{l}, \aug_{r}$ be an ordered pair of augmentations of $\Algebra$. Define $\field$-linear maps
\begin{equation*}
\pi^{\aug_{l}, \aug_{r}}_{k}: \orbitVS^{\otimes k} \rightarrow \orbitVS, \quad \pi_{0}^{\aug_{l}, \aug_{r}} = 0, \quad \pi_{k > 0 }^{\aug_{l}, \aug_{r}} = \sum_{i + j = k - 1} \aug_{l}^{\otimes i} \otimes 1 \otimes \aug_{r}^{\otimes j}.
\end{equation*}
Composing these maps we get a $\field$-linear differential operator
\begin{equation*}
\partial^{\aug_{l}, \aug_{r}} = \sum \pi_{k}^{\aug_{l}, \aug_{r}}\partial_{k}: \orbitVS_{\ast} \rightarrow \orbitVS_{\ast-1}, \quad (\partial^{\aug_{l}, \aug_{r}})^{2} = 0.
\end{equation*}

\begin{defn}
$(\orbitVS, \partial^{\aug_{l}, \aug_{r}})$ is the \emph{bilinearized chain complex} whose homology $H^{\aug_{l}, \aug_{r}}_{\ast} = H_{\ast}(\orbitVS, \partial^{\aug_{l}, \aug_{r}})$ is the \emph{bilinearized homology}.
\end{defn}

Define a chain map $\counit^{\aug_{l}, \aug_{r}}$ from the bilinearized chain complex to $\field$,
\begin{equation*}
\begin{gathered}
\counit^{\aug_{l}, \aug_{r}} = \aug_{l} - \aug_{r}: \orbitVS \rightarrow \field,\\
\counit^{\aug_{l}, \aug_{r}} \partial^{\aug_{l}, \aug_{r}} = \sum_{k}\left(\sum_{i + j = k - 1} \aug_{l}^{\otimes i+1} \otimes \aug_{r}^{\otimes j} - \aug_{l}^{\otimes i} \otimes \aug_{r}^{\otimes j + 1} \right)\partial_{k} = 0,
\end{gathered}
\end{equation*}
yielding a linear map on homology which vanishes in non-zero homological degree
\begin{equation*}
\Counit^{\aug_{l}, \aug_{r}} = [\counit^{\aug_{l}, \aug_{r}}]: H^{\aug_{l}, \aug_{r}}_{\ast} \rightarrow \field.
\end{equation*}

\begin{defn}
$\Counit^{\aug_{l}, \aug_{r}}$ is the \emph{fundamental class on bilinearized homology}.
\end{defn}

\begin{rmk}
The name ``fundamental class'' is justified by the fact that when we are working with the bilinearized complex associated to the $\CE$ algebra of a connected $\Leg$, $\Counit^{\aug_{l}, \aug_{r}}$ is the map
\begin{equation*}
LCH^{\aug_{l}, \aug_{r}}_{0} \rightarrow H_{0}(\Leg) \simeq \field
\end{equation*}
in the duality exact sequence of \cite{EES:Duality}. See the proof of \cite[Proposition 3.2]{BG:Bilinearized}.
\end{rmk}

The pair $(H^{\aug_{l}, \aug_{r}}_{\ast}, \Counit^{\aug_{l}, \aug_{r}})$ depends only on the homotopy classes of $\aug_{l}, \aug_{r}$ \cite{BC:Bilinearized}. The following is simply a restatement of \cite[Lemma 3.1]{BG:Bilinearized} and gives a practical tool for distinguishing homotopy classes of augmentations. Its proof is purely algebraic, and so is applicable to any free DGA, such as the $\filtration^{\filtLevel}\newRSFTA$.

\begin{thm}
The augmentations $\aug_{l}, \aug_{r}$ are homotopic if and only if $\Counit^{\aug_{l}, \aug_{r}} = 0$.
\end{thm}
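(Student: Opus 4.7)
The plan is to translate the chain homotopy equation $\aug_{l} - \aug_{r} = h \circ \partial$ on the free algebra $\Algebra = \tensorAlg(\orbitVS)$ into an equivalent $\field$-linear statement on the generating module $\orbitVS$, and to recognize that statement as saying exactly that $\counit^{\aug_{l}, \aug_{r}}$ is a boundary in the bilinearized complex. The key input is that a chain homotopy between DGA morphisms $\aug_{l}, \aug_{r}: \Algebra \to \field$ is required to be an $(\aug_{l}, \aug_{r})$-derivation of degree $1$, so it is determined by its restriction $h|_{\orbitVS}$ to generators, and moreover is supported in degree $-1$ since $\field$ lives only in degree $0$.

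For the forward implication, suppose $h$ is such a chain homotopy. For each $v \in \orbitVS$, expand $\partial v = \sum_{k} \partial_{k}v$ with $\partial_{k}v \in \orbitVS^{\otimes k}$, and apply the twisted Leibniz rule
\begin{equation*}
h(v_{1}\cdots v_{k}) = \sum_{i=1}^{k} \aug_{l}(v_{1})\cdots \aug_{l}(v_{i-1})\, h(v_{i})\, \aug_{r}(v_{i+1})\cdots \aug_{r}(v_{k}).
\end{equation*}
Because $h$ is zero off $\orbitVS_{-1}$, each summand equals $h|_{\orbitVS}$ composed with the $i$th entry of the operator $\pi_{k}^{\aug_{l}, \aug_{r}}$ defined earlier. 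Summing in $i$ and $k$ gives $h\circ \partial|_{\orbitVS} = h|_{\orbitVS} \circ \partial^{\aug_{l}, \aug_{r}}$. Hence $\counit^{\aug_{l}, \aug_{r}} = h|_{\orbitVS} \circ \partial^{\aug_{l}, \aug_{r}}$ as maps $\orbitVS \to \field$, which shows that $\counit^{\aug_{l}, \aug_{r}}$ is null-homotopic in the bilinearized complex and therefore $\Counit^{\aug_{l}, \aug_{r}} = 0$.

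For the converse, assume $\Counit^{\aug_{l}, \aug_{r}} = 0$, i.e.\ $\counit^{\aug_{l},\aug_{r}}|_{\orbitVS_{0}}$ vanishes on $\ker(\partial^{\aug_{l},\aug_{r}}|_{\orbitVS_{0}})$. Define $h|_{\orbitVS}: \orbitVS_{-1} \to \field$ by setting $h|_{\orbitVS}(\partial^{\aug_{l},\aug_{r}}v) := \counit^{\aug_{l}, \aug_{r}}(v)$ on the image of $\partial^{\aug_{l}, \aug_{r}}|_{\orbitVS_{0}}$ (well-defined by hypothesis) and extending arbitrarily to a complement; set $h|_{\orbitVS_{n}} = 0$ for $n \neq -1$. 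Extend $h$ to an $(\aug_{l}, \aug_{r})$-derivation $h: \Algebra \to \field$ by the twisted Leibniz rule. Reversing the computation of the forward direction, $h \circ \partial$ and $\aug_{l} - \aug_{r}$ agree on $\orbitVS_{0}$ by construction, agree trivially on $\orbitVS_{n}$ for $n \neq 0$ since both sides are forced to zero by degree, and then agree on arbitrary products because both are algebra-level maps built from the same pair $(\aug_{l}, \aug_{r})$; concretely this uses the identity $(\aug_{l} - \aug_{r})(xy) = (\aug_{l} - \aug_{r})(x)\aug_{l}(y) + \aug_{r}(x)(\aug_{l} - \aug_{r})(y)$ (where signs disappear over $\field = \Z/2\Z$) together with $\aug_{l,r} \circ \partial = 0$.

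The main obstacle, and the only nontrivial bookkeeping, is the last extension step: verifying that the $(\aug_{l}, \aug_{r})$-derivation $h$ built from $h|_{\orbitVS}$ actually satisfies $\aug_{l} - \aug_{r} = h \circ \partial$ on all of $\Algebra$ rather than merely on $\orbitVS$. Once this is dispatched by the algebraic identity above, the theorem reduces to a clean equivalence between the chain homotopy equation on $\Algebra$ and the statement that $\counit^{\aug_{l},\aug_{r}}$ is a coboundary in the bilinearized chain complex $(\orbitVS, \partial^{\aug_{l}, \aug_{r}})$ mapping to $\field$.
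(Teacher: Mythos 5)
Your argument is correct, and it supplies in full the proof that the paper itself omits by citing \cite[Lemma 3.1]{BG:Bilinearized}; it is the same algebraic argument used there. The one point worth making explicit is that everything hinges on the convention (standard in the $\CE$ literature and implicit in the paper's Section 5.1, though not spelled out there) that a chain homotopy $h$ between DGA morphisms is an $(\aug_{l}, \aug_{r})$-derivation: without that, $h \circ \partial|_{\orbitVS}$ would not factor through $\partial^{\aug_{l}, \aug_{r}}$ and the forward implication would fail, so you are right to flag it as the key input. Your converse is also complete --- the induction on word length using $\aug_{l}\circ\partial = \aug_{r}\circ\partial = 0$ and the telescoping identity for $\aug_{l} - \aug_{r}$ on products is exactly the needed bookkeeping, and the degree argument correctly disposes of the generators outside degree $0$ and of the curvature term $\partial_{0}$ (since $h(1) = 0$ for any derivation).
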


With a single augmentation $\aug = \aug_{l} = \aug_{r}$, the bilinearized homology construction gives rise to the \emph{linearized chain complex} and \emph{linearized homology},
\begin{equation*}
(\orbitVS, \partial^{\aug} = \partial^{\aug, \aug}), \quad H^{\aug}_{\ast} = H_{\ast}(\orbitVS, \partial^{\aug}).
\end{equation*}
The linearized homology can also be defined for a commutative free DGA of the form $(\tensorAlgCom(\orbitVS), \partial)$ in which case we replace the $\pi^{\aug_{0}, \aug_{1}}_{k > 0}$ with
\begin{equation*}
\pi^{\aug}_{k}(v_{1} \wedge \cdots \wedge v_{k}) = \sum_{i = 1}^{k} \left(\prod_{j \neq i} \aug(v_{j})\right)v_{i}.
\end{equation*}

To define bilinearized cochain complexes from a pair of augmentations $\aug_{l}, \aug_{r}$, define a differential $d_{\aug_{l}, \aug_{r}}$ on $\orbitVS$ whose structure coefficients $\langle d_{\aug_{l}, \aug_{r}}v, w \rangle$ are defined by adjunction with $\partial^{\aug_{l}, \aug_{r}}$,
\begin{equation*}
d_{\aug_{l}, \aug_{r}}: \orbitVS_{\ast} \rightarrow \orbitVS_{\ast + 1}, \quad \langle d_{\aug_{l}, \aug_{r}}v, w \rangle = \langle v, \partial^{\aug_{l}, \aug_{r}}w \rangle.
\end{equation*}

\begin{defn}
$(\orbitVS, d_{\aug_{l}, \aug_{r}})$ is the \emph{bilinearized cochain complex} whose cohomology $H_{\aug_{l}, \aug_{r}}^{\ast} = H^{\ast}(\orbitVS, d_{\aug_{l}, \aug_{r}})$ is the \emph{bilinearized cohomology}.
\end{defn}

When applied to $\CE$, the (bi)linearized homology and cohomology groups are called the \emph{(bi)linearized Legendrian contact homology} and \emph{(bi)linearized Legendrian contact cohomology}, denoted $LCH^{\aug_{l}, \aug_{r}}_{\ast}(\Leg)$ and $LCH_{\aug_{l}, \aug_{r}}^{\ast}(\Leg)$, respectively.

\subsubsection{Enhancements for mfDGAs and $\filtration^{\filtLevel}\newRSFTA$}

For mfDGAs, the bilinearized chain complex is a filtered chain complex: As $\partial: V \rightarrow \Algebra$ is filtration preserving, then so is $\partial^{\aug_{l}, \aug_{r}}$. Of course there are a few options to organize the filtration data homologically.

First, filtration inclusions induce chain maps determining morphisms on bilinearized homologies,
\begin{equation*}
\begin{gathered}
\cdots \rightarrow (\filtration^{\filtLevel}\orbitVS, \partial^{\aug_{l}, \aug_{r}}) \rightarrow (\filtration^{\filtLevel + 1}\orbitVS, \partial^{\aug_{l}, \aug_{r}}) \rightarrow \cdots,\\
\cdots \rightarrow H_{\ast}(\filtration^{\filtLevel}\orbitVS, \partial^{\aug_{l}, \aug_{r}}) \rightarrow H_{\ast}(\filtration^{\filtLevel + 1}\orbitVS, \partial^{\aug_{l}, \aug_{r}}) \rightarrow \cdots.
\end{gathered}
\end{equation*}
As is evident from the chain level, fundamental classes are pulled back from inclusion morphisms
\begin{equation*}
\begin{tikzcd}
H_{\ast}(\filtration^{\filtLevel}\orbitVS, \partial^{\aug_{l}, \aug_{r}}) \arrow[r]\arrow[dr, "\Counit^{\aug_{l}, \aug_{r}}"] & H_{\ast}(\filtration^{\filtLevel + 1}\orbitVS, \partial^{\aug_{l}, \aug_{r}}) \arrow[d, "\Counit^{\aug_{l}, \aug_{r}}"]\\
& \field,
\end{tikzcd}
\end{equation*}

Second, we can consider the spectral sequence homology groups associated to the filtered complex,
\begin{equation*}
E^{r}_{p, \deg}(\aug_{l}, \aug_{r}) = E^{r}_{p, \deg}(\orbitVS, \partial^{\aug_{l}, \aug_{r}}, \filtration).
\end{equation*}
Our conventions for homological spectral sequences associated to ascending filtrations are that on the $r$th page $E^{r}$, differentials go
\begin{equation}\label{Eq:SSDiffConvention}
E^{r}_{p, \deg} \rightarrow E^{r}_{p - r, \deg - 1}
\end{equation}
with $p$ denoting filtration level and $\deg$ indicating homological degree. For cohomological spectral sequences associated to descending filtrations, our convention is that differentials on the $r$th page go
\begin{equation*}
E_{r}^{p, \deg} \rightarrow E_{r}^{p + r, \deg + 1}.
\end{equation*}

\begin{lemma}
The spectral sequence homology groups $E^{r}_{p, \deg}(\aug_{l}, \aug_{r}), r \geq 1$ are homotopy invariants of the pair $\aug_{l}, \aug_{r}$. The set of all collections of spectral sequence homology groups is a filtered stable tame isomorphism invariant of $(\Algebra, \partial, \filtration)$.
\end{lemma}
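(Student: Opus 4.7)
The strategy is to reduce to two standard claims about filtered chain complexes: (i) chain homotopies of augmentations induce filtered chain homotopies of the bilinearized differentials, and (ii) a filtered stable tame isomorphism of $(\Algebra, \partial, \filtration)$ induces, upon each matching of augmentations, a \emph{filtered} quasi-isomorphism of bilinearized complexes. In both cases, the conclusion on spectral sequences will follow from the standard comparison theorem: a filtration-preserving chain map that induces an isomorphism on the associated graded complex induces isomorphisms on $E^r_{p,q}$ for every $r \geq 1$.

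For the first assertion, I would revisit the construction of $\partial^{\aug_l, \aug_r}$ and observe that, since each coproduct piece $\partial_k : \orbitVS \to \orbitVS^{\otimes k}$ preserves the filtration (by mfDGA-ness of $(\Algebra, \partial, \filtration)$), the operators $\pi_k^{\aug_l, \aug_r} \partial_k$ also preserve $\filtration$, for any augmentations $\aug_l, \aug_r$. Then, given a DGA chain homotopy $H$ from $\aug_l$ to $\aug_l'$ (and $\aug_r$ to $\aug_r'$), the explicit homotopy formula for the induced map between bilinearized complexes — as written out in \cite{BC:Bilinearized} — is built from the same $\partial_k$ and from $H$, each of which is filtration-preserving. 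Thus the bilinearized chain homotopy preserves $\filtration$, so it restricts to each $\filtration^{\filtLevel}\orbitVS$, and by the comparison theorem yields isomorphisms on $E^r_{p,q}$ for $r \geq 1$. This proves homotopy invariance in each augmentation variable separately, hence in the pair.

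For the second assertion, filtered stable tame isomorphism decomposes into tame isomorphism and stabilization, so I would handle these separately.
\begin{itemize}
\item For a tame (filtration-preserving) isomorphism $\phi : \Algebra \to \AlgebraOther$, the underlying change of generators is a filtered $\field$-linear isomorphism $\orbitVS \to W$ that conjugates $\partial_k^{\Algebra}$ into $\partial_k^{\AlgebraOther}$; pulling back augmentations $\aug_l, \aug_r$ of $\AlgebraOther$ to $\phi^*\aug_l, \phi^*\aug_r$ of $\Algebra$ (compatibly with homotopy classes) identifies the bilinearized filtered complexes on the nose. So the collections of spectral sequence pages match.
\item For a stabilization $\Algebra \hookrightarrow \Stab_{f, \deg}\Algebra$, the new generators $e_{\deg}^f, e_{\deg-1}^f$ contribute a linear differential $e_{\deg}^f \mapsto e_{\deg-1}^f$ that is unaffected by any bilinearization. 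Hence for any augmentations $\aug_l, \aug_r$ of $\Stab_{f,\deg}\Algebra$, the bilinearized complex splits as a direct sum of filtered complexes
\[
(\orbitVS, \partial^{\aug_l|_{\Algebra}, \aug_r|_{\Algebra}}) \; \oplus \; (\field\langle e_{\deg}^f, e_{\deg-1}^f\rangle, \text{acyclic}),
\]
with the acyclic summand concentrated at filtration level $f$. The inclusion of the first summand is therefore a filtered quasi-isomorphism, inducing isomorphisms on $E^r_{p,q}$ for $r \geq 1$. Conversely any augmentation of $\Algebra$ extends to $\Stab_{f,\deg}\Algebra$ (taking $e_{\deg}^f \mapsto 0$, or any value of $\field$ when $\deg = 0$), and by the homotopy-invariance established in the first paragraph, different choices of extension produce identical spectral sequences. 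Hence the two sets of collections of $E^r_{p,q}$, ranging over all augmentation pairs, coincide under the correspondence of augmentation homotopy classes.
\end{itemize}

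The main obstacle is the matching of augmentations across a stabilization, since the augmentation variety genuinely enlarges by a factor of $\field$ when $\deg = 0$. This is handled precisely by the lemma stated earlier that all such extensions $(c, \aug)$ are chain homotopic as DGA morphisms, combined with the fact that chain-homotopic augmentations yield filtered chain-homotopic bilinearized differentials; once that is in hand, the spectral sequence invariance reduces to the elementary splitting observation above.
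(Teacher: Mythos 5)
Your proposal is correct and follows essentially the same route as the paper: both arguments rest on the comparison theorem that a filtered chain map inducing an isomorphism on $E^{1}_{p,q}$ induces isomorphisms on all pages, applied to the filtered chain homotopy coming from a homotopy of augmentations and to the filtered quasi-isomorphism $\Algebra \rightarrow \Stab_{f,\deg}\Algebra$ whose new generators carry a purely linear differential. You spell out two steps the paper leaves implicit (the tame isomorphism case and the matching of augmentations across a stabilization via the chain-homotopy of the extensions $(c,\aug)$), and both are handled correctly.
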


\begin{proof}
These follows from the fact that that if a chain map between filtered complexes induces isomorphisms on $E^{1}$ then it induces isomorphisms on all $E^{r}$. See for example \cite[\S 4]{Baldwin:KHFSS}. If we modify $\aug_{r}$ by a chain homotopy, we have induced chain homotopy equivalence between filtered complexes $(\orbitVS, \partial^{\aug_{l}, \aug_{r}}, \filtration)$ and $(\orbitVS, \partial^{\aug_{l}, \aug_{r}'}, \filtration)$ which induce $E^{1}$ isomorphisms. The same goes for modification of $\aug_{l}$.

As for stable tame isomorphism invariance, we can apply the homotopy equivalence between an algebra and its stabilization pulling back augmentations to $\Algebra$ via the inclusion $\Algebra \rightarrow \Stab_{f, \deg}\Algebra$. The associated map on the linearized complex clearly induces an $E^{1}$ isomorphism as the differential on $\Stab_{f, \deg}\Algebra$ is linear when restricted to new generators.
\end{proof}

Using the $E^{r}_{p, \deg}(\aug_{l}, \aug_{r})$ where the $\aug_{l}, \aug_{r}$ are augmentations of $\filtration^{\filtLevel}\newRSFTA$ we define a polynomial invariant of the homotopy classes of the $\aug_{l}, \aug_{r}$,
\begin{equation}\label{Eq:SpecPolyDef}
P^{\spec}_{\aug_{l}, \aug_{r}}(t, x, y) = \sum_{r = 1}^{\max(\filtLevel, N^{\Partition})} \dim E^{r}_{p, \deg}(\aug_{l}, \aug_{r})t^{\deg}x^{r - 1}y^{p}.
\end{equation}
The ``$r - 1$'' ensures the polynomial isn't guaranteed to be divisible by $x$.

As an immediate consequence of the above lemma and Theorem \ref{Thm:Main} we have the following theorem.

\begin{thm}
Let $\aug_{l}, \aug_{r}$ be augmentations of $\filtration^{\filtLevel}\newRSFTA\LegGrouped$. Then the isomorphism class of the bilinearized spectral sequence homology groups $E^{r}_{p, q}(\aug_{l}, \aug_{r}), r \geq 1$ -- and so the $P^{\spec}_{\aug_{l}, \aug_{r}}$ -- associated to the filtered complex $(\filtration^{\filtLevel}\orbitVS, \partial, \filtration)$ are homotopy invariants of the pair $\aug_{l}, \aug_{r}$. The set of all such collections of spectral sequences -- over varying pairs $\aug_{l},\aug_{r}$ -- is a Legendrian isotopy invariant of the partitioned Legendrian $\LegGrouped$.

\end{thm}

We can continue in the same fashion describing analogues of $\CE$ structures for the $\filtration^{\filtLevel}\newRSFTA$ and stating theorems which come for free as a consequence of Theorem \ref{Thm:Main} combined with the results of \cite{BC:Bilinearized} and standard homological algebra. Here are some examples:
\be
\item Bilinearized cochain groups and spectral sequences are also available with the caveat that the differentials no longer preserve the ascending filtration. We can instead equip $\orbitVS$ with an descending filtration $\filtration_{op}$ with each $\filtration^{\filtLevel}_{op}$ additively generated by words with length $\geq \filtLevel$. Let's call $E_{r}^{p, q}$ the associated cohomological spectral sequence with associated Poincar\'{e} polynomial,
\begin{equation*}
P_{\spec}^{\aug_{l}, \aug_{r}}(t, x, y) = \sum_{r = 1}^{\max(\filtLevel, N^{\Partition})} \dim E_{r}^{p, \deg}(\aug_{l}, \aug_{r})t^{\deg}x^{r - 1}y^{p}.
\end{equation*}
\item The bilinearized cochain complexes give the set of all augmentations of $\filtration^{\filtLevel}\newRSFTA$ the structure of an $A_{\infty}$ category whose \emph{pseudo-equivalence class} \cite{BC:Bilinearized} is a Legendrian isotopy invariant.
\item With a single augmentation of some $\filtration^{\filtLevel}\newRSFTA^{\com}$ or $\filtration^{\filtLevel}\newRSFTA^{\cyc}$ we have linearized chain complexes and cochain complexes with obvious analogues of the above theorem.
\item For a single augmentation $\aug$, the $A_{\infty}$ structure makes the linearized cochain complex $(\orbitVS, d_{\aug}, \filtration_{op})$ an $A_{\infty}$ algebra whose products $\mu_{k}: \orbitVS^{\otimes k} \rightarrow \orbitVS$ for $k \geq 1$ preserve the ascending filtration $\filtration_{op}$.
\item Cochain complexes are -- according to our convention that Lagrangian cobordisms go downward in a symplectization -- contravariantly functorial with respect to cobordism.
\item If we shift the grading by $1$, the linearized cohomology group $H_{\aug}^{\ast}[1]$ becomes a ring and the $E_{r}^{p, q}[1]$ determine a multiplicative spectral sequence using the product $\mu_{2}$.
\ee

In particular, the last item applied to $\newRSFTA^{cyc}$ gives the RSFT spectral sequence of \cite{Ekholm:Z2RSFT} a multiplicative structure after a grading shift has been applied.

\subsubsection{Planar diagrams for bilinearized disk counts}

To gain some intuition as to how these invariants work, let's draw planar diagrams for the disks counted by the differentials for the $\newRSFTA$ (bi)linearized (co)chain complexes and spectral sequences.

\begin{figure}[h]
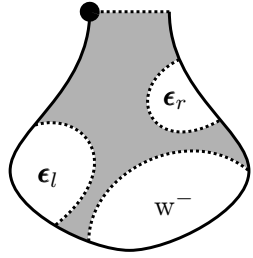

\begin{overpic}[scale=.5]{lch_ops.eps}
\put(12, 28){$\aug_{l}$}
\put(62, 59){$\aug_{r}$}
\put(58, 15){$\word^{-}$}
\end{overpic}
\caption{Disks contributing to $\partial^{\aug_{l}, \aug_{r}}$ in linearized $LCH$.}
\label{Fig:LCHOps}
\end{figure}

For (bi)linearized Legendrian contact homology, we count holomorphic disks with one positive puncture and $m_{-} \geq 1$ negative punctures, capping off all but one of them with augmentations. Figure \ref{Fig:LCHOps} displays the usual picture (cf. \cite[Figure 1]{BC:Bilinearized}) in the planar diagram style of this article.

\begin{figure}[h]
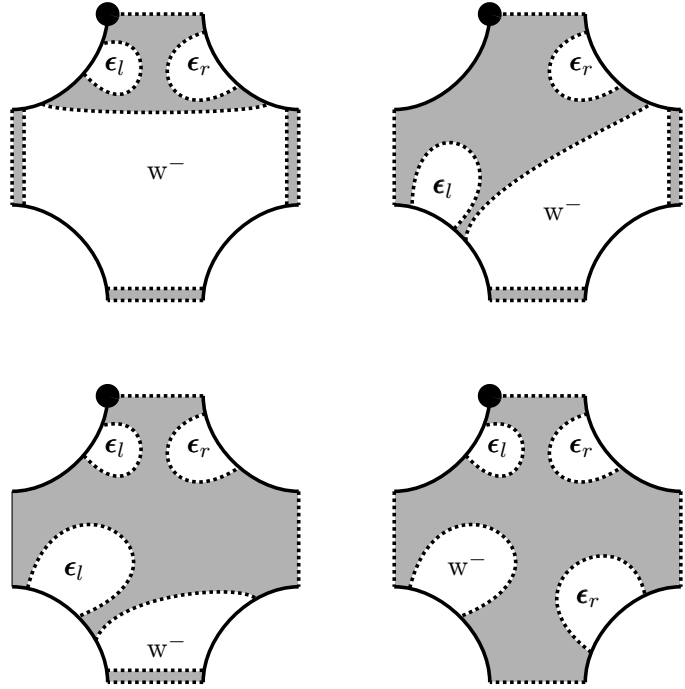

\begin{overpic}[scale=.6]{linear_ops.eps}
\put(14, 90){$\aug_{l}$}
\put(26, 90){$\aug_{r}$}
\put(20, 74){$\word^{-}$}

\put(62, 72){$\aug_{l}$}
\put(82, 90){$\aug_{r}$}
\put(78, 68){$\word^{-}$}

\put(14, 34){$\aug_{l}$}
\put(8, 16){$\aug_{l}$}
\put(26, 34){$\aug_{r}$}
\put(20, 4){$\word^{-}$}

\put(70, 34){$\aug_{l}$}
\put(82, 34){$\aug_{r}$}
\put(83, 12){$\aug_{r}$}
\put(64, 16){$\word^{-}$}
\end{overpic}
\caption{Planar diagrams for contributions to the bilinearized homology spectral sequence differentials of a word $\word$ of length $4$. From left-to-right, top-to-bottom, we see contributions to differentials on the $E^{0}, E^{1}, E^{2}$, and  $E^{3}$ pages. Here trivial strips appear near the positive chords in $\word$ which are not touched by the $\ind = 1$ admissible disk.}
\label{Fig:LinearOps}
\end{figure}

For $\filtration^{\filtLevel}\newRSFTA$, the differentials on the $E^{0}$ page of a (bi)linearized spectral sequence count disks for which the inputs and outputs of the operators have the same word length, with some numbers of additional outputs assigned augmentations. For the differentials on the $E^{k > 0}$ page of the homological spectral sequence, the disks we count will have one input $\word$, one output $\word^{-}$ for which
\begin{equation*}
\filtLevel(\word) - \filtLevel(\word^{-}) = k,
\end{equation*}
and some additional $\word^{-}_{i}$ which are augmented. For the cohomological spectral sequence, we view $\word$ as an output and $\word^{-}$ as an input. See Figure \ref{Fig:LinearOps}.

\subsubsection{Convergence of spectral sequences for links in $\Rthree$}

Now that we know what the spectral sequence differentials count, it's easy to see that for links in $\Rthree$ the $\newRSFTA$ bilinearized spectral sequences converge very quickly.

\begin{thm}
Let $\LegGrouped$ be a partitioned Legendrian in $\Rthree$ and let $\aug_{l}, \aug_{r}$ be augmentations of some $\filtration^{\filtLevel}\newRSFTA$. Then the associated (bi)linearized spectral sequences $E^{r}_{p, q}$ and $E^{p, q}_{r}$ converge at their $E^{2}$ and $E_{2}$ pages respectively.
\end{thm}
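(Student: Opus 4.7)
Convergence at $E^{2}$ (respectively $E_{2}$) is equivalent to the vanishing of $d^{r}$ for $r \geq 2$ (respectively $d_{r}$), and by the duality between the bilinearized chain and cochain complexes it suffices to treat the homological case. My plan is to reduce the claim to a combinatorial constraint on inscriptions of planar diagrams in $\Rthree$ and then to verify this constraint via the index formula together with admissibility.

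By the description of spectral sequence differentials surrounding Figure~\ref{Fig:LinearOps}, a nonzero contribution to $d^{r}$ comes from an $\ind(u) = 1$ admissible holomorphic disk $u \in \ModSpace_{\Leg}(\boundaryWord)$ with an inscription $\planarDiagram(\boundaryWord) \subset \planarDiagram(\word)$ and a distinguished un-augmented complement component $\planarDiagram(\word^{-})$ with $\filtLevel(\word) - \filtLevel(\word^{-}) = r$. First I would make the planar-diagram combinatorics explicit: because $\planarDiagram(\boundaryWord)$ is a single connected subregion of $\planarDiagram(\word)$ whose boundary contains all $m^{-}$ interior chord arcs of $\boundaryWord$, the complement $\planarDiagram(\word) \setminus \planarDiagram(\boundaryWord)$ is a disjoint union of $m^{-}$ ``petal'' regions attached to $\planarDiagram(\boundaryWord)$ in a star configuration. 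Each petal $\planarDiagram(\word^{-}_{j})$ is bounded by exactly one interior chord of $\boundaryWord$ plus an arc of $\partial \planarDiagram(\word)$ carrying some $p_{j} \geq 0$ leftover chord arcs of $\word$, so that $\filtLevel(\word^{-}_{j}) = 1 + p_{j}$ with $\sum_{j} p_{j} = \filtLevel(\word) - m^{+}$.

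The required inequality $\filtLevel(\word) - \filtLevel(\word^{-}_{j}) \leq 1$ is equivalent to $p_{j} \geq \filtLevel(\word) - 2$, which is automatic when $\filtLevel(\word) \leq 2$. The substantive case is $\filtLevel(\word) \geq 3$, and the key combinatorial claim is that no $\ind(u) = 1$ admissible disk in $\Rthree$ admits an inscription with a petal having $p_{j} \leq \filtLevel(\word) - 3$. My approach is to apply Lemma~\ref{Lemma:LCHShift} to replace $u$ by a $CE$-type disk $\tilde{u}$ in a shifted Legendrian $\tilde{\Leg} \subset \Rthree$, then combine the index equation $\ind(\boundaryWord) = m - 1 + \Maslov(\boundaryWord) = 1$ with the admissibility of $\boundaryWord$ (Definition~\ref{Def:AdmissibleDisk}) and the admissibility of each output word $\word^{-}_{j}$ to constrain how the interior chord endpoints distribute along $\partial \planarDiagram(\word)$.

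The main obstacle is this last step: ruling out ``unbalanced'' star configurations in which one petal absorbs most of the leftover chord arcs while another petal has none. The geometry of $\Rthree$ should enter through the one-dimensionality of $\LagGras_{1} = \R\mathrm{P}^{1}$, so that the CW rotations at chord corners are confined to a single interval $(-\pi, 0)$; combined with a Gauss--Bonnet style bookkeeping forced by $\Maslov(\boundaryWord) = 2 - m$, this should show that every petal must capture at least $\filtLevel(\word) - 2$ of the chord arcs of $\word$. The cohomological statement is then obtained by applying the same argument to the adjoint differential, using that the descending filtration $\filtration_{op}$ is the word-length filtration with its ordering reversed.
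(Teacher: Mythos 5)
Your reduction to petal combinatorics is correct as far as it goes: with one admissible disk $u$ inscribed in $\planarDiagram(\word)$ and trivial strips over the remaining chords, the complement is indeed a star of $m^{-}$ petals, the $j$th containing one negative chord of $u$ together with $p_{j}$ leftover chords of $\word$, so that $\filtLevel(\word^{-}_{j}) = 1 + p_{j}$ and $\sum_{j} p_{j} = \filtLevel(\word) - m^{+}$. The gap is precisely the step you flag as the ``main obstacle,'' and it is not a technical obstacle but a sign that the approach cannot work: the combinatorial claim you need --- that for \emph{every} chord-generic representative and every rigid admissible disk, every petal satisfies $p_{j} \geq \filtLevel(\word) - 2$ --- is false. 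The distribution of the leftover chords of $\word$ among the petals is governed by which pieces $\Leg^{\Partition}_{j}$ the negative chords of $u$ join and by the cyclic order of $\word$, not by any curvature data of $u$ itself; the identity $\Maslov(\boundaryWord) = 2 - m$ and convexity of corners in $\LagGras_{1}$ constrain the corners of $u$, but say nothing about chords of $\word$ that $u$ never touches. A rigid disk with one positive and two negative punctures can perfectly well be inscribed so that one petal is a single chord ($p_{j} = 0$) while the other absorbs all $\filtLevel(\word) - 1$ remaining chords, giving a chain-level term of filtration drop $\filtLevel(\word) - 1 \geq 2$. So the chain-level differentials $d^{r}$, $r \geq 2$, do not vanish for an arbitrary representative, and no Gauss--Bonnet bookkeeping will make them vanish.

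The statement is about the pages $E^{r}_{p,q}$, which are invariants of the Legendrian isotopy class, and the paper's proof exploits exactly this: by the main theorem of \cite{Avdek:LSFT} one first isotopes $\Leg$ to a representative for which the rigid disks contributing to $\partial$ are constrained (at most two positive punctures, with the accompanying control on the moduli spaces established there), for which your petal count shows every chain-level term drops word length by at most one; hence $d^{r} = 0$, $r \geq 2$, for that representative, and invariance of the spectral sequence transports the conclusion to all representatives. Your proposal never invokes invariance or a preferred representative, which is the essential mechanism; without it, the statement you are attempting to prove is strictly stronger than the theorem and is false. The cohomological case is then handled exactly as you suggest, by adjunction with the reversed filtration.
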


\begin{proof}
This is an immediate consequence of \cite{Avdek:LSFT}: After applying a Legendrian isotopy we can guarantee that all rigid holomorphic disks contributing to the $\newRSFTA$ differential have at most two positive punctures. This means that for this isotopy representative the differentials on $E^{k}$ and $E_{k}$ pages of the spectral sequences vanish for $k \geq 2$. The result then follows from the fact that the spectral sequence is an invariant of the Legendrian isotopy class of $\LegGrouped$.
\end{proof}

\section{Computations} \label{Sec:Computations}

Here we carry some basic computations. Prior to \S \ref{Sec:Cables}, we focus on low-dimensional examples. We start by describing general techniques for Legendrian links in $\Rthree$ which are implemented in the software package \cite{Avdek:Software}.

\subsection{Computations in $\Rthree$}

We assume the reader is familiar with the basics of combinatorially defined Legendrian contact homology for links in $\Rthree$ as in \cite{Chekanov:LCH, EtnyreNg:LCHSurvey} and will use a slightly modified setup following \cite{Ng:RSFT} (which uses two marked points rather than one marked point on each connected component of $\Leg$).

Everything here works just as well for Legendrians in $\R_{t}$ times a Riemann surface $\Sigma$ with non-empty boundary and Liouville form $\beta$ as in \cite{Bjorklund:LCH}. The only additional data that is needed in this more general case is a framing of $T\Sigma$ which can be determined by an immersion $\Sigma \rightarrow \C$.

Decorate each connected component $\Leg_{i}$ of $\Leg$ with an arrow and two points $\ast_{i}$ and $\bullet_{i}$ which are not the endpoints of any chord. The arrow indicates orientation, the $\ast_{i}$ will be our basepoint, and the $\bullet_{i}$ will help us record homology classes of curves.

If the $\Leg^{\Partition}_{i}$ are connected or if we're satisfied working with $\Z/2\Z$ gradings the basepoints can be ignored. Otherwise we must draw connecting arcs between the $\ast_{i}$ and extend the oriented Lagrangian subspaces $T_{\ast_{i}}\Leg$ over them as described in \S \ref{Sec:Gradings}.

To compute the Maslov number of a word $\word = (\chord_{1}\cdots\chord_{\filtLevel})$, we calculate the rotation angles over preferred capping paths (which miss the $\bullet_{i}$), performing CW rotations of the tangent spaces $T(\pi_{\SympBase}\Leg_{i})$ when traversing chords. In this dimension, CW rotations are simply clockwise rotations in the usual sense. Standard calculations show that the modulo $2$ we get
\begin{equation*}
\begin{aligned}
\Maslov_{2}(\word) &= \#(\text{negative crossing } \chord_{i} \in \word) \in \Z/2\Z,\\
|\word|_{2} &= 1 + \filtLevel + \#(\text{negative crossing } \chord_{i}) \\
&= 1 + \#(\text{positive crossing } \chord_{i}) \in \Z/2\Z.
\end{aligned}
\end{equation*}
Of course, if $\filtLevel = 1$, we get the $\CE$ grading by definition.

We assign a variable $T_{i} = e^{[\Leg_{i}]} \in \field[H_{1}(\Leg)]$ to each $\Leg_{i}$ where $[\Leg_{i}]$ is the fundamental class in homology with respect to the prescribed orientation. The grading of each $T_{i}$ is given by
\begin{equation*}
|T_{i}| = 2\rot(\Leg_{i}), \quad |T_{i}^{-1}| = -2\rot(\Leg_{i}).
\end{equation*}
We recall that it is our convention that the $T_{i}$ commute with the $\word$ generators of our algebra.

Differentials can be computed by counting immersions of polygons into $\C$ with boundary on $\pi_{\SympBase}\Leg$ as in \cite{Chekanov:LCH, EtnyreNg:LCHSurvey, Ng:RSFT}. To compute the $\field[H_{1}(\Leg)]$ element $h(u)$ associated to each polygon $u$ contributing to $\partial$, traverse the boundary of the polygon and multiply by $T_{i}$ (or $T^{-1}_{i}$) every time we cross over a $\bullet_{i}$ according to the positive (or negative) orientation of $\Leg$. To check admissibility of disks and compute their output words, we recommend having plenty of scrap paper on hand.

\subsection{The Hopf link}\label{Sec:HopfLink3d}

Let $\Leg_{1}$ be the standard $\tb = -1$ unknot in $\Rthree$ and let $\Leg_{2}$ be a pushoff with its orientation flipped, $\Leg_{2} = -\Flow^{\epsilon}_{\partial_{t}}(\Leg_{1})$ for $\epsilon$ small. Then $\Leg$ is a Hopf link which bounds an exact, oriented Lagrangian annulus in the symplectization of $\Rthree$ \cite{EHK:LagrangianCobordisms}. The Lagrangian filling implies the existence of an augmentation of $\aug_{\Lag}:LCH(\Leg) \rightarrow \field$ and so $LCH(\Leg)$ is non-zero. Likewise the individual $\Leg_{i}$ bound Lagrangian disks so that $LCH(\Leg_{i}) \neq 0$.

\begin{figure}[h]
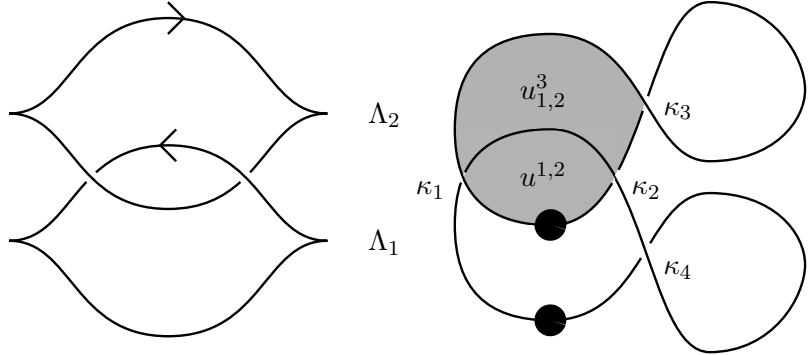

\begin{overpic}[scale=.4]{hopf_differential.eps}
\put(45, 29){$\Leg_{2}$}
\put(45, 13){$\Leg_{1}$}
\put(51, 20){$\chord_{1}$}
\put(78, 20){$\chord_{2}$}
\put(82, 30){$\chord_{3}$}
\put(82, 10){$\chord_{4}$}
\put(64, 32){$u^{3}_{1, 2}$}
\put(64, 21){$u^{1, 2}$}
\end{overpic}
\caption{A Legendrian Hopf link in the front (left) and Lagrangian projections (right) in the $xz$ and $xy$ planes, respectively. Projections of holomorphic disks $u^{3}_{1, 2}$ and $u^{1, 2}$ to $\R^{2}_{x, y}$ are shaded.}
\label{Fig:HopfDifferential}
\end{figure}

There are two ways to group $\Leg$: Either $N^{\Partition} = 1$ or $N^{\Partition} = 2$. In the first case,
\begin{equation*}
\newRSFT(\Leg, \Partition) = \filtration^{1}\newRSFT(\Leg, \Partition) = LCH(\Leg) \neq 0, \quad \tau_{H}\LegGrouped = \tau_{A}\LegGrouped = \infty.
\end{equation*}
Moving forward we assume $N^{\Partition} = 2$ so that $\Leg^{\Partition}_{i} = \Leg_{i}$.

Figure \ref{Fig:HopfDifferential} shows a Lagrangian resolution \cite{Ng:ComputableInvariants} of $\Leg$ with four chords: $\chord_{1}$ and $\chord_{2}$ are mixed (going from $\Leg_{1}$ to $\Leg_{2}$ and vice-versa, respectively) and $\chord_{3}, \chord_{4}$ are pure. Then
\begin{equation*}
\filtration^{1}\orbitVS = \langle(\chord_{3}), (\chord_{4})\rangle, \quad \filtration^{2}\orbitVS = \filtration^{1}\orbitVS \oplus R \langle(\chord_{1}\chord_{2}), (\chord_{2}\chord_{1})\rangle = \orbitVS.
\end{equation*}
where $R=\field[T^{\pm}_{1}, T^{\pm}_{2}]$. The differentials of the $(\chord_{3}), (\chord_{4})$ count only disks with one positive puncture so that
\begin{equation*}
\partial \chord_{3} = 1 + T_{2}^{-1}, \quad \partial \chord_{4} = 1 + T_{1} \implies T_{1} \simeq T_{2} = 1.
\end{equation*}
As described in Example \ref{Ex:LengthOne}, the disk $u^{3}_{1, 2}$ shown in the figure does not contribute a term to $\partial (\chord_{3})$ as it cannot be inscribed in $\planarDiagram((\chord_{3}))$. (More on this in a moment). With $R$ coefficients, we see that $PDA^{1}$ admits an augmentation $T_{1},T_{2} \mapsto 1$, $\chord_{3}, \chord_{4} \mapsto 0$ and so $PDH^{1}$ is non-zero. With $\field$ coefficients, the differential vanishes so that $PDH^{1}$ is freely generated by the $(\chord_{3}), (\chord_{4})$.

There is a disk $u^{1, 2}$ with two positive puncture asymptotic to $\chord_{1}$ and $\chord_{2}$, yielding
\begin{equation*}
\partial (\chord_{1}\chord_{2}) = \partial (\chord_{2}\chord_{1}) = T^{-1}_{2} \simeq 1.
\end{equation*}
We conclude that for this choice of grouping
\begin{equation*}
\newRSFT = \newRSFT^{2} = 0, \quad \tau_{H}(\Leg, \Partition) = 1.
\end{equation*}
For this $\Partition$, the augmentations of $\newRSFTA^{1}$ are in one-to-one correspondence with tensor products of augmentations for the $\Leg_{i}$. There is only one such $\aug$ sending $1 \mapsto 1$, $T_{i} \mapsto 1$, and $\chord_{3}, \chord_{4} \mapsto 0$.

Observe that if we glue $u^{3}_{1, 2}$ to $u^{1, 2}$ at $\chord_{1}$, we obtain a $\ind = 2$ disk $u = u^{1, 2} \# u^{3}_{1, 2}$ with four boundary punctures. The moduli space $\ModSpace_{\Leg}$ in which $u$ is contained is one dimensional and at one of its (compactified) ends we obtain a $2$-level SFT building with $u^{1, 2}$ on the bottom and $u^{3}_{1, 2}$ on the top. At the other end of $\ModSpace/\R_{s}$, the disk degenerates into a nodal curve consisting of a trivial strip over $\chord_{2}$ conjoined to a disk with one positive boundary puncture at $\chord_{3}$. This nodal degeneration is exactly the string topological breaking which we are seeking to avoid in defining $\newRSFTA$.

So how is it possible that $\newRSFT = 0$ if $\Leg$ has an exact Lagrangian filling? This is because the $N^{\Partition} = 2$ partition obstructs the existence of Lagrangian fillings $\Lag = \Lag_{1} \sqcup \Lag_{2}$ for which $\partial \Lag_{i} = \Leg_{i}$. Such $\Lag_{i}$ would have intersection number $\pm 1 = \lk(\Leg_{1}, \Leg_{2})$ so that $\Lag$ could not be embedded. In \S \ref{Sec:UnknotLinks} we'll generalize this computation to all dimensions.

\subsection{More $2$-component links in $\Rthree$}

Similar calculations show that $\newRSFTA$ obstructs disconnected fillings for two component links with $0$ linking number. For example, consider the two component link of unknots on the left-hand side of Figure \ref{Fig:TwistingExamples} with $N^{\Partition} = 2$. The $LCH$ of this link is non-zero but using this partition $\newRSFT\LegGrouped = 0$. So $\Leg$ has no disconnected filling.

\begin{figure}[h]
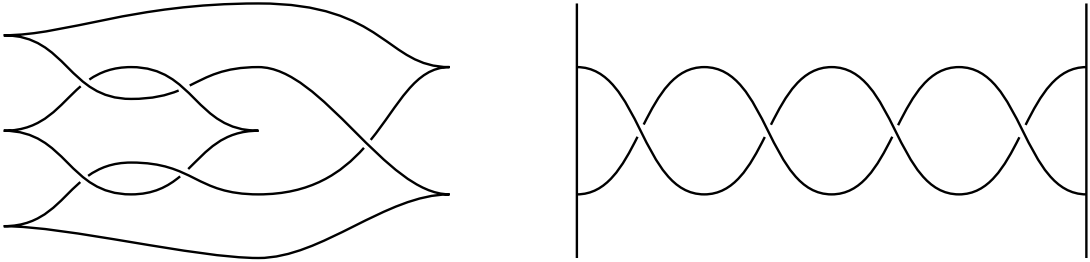

\begin{overpic}[scale=.4]{twisting_examples.eps}
\end{overpic}
\caption{On the left is a link of $\tb=-1$ unknots with $0$ linking number. On the right, a front satellite pattern for $(2k, 2)$-cabling (with respect to the contact framing) in the case $k=2$ in the style of \cite{NT:Torus}.}
\label{Fig:TwistingExamples}
\end{figure}

Similarly, for any knot $\Leg$ we can consider the $(2k, 2)$ cable, $\Leg(2k, 2)$, shown in the front projection on the right-hand side of the figure with $N^{\Partition}=2$. Our convention is that cables are defined with respect to the contact framing. For any $k > 1$ there is a length $2$ word $\word$ satisfying $\partial \word = 1$ as can be seen by counting disks of low energy in a $1$-jet neighborhood of $\Leg$. We'll see an alternate proof which generalizes to high dimensions in Proposition \ref{Prop:CableFIllingObs}.

The case $k = 1$ is more subtle. When $\Leg$ is the $\tb = -1$ unknot, then the $(2, 2)$ cable is an unlink. For $\Leg$ a Lagrangian slice knot, there is a Lagrangian concordance from the $(2, 2)$ cable to the $(2, 2)$ cable of the unknot. Therefore $\Leg(2, 2)$ is fillable by a pair of disjoint Lagrangian slice disks. See \cite[Proposition 1.8]{CET:PlusOneFilling}.

\subsection{Unbounded torsion for links in $3$-manifolds}

We do not currently have examples of links in $\Rthree$ with torsions other than $1$ or $\infty$. Figure \ref{Fig:HighTorsion} sketches the construction of Legendrians with
\begin{equation*}
\tau_{H}\LegGrouped = \tau_{A}\LegGrouped = k
\end{equation*}
for all $k \geq 2$ in contact $3$-manifolds of the form $\R_{t} \times \Sigma$ for $\Sigma$ a general Riemann surface.

\begin{figure}[h]
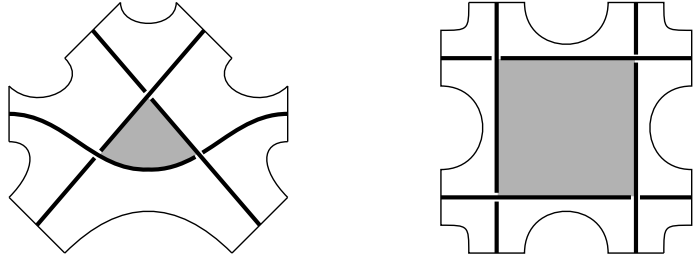

\begin{overpic}[scale=.7]{high_torsion.eps}
\end{overpic}
\caption{Links $\Leg$ in $\R_{t}\times \Sigma$ in the cases $k=2, 3$. The $(k+1)$-gon $U$ is shaded.}
\label{Fig:HighTorsion}
\end{figure}

Here are the details: Consider $k + 1 \geq 3$ line segments $\leg_{i}$ in $\C$, with each $\leg_{i}$ intersecting $\leg_{i+1}$ transversely in a single point. Here indices are taken modulo $k+1$ and the $\leg_{i}$ form the smooth boundary arcs of a $(k+1)$-gon, $U$. Create a $4(k + 1)$-gon $\widetilde{\Sigma} \subset \C$ by taking a neighborhood of the union of the $\leg_{i}$ with $U$ and straightening the edges near the boundary points of the $\leg_{i}$. Now make a Riemann surface $\Sigma$ with $\chi(\Sigma) = 1 - 2(k + 1)$ by identifying the pairs of straightened edges of $\partial\widetilde{\Sigma}$ lying at the endpoints of each $\leg_{i}$. Then the $\leg_{i}$ form closed curves in $\Sigma$ with $\leg_{i} \cap \leg_{j}$ being a single point for each pair $i \neq j$. Equip $\Sigma$ with a Liouville form $\beta$ and find $\Leg_{i} \subset \R_{t} \times \Sigma$ whose projections to $\Sigma$ agree with the $\leg_{i}$ along the boundary of $U \subset \Sigma$ and such that the crossings of the $\Leg_{i}$ are alternating along the boundary of $U$ as shown in Figure \ref{Fig:HighTorsion} in the cases $k=2, 3$. We partition $\Leg$ so that all of the $\Leg_{i}$ are distinct, $N^{\Partition} = k+1$.

For $\filtLevel \leq k$, $\filtration^{\filtLevel}\orbitVS = 0$ so that $\filtration^{\filtLevel}\newRSFTA = \field$ and has only the trivial augmentation $1 \mapsto 1$. There is only one holomorphic disk contributing to $\partial$ which has $k + 1$ positive punctures, no negative punctures, and makes the unit in $\filtration^{k+1}\newRSFTA$ exact. This disk projects to the region $U$ in $\Sigma$. It follows that
\begin{equation*}
\newRSFT^{\filtLevel}\LegGrouped \simeq \begin{cases}
\field & \filtLevel \leq k,\\
0 & \filtLevel > k.
\end{cases}
\end{equation*}

It would be interesting to have examples of links for which the H-torsion and A-torsions disagree. It seems that such examples are difficult to produce. In \cite{Sivek:NoAugs}, Sivek describes a Legendrian knot $K_{2}$ whose $\CE$ algebra has no augmentations but has non-vanishing homology. In the language of this article,
\begin{equation*}
\tau_{H}(K_{2}) = \infty, \quad \tau_{A}(K_{2}) = 1.
\end{equation*}

\subsection{A polyfillable link}\label{Sec:Polyfilling}

In previous examples, we only needed to consider the $\newRSFTA$ differentials of a single element of $\Algebra$. We now consider a slightly more complicated example, carrying out a detailed analysis of $\newRSFTA$ invariants of a two-component link $\Leg = \Leg_{1} \sqcup \Leg_{2}$ where $\Leg_{1}$ is $\tb=1$ trefoil and $\Leg_{2}$ is a $\tb = -1$ unknot. See Figure \ref{Fig:Polyfillable}. We can group the $\Leg_{i}$ in two ways, using either
\be
\item $\Partition_{1}$ for which there is a single $\Leg^{\Partition}$
\item $\Partition_{2}$ for which $\Leg^{\Partition}_{i} = \Leg_{i}$ for $i=1, 2$.
\ee

\begin{figure}[h]
\begin{overpic}[scale=.5]{polyfillable_link.eps}
\put(89, 54){$\Leg_{2}$}
\put(102, 35){$\Leg_{1}$}
\put(-4, 41){$\chord^{1,2}_{1}$}
\put(-3, 22){$\chord^{1, 1}_{1}$}
\put(4, 32){$\chord^{1, 2}_{2}$}
\put(30, 32){$\chord^{2, 1}_{1}$}
\put(35, 22){$\chord^{1, 1}_{2}$}
\put(62, 22){$\chord^{1, 1}_{3}$}
\put(71, 32){$\chord^{1, 1}_{4}$}
\put(71, 9){$\chord^{1, 1}_{5}$}
\put(40, 45){$\chord^{2, 1}_{2}$}
\put(46, 52){$\chord^{2, 2}_{1}$}
\end{overpic}
\caption{The Lagrangian projection of $\Leg$.}
\label{Fig:Polyfillable}
\end{figure}

As described in \cite{CGKS:Polyfilling}, $\Leg$ has topologically distinct Lagrangian fillings. One filling is an annulus. The other is a disjoint union of a genus one surface bounding with one boundary component $\Leg_{1}$ with a disk bounding $\Leg_{2}$. The connected filling can be partitioned only according to $\Partition_{1}$ and the disconnected filling can be partitioned either by $\Partition_{1}$ or $\Partition_{2}$. Since both groupings of $\Leg$ admit Lagrangian fillings, the $\newRSFTA\LegGrouped$ admit augmentations for either choice of $\Partition$. So both of the $\newRSFT(\Leg, \Partition_{i})$ are non-zero and have infinite torsion.

Using $\Partition_{1}$, the algebra $(\Algebra, \partial) = (\filtration^{1}\Algebra, \partial)$ is the Chekanov-Eliashberg algebra of $\Leg$. We now describe the $\newRSFTA$ algebra associated to $\Partition_{2}$. Using $\Partition_{2}$ and the orientation of $\Leg$, $\newRSFTA$ is naturally equipped with a $\Z$-valued degree grading (by the fact that $\rot(\Leg_{1}) = \rot(\Leg_{2}) = 0$) which we use throughout. For simplicity, we will ignore $T$ variables.

\subsubsection{Generators and relations}\label{Sec:SSGensAndDiffs}

Each chord labeled $\chord^{i, j}_{k}$ in Figure \ref{Fig:Polyfillable} will start on $\Leg_{i}$ and end on $\Leg_{j}$. The $10$ chords of $\Leg$ determine $14$ generators of $\Algebra$ for $\Partition_{2}$. Their word-lengths and $\Z$ gradings are as follows:

{\renewcommand{\arraystretch}{2}
\begin{center}
\setlength{\tabcolsep}{5pt}
\begin{tabular}{ |c||c|c|c| } 
 \hline
& $|\word| = 0$ & $|\word| = 1$ & $|\word| = 2$\\ \hline \hline
$\filtLevel(\word) = 1$ & $(\chord^{1, 1}_{1}), (\chord^{1, 1}_{2}), (\chord^{1, 1}_{3})$ & $(\chord^{1, 1}_{4}), (\chord^{1, 1}_{5}), (\chord^{2, 2}_{1})$ & \\ \hline
$\filtLevel(\word) = 2$ & $(\chord^{1, 2}_{1}\chord^{2, 1}_{1}), (\chord^{2, 1}_{1}\chord^{1, 2}_{1})$ & $(\chord^{1, 2}_{1}\chord^{2, 1}_{2}), (\chord^{2, 1}_{2}\chord^{1, 2}_{1}), (\chord^{1, 2}_{2}\chord^{2, 1}_{1}), (\chord^{2, 1}_{1}\chord^{1, 2}_{2})$ & $(\chord^{1, 2}_{2}\chord^{2, 1}_{2}), (\chord^{2, 1}_{2}\chord^{1, 2}_{2})$ \\ 
 \hline
\end{tabular}
\end{center}

For the $\filtLevel(\word) = 1$ generators, differentials are computed using $\CE$ disks, yielding
\begin{equation*}
\begin{gathered}
\partial (\chord^{1, 1}_{1}) = \partial (\chord^{1, 1}_{2}) = \partial (\chord^{1, 1}_{3}) = \partial (\chord^{2, 2}_{1}) = 0,\\
\partial (\chord^{1, 1}_{4}) = 1 + (\chord^{1, 1}_{1}) +  (\chord^{1, 1}_{3}) + (\chord^{1, 1}_{1})(\chord^{1, 1}_{2}) (\chord^{1, 1}_{3}),\\
\partial (\chord^{1, 1}_{5}) = 1 + (\chord^{1, 1}_{1}) +  (\chord^{1, 1}_{3}) + (\chord^{1, 1}_{3})(\chord^{1, 1}_{2})(\chord^{1, 1}_{1}).
\end{gathered}
\end{equation*}

For $\filtLevel(\word) = 2$ generators, differentials are computed using disks with $1$ or $2$ positive punctures:
\begin{equation*}
\begin{gathered}
\partial (\chord^{1, 2}_{1}\chord^{2, 1}_{1}) = \partial (\chord^{2, 1}_{1}\chord^{1, 2}_{1}) = 0,\\
\partial(\chord^{1, 2}_{1}\chord^{2, 1}_{2}) = 1 + (\chord^{1, 2}_{1}\chord^{2, 1}_{1})(\chord^{1, 1}_{1}), \quad \partial(\chord^{2, 1}_{2}\chord^{1, 2}_{1}) = 1 + (\chord^{1, 1}_{1})(\chord^{2, 1}_{1}\chord^{1, 2}_{1}),\\
\partial (\chord^{1, 2}_{2}\chord^{2, 1}_{1}) = 1 + (\chord^{1, 2}_{1}\chord^{2, 1}_{1})(\chord^{1, 1}_{1}), \quad \partial (\chord^{2, 1}_{1}\chord^{1, 2}_{2}) = 1 + (\chord^{2, 1}_{1}\chord^{1, 2}_{1})(\chord^{1, 1}_{1}),\\
\partial (\chord^{1, 2}_{2}\chord^{2, 1}_{2}) = (\chord^{1, 2}_{1}\chord^{2, 1}_{2})(\chord^{1, 1}_{1}) + (\chord^{1, 2}_{2}\chord^{2, 1}_{1})(\chord^{1, 1}_{1}),\quad
\partial (\chord^{2, 1}_{2}\chord^{1, 2}_{2}) = (\chord^{2, 1}_{2}\chord^{1, 2}_{1})(\chord^{1, 1}_{1}) + (\chord^{1, 1}_{1})(\chord^{2, 1}_{1}\chord^{1, 2}_{2}).
\end{gathered}
\end{equation*}
The above equation is organized so that cyclic rotations of words appear in the same row. We see by inspection that $\partial$ is not invariant under cyclic rotation.

\subsubsection{Augmentations}

The DGA $(\filtration^{1}\Algebra, \partial)$ is generated by the words with $\filtLevel(\word) = 1$. This generating set is the union of the $LCH$ generating sets for $\Leg_{1}$ and $\Leg_{2}$. All of the degree $0$ generators are pure chords on $\Leg_{1}$. A standard computation shows that the Chekanov-Eliashberg algebra for the trefoil $\Leg_{1}$ has $5$ augmentations, which are known to come from distinct Lagrangian fillings \cite{BC:Bilinearized, EHK:LagrangianCobordisms}. They are given by the following table:
\begin{center}
\setlength{\tabcolsep}{5pt}
\begin{tabular}{ |c||c|c|c| } 
 \hline
& $(\chord^{1, 1}_{1})$ & $(\chord^{1, 1}_{2})$ & $(\chord^{1, 1}_{3})$ \\
 \hline \hline
$\aug^{1}_{1}$ & $0$ & $0$ & $1$\\
\hline
$\aug^{1}_{2}$ & $0$ & $1$ & $1$\\
\hline
$\aug^{1}_{3}$ & $1$ & $0$ & $0$\\
\hline
$\aug^{1}_{4}$ & $1$ & $1$ & $0$\\
\hline
$\aug^{1}_{5}$ & $1$ & $1$ & $1$\\
\hline
\end{tabular}
\end{center}
These augmentations yield exactly the augmentations for $(\filtration^{1}\Algebra, \partial)$ as $\Leg_{2}$ has no degree $0$ generators. Moreover since there are no chords of negative degree, all of the augmentations are homotopy-inequivalent. See Remark \ref{Rmk:AugHomotopy}.

From the above computations, we see that only the augmentations of $(\filtration^{1}\Algebra, \partial)$ satisfying $(\chord^{1, 1}_{1}) \mapsto 1$ can be upgraded to augmentations of $(\filtration^{2}\Algebra, \partial)$. These are $\aug^{1}_{3}$, $\aug^{1}_{4}$, and $\aug^{1}_{5}$. For each such $\aug^{1}_{i}$ there is exactly one corresponding augmentation $\aug^{2}_{i}$ of $(\filtration^{2}\Algebra, \partial)$ as described in below:
\begin{center}
\setlength{\tabcolsep}{5pt}
\begin{tabular}{ |c||c|c|c|c|c| } 
 \hline
& $(\chord^{1, 1}_{1})$ & $(\chord^{1, 1}_{2})$ & $(\chord^{1, 1}_{3})$ & $(\chord^{1, 2}_{1} \chord^{2, 1}_{1})$ & $(\chord^{2, 1}_{1}\chord^{1, 2}_{1})$\\
 \hline \hline
$\aug^{2}_{3}$ & $1$ & $0$ & $0$ & $1$ & $1$\\
\hline
$\aug^{2}_{4}$ & $1$ & $1$ & $0$ & $1$ & $1$\\
\hline
$\aug^{2}_{5}$ & $1$ & $1$ & $1$ & $1$ & $1$\\
\hline
\end{tabular}
\end{center}

Because the disconnected filling $\Lag$ of $\Leg$ induces an augmentation $\aug_{\Lag}$, at least one of the above augmentations must correspond to $\aug_{\Lag}$. Since each $\aug^{2}_{i}$ sends the $\filtLevel(\word) = 2, |\word| = 0$ generators to $1$, $\aug_{\Lag}$ must include non-zero counts of $\ind = 0$ holomorphic disks with two positive punctures (also known as bananas).

Because the generators of $\newRSFTA$ all have non-negative degree, distinct augmentations cannot be chain homotopic by Remark \ref{Rmk:AugHomotopy}. By comparing the values of the above augmentations on generators, we can compute the entire augmentation tree $\tree_{\Aug}$ for $(\Leg, \Partition)$ as follows:
\begin{equation*}
\begin{tikzcd}
& & 1 & & \\
\aug^{1}_{1}\arrow[rru] & \aug^{1}_{2} \arrow[ru] & \aug^{1}_{3} \arrow[u] & \aug^{1}_{4} \arrow[lu] & \aug^{1}_{5} \arrow[llu] \\
& & \aug^{2}_{3} \arrow[u] & \aug^{2}_{4}\arrow[u] & \aug^{2}_{5}\arrow[u]
\end{tikzcd}
\end{equation*}

Because all the $\aug^{1}_{i}$ are determined by the Lagrangian fillings of \cite{EHK:LagrangianCobordisms}, the following is immediate:

\begin{cor}
At most $3$ of the $5$ Lagrangian fillings of the trefoil described in \cite{EHK:LagrangianCobordisms} appear as components of disconnected fillings of $\Leg$.
\end{cor}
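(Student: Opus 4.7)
The plan is to extract the corollary from the preceding analysis of the augmentation tree $\tree_{\Aug}$ together with the filling-to-augmentation functoriality of Theorem \ref{Thm:Main}. Suppose that $\Lag = \Lag_{1} \sqcup \Lag_{2}$ is any disconnected exact Lagrangian filling of $\Leg$ with $\partial \Lag_{i} = \Leg_{i}$. Equip $\Lag$ with the partition $\Partition_{2}$ (which is induced by $\Partition$ on $\Lag$, since the connected components of $\Lag$ and of $\Leg$ stand in bijective correspondence). By Theorem \ref{Thm:Main}, $\Lag$ determines an augmentation
\begin{equation*}
\aug_{\Lag}: \newRSFTA(\Leg, \Partition_{2}) \rightarrow \field,
\end{equation*}
and in particular its pullback along the inclusion $\filtration^{1}\Algebra \hookrightarrow \filtration^{2}\Algebra = \Algebra$ is an augmentation of $\filtration^{1}\newRSFTA(\Leg, \Partition_{2})$.

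Next, I would identify this restricted augmentation with the augmentation of $\CE(\Leg_{1})$ produced by $\Lag_{1}$ alone. The point is that the subalgebra of $\filtration^{1}\Algebra$ generated by $\Partition_{2}$-pure chords on $\Leg_{1}$ is literally $\CE(\Leg_{1})$, and by Lemma \ref{Lemma:NoPurePosForAdmissible} any admissible disk with a positive puncture at such a $\Partition_{2}$-pure chord has all remaining punctures $\Partition_{2}$-pure negative. Hence all disks contributing to $\aug_{\Lag}$ on this subalgebra have boundary contained in $\Lag_{1}$, and the count agrees with the $\CE(\Leg_{1})$ augmentation count determined by $\Lag_{1}$. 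Consequently the restriction of $\aug_{\Lag}$ to $\CE(\Leg_{1}) \subset \filtration^{1}\Algebra$ is exactly the augmentation of $\CE(\Leg_{1})$ associated to $\Lag_{1}$ in \cite{EHK:LagrangianCobordisms}, which is one of $\aug^{1}_{1}, \ldots, \aug^{1}_{5}$.

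Finally, I would invoke the augmentation tree computation above. Since $\aug_{\Lag}$ exists as an augmentation of the full $\filtration^{2}\newRSFTA = \newRSFTA(\Leg, \Partition_{2})$, the restriction to $\CE(\Leg_{1})$ must be in the image of the restriction map $\Variety^{2}_{\Aug} \rightarrow \Variety^{1}_{\Aug}$. From the two tables above, the only preimages of this map on the $\Leg_{1}$ factor are $\aug^{1}_{3}, \aug^{1}_{4}, \aug^{1}_{5}$; equivalently, the constraint is precisely $(\chord^{1,1}_{1}) \mapsto 1$, which $\aug^{1}_{1}$ and $\aug^{1}_{2}$ violate. Therefore $\Lag_{1}$ must correspond to one of at most $3$ augmentations, giving the corollary.

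The main obstacle is the middle step: matching the restriction of $\aug_{\Lag}$ along $\CE(\Leg_{1}) \hookrightarrow \filtration^{1}\Algebra \hookrightarrow \Algebra$ with the $\CE$-augmentation determined by $\Lag_{1}$ in isolation. This is where the admissibility hypothesis for boundary words (Definition \ref{Def:AdmissibleDisk}) combined with Lemma \ref{Lemma:NoPurePosForAdmissible} does the real work by ruling out contributions from disks whose boundary would cross between $\Lag_{1}$ and $\Lag_{2}$; once this geometric point is set, the rest is just reading off the table.
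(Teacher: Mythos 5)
Your proof is correct and follows the same route the paper intends: the paper states the corollary as an immediate consequence of the augmentation-tree computation together with the fact that a partitioned filling induces an augmentation of $\newRSFTA(\Leg,\Partition_{2})$ whose restriction to $\filtration^{1}$ recovers the $\CE$ augmentations of the components. Your middle step — using admissibility and Lemma \ref{Lemma:NoPurePosForAdmissible} to identify the restriction of $\aug_{\Lag}$ to $\CE(\Leg_{1})$ with the augmentation induced by $\Lag_{1}$ alone — is exactly the detail the paper leaves implicit, and you handle it correctly.
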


\subsubsection{Bilinearized homologies}

We can compute the bilinearized homologies associated to the pairs of augmentations $\aug^{1}_{i}, \aug^{1}_{j}$ of $(\filtration^{1}\Algebra, \partial)$ directly, yielding Poincar\'{e} polynomials $P_{\aug^{1}_{i}, \aug^{1}_{j}}(t)$ for each bilinearization,
\begin{equation}\label{Eq:BilinPoly}
P_{\aug^{1}_{i}, \aug^{1}_{j}}(t) = \begin{cases}
1 + t & i \neq j\\
2 + 2t & i = j.
\end{cases}
\end{equation}
These are the sums of the Poincar\'{e} polynomials of the bilinearized homologies of the $\Leg_{i}$ with the induced augmentations. This is expected as the bilinearized complex is a direct sum of the bilinearized complexes of the $\Leg_{i}$.

Now we work out a full computation of the bilinearized $PDH^{2}$ spectral sequence homology groups. For each of the $\aug^{2}_{i}, \aug^{2}_{j}$, we compute our bilinearized differentials as
\begin{equation*}
\begin{gathered}
\partial^{\aug^{2}_{i}, \aug^{2}_{j}} (\chord^{1, 1}_{k}) = \partial^{\aug^{1}_{i}, \aug^{1}_{j}} (\chord^{1, 1}_{k}), \quad \partial^{\aug^{2}_{i}, \aug^{2}_{j}} (\chord^{2, 2}_{1}) = \partial^{\aug^{1}_{i}, \aug^{1}_{j}} (\chord^{2, 2}_{1}) = 0\\
\partial^{\aug^{2}_{i}, \aug^{2}_{j}} (\chord^{1, 2}_{1}\chord^{2, 1}_{1}) = \partial^{\aug^{2}_{i}, \aug^{2}_{j}} (\chord^{2, 1}_{1}\chord^{1, 2}_{1}) = 0,\\
\partial^{\aug^{2}_{i}, \aug^{2}_{j}}(\chord^{1, 2}_{1}\chord^{2, 1}_{2}) = (\chord^{1, 2}_{1}\chord^{2, 1}_{1}) + (\chord^{1, 1}_{1}), \quad \partial^{\aug^{2}_{i}, \aug^{2}_{j}}(\chord^{2, 1}_{2}\chord^{1, 2}_{1}) = (\chord^{1, 1}_{1}) + (\chord^{2, 1}_{1}\chord^{1, 2}_{1}),\\
\partial^{\aug^{2}_{i}, \aug^{2}_{j}} (\chord^{1, 2}_{2}\chord^{2, 1}_{1}) = (\chord^{1, 2}_{1}\chord^{2, 1}_{1}) + (\chord^{1, 1}_{1}), \quad \partial^{\aug^{2}_{i}, \aug^{2}_{j}} (\chord^{2, 1}_{1}\chord^{1, 2}_{2}) = (\chord^{2, 1}_{1}\chord^{1, 2}_{1}) + (\chord^{1, 1}_{1}),\\
\partial^{\aug^{2}_{i}, \aug^{2}_{j}} (\chord^{1, 2}_{2}\chord^{2, 1}_{2}) = (\chord^{1, 2}_{1}\chord^{2, 1}_{2}) + (\chord^{1, 2}_{2}\chord^{2, 1}_{1}),\quad
\partial^{\aug^{2}_{i}, \aug^{2}_{j}} (\chord^{2, 1}_{2}\chord^{1, 2}_{2}) = (\chord^{2, 1}_{2}\chord^{1, 2}_{1}) + (\chord^{2, 1}_{1}\chord^{1, 2}_{2}).
\end{gathered}
\end{equation*}
The computation for words of length $2$ quickly follows from $\aug^{2}_{i}(\chord^{1,1}_{1}) = 1$ for all $i$.

According to the convention of Equation \eqref{Eq:SSDiffConvention}, the $E^{0}_{p,\deg}$ groups are additively generated by words of length $p$ having grading $\deg$ and the differential on this $0$th page preserves the word length filtration. The generators for these tables can be read off of the table at the start of \S \ref{Sec:SSGensAndDiffs}, yielding
\begin{equation*}
\begin{tikzcd}
E^{0}_{2,0} = \field^{2}  & \arrow[l] E^{0}_{2,1}=\field^{4} & \arrow[l] E^{0}_{2,2} = \field^{2}\\
E^{0}_{1,0} = \field^{3} & \arrow[l] E^{0}_{1,1} = \field^{3}
\end{tikzcd}
\end{equation*}
From the above computation of the $\partial^{\aug^{2}_{i}, \aug^{2}_{j}}$ we see that the differentials on the $0$th page are the usual bilinearized differentials along the $E^{0}_{1, \ast}$ row. Along the $E^{0}_{2, \ast}$ row we have
\begin{equation*}
\begin{gathered}
\partial^{\aug^{2}_{i}, \aug^{2}_{j}}(\chord^{1, 2}_{1}\chord^{2, 1}_{2}) = (\chord^{1, 2}_{1}\chord^{2, 1}_{1}), \quad \partial^{\aug^{2}_{i}, \aug^{2}_{j}}(\chord^{2, 1}_{2}\chord^{1, 2}_{1}) = (\chord^{2, 1}_{1}\chord^{1, 2}_{1}),\\
\partial^{\aug^{2}_{i}, \aug^{2}_{j}} (\chord^{1, 2}_{2}\chord^{2, 1}_{1}) = (\chord^{1, 2}_{1}\chord^{2, 1}_{1}), \quad \partial^{\aug^{2}_{i}, \aug^{2}_{j}} (\chord^{2, 1}_{1}\chord^{1, 2}_{2}) = (\chord^{2, 1}_{1}\chord^{1, 2}_{1}),\\
\partial^{\aug^{2}_{i}, \aug^{2}_{j}} (\chord^{1, 2}_{2}\chord^{2, 1}_{2}) = (\chord^{1, 2}_{1}\chord^{2, 1}_{2}) + (\chord^{1, 2}_{2}\chord^{2, 1}_{1}),\quad
\partial^{\aug^{2}_{i}, \aug^{2}_{j}} (\chord^{2, 1}_{2}\chord^{1, 2}_{2}) = (\chord^{2, 1}_{2}\chord^{1, 2}_{1}) + (\chord^{2, 1}_{1}\chord^{1, 2}_{2}).
\end{gathered}
\end{equation*}
By inspection this row is exact. So the $E^{1}$ page of our spectral sequence is
\begin{equation*}
\begin{tikzcd}
E^{1}_{1,0} = LCH^{\aug^{1}_{i}, \aug^{1}_{j}}_{0} & E^{1}_{1,1} = LCH^{\aug^{1}_{i}, \aug^{1}_{j}}_{1}
\end{tikzcd}
\end{equation*}
with vanishing differentials $E^{1}_{p, \deg} \xrightarrow{0} E^{1}_{p-1, \deg-1}$. So we see that the spectral sequence converges at the $r=1$ page yielding
\begin{equation*}
E^{r\geq 1}_{p, \deg} = \begin{cases}
LCH^{\aug^{1}_{i}, \aug^{1}_{j}}_{0} & p = 1\\
0 & p \neq 1.
\end{cases}
\end{equation*}
Packaging the information as a Poincaré polynomial as defined in Equation \eqref{Eq:SpecPolyDef}, we get
\begin{equation*}
P_{\aug^{2}_{i}, \aug^{2}_{j}}^{\spec}(t, x, y) = (1+x)y P_{\aug^{1}_{i}, \aug^{1}_{j}}(t)
\end{equation*}
as determined by the bilinearized Legendrian contact homology polynomials of Equation \eqref{Eq:BilinPoly}.

\subsection{$\newRSFT$ of $2$-cables}\label{Sec:Cables}

Now we investigate the $\newRSFT$ of two component links, using $\Z/2\Z$ gradings throughout the remainder of this section.

Let $B$ be a closed, oriented manifold so that $T^{\ast}B$ is an exact symplectic manifold when equipped with the canonical $1$-form, $pdq$. Here and throughout the remainder of the subsection, $\R_{t} \times T^{\ast}B$ is equipped with the canonical contact structure $\ker(dt + pdq)$.

\begin{defn}
A \emph{Legendrian cabling pattern} for $B$ is a Legendrian $\Leg \subset \R_{t} \times T^{\ast}B$ such that the composition of the inclusion with the projection
\begin{equation*}
\Leg \rightarrow \R_{t} \times T^{\ast}B \rightarrow B
\end{equation*}
is a covering map. We say that $\Leg$ has \emph{$d$ strands} where $d$ is the degree of $\Lambda \to B$.
\end{defn}

The Legendrian cabling pattern generalizes the construction of Legendrian cables which are solid torus links as described in \cite{NT:Torus}, frequently studied in the literature. Given a cabling pattern $\Leg$ for some $B$ and a Legendrian embedding of $B$ into some $\Mxi$, we can apply the Weinstein neighborhood theorem to realize $\Leg$ as  a Legendrian in $\Mxi$ as well. We then say that $\Leg$ is a \emph{$d$-cable of $B$}.

\begin{lemma}\label{Lemma:LocalVanishing}
Suppose that $B \subset \Mxi$ is a connected Legendrian in a contactization $\Mxi$ and let $\Leg \subset \Mxi$ be a cable, contained in a Weinstein neighborhood $N = [-\epsilon, \epsilon]_{t} \times \disk^{\ast}B$ of $B \subset \Mxi$. If $PDH(\Leg \subset N, \Partition) = 0$ for some partition, then $PDH(\Leg \subset M, \Partition)=0$ as well.
\end{lemma}

\begin{proof}
The size of the neighborhood $N$ can be shrunk by applying the time $T \ll 0$ flow of the contact vector field $t\partial_{t} + p\partial_{p}$, where $p\partial_{p}$ is the Liouville vector field on $T^{\ast}B$. This flow induces a Legendrian isotopy on $\Leg \subset B$ and so on $\Leg \subset \Mxi$.

Say that a word of chords on $\Leg \subset \Mxi$ is \emph{short} if all of its chords are completely contained in $N$. Shrinking the neighborhood sufficiently we see that, without loss of generality, actions of short words can be assumed arbitrarily small. Since the $\newRSFTA$ differential is action decreasing, it must send short words to short words. Since the short words are exactly the generators of $\newRSFTA(\Leg \subset N, \Partition)$, we see that the inclusion $N \subset \Mxi$ induces a DGA morphism $\newRSFTA(\Leg \subset N, \Partition) \to \newRSFTA(\Leg \subset M, \Partition)$ and hence a unital algebra morphism $\newRSFT(\Leg \subset N, \Partition) \to \newRSFT(\Leg \subset M, \Partition)$. Since the domain of the latter is zero by assumption, the target must be zero as well and the lemma is established.
\end{proof}

This lemma provides a means of producing $(\Leg, \Partition)$ for which $LCH(\Leg) \neq 0$ and $\newRSFT(\Leg, \Partition) = 0$.

Consider the case in which there are $d=2$ strands, $\Leg$ is disconnected, and $N^{\Partition} = 2$. We'll see that Floer's identification of holomorphic strips with Morse flow lines \cite{Floer:Morse} is sufficient to compute $\partial$. The generalized techniques of \cite{FO:FlowTree} can be applied to computations with more strands and \cite{Ekholm:FlowTrees} can be applied to compute $\partial$ for general Legendrians in $\R_{t} \times T^{\ast}B$ or to cables of Legendrians $B$ inside of $1$-jet spaces $\Mxi$. In this setup we can describe $\Leg$ as consisting of the $0$-section $\Leg_{1} = B$ and the $1$-jet of a smooth function $f: B \rightarrow \R$ having $0$ as a regular value,
\begin{equation*}
\Leg_{2} = \{  (f(q), -df(q))\ : q \in B\} \subset \R_{t} \times T^{\ast}B.
\end{equation*}
The function $f$ splits $B$ in two pieces,
\begin{equation*}
B = B^{-} \cup B^{+}, \quad B^{-} = f^{-1}((-\infty, 0])\quad B^{+} = f^{-1}([0, \infty))
\end{equation*}
whose intersection is $f^{-1}(0)$. Since $[\Leg_{1}] = \pm [\Leg_{2}] \neq 0 \in H_{n-1}(N)$ where $N$ is our Weinstein neighborhood of $B$, $\Leg \subset N$ cannot have a disconnected Lagrangian filling. We can ask when $\Leg$ cabled about a $B \subset \Mxi$ has a disconnected filling for more general $\Mxi$. The following result provides a local obstruction using $\newRSFT$.

\begin{prop}\label{Prop:CableFIllingObs}
For a $2$-cable $\Leg \subset N= \R_{t} \times T^{\ast}B$ equipped with the $N^{\Partition}=2$ partition, $\newRSFT\LegGrouped = 0$ if and only if the Bockstein morphism $\delta$ is non-zero,
\begin{equation*}
\delta: H_{\ast}(B, B^{-}) \rightarrow H_{\ast - 1}(B^{-}).
\end{equation*}
If $\delta = 0$, then $\newRSFT \simeq \tensorAlg(H)$ is a tensor algebra where $H$ is as in Equation \eqref{Eq:MorseHomology} below. Therefore if $\delta \neq 0$, then $\Leg \subset \Mxi$ has no disconnected filling.
\end{prop}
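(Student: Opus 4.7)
My plan is first to identify Reeb chords with critical points of $f$: chords from $\Leg_1$ to $\Leg_2$ correspond to critical points in $B^+$ (call these $\chord^+$) and chords from $\Leg_2$ to $\Leg_1$ correspond to critical points in $B^-$ (call these $\chord^-$). Since $\Leg_1$ and $\Leg_2$ each admit no self-chords, there are no pure chords, so every admissible $\Partition$-cyclic word has length exactly $2$, either $(\chord^+ \chord^-)$ or $(\chord^- \chord^+)$. This yields the identification
\begin{equation*}
\orbitVS = (C^+ \otimes C^-) \oplus (C^- \otimes C^+),
\end{equation*}
where $C^\pm$ is the Morse complex generated by critical points in $B^\pm$, and the total Morse differential on $C^+ \oplus C^-$ splits as $\partial_f = \partial^{++} + \partial^{--} + \partial^{+-}$. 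Under these identifications, $(C^+, \partial^{++})$ computes $H_*(B, B^-)$, $(C^-, \partial^{--})$ computes $H_*(B^-)$, and $\partial^{+-}\colon C^+ \to C^-$ induces $\delta$ on homology.

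Next I will scale $f$ to be $\mathcal{C}^1$-small (permissible by Theorem \ref{Thm:Main}) and identify rigid $\ind = 1$ holomorphic disks via Floer's theorem and Ekholm's flow tree theorem. Three disk types contribute to $\partial$ of a length-$2$ word: (a) bananas (two positive, no negative punctures) correspond to downward gradient trajectories of $f$ crossing $f^{-1}(0)$, contributing the constant term $\langle \partial^{+-}\chord^+, \chord^-\rangle \cdot 1$; (b) CE-type strips with one positive and one negative puncture both contained in a single region $B^\pm$, capped by a trivial strip over the remaining chord of the word, contribute the Morse terms $(\partial^{++}\chord^+)(\chord^-)$ and $(\chord^+)(\partial^{--}\chord^-)$; (c) disks with more negative punctures contribute only to the tensor-degree-$\geq 2$ part of the output. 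The main analytic hurdle is (a), since identifying bananas with connecting Morse trajectories requires the flow tree framework adapted to a two-positive-puncture setting.

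With $\partial$ determined, I will analyze the homology via the descending filtration by tensor degree on $\Algebra = \tensorAlg(\orbitVS)$. By the disk analysis, $\partial$ decreases this filtration by at most one, with $\partial^{-1}$ (bananas) the sole decreasing component and $\partial^0$ (linear Morse) the filtration-preserving part, which extends by the Leibniz rule to a derivation on all of $\Algebra$. The $E^0$ page is $\Algebra$ with $d^0 = \partial^0$, and by K\"unneth together with the derivation property, the $E^1$ page is
\begin{equation*}
E^1 = \tensorAlg(H), \qquad H = \bigl(H_*(B, B^-) \otimes H_*(B^-)\bigr) \oplus \bigl(H_*(B^-) \otimes H_*(B, B^-)\bigr).
\end{equation*}
The $d^1$ differential, induced by $\partial^{-1}$, is then a derivation sending each $[\alpha] \otimes [\beta] \in H$ to $\langle \delta[\alpha], [\beta]\rangle \cdot 1 \in \field$.

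To conclude, I dichotomize on the vanishing of $\delta$. If $\delta \neq 0$ then I can pick $[\alpha], [\beta]$ with $\langle \delta[\alpha], [\beta]\rangle = 1$, producing some $x \in H$ with $d^1 x = 1$; left multiplication by $x$ then provides a contracting chain homotopy for $d^1$ on $\tensorAlg(H)$ (over $\field = \Z/2\Z$), so $E^2 = 0$ and hence $\newRSFT = 0$. If $\delta = 0$ then $d^1$ vanishes on $H$ and hence on all of $\tensorAlg(H)$ by the derivation property; since the filtration-decreasing part of $\partial$ only decreases tensor degree by exactly one, no higher differentials $d^r$ with $r \geq 2$ can arise, so the spectral sequence collapses to $E^\infty = \tensorAlg(H)$. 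The main algebraic subtlety will be verifying this collapse, which follows from the single-step nature of $\partial^{-1}$ together with the standard tensor-algebra contracting-homotopy argument.
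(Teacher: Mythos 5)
Your proposal is correct and follows essentially the same route as the paper: identify $\orbitVS$ with $(C^{+}\otimes C^{-})\oplus(C^{-}\otimes C^{+})$ via Floer/flow-tree theory, split $\partial$ into the filtration-preserving Morse part and the banana part inducing the Bockstein, and run the tensor-degree spectral sequence with $E^{1}=\tensorAlg(H)$, concluding by the contracting-homotopy dichotomy on $\delta$. The only (harmless) discrepancies are bookkeeping: the paper places the banana differential on the $E^{2}$ page rather than $E^{1}$, and your case (c) is vacuous since no rigid disks with extra negative punctures occur for two graphical components.
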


Consequently, only the ``N''-shaped sphere in Figure \ref{Fig:MorseFun} corresponds to a $\Leg$ having vanishing $\newRSFT$.

\begin{figure}[h]
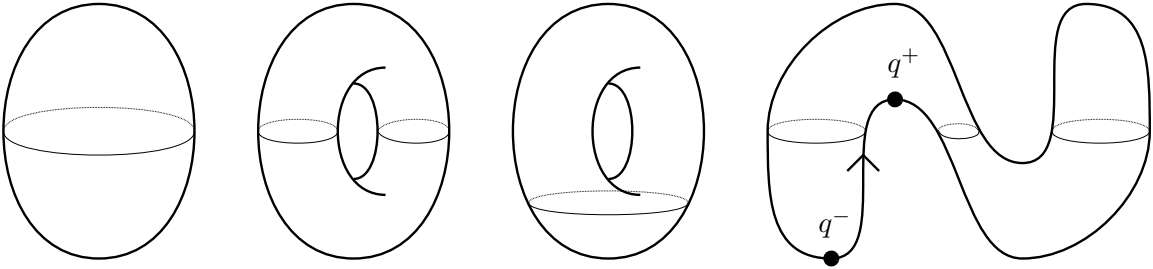

\begin{overpic}[scale=.4]{morse_fun.eps}
\put(71, 3){$q^{-}$}
\put(77, 17){$q^{+}$}
\end{overpic}
\caption{Morse functions on spheres and tori with circles indicating the $f^{-1}(0)$. The regions above the circles are the $B^{+}$ and the regions below are the $B^{-}$. The $\grad f$ flow line indicated determines a holomorphic disk $u$ contributing to $\partial(\chord_{q^{+}}\chord_{q^{-}}) = 1$.}
\label{Fig:MorseFun}
\end{figure}

\begin{proof}
The chords are in one-to-one correspondence with critical points $q \in \Crit(f)$ of $f$ and we will write the corresponding chord as $\chord_{q}$. Looking at the relative values of $f$ we see that
\begin{equation*}
\chord_{q} \in \begin{cases}
\Chords_{1, 2} & f(q) > 0 \\
\Chords_{2, 1} & f(q) < 0.
\end{cases}
\end{equation*}
There are no pure chords and every generator $\word \in \orbitVS$ of $\Algebra$ has word length $2$, being of the form $(\chord_{q^{-}}\chord_{q^{+}})$ or $(\chord_{q^{+}}\chord_{q^{-}})$ for critical points $q^{\pm} \in B^{\pm}$. Therefore we can identify the vector space of words as
\begin{equation*}
V = \left( C^{\Morse}_{\ast}(f|_{B^{-}}) \otimes C^{\ast}_{\Morse}(f|_{B^{+}})\right) \oplus \left( C^{\ast}_{\Morse}(f|_{B^{+}}) \otimes C^{\Morse}_{\ast}(f|_{B^{-}}) \right) .
\end{equation*}
Note that we are viewing the critical points of $B^{-}$ as homological generators and the critical points of $B^{+}$ as cohomological generators.

The $\newRSFTA$ differential of such a word has two possible contributions. There could be
\be
\item strips positively asymptotic to one of the $\chord_{q^{\pm}}$ and having one negative asymptotic and
\item strips with two positive punctures, asymptotic to both of the $\chord_{q^{\pm}}$ and with no negative punctures.
\ee
Let's call these $\partial_{1}$ and $\partial_{2}$ contributions so that $\partial = \partial_{1} + \partial_{2}$ with the subscript indicating the numbers of positive punctures involved in curve counts. Writing $\filtration_{\otimes}$ for the ascending tensor length filtration on $\Algebra = \tensorAlg(\orbitVS)$, defined
\begin{equation*}
\filtration_{\otimes}^{k}\Algebra = \bigoplus_{k' \leq k} \orbitVS^{\otimes k'},
\end{equation*}
we observe that
\begin{equation*}
\partial_{1}\filtration_{\otimes}^{k}\Algebra \subset \filtration_{\otimes}^{k}\Algebra, \quad \partial_{2}\filtration_{\otimes}^{k}\Algebra \subset \filtration_{\otimes}^{k - 2}\Algebra.
\end{equation*}
Therefore treating $\Algebra$ as a $\field$ vector space so that $(\Algebra, \partial)$ is an additive chain complex, we can compute $\newRSFT = H(\Algebra, \partial)$ with a spectral sequence associated to the filtered complex $\filtration_{\otimes}$. Writing $D_{r}$ for the differential on the $E^{r}$ page, it's clear that
\begin{equation*}
D_{0} = \partial_{1}, \quad D_{1} = 0, \quad D_{2} = \partial_{2}, \quad D_{k > 2} = 0
\end{equation*}
so that $\newRSFT$ is a direct sum of subgroups of $E^{3}$.

According to \cite{Floer:Morse}, the $\partial_{1}$ contributions are counted as $-\grad f$ flow lines between critical points in $B^{-}$ which start at some $q^{-}$ and $\grad f$ flow lines between critical points in $B^{+}$ which start at some $q^{+}$. Indeed, such flow lines correspond to holomorphic strips in the Lagrangian projection which lift to strips in the symplectization of the $1$-jet space of $B$. The strips will have one positive and one negative end iff both of the chords correspond to critical points in the same $B^{\pm}$. As we are working over a field, $\field$, the Kunneth formula gives us isomorphisms between tensor products of homologies and homologies of tensor products, so that
\begin{equation*}
\begin{gathered}
E^{1} = \ker \partial_{1} / \im \partial_{1} = \tensorAlg(H),\\
H = \left( H_{\ast}^{\Morse}(B^{-}) \otimes H^{\ast}_{\Morse}(B^{+}) \right) \oplus \left( H^{\ast}_{\Morse}(B^{+}) \otimes H_{\ast}^{\Morse}(B^{-}) \right).
\end{gathered}
\end{equation*}
Using the facts that $H_{\ast}^{\Morse}(B^{-})$ computes homology and $H_{\Morse}^{\ast}(B^{+})$ computes cohomology \cite[Chapter 4]{Schwarz:Morse}, applying Poincar\'{e} duality, and changing notation for indices, we can rewrite
\begin{equation}\label{Eq:MorseHomology}
\begin{gathered}
H^{\ast}_{\Morse}(B^{+}) = H_{\ast+n-1}(B, B^{-})\\
H = \left( H_{\ast}(B^{-}) \otimes H_{\ast}(B, B^{-}) \right) \oplus \left( H_{\ast}(B, B^{-}) \otimes H_{\ast}(B^{-}) \right).
\end{gathered}
\end{equation}

Again by \cite{Floer:Morse}, the $\partial_{2}$ contributions are counted as $\grad f$ flow lines which begin at a $q^{-} \in B^{-}$ and end at some $q^{+} \in B^{+}$. On each summand of $H$, this is exactly the Bockstein homomorphism $\delta$. If $\delta$ vanishes, then $D_{2} = \partial_{2}$ vanishes and $\newRSFTA \simeq E_{1}$. Otherwise we can take an element of the form $x \otimes \delta(x)$ or $\delta(x)\otimes x \in E^{2}$ with $x \neq 0 \in H^{\ast}(B^{-})$ and see that $\partial_{2}\word = 1$, meaning that $\newRSFT$ vanishes. A specific example of a non-zero $\delta$ count is indicated in the right-most subfigure of Figure \ref{Fig:MorseFun}. The last statement of the proposition regarding filling obstructions then follows from Lemma \ref{Lemma:LocalVanishing} and functoriality of $PDH$.
\end{proof}

\subsection{Links in $\Rtwonminusone$ with vanishing $\newRSFT$}\label{Sec:UnknotLinks}

We continue to use the data of $f, B$ to determine a $2$-cable inside of $\R_{t}\times T^{\ast}B$ with $f$ determining $B^{\pm} \subset B$. Let $B \subset \Rtwonminusone$ be a Legendrian so that we view the cable $\Leg = \Leg_{1} \sqcup \Leg_{2}$ as being contained in $\Rtwonminusone$ as well. Suppose that the orientations of $\Leg_{1}$ and $\Leg_{2}$ differ by a sign $\sigma$.

The linking number $\lk(\Leg_{2}, \Leg_{1})$ can be computed as the signed count of chords in $\Chords_{2, 1}$. Each pure chord in $\Chords_{1, 1}$ determines one chord in $\Chords_{2, 1}$ having sign given by $\sigma$ times the sign of the corresponding element of $\Chords_{1, 1}$. See the two copy construction of \cite{EES:Duality}. All other chords in $\Chords_{1, 2}$ correspond to critical points of $B^{-}$ and their signed count is computed as $\sigma(-1)^{\ind_{\Morse}}$. Therefore
\begin{equation}\label{Eq:ChiLInking}
\lk(\Leg_{2}, \Leg_{1}) = \sigma(\tb(B) + \chi(B^{-})).
\end{equation}

It is not difficult to produce examples of $2$-cables in $\Rtwonminusone$ which have
\begin{equation*}
LCH(\Leg) \neq 0, \quad \newRSFTA\LegGrouped = 0
\end{equation*}
with the $N^{\Partition} = 2$ partition, and arbitrary linking number: By Lemma \ref{Lemma:CEAugCombo}, if $\CE(B)$ admits an augmentation, then so does $\CE(\Leg)$, implying $LCH(\Leg) \neq 0$. Meanwhile Proposition \ref{Prop:CableFIllingObs} and Equation \eqref{Eq:ChiLInking} can be applied to ensure that $\newRSFT\LegGrouped = 0$ while controlling the linking number.

For some specific examples, take $B \subset \Rtwonminusone$ to be the standard Legendrian unknot. Choose $f$ so that $B^{-}$ consists of a disk together with some copies of $S^{1} \times \disk^{n-2}$ implying that the Bockstein $\delta$ of Proposition \ref{Prop:CableFIllingObs} is non-zero. By choosing orientations appropriately ($\sigma = (-1)^{n-1}$ in the above notation) we can guarantee that the linking number vanishes. For $n=3$ with a single $S^{1} \times \disk^{1}$, this corresponds exactly to the ``N''-shaped sphere of Figure \ref{Fig:MorseFun} which we draw as a spinning \cite{EES:LegendriansInR2nPlus1} in Figure \ref{Fig:FrontSpinning}.

\begin{figure}[h]
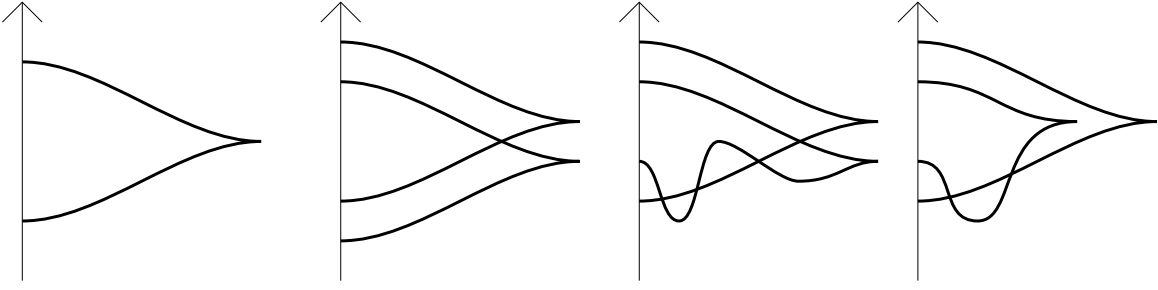

\begin{overpic}[scale=.5]{front_spinning.eps}
\end{overpic}
\caption{Front spinnings \cite{EES:LegendriansInR2nPlus1} of Legendrian arcs about the $t$-axis determining Legendrian spheres in $(\R^{5}, \xi_{std})$. On the left, the spinning produces the ``flying saucer'' picture of the unknot. The Hopf link given by the $2$-cable corresponding to a constant function is center-left. Spinning the center-right figure and then breaking the $S^{1}$ symmetry will give a two-component link with zero linking number corresponding to the ``N'' shaped sphere of Figure \ref{Fig:MorseFun}. On the right, the center-right subfigure is simplified by a Reidemeister move.}
\label{Fig:FrontSpinning}
\end{figure}

Now we compute the $PDH$ of a Hopf link $\Leg$ with $n$ arbitrarily, generalizing the $n=2$ case of \S \ref{Sec:HopfLink3d}. We define $\Leg \subset \Rtwonminusone, n > 2$ as the union of an unknot $\Leg_{1}$ with a push-off $\Leg_{2} = (-1)^{n-1}\Flow^{\epsilon}_{\partial_{t}}\Leg_{1}$. Here the sign indicates orientation and $\epsilon > 0$ small. We can view $\Leg$ as a $2$-cable of $\Leg_{1}$ with associated Morse function $f$ a constant. Therefore one of the $B^{\pm}$ is empty and Proposition \ref{Prop:CableFIllingObs} cannot be used to infer the vanishing of $PDH$.

\begin{prop}
The Hopf link $\Leg \subset \Rtwonminusone$ has a filling by an oriented Lagrangian $[-1, 1] \times S^{n-1}$ and satisfies $\newRSFT\LegGrouped = 0$ when using the $N^{\Partition} = 2$ partition.
\end{prop}

\begin{proof}[Sketch of the proof]
We'll only sketch the proof, heavily relying on standard references.

For the existence of the Lagrangian filling, which is well known, it is a standard fact that each $\Leg_{i}$ bounds a Lagrangian disk and up to ambient Hamiltonian isotopy we can view these disks as $\R^{n}, J\R^{n} \cap \disk^{2n}$ where $\disk^{2n} \subset \C^{n}$ is equipped with the standard symplectic structure and we view $\Rtwonminusone$ as the complement of a point in $S^{2n - 1} = \partial \disk^{2n}$. The choice of orientation tells us that the sign of the intersection is positive, so that we can perform a Lagrange-Polterovich surgery \cite{Polterovich:Surgery} at the intersection point to obtain an oriented Lagrangian filling $\Lag$ which is smoothly $\disk^{n}\# \disk^{n} \simeq [-1, 1]\times S^{n-1}$. Exactness of the filling is given to us for free as $n > 2$ so $H_{1}(\Lag) = 0$.

To establish $\newRSFTA = 0$, we return to working within $\Rtwonminusone$ and apply the displacement technique of \cite{EES:Duality} or invariance of $LCH$ under double point moves as in \cite{EES:LegendriansInR2nPlus1}. Alternatively we can look to Figure \ref{Fig:HopfDifferential} as the computation will be essentially the same.

There are exactly two mixed chord $\chord_{1} \in \Chords_{1, 2}, \chord_{2} \in \Chords_{2, 1}$. Apply a generic Legendrian isotopy which keeps $\Leg_{1}$ fixed and unlinks the pair with $\Leg_{2}$ following some family $\Leg_{2}(T), T \in [0, 1+\epsilon]$ for which $\Leg_{2} = \Leg_{2}(0)$ with the pair unlinked for $T > 1$. For $T < 1$ the mixed chords vary smoothly and converge to a quadratic double point at $T = 1$. In the Lagrangian projection, this appears as the application of a double point move as described in \S \ref{Sec:HighDimDoublePt}. For $T = 1 - \epsilon$ with $\epsilon$ small, there will be exactly one holomorphic strip in the Lagrangian projection which is asymptotic to the double points determined by the chords. This contributes a holomorphic disks to $\partial (\chord_{1}\chord_{2})$ with two positive punctures and no negative punctures. By energy considerations, there can be no other disks. Therefore $\partial(\chord_{1}\chord_{2}) = 1$ and so $\newRSFT = 0$.
\end{proof}

\appendix

\section{Proofs of invariance}\label{App:Invariance}

Here we establish filtered stable tame isomorphism invariance of $\newRSFTA$ as defined in Definition \ref{Def:FilteredStableTameIso}, proving Theorem \ref{Thm:Main}. We break up the invariance proof into three steps:
\be
\item Triple point moves for $\Leg$ in $3$-manifolds are addressed in \S \ref{Sec:TriplePoint}.
\item Double point moves for $\Leg$ in $3$-manifolds are addressed in \S \ref{Sec:3dDoublePt}.
\item Invariance for $\Leg$ in high dimensional manifolds is addressed in \S \ref{Sec:HighDimInvariance}.
\ee

The invariance proofs are modeled on those appearing in \cite{Chekanov:LCH, Ekholm:Z2RSFT, EES:LegendriansInR2nPlus1, EES:PtimesR, EHK:LagrangianCobordisms, ENS:Orientations, Ng:RSFT} and we will rely on references to speed up the exposition whenever possible. The main technical difficulty we'll encounter is the management of multiple simultaneous stabilizations occurring during application of double point moves, which are handled algebraically.

Homotopy invariance of cobordism maps under deformation of partitioned Lagrangian cobordisms $\LagGrouped$ follows from \cite{Ekholm:Z2RSFT} and will not be addressed. Likewise with $\ring = \field[H_{1}(\Leg)]$ coefficients, changes in capping path and basepoints determine automorphisms of $\newRSFTA$ induced by automorphisms of $\ring$ following standard arguments appearing in the references.

When $H^{1}(\SympBase) \neq 0$, $\newRSFTA$ will in general depend on the homotopy class of the trivialization of the determinant line bundle $\Det_{\SympBase}$ used to define gradings in \S \ref{Sec:Gradings}. Indeed the homotopy classes of such trivializations is an affine space modeled on $H^{1}(\SympBase) \simeq [(\SympBase, \pt), (S^{1}, \pt)]$ (that is, homotopy classes of pointed maps) and homotopically non-trivial modification of a trivialization will result in a grading shift on generators.

\subsection{Triple point moves}\label{Sec:TriplePoint}

Here we show that modifications of a $\dim \Leg = 1$ Legendrian by a triple point move induces a stable-tame isomorphism of planar diagram algebras. There are two types of moves to consider, both shown in Figure \ref{Fig:TriplePoint}. The modification of $\Leg$ takes place in a $\R_{t} \times \disk$ for a disk $\disk \subset W$ where the ambient contact manifold is $\R_{t} \times W$. Our proof is independent of the partition $\Partition$.

Each type of move may be undone by rotating a portion of the diagram by an angle of $\pi$ and then reapplying a move of the same type. Hence it suffices to consider each move in a single direction ($\rightarrow$ in the figure). The labels $q_{k}$ will help us track holomorphic disks and can be ignored for now.

\begin{figure}[h]
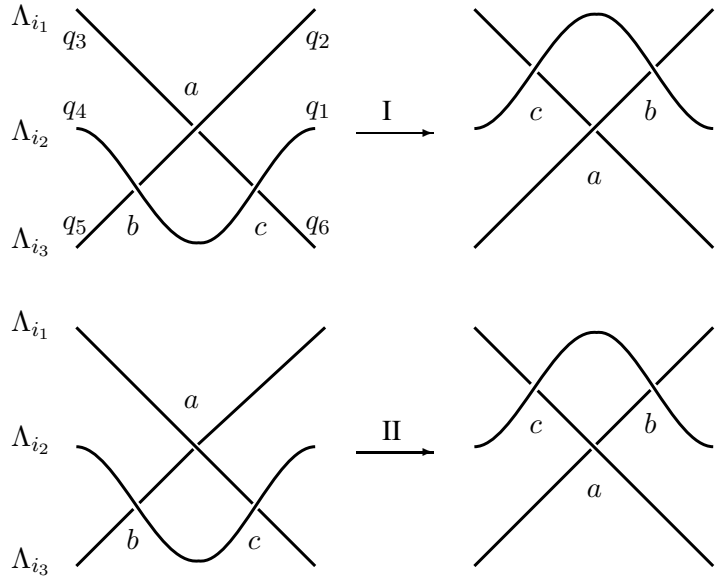

\begin{overpic}[scale=.5]{triple_point.eps}
\put(-10, 85){$\Leg_{i_{1}}$}
\put(-10, 67){$\Leg_{i_{2}}$}
\put(-10, 50){$\Leg_{i_{3}}$}
\put(44, 68){\vector(1, 0){12}}
\put(48, 70){I}
\put(36, 71){$q_{1}$}
\put(36, 82){$q_{2}$}
\put(-2, 82){$q_{3}$}
\put(-2, 71){$q_{4}$}
\put(-2, 53){$q_{5}$}
\put(36, 53){$q_{6}$}

\put(17, 74){$a$}
\put(8, 52){$b$}
\put(28, 52){$c$}
\put(80, 60){$a$}
\put(71, 70){$c$}
\put(89, 70){$b$}

\put(-10, 37){$\Leg_{i_{1}}$}
\put(-10, 19){$\Leg_{i_{2}}$}
\put(-10, 0){$\Leg_{i_{3}}$}
\put(44, 18){\vector(1, 0){12}}
\put(48, 20){II}

\put(17, 25){$a$}
\put(8, 3){$b$}
\put(27, 3){$c$}
\put(80, 11){$a$}
\put(71, 21){$c$}
\put(89, 21){$b$}

\end{overpic}
\caption{Triple point Reidemeister moves of type I and II.}
\label{Fig:TriplePoint}
\end{figure}

In each row of Figure \ref{Fig:TriplePoint} we have indicated a one-to-one correspondence between the chords before and after the move is applied. The correspondence preserves the assignments of starting and ending points of chords to connected components of $\Leg$. Therefore a triple point move induces a one-to-one correspondence between generators of $\Algebra$ before and after the move the applied. We write $\chord_{i}$ for the chords of the $\Leg$ located outside of the local picture with chords $a$, $b$, and $c$ lying within the local picture. Denote by $\partial^{-}$ the differential of $\Algebra$ before a triple point move is applied and $\partial^{+}$ for the differential after the move is applied.

\begin{prop}\label{PropTriplePoint}
Consider disks $u_{I}$ and $u_{II}$ with boundary words
\begin{equation*}
\boundaryWord(u_{I}) = c^{+}b^{-}a^{-}, \quad \boundaryWord(u_{II}) = a^{+}c^{+}b^{-}
\end{equation*}
as shown in Figure \ref{Fig:TriplePointCob}. Then the maps
\begin{equation*}
\Phi_{I} = \Id + \mu_{u_{I}}: \Algebra \rightarrow \Algebra, \quad \Phi_{II} = \Id + \mu_{u_{II}}: \Algebra \rightarrow \Algebra
\end{equation*}
induce stable tame isomorphisms $(\Algebra, \partial^{-}, \filtration) \rightarrow (\Algebra, \partial^{+}, \filtration)$ between the $\newRSFTA$ algebras before and after application of the triple point moves of type I and II, respectively.
\end{prop}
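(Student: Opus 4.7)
My plan is to verify the three properties making $\Phi_{I}$ and $\Phi_{II}$ filtered stable tame isomorphisms: (1) they are tame algebra isomorphisms, (2) they preserve the word-length filtration $\filtration$, and (3) they intertwine the differentials $\partial^{\pm}$. Filtration preservation (2) is immediate from Lemma \ref{Lemma:FiltrationPreservation} since $\mu_{u_{I}}$ and $\mu_{u_{II}}$ are built from inscription operators. For (1), I would observe that in both cases $\mu_{u_{\ast}}$ is nilpotent: $\mu_{u_{I}}$ only acts non-trivially on a generator $\word$ when $c$ appears as a positive puncture in $\planarDiagram(\word)$, replacing that puncture by two negative punctures $b, a$; similarly $\mu_{u_{II}}$ only acts when $a$ and $c$ appear consecutively as positive punctures (in the cyclic order prescribed by the boundary orientation of $\planarDiagram(u_{II})$), replacing them by a single negative puncture $b$. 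In either case, a strict action decrease and the bijective correspondence of chords before/after the move makes $\Phi_{I,II} = \Id + \mu_{u_{\ast}}$ manifestly a composition of elementary isomorphisms — hence tame.

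The heart of the argument is (3), the chain-map identity $\Phi\circ \partial^{-} = \partial^{+}\circ \Phi$, and by the Leibniz rule it suffices to verify this on generators $\word \in \orbitVS$. I would set up a $1$-parameter family $\Leg_{\tau}$ realizing the triple-point move, so that $\Leg_{\tau = 0}$ has a non-transverse triple point and $\Leg_{\tau < 0}, \Leg_{\tau > 0}$ correspond to the two sides of the move. For each admissible $\word$, the moduli spaces $\ModSpace_{\Leg_{\tau}}(\boundaryWord)$ with $\ind(\boundaryWord) = 1$ fit together into a $1$-manifold with boundary whose boundary consists of disks of index $1$ at $\tau = \pm \epsilon$ together with degenerate configurations at $\tau = 0$. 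Using Lemma \ref{Lemma:LCHShift} to reduce to $\CE$-type moduli spaces with a single positive puncture, the analysis at $\tau = 0$ is exactly that of the classical triple-point invariance argument for $\CE$ (see \cite{Chekanov:LCH, EES:LegendriansInR2nPlus1}): the exceptional boundary configurations are in bijection with disks obtained by gluing a generic index-$1$ disk for $\Leg_{\pm\epsilon}$ to the rigid disk $u_{I}$ (respectively $u_{II}$) at one of the chords $a, b, c$.

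Translating that boundary count into the inscription language gives, for each generator $\word$ and each matching pair of indices, the identity
\begin{equation*}
\partial^{+}\word - \partial^{-}\word = \mu_{u_{\ast}}(\partial^{-}\word) + \partial^{-}(\mu_{u_{\ast}}\word)
\end{equation*}
which is exactly the chain-map condition $\partial^{+}\Phi_{\ast} - \Phi_{\ast}\partial^{-} = 0$ once $\Phi_{\ast} = \Id + \mu_{u_{\ast}}$ is expanded and the diagonal terms cancel. For type I this is essentially Chekanov's original computation repackaged through the $\mu$-operator formalism, and the key point is that $u_{I}$ has a single positive puncture, so its contribution to $\partial^{+}$ is indistinguishable (on the chain level) from gluing-induced bubbling. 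For type II the same picture applies but $u_{II}$ has two positive punctures at $a$ and $c$, so the cancellation takes place word-by-word at positions where $(ac)$ appears as an admissible consecutive subword.

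The main obstacle I anticipate is the type II case: I must verify that the admissibility condition for $\boundaryWord$ (Definition \ref{Def:AdmissibleDisk}) is preserved under inscription of $u_{II}$, and that the cyclic ordering of output words produced by the $\mu_{u_{II}}\circ\partial^{-}$ and $\partial^{-}\circ \mu_{u_{II}}$ compositions match up correctly. Both points follow from Lemmas \ref{Lemma:AdmissibleComplement} and \ref{Lemma:InscriptionAdmiss}, but require a careful bookkeeping of the planar diagrams, particularly to ensure that all bubbling configurations occurring at the $\tau = 0$ transition are accounted for exactly once and with the correct combinatorial labelings. Once this matching is set up, the remainder is a mechanical application of the $\field = \Z/2\Z$ boundary counting argument used in the proof that $\partial^{2} = 0$.
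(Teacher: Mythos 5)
Your route is genuinely different from the paper's. The paper does not run a bifurcation analysis over a $1$-parameter family at all: since the triple point move is supported in a disk $\region \subset \C$ and induces a bijection of chords, it decomposes every $\ind = 1$ disk into the part outside $\region$ (which is untouched by the move) and finitely many \emph{disk segments} inside $\region$, writes $\partial^{\pm}$ as a composition of operators associated to these pieces (Equation \eqref{Eq:TriplePointEquivalence}, which also involves the ``tiny triangle'' $u_{\triangle}$ with boundary word $a^{+}b^{+}c^{-}$, resp.\ $b^{+}c^{-}a^{-}$), and then verifies $\partial^{+} = \Phi_{\ast}\partial^{-}\Phi_{\ast}$ by checking, segment by segment, that conjugation by $\mu_{u_{\ast}}$ carries each entry of the $v^{-}$ column of Table \ref{Table:DiskSegments} to the corresponding $v^{+}$ entry. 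This is purely combinatorial, in the style of \cite{Ng:RSFT}, and requires no gluing or compactness input beyond what is already drawn in the local picture.

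The gap in your proposal is that the step you defer to ``careful bookkeeping'' \emph{is} the proof: the entire content of the statement is the explicit matching of which configurations on the $-$ side correspond to which on the $+$ side, and you never produce it. Three specific points. First, your displayed identity has $\partial^{-}(\mu_{u_{\ast}}\word)$ where the chain-map condition $\partial^{+}\Phi_{\ast} = \Phi_{\ast}\partial^{-}$ requires $\partial^{+}(\mu_{u_{\ast}}\word)$ (equivalently, one proves $\partial^{+} = \Phi_{\ast}\partial^{-}\Phi_{\ast}$ and uses $\Phi_{\ast}^{2} = \Id$); as written the identity is not the one you need. Second, you make no mention of the tiny triangles $u_{\triangle}$, which are the disks that actually degenerate at the triple point and whose contributions to $\partial^{\pm}$ must be matched against the configurations obtained by attaching $u_{I}$ or $u_{II}$; omitting them means your boundary count of the parametrized moduli space is incomplete. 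Third, reducing to $\CE$-type moduli via Lemma \ref{Lemma:LCHShift} does not dispose of type II: $u_{II}$ has two positive punctures, so the relevant degenerations include configurations where an index-$1$ disk is glued to $u_{II}$ along $a$ while $c$ remains a free positive puncture of the broken configuration, and these are not covered by the classical single-positive-puncture triple-point argument of \cite{Chekanov:LCH, EES:LegendriansInR2nPlus1} that you cite. Your outline of tameness and filtration preservation is fine, but without the explicit correspondence of disks (the analogue of Table \ref{Table:DiskSegments}) the chain-map claim is unsubstantiated.
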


\begin{figure}[h]
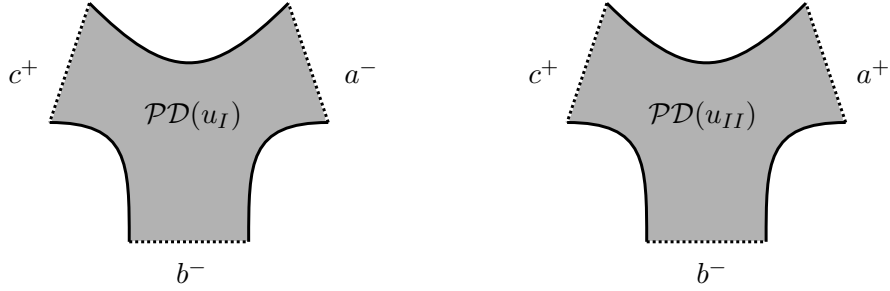

\begin{overpic}[scale=.5]{triple_point_cob.eps}
\put(37, 20){$a^{-}$}
\put(16, -5){$b^{-}$}
\put(-5, 20){$c^{+}$}
\put(12, 15){$\planarDiagram(u_{I})$}

\put(101, 20){$a^{+}$}
\put(81, -5){$b^{-}$}
\put(60, 20){$c^{+}$}
\put(75, 15){$\planarDiagram(u_{II})$}
\end{overpic}
\vspace{3mm}
\caption{Planar diagrams for the triangles $u_{I}$ and $u_{II}$.}
\label{Fig:TriplePointCob}
\end{figure}

Note that $u_{I}$ gives exactly the cobordism map on $LCH$ associated to the Lagrangian cobordism determined by the trace of the Legendrian isotopy as described in \cite{EHK:LagrangianCobordisms}. We conjecture that $u_{II}$ is likewise a cobordism map. Our proof strategy mimics that of \cite{Ng:RSFT}.

\begin{proof}

In each subfigure of Figure \ref{Fig:TriplePoint}, there is a $\ind = 1$ holomorphic triangle $u_{\triangle}$ which is entirely contained in the diagram. The computation of $\ind(u_{\triangle})=1$ follows from the fact that the Lagrangian projection is an embedding having only convex corners, cf. \cite{Avdek:LSFT}. The boundary words of these triangles are
\begin{equation}\label{Eq:BoundaryOfSmallTriangle}
\word(u_{\triangle}) = \begin{cases}
a^{+}b^{+}c^{-} & \text{type I} \\
b^{+}c^{-}a^{-} & \text{type II}.
\end{cases}
\end{equation}
We'll call these holomorphic triangles \emph{tiny triangles} throughout the proof. Observe that the tiny triangle for a type I move has two positive punctures and so cannot contribute to the differential for the Chekanov-Eliashberg algebra.

The proof will follow easily from the introduction of some new language. Recall that $\disk \subset W$ is the region in which the triple point move has been applied. Let $u^{\pm}$ be an $\ind = 1$ holomorphic disk. Unless $u^{\pm}$ is a tiny triangle, a connected components of $(\pi_{xy}u^{\pm})^{-1}(\disk)$ must intercept some point $q_{k_{1}} \in \Leg$ as $u^{\pm}$ enters $\disk \subset W$ (following the boundary) and then some other $q_{k_{2}}\in \Leg$ as it exits $\disk \subset W$ where the $q_{k}$ are as shown in Figure \ref{Fig:TriplePoint}.

If such a connected component does not have a puncture at at one of $a^{\pm},b^{\pm},c^{\pm}$, then $k_{2} = k_{1} + 3$ where the subscripts are modulo $6$. Such a connected component will not be modified by the triple point move. Otherwise, we'll have $k_{2} = k_{1} - 1$ or $k_{2} = k_{1} - 2$ in which we say that $(\pi_{xy}u^{\pm})^{-1}(\disk \subset W)$ is a \emph{disk segment} and we write $\check{u}^{\pm}$ for $u^{\pm}$ with its disk segments removed.

\renewcommand{\arraystretch}{1.5}
\begin{center}
\begin{table}[ht]
\begin{tabular}{| c | c| c| c|}
\hline
$v^{-}$ at I & $v^{+}$ at I & $v^{-}$ at II & $v^{+}$ at II\\
\hline \hline
$(q_{1}q_{6})c^{-}$ & $(q_{1}q_{6})c^{-} + (q_{1}q_{6})b^{-} a^{-}$ & $(q_{1}q_{6})c^{-}$ & $(q_{1}q_{6})c^{-}+(q_{1}q_{6})b^{-} a^{+}$\\
$(q_{1}q_{5})b^{-}$ & $(q_{1}q_{5})b^{-}$ & $(q_{1}q_{5})b^{-}$ & $(q_{1}q_{5})b^{-}$\\ 
\hline
$(q_{2}q_{1})b^{+} + (q_{2}q_{1})a^{-}c^{+}$ & $(q_{2}q_{1})b^{+}$ & $(q_{2}q_{1})b^{+} + (q_{2}q_{1})a^{+}c^{+}$ & $(q_{2}q_{1})b^{+}$\\
$(q_{2}q_{6})a^{-}$ & $(q_{2}q_{6})a^{-}$ & $(q_{2}q_{6})a^{+}$ & $(q_{2}q_{6})a^{+}$\\
\hline
$(q_{3}q_{1})c^{+}$ & $(q_{3}q_{1})c^{+}$ & $(q_{3}q_{1})c^{+}$ & $(q_{3}q_{1})c^{+}$\\ 
$(q_{3}q_{2})a^{+}$ & $(q_{3}q_{2})a^{+} + (q_{3}q_{2})c^{+}b^{-}$ & $(q_{3}q_{2})a^{-}$ & $a^{-}+ (q_{3}q_{2})c^{+}b^{-}$\\ 
\hline
$(q_{4}q_{2})b^{-}$ & $(q_{4}q_{2})b^{-}$ & $(q_{4}q_{2})b^{-}$ & $(q_{4}q_{2})b^{-}$\\
$(q_{4}q_{3})c^{-} + (q_{4}q_{3})b^{-}a^{-}$ & $(q_{4}q_{3})c^{-}$ & $(q_{4}q_{3})c^{-} + (q_{4}q_{3})b^{-}a^{+}$ & $(q_{4}q_{3})c^{-}$\\
\hline
$(q_{5}q_{3})a^{-}$ & $(q_{5}q_{3})a^{-}$ & $(q_{5}q_{3})a^{+}$ & $(q_{5}q_{3})a^{+}$\\
$(q_{5}q_{4})b^{+}$ & $(q_{5}q_{4})b^{+} + (q_{5}q_{4})a^{-}c^{+}$ & $(q_{5}q_{4})b^{+}$ & $(q_{5}q_{4})b^{+} + (q_{5}q_{4})a^{+}c^{+}$\\
\hline
$(q_{6}, q_{5})a^{+} + (q_{6}, q_{5})c^{+}b^{-}$ & $(q_{6}, q_{5})a^{+}$ & $(q_{6}, q_{5})a^{-} + (q_{6}, q_{5})c^{+}b^{-}$ & $(q_{6}, q_{5})a^{-}$\\
$(q_{6}q_{4})c^{+}$ & $(q_{6}q_{4})c^{+}$ & $(q_{6}q_{4})c^{+}$ & $(q_{6}q_{4})c^{+}$\\
\hline
\end{tabular}
\vspace{3mm}
\caption{Segments of $\ind = 1$ disks incident to chords in a triple point move.}
\label{Table:DiskSegments}
\end{table}
\end{center}

Each disk segment $v^{\pm}$ has a boundary word $\boundaryWord(v^{\pm})$ as shown in Table \ref{Table:DiskSegments} containing some elements of the $(q_{k}, q_{k-1})$ or $(q_{k}, q_{k-2})$. There is exactly one $v^{\pm}_{k, k-1}$ and exactly one $v^{\pm}_{k, k-2}$ for each $\pm$ and each $k=1, \dots, 6$.

Each $\check{u}^{\pm}$ likewise has a boundary word $\boundaryWord(\check{u}^{\pm})$. We can then express $\boundaryWord(u^{\pm})$ as a composition of boundary words $\boundaryWord(\check{u}^{\pm})$ and the $\boundaryWord(v^{\pm})$ by gluing at the $(q_{k}, q_{k-1})$ or $(q_{k}, q_{k-2})$.

Therefore we can write $\mu_{u^{\pm}}$ as a composition of operators $\mu_{\check{u}^{\pm}}$ and $\mu_{v}^{\pm}$ associated to $\boundaryWord(\check{u}^{\pm})$ and $\boundaryWord(v^{\pm})$. Writing $\orbitVS$ for the vector space of words and $Q$ for the vector space generated by the $(q_{k}, q_{k-1})$ and $(q_{k}, q_{k-2})$, the $\mu_{\check{u}^{\pm}}$ and $\mu_{v^{\pm}}$ act on $\tensorAlg(\orbitVS \oplus Q)$. Define $\pi^{\pm}: \tensorAlg(V \oplus Q) \to \tensorAlg(V)$ to be the algebra homomorphism determined by $\pi|_{V} = \Id_{V}$ and $\pi|_{Q}= \mu_{v^{\pm}_{k, k-1}} + \mu_{v^{\pm}_{k, k-2}}$. Then
\begin{equation}\label{Eq:TriplePointEquivalence}
\begin{gathered}
\partial^{\pm} =  \pi^{\pm}\circ \left( \mu_{\triangle} + \sum \mu_{\check{u}^{\pm}} \right).
\end{gathered}
\end{equation}

Let $\Phi_{\ast}$ be one of either $\Phi_{I}$ or $\Phi_{II}$ and let $\mu_{\ast}$ be the associated operator $\mu_{u_{I}}$ or $\mu_{u_{II}}$ so that $\Phi_{\ast} = \Id + \mu_{\ast}$. We seek to show that $\partial^{+} = \Phi_{\ast}\partial^{-}\Phi_{\ast}$. In light of Equation \eqref{Eq:TriplePointEquivalence}, this can be verified by applying the operator $\mu_{\ast}$ before and after each of the terms in the $v^{-}$ columns of Table \ref{Table:DiskSegments} and seeing that we obtain the associated $v^{+}$. 

For example, at the third row of the table in the type I columns we have
\begin{equation*}
v_{2, 1}^{-} = (q_{2}q_{1})b^{+} + (q_{2}q_{1})a^{-}c^{+}, v_{2, 1}^{+} = (q_{2}q_{1})b^{+}.
\end{equation*}
Applying $\mu_{\ast}$ on the right of $v_{2, 1}^{-}$ we get $(q_{2}q_{1})b^{+} + 2(q_{2}q_{1})a^{-}c^{+} = (q_{2}q_{1})b^{+}$. Then then $\mu_{\ast}$ on the left has no effect, so we get $ (q_{2}q_{1})b^{+} = v^{+}_{2, 1}$ again. Analogous computations can be applied to each $v^{-}$ and $v^{+}$ pair in Table \ref{Table:DiskSegments}, yielding $\partial^{+} = \Phi_{\ast}\partial^{-}\Phi_{\ast}$ as desired.
\end{proof}

\subsection{Double point moves in $\R_{t} \times W$}\label{Sec:3dDoublePt}

Now we address double point Reidemeister moves for $\dim\Leg = 1$. As in the previous subsection, we locally modify $\Leg$ within a $\R_{t} \times \disk \subset \R_{t} \times W$ for some disk $\disk$ in $W$.

\begin{prop}\label{Prop:DoublePointInv}
The stable tame isomorphism class of $\newRSFTA$ is invariant under double point Reidemeister moves as shown in Figure \ref{Fig:DoublePoint}.
\end{prop}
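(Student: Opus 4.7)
The strategy adapts Chekanov's proof of $\CE$-invariance under Reidemeister II moves to the multi-pair stabilization setting forced by the word-algebra structure of $\newRSFTA$. Writing $\Leg^\pm$ for the Legendrians before and after the move and $(\Algebra^\pm, \partial^\pm, \filtration)$ for the associated planar diagram algebras, the move creates (or destroys) two chords $a, b$ with $|a| = |b|+1$, both running between the same pair of Legendrian components, together with a single small holomorphic bigon $u_0 \in \ModSpace_{\Leg^+}$ having $\boundaryWord(u_0) = a^+ b^-$ and $\ind(u_0) = 1$. Since $a$ and $b$ share endpoints, admissibility forces each new $\Partition$-cyclic word to contain at most one of them, and the substitution $\phi \colon \word_a \mapsto \word_b$ (replace $a$ by $b$ at its unique position) defines a degree-shifting, word-length-preserving bijection between the two classes of new generators of $\orbitVS^+ \setminus \orbitVS^-$.

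The first step is to show that inscribing $u_0$ at the $a$-position of $\planarDiagram(\word_a)$, together with trivial strips at every other positive chord, is the unique small-energy full inscription contributing $\word_b$, so that
\begin{equation*}
\partial^+ \word_a = \word_b + r(\word_a),
\end{equation*}
with every monomial summand of $r(\word_a)$ of strictly smaller action than $\action(\word_a)$. Lemma \ref{Lemma:LCHShift} is then used to translate each $\newRSFTA$ moduli space with positive word $\word_a$ into a $\CE$ moduli space on a shifted Legendrian, so that the Chekanov-style disk-bijection arguments of \cite{Chekanov:LCH, EES:PtimesR, ENS:Orientations} apply individually to each positive chord and identify the remaining disks contributing to $\partial^+$ with those of $\partial^-$. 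The second step is to build a filtered tame automorphism $\Phi \colon \Algebra^+ \to \Algebra^+$ as a finite composition of elementary moves of the form $\word_b \mapsto \word_b + r(\word_a)$, processed in order of increasing $\action(\word_a)$; because $r(\word_a)$ involves only generators of action strictly below $\action(\word_a)$, each such move is genuinely elementary with respect to the state of the algebra at that stage of the induction. The conjugated differential $\widetilde{\partial}^+ = \Phi^{-1}\partial^+\Phi$ then satisfies $\widetilde{\partial}^+\word_a = \word_b$ for every new pair and agrees with $\partial^-$ on the subalgebra generated by $\orbitVS^-$, so $(\Algebra^+, \widetilde{\partial}^+, \filtration)$ is identified with the iterated mfDGA stabilization of $(\Algebra^-, \partial^-, \filtration)$ by one pair per $(\word_a, \word_b)$, each stabilization performed at filtration level $\filtLevel(\word_a) = \filtLevel(\word_b)$ via Lemma \ref{Lemma:FiltrationPreservation}.

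The main obstacle will be the careful bookkeeping of these many simultaneous elementary moves: unlike the single pair of the $\CE$ argument, here the number of new generator pairs grows with both the partition and the action bound, and one must verify that at the moment $\word_b$ is to be modified the correction $r(\word_a)$ does not already contain $\word_b$ as a factor (which would destroy tameness). The uniqueness of the small bigon $u_0$ together with the strict action decrease of non-constant holomorphic disks, combined with Lemma \ref{Lemma:LCHShift}, reduce this to a Chekanov-type statement for each individual positive puncture on a shifted Legendrian; sequencing these local reductions consistently across all pairs while tracking filtration preservation throughout is where the bulk of the technical work lies.
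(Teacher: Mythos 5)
Your overall strategy is the one the paper uses: pair up the new generators $\word_a \leftrightarrow \word_b$, isolate the small bigon so that $\partial \word_a = \word_b + (\text{lower action terms})$, and assemble an action-ordered composition of elementary isomorphisms identifying the algebra with an iterated stabilization, one $(f,\deg)$-stabilization per pair $(\word_a,\word_b)$. Your observation that the correction $r(\word_a)$ lives strictly below $\action(\word_b)$, so that tameness is safe, is correct and is exactly how the paper's induction is organized. However, two points in your second step are genuine gaps rather than bookkeeping.

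First, a tame automorphism consisting only of moves $\word_b \mapsto \word_b + r(\word_a)$ cannot make the conjugated differential agree with $\partial^-$ on the old generators. An old generator $\word$ (containing neither $a$ nor $b$) has differential terms with $\word_b$-factors coming from disks with $b$ as a negative puncture, and these must be traded for the glued disks contributing to $\partial^-\word$; this forces additional elementary moves on the old generators themselves, which the paper produces inductively as $\word_k \mapsto \word_k + h(\partial^+\word_k - \Phi_{k-1}\partial^-\word_k)$ using the degree-one homotopy $h$ satisfying $\Id - \pi = \partial^+ h + h\partial^+$ (see Equations \eqref{Eq:DoublePointPhik} and \eqref{Eq:StabChainHomotopy}). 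Without these corrections the identification with the stabilization of the post-move algebra fails. Second, the identity you need on old generators --- that after killing the stabilized variables the two differentials agree --- does not follow from Lemma \ref{Lemma:LCHShift}, which only converts $\newRSFTA$ moduli spaces into $\CE$ moduli spaces on a shifted Legendrian. The required input is the splitting/gluing of disks along the arc $\gamma$ through the dying chord (Lemma \ref{Lemma:PartialChain}, and in high dimensions the compactness-gluing results invoked in Section \ref{Sec:HighDimDoublePt}): every rigid disk for the Legendrian after the move whose image crosses the surgery region either persists or corresponds to a broken pair $(u_l, u_r)$ glued at $b$, with $u_l$ contributing to the $y^i_j$ part of $\partial(\word^i_{l,j} a \word^i_{r,j})$. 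Making this correspondence precise is the geometric heart of the argument and should be stated as a separate lemma rather than absorbed into the phrase ``Chekanov-style disk-bijection arguments.''
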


There are two cases to consider: If $\Leg_{i_{1}}$ and $\Leg_{i_{2}}$ belong to the same $\Leg^{\Partition}_{j}$, the chords $a$ and $b$ are $\Partition$ pure and we can prove Proposition \ref{Prop:DoublePointInv} by following existing proofs of invariance \cite{Chekanov:LCH, ENS:Orientations} almost word for word. We will assume $\Leg_{i_{1}}$ and $\Leg_{i_{2}}$ belong to different pieces of $\Leg$, writing
\begin{equation*}
\Leg_{i_{1}} \subset \Leg^{\Partition}_{1}, \quad \Leg_{i_{2}} \subset \Leg^{\Partition}_{2}.
\end{equation*}
We will detail a proof of Proposition \ref{Prop:DoublePointInv} in this case, again by modifying the strategy of \cite{Chekanov:LCH, ENS:Orientations}. The proof here will be more difficult, as in our context a double point move can simultaneously introduce many stabilization of our algebra.

\begin{figure}[h]
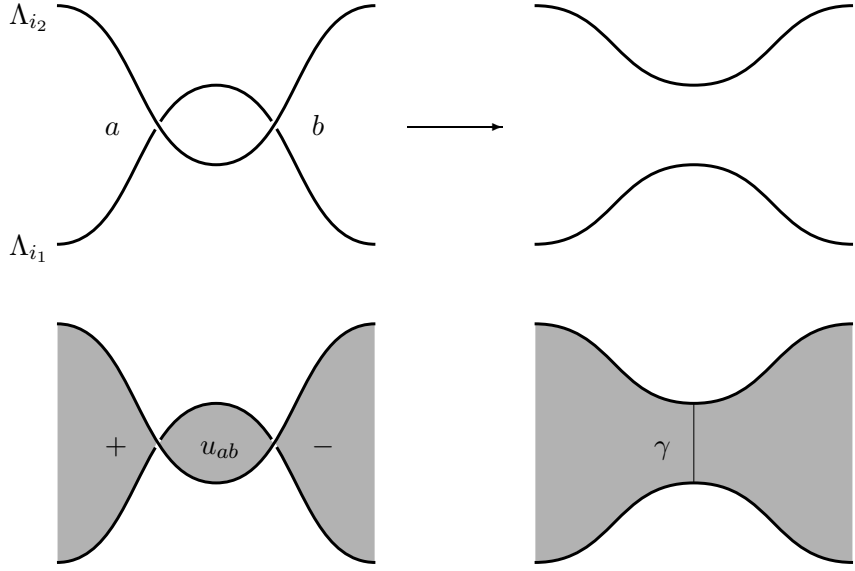

\begin{overpic}[scale=.5]{double_point.eps}
\put(6, 54){$a$}
\put(32, 54){$b$}
\put(44, 55){\vector(1, 0){12}}
\put(6, 14){$+$}
\put(32, 14){$-$}
\put(18, 14){$u_{ab}$}
\put(-6, 68){$\Leg_{i_{2}}$}
\put(-6, 39){$\Leg_{i_{1}}$}
\put(75, 14){$\gamma$}
\end{overpic}
\caption{A double point Reidemeister move (top row) along with disk segments appearing in the proof Lemma \ref{Lemma:PartialChain} (bottom row).}
\label{Fig:DoublePoint}
\end{figure}

There are two chords, $a$ and $b$, eliminated by the double point move which satisfy $\action(a) = \action(b) + \epsilon$ with $\epsilon > 0$ given by the area of the disk $u_{ab}$ shown in Figure \ref{Fig:DoublePoint} for which
\begin{equation*}
\boundaryWord(u_{ab}) = a^{+}b^{-}.
\end{equation*}
We can assume that $\epsilon$ is arbitrarily small.

Because both $a, b \in \Chords^{\Partition}_{1,2}$, they cannot appear simultaneously in any $\word$ and neither can appear in a $\word$ of length $1$. We have one to one correspondences between words of the form
\begin{equation*}
(\word_{l}a\word_{r}), \quad (\word_{l}b\word_{r})
\end{equation*}
where the $\word_{l}$ and $\word_{r}$ are sequences of chords for which the $(\word_{l}a\word_{r})$ are admissible $\Partition$ cyclic words of chords. For such pairs of words, we have
\begin{equation*}
\action(\word_{l}a\word_{r}) = \action(\word_{l}b\word_{r}) + \epsilon.
\end{equation*}
The disk $u_{ab}$ has $\ind = 1$ and can have its planar diagram inscribed in the planar diagram of any $(\word_{l}a\word_{r})$. Therefore
\begin{equation*}
|(\word_{l}a\word_{r})| = |(\word_{l}b\word_{r})| + 1.
\end{equation*}
Because $\action(\word)$ depends only on the unordered collection of chords in a given $\word$, the actions of many of these words will agree. We can write these words as $(\word^{i}_{l, j}a\word^{i}_{r, j}), (\word^{i}_{l, j}b\word^{i}_{r, j})$ with the superscripts ordered so that
\begin{equation*}
i < i' \implies \action(\word^{i}_{l, j}a\word^{i}_{r, j}) \leq \action(\word^{i'}_{l, j'}a\word^{i'}_{r, j'}), \quad \action(\word^{i}_{l, j}b\word^{i}_{r, j}) \leq \action(\word^{i'}_{l, j'}b\word^{i'}_{r, j'})
\end{equation*}
with $i = 1, \cdots, N$ and $j = 1,\dots,N_{i}$ for some $N$ and $N_{i}$. Using $\epsilon$ arbitrarily small, in particular smaller that the action of any single chord, we label all words $\word_{i}$ not containing $a$ or $b$ with indices $\word_{i}$ so that
\begin{equation}\label{Eq:DoublePointActionOrdering}
\begin{gathered}
\action\left(\word_{-n_{0}}\right) \leq \cdots \leq \action\left(\word_{0}\right) \\
< \action\left(\word^{1}_{l, 1}b\word^{1}_{r, 1}\right) = \cdots = \action\left(\word^{1}_{l, N_{1, j}}b\word^{1}_{r, N_{1, j}}\right)\\
< \action\left(\word^{1}_{l, 1}a\word^{1}_{r, 1}\right) = \cdots = \action\left(\word^{1}_{l, N_{1, j}}a\word^{1}_{r, N_{1, j}}\right) \\
< \action\left(\word_{1}\right) \leq \cdots \leq \action\left(\word_{n_{1}}\right) \\
\cdots \\
< \action\left(\word_{n_{N_{i}-2}+1}\right) \leq \cdots \leq \action\left(\word_{n_{N_{i}-1}}\right) \\
< \action\left(\word^{N_{i}}_{l, 1}b\word^{N_{i}}_{r, 1}\right) = \cdots = \action\left(\word^{N_{i}}_{l, N_{i, j}}b\word^{N_{i}}_{r, N_{i, j}}\right)\\
< \action\left(\word^{N_{i}}_{l, 1}a\word^{N_{i}}_{r, 1}\right) = \cdots = \action\left(\word^{N_{i}}_{l, N_{i, j}}a\word^{N_{i}}_{r, N_{i,j}}\right)
\\
< \action\left(\word_{n_{N_{i}-1}+1}\right) \leq \cdots \leq \action\left(\word_{n_{N_{i}}}\right)
\end{gathered}
\end{equation}

Let $(\Algebra^{-}, \partial^{-}, \filtration)$ be the max-filtered DGA before the application of the double point move and $(\Algebra^{+}, \partial^{+}, \filtration^{+})$ for the corresponding mfDGA after the move has been applied. Then $\Algebra^{+}$ is a filtered, graded subalgebra of $\Algebra^{-}$, although the inclusion $\Algebra^{+} \rightarrow \Algebra^{-}$ is not necessarily a chain map.

For each pair $(\word^{i}_{l, j}a\word^{i}_{r, j}), (\word^{i}_{l, j}b\word^{i}_{r, j})$ we define abstract variables $e^{i}_{j, 1}$ and $e^{i}_{j, 2}$ with filtration levels given by the word lengths of the $(\word^{i}_{l, j}a\word^{i}_{r, j})$ and gradings
\begin{equation*}
|e^{i}_{j, 1}| = |(\word^{i}_{l, j}a\word^{i}_{r, j})|, \quad |e^{i}_{j, 2}| = |e^{i}_{j, 1}| - 1 = |(\word^{i}_{l, j}b\word^{i}_{r, j})|.
\end{equation*}
Define $\Stab\Algebra^{+}$ to be the extension of $\Algebra^{+}$ obtained by adjoining all of the variables $e^{i}_{j,1}, e^{i}_{j, 2}$ with differential
\begin{equation*}
\partial|_{\Algebra^{+}} = \partial^{+}, \quad \partial e^{i}_{j, 1} = e^{i}_{j, 2}, \quad \partial e^{i}_{j, 2} = 0
\end{equation*}

There is a projection map
\begin{equation*}
\pi: \Stab\Algebra^{+} \rightarrow \Algebra^{+}
\end{equation*}
defined by setting $e^{i}_{j, 1}, e^{i}_{j, 2} = 0$. Clearly $(\Stab\Algebra^{+}, \partial^{+}, \filtration)$ is obtained from $(\Algebra, \partial^{+}, \filtration)$ by a sequence of $(f, \deg)$-stabilizations with
\begin{equation*}
f = \filtLevel(\word^{i}_{l, j}a\word^{i}_{r, j}), \quad \deg = |(\word^{i}_{l, j}a\word^{i}_{r, j})|
\end{equation*}
For $\const \in \R$ define $\Algebra^{-}_{<\const}$ ($\Algebra^{-}_{\leq\const}$) to be the subalgebra of $\Algebra^{-}$ generated by words $\word$ of with $\action(\word) < \const$ (respectively, $\action(\word) \leq \const$).

The strategy for the remainder of the proof is to inductively build a sequence of algebra isomorphisms $\Phi_{k}: \Algebra^{-} \rightarrow \Stab\Algebra^{+}$ which are chain maps when restricted to successively larger subalgebras $\Algebra^{-}_{\leq\action^{-}_{\const}}$ with $\const$ increasing. When we reach $\const = \action(\word_{n_{N_{i}}})$ the proof will be complete. The following notation and lemmas will start the induction.

The disk $u_{ab}$ satisfies $\mu_{u_{ab}}(\word^{i}_{l, j}a\word^{i}_{r, j}) = (\word^{i}_{l, j}b\word^{i}_{r, j})$ and clearly contributes to $\partial^{-}(\word^{i}_{l, j}a\word^{i}_{r, j})$. Due to our action estimates, there are no other terms in $\partial^{-}(\word^{i}_{l, j}a\word^{i}_{r, j})$ which are multiples of $(\word^{i}_{l, j}b\word^{i}_{r, j})$. We conclude that there are elements
\begin{equation}\label{Eq:DoublePointDiff}
x^{i}_{j}, y^{i}_{j} \in \Algebra^{-}_{<\action(\word^{i}_{l, j}b\word^{i}_{r, j})}, \quad \partial^{-}(\word^{i}_{l, j}a\word^{i}_{r, j}) = (\word^{i}_{l, j}b\word^{i}_{r, j}) + x^{i}_{j} + y^{i}_{j}.
\end{equation}
Here the $x^{i}_{j}$ and $y^{i}_{j}$ are such that
\be
\item the $x^{i}_{j}$ count contributions of disks which do not have $a$ as a positive puncture and \item the $y^{i}_{j}$ contain neither an $a$ nor a $b$.
\ee
Consequently each summand of $x^{i}_{j}$ will have exactly one word which contains an $a$.

Since $\partial^{2} = 0$ we know $\partial^{-}(x^{i}_{j} + y^{i}_{j}) = \partial^{-}(\word^{i}_{l, j}b\word^{i}_{r, j})$. Using the $x^{i}_{j}, y^{i}_{j}$ define
\begin{equation}\label{Eq:DoublePointPhiNot}
\Phi_{0}: \Algebra^{-} \rightarrow \Stab\Algebra^{+}, \quad \Phi_{0}\word = \begin{cases}
\word & a, b \notin \word, \\
e^{i}_{j, 1} & \word = (\word^{i}_{l, j}a\word^{i}_{r, j}),  \\
e^{i}_{j, 2} + \check{x}^{i}_{j} + y^{i}_{j} & \word = (\word^{i}_{l, j}b\word^{i}_{r, j})
\end{cases}
\end{equation}
where $\check{x}^{i}_{j}$ is obtained be replacing each $(\word^{i}_{l, j}a\word^{i}_{r, j})$ factor with a $e^{i}_{j, 1}$. In other words, $\check{x}^{i}_{j} = \Phi_{0}x^{i}_{j}$

The following two lemmas are modifications of Lemmas 6.3 and 6.4 from \cite{ENS:Orientations}.

\begin{lemma}\label{Lemma:PhiZeroChainMap}
Every $\word$ satisfying at least one of the following conditions satisfies $\partial^{+}\Phi_{0}\word = \Phi_{0}\partial^{-}\word$:
\be
\item $\word$ does not contain $a$ or $b$ and does not have an $a$ or $b$ in $\partial^{-}\word$.
\item $\word$ is a generator of $\Algebra^{-}_{< \action(\word^{1}_{l, 1}b\word^{1}_{r, 1})}$.
\item $\word$ does not contain chords which end on both $\Leg^{\Partition}_{1}$ and $\Leg^{\Partition}_{2}$.
\item $\word$ contains $a$ or $b$.
\ee
\end{lemma}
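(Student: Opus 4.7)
The plan is to verify $\partial^{+}\Phi_{0}\word = \Phi_{0}\partial^{-}\word$ by case analysis. Cases (1)--(3) all reduce to the single statement: when neither $\word$ nor any output word of $\partial^{-}\word$ involves $a$ or $b$, we have $\partial^{-}\word = \partial^{+}\word$ and $\Phi_{0}$ acts as the identity on both sides. Case (4) is a direct computation using the decomposition of $\partial^{-}(\word^{i}_{l, j}a\word^{i}_{r, j})$ provided by Equation \eqref{Eq:DoublePointDiff} together with the definition of $\Phi_{0}$.

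For case (1), the identification $\partial^{-}\word = \partial^{+}\word$ follows from a Gromov compactness and continuation argument along the Legendrian isotopy realizing the double point move: moduli spaces of admissible disks whose boundaries stay away from the support of the move are canonically identified before and after. Case (2) is immediate from (1) since $\partial^{-}$ is action-decreasing, so any output of $\partial^{-}\word$ with $\action(\word) < \action(\word^{1}_{l, 1}b\word^{1}_{r, 1})$ lies below the first action threshold at which an $a$- or $b$-containing word appears. Case (3) reduces to (1) by an inscription argument: a disk $u$ contributing a negative puncture at $a$ or $b$ would have the two boundary arcs of $\planarDiagram(u)$ adjacent to that puncture mapped into non-dashed arcs of $\partial\planarDiagram(\word)$ labeled by both $\Leg^{\Partition}_{1}$ and $\Leg^{\Partition}_{2}$, which under the hypothesis of case (3) cannot coexist in $\planarDiagram(\word)$.

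For case (4), I split on whether $\word = (\word^{i}_{l, j}a\word^{i}_{r, j})$ or $\word = (\word^{i}_{l, j}b\word^{i}_{r, j})$. In the $a$-subcase, $\partial^{+}\Phi_{0}\word = \partial^{+}e^{i}_{j, 1} = e^{i}_{j, 2}$ and
\begin{equation*}
\Phi_{0}\partial^{-}\word = \Phi_{0}\big[(\word^{i}_{l, j}b\word^{i}_{r, j}) + x^{i}_{j} + y^{i}_{j}\big] = \big(e^{i}_{j, 2} + \check{x}^{i}_{j} + y^{i}_{j}\big) + \Phi_{0}x^{i}_{j} + \Phi_{0}y^{i}_{j}.
\end{equation*}
Over $\field = \Z/2\Z$ the claim reduces to $\Phi_{0}y^{i}_{j} = y^{i}_{j}$ (immediate, as $y^{i}_{j}$ contains neither $a$ nor $b$) and $\Phi_{0}x^{i}_{j} = \check{x}^{i}_{j}$. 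The $b$-subcase is handled by applying $(\partial^{-})^{2}(\word^{i}_{l, j}a\word^{i}_{r, j}) = 0$ to rewrite $\partial^{-}(\word^{i}_{l, j}b\word^{i}_{r, j}) = \partial^{-}x^{i}_{j} + \partial^{-}y^{i}_{j}$, then expanding $\partial^{+}\Phi_{0}(\word^{i}_{l, j}b\word^{i}_{r, j}) = \partial^{+}\check{x}^{i}_{j} + \partial^{+}y^{i}_{j}$ and matching term-by-term using the identities already established for the $a$-subcase and for cases (1)--(3).

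The main obstacle is the identity $\Phi_{0}x^{i}_{j} = \check{x}^{i}_{j}$. Since $\check{x}^{i}_{j}$ is defined by replacing only $a$-containing factors with $e^{i'}_{j', 1}$, equality requires that no summand of $x^{i}_{j}$ contain a $b$-containing factor alongside its unique $a$-factor. For sufficiently small $\epsilon$ the action ordering in Equation \eqref{Eq:DoublePointActionOrdering} forces this: a disk with both an output $a$-word and a $b$-negative asymptote would need area bounded by an insufficient multiple of $\epsilon$, forcing the disk to coincide with $u_{ab}$ itself---whose contribution is already isolated as the $(\word^{i}_{l, j}b\word^{i}_{r, j})$ summand. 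The freedom to shrink $\epsilon$ is precisely what makes the cascade of simultaneous $(f, \deg)$-stabilizations built into $\Stab\Algebra^{+}$ tractable, distinguishing the present mixed-chord setting from the simpler pure-chord case handled in \cite{Chekanov:LCH, ENS:Orientations}.
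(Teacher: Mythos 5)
Your overall architecture matches the paper's proof: cases (1)--(3) reduce to the observation that $\partial^{+}$ and $\partial^{-}$ agree on words whose outputs avoid $a$ and $b$, and case (4) is the same mod-$2$ computation with $\partial^{+}e^{i}_{j,1}=e^{i}_{j,2}$ and the decomposition of Equation \eqref{Eq:DoublePointDiff}. The genuine problem is your justification of what you correctly identify as the main obstacle, $\Phi_{0}x^{i}_{j}=\check{x}^{i}_{j}$, i.e.\ that no summand of $x^{i}_{j}$ carries a factor containing $b$. Your area estimate does not rule this out: only the \emph{difference} $\action(a)-\action(b)=\epsilon$ is small, while $\action(b)$ itself is a fixed positive quantity independent of $\epsilon$. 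A disk with no positive puncture at $a$, positive punctures among the chords of $\word^{i}_{l,j},\word^{i}_{r,j}$, and a negative puncture at $b$ only needs $\action(b)<\sum\action(\text{positive punctures})$, which shrinking $\epsilon$ never forbids; moreover such a disk cannot ``coincide with $u_{ab}$,'' since $u_{ab}$ has $a$ as a positive puncture and is therefore not even a candidate contributor to $x^{i}_{j}$. The correct exclusion is combinatorial rather than quantitative: $(\word^{i}_{l,j}a\word^{i}_{r,j})$ is admissible, so $a$ is its unique chord touching $\Leg^{\Partition}_{2}$, and the matching in the proof of Lemma \ref{Lemma:AdmissibleComplement} assigns each negative chord of the output injectively to a positive chord of the input touching the same piece of $\Leg^{\Partition}$. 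An output containing both the surviving $a$ and a negative $b$ would therefore require two distinct input chords touching $\Leg^{\Partition}_{2}$. This is exactly the content of the paper's Figure \ref{Fig:SplittingLemma}.

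The same geometric input is hidden in your treatment of case (4b). To ``match term-by-term'' via the Leibniz rule on a summand $\word_{a}\prod z_{k}$ of $x^{i}_{j}$, you must know not only that $a,b\notin z_{k}$ but also that $a,b\notin\partial^{-}z_{k}$; this is not automatic and is what the paper proves with Figure \ref{Fig:SplittingLemma}. It does follow from your case (3) -- the injective matching above shows no chord of any $z_{k}$ touches $\Leg^{\Partition}_{2}$ -- but that step has to be made explicit. Once both points are discharged by the admissibility argument in place of the area estimate, your proof coincides with the paper's.
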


\begin{proof}
For the first item above, $\partial^{+}\word = \partial^{-}\word$ and so $\partial^{+}\Phi_{0}\word = \Phi_{0}\partial^{-}\word$.

In the second case $\word$ can not contain an $a$ or a $b$ by Equation \eqref{Eq:DoublePointActionOrdering}. Moreover $\partial^{-}\word$ cannot have any terms containing an $a$ or a $b$ by action considerations. Therefore we have reduced to the first case which has already been established.

In the third case, the negative asymptotics of disks contributing to $\partial^{-}\word$ have endpoints on the $\Leg^{\Partition}_{k}$ which are touched by the endpoints of the chords of $\word$. Hence $\word$ fits the hypothesis of the first item.

Now we consider the fourth case with $\word$ containing an $a$. So suppose that $\word = (\word^{i}_{l, j}a\word^{i}_{r, j})$. Then $\partial^{+}\Phi_{0}\word = \partial^{+}e^{i}_{j, 1} = e^{i}_{j, 2}$ and
\begin{equation}\label{Eq:DoublePointPhiZeroA}
\Phi_{0}\partial^{-}\word = \Phi_{0}((\word^{i}_{l, j}b\word^{i}_{r, j}) + x^{i}_{j} + y^{i}_{j}) = e^{i}_{j, 2} + 2(\check{x}^{i}_{j}  + y^{i}_{j}) = e^{i}_{j, 2}
\end{equation}
as desired.

Finally, we consider the fourth case with $\word$ containing an $b$, so $\word = (\word^{i}_{l, j}b\word^{i}_{r, j})$. Then
\begin{equation*}
\begin{gathered}
\partial^{+}\Phi_{0}\word = \partial^{+}(e^{i}_{j, 2} + \check{x}^{i}_{j} + y^{i}_{j}) = \partial^{+}(\check{x}^{i}_{j} + y^{i}_{j}) = \partial^{+}\Phi_{0}x^{i}_{j} + \partial^{+}y^{i}_{j}\\
\Phi_{0}\partial^{-}\word = \Phi^{0}\partial^{-}(x^{i}_{j} + y^{i}_{j}) = \Phi^{0}\partial^{-}x^{i}_{j} + \partial^{+}y^{i}_{j}.
\end{gathered}
\end{equation*}
We need to show that $\partial^{+}\Phi_{0}x^{i}_{j} = \Phi_{0}\partial^{-}x^{i}_{j}$.

\begin{figure}[h]
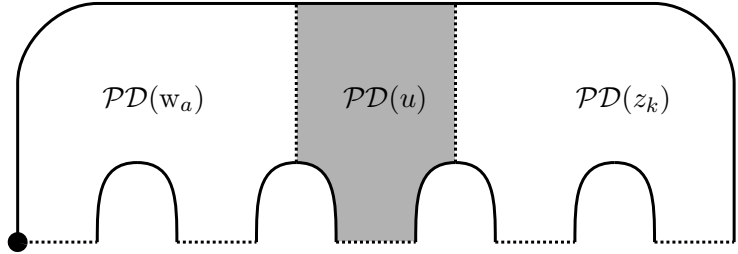

\begin{overpic}[scale=.5]{splitting_lemma.eps}
\put(13, 20){$\planarDiagram(\word_{a})$}
\put(46, 20){$\planarDiagram(u)$}
\put(78, 20){$\planarDiagram(z_{k})$}
\end{overpic}
\caption{A disk contributing to $\partial^{-}z_{k}$ cannot have $a$ or $b$ as a negative puncture.}
\label{Fig:SplittingLemma}
\end{figure}

Each summand of $x^{i}_{j}$ contains exactly one factor $\word_{a}$ containing an $a$. Say the remaining factors are $z_{k}$ which cannot have an $a$ so our summand is $\word_{a}\prod z_{k}$. Then $b \notin z_{k}$ by the definition of the $x^{i}_{j}$ and $y^{i}_{j}$. It is clear from drawing planar diagrams that $a, b \notin \partial^{-} z_{k}$ as can be seen from Figure \ref{Fig:SplittingLemma}.

In summary $a, b \notin z_{k}$ and $a, b \notin \partial^{-}z_{k}$. We have already established that $\partial^{+}\Phi_{0}z_{k} = \Phi_{0}\partial^{-}z_{k}$ by the first item in the statement of the lemma. We have also established $\partial^{+}\Phi_{0}\word_{a} = \Phi_{0}\partial^{-}\word_{a}$ in Equation \eqref{Eq:DoublePointPhiZeroA} above. Therefore $\partial^{+}\Phi_{0}(\word_{a}\prod z_{k}) = \Phi_{0}\partial^{-}(\word_{a}\prod z_{k})$ by the Leibniz rules for $\partial^{\pm}$. Summing over the terms in $x^{i}_{j}$, we have $\partial^{+}\Phi_{0}x^{i}_{j} = \Phi_{0}\partial^{-}x^{i}_{j}$ as desired, completing the proof.
\end{proof}

\begin{lemma}\label{Lemma:PartialChain}
$\pi \partial^{+}\Phi_{0} = \pi\Phi_{0}\partial^{-}$.
\end{lemma}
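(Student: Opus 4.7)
The plan is as follows. Since $\pi\Phi_0$ is a unital graded algebra morphism and both $\partial^\pm$ satisfy the Leibniz rule, it suffices to verify the identity on an arbitrary generator $\word \in \orbitVS$. Lemma \ref{Lemma:PhiZeroChainMap} already establishes the stronger on-the-nose equation $\partial^+\Phi_0\word = \Phi_0\partial^-\word$ whenever $\word$ satisfies any of its four hypotheses. The only case left to treat is a $\word$ that contains neither $a$ nor $b$ but whose differential $\partial^-\word$ does. For such a $\word$, $\Phi_0\word = \word$ is $e$-free, so the left-hand side reduces to $\pi\partial^+\word = \partial^+\word$.

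For the right-hand side, decompose $\partial^-\word = X + Y + Z$, where $X$ consists of monomials no factor of which contains $a$ or $b$, $Y$ of monomials containing some factor $(\word^i_{l,j}a\word^i_{r,j})$, and $Z$ of monomials containing some factor $(\word^i_{l,j}b\word^i_{r,j})$. Admissibility prevents $a$ and $b$ from occurring in a single admissible word, so the three groups are mutually exclusive. From \eqref{Eq:DoublePointPhiNot}, every monomial of $\Phi_0 Y$ contains an $e^{i}_{j,1}$ and is killed by $\pi$. Each $(\word^i_{l,j}b\word^i_{r,j})$ factor in $Z$ is sent by $\Phi_0$ to $e^i_{j,2} + \check{x}^i_j + y^i_j$; the first summand is $e$-valued, and because $\check{x}^i_j = \Phi_0 x^i_j$ with every monomial of $x^i_j$ carrying an $a$-factor, every monomial of $\check{x}^i_j$ carries an $e^{i'}_{j',1}$. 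Hence $\pi$ collapses the $\Phi_0$-image of each $b$-factor to $y^i_j$. Writing $\tilde Z$ for $Z$ after this substitution, the desired identity reduces to
\[
\partial^+\word \;=\; X + \tilde Z.
\]

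I would establish this geometric identity via the standard Chekanov-style cobordism/gluing bijection, adapted from \cite{Chekanov:LCH, ENS:Orientations} with the multiple-positive-puncture considerations of \cite{Ng:RSFT} and the admissibility/transversality framework of \cite{Ekholm:Z2RSFT}. The $X$-terms correspond to those $\partial^+\word$ disks whose images avoid the disappearing double-point region: outside that region the Legendrians and $J_{\SympBase}$ are unchanged, giving a canonical bijection. The remaining $\partial^+\word$ disks, those that traverse the region where $a, b$ formerly lived, are matched bijectively via a gluing analysis as $\epsilon = \action(u_{ab}) \to 0$ with two-level configurations of old disks: an $\ind = 1$ disk $u_Z$ outputting $(\word^i_{l,j}b\word^i_{r,j})$, glued at that $b$-puncture to an $\ind = 1$ disk of $y^i_j$ type (positively asymptotic to $(\word^i_{l,j}a\word^i_{r,j})$ with $a,b$-free negative output), with $u_{ab}$ shrinking between them. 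Summing over all such pairs reproduces $\tilde Z$. The analogous $Y$-type breakings at an $a$-puncture do not extend to disks on the post-move side, consistent with $\pi$ killing $Y$.

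The principal technical obstacle is making this gluing bijection rigorous: showing that no additional degenerations contribute, that post-move admissibility of the inscribed planar diagrams is preserved across the move, and that transversality is retained at the resulting $\ind = 1$ disks. These are handled using Lemma \ref{Lemma:InscriptionAdmiss}, Lemma \ref{Lemma:SympBaseTransversality}, and a choice of $\epsilon$ taken small enough with respect to the action ordering \eqref{Eq:DoublePointActionOrdering} to rule out spurious bubbling, following the template of \cite[Section 6]{ENS:Orientations}.
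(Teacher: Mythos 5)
Your proposal is correct and follows essentially the same route as the paper: the same reduction to generators, the same three-way decomposition of $\partial^{-}\word$ (your $X + Y + Z$ is the paper's $\word_{\Algebra} + \word_{a} + \word_{b}$), the same observation that $\pi\Phi_{0}$ kills the $a$-terms and replaces each $b$-word by $y^{i}_{j}$, and the same cut-and-paste bijection between the new disks $\word_{\complement}$ and broken configurations across the vanishing double point (the paper's cutting along the arc $\gamma$ in Figure \ref{Fig:DoublePointCurveSplit}). The only cosmetic difference is that you phrase the geometric step as an $\epsilon \to 0$ gluing analysis, whereas in the $3$-dimensional setting the paper reads it off combinatorially from the disk segments, deferring the analytical version to the high-dimensional invariance argument.
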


\begin{proof}
Using the previous lemma and the fact that $\pi$ is an algebra morphism, we assume that $\word$ contains no $a$ or $b$ but has chords ending on both $\Leg^{\Partition}_{1}$ and $\Leg^{\Partition}_{2}$. Write
\begin{equation*}
\partial^{-}\word = \word_{\Algebra} + \word_{a} + \word_{b}, \quad \partial^{+}\word = \word_{\Algebra} + \word_{\complement}
\end{equation*}
where $\word_{\Algebra}$ are the terms which contain no $a$ or $b$, $\word_{a}$ is the sum of all terms containing $a$ in one of its factors, and $\word_{b}$ is the sum of all terms containing $b$ in one of its factors. So $\word_{\complement}$ is determined by $\word_{\Algebra}$ and $\partial^{+}\word$. Then
\begin{equation*}
\pi\partial^{+}\Phi_{0}\word - \pi\Phi_{0}\partial^{-}\word = \word_{\complement} - \pi\Phi_{0}\word_{b}.
\end{equation*}

The disks contributing to $\word_{\complement}$ are as shown in the right hand side of the bottom row of Figure \ref{Fig:DoublePoint}. The endpoints of the arc $\gamma$ in the figure determines a unique (up to boundary-relative isotopy) embedding of an arc in $\planarDiagram(\word)$ which we'll also call $\gamma$. Cutting $\planarDiagram(\word)$ along $\gamma \subset \planarDiagram(\word)$ yields a word $(\word^{i}_{l, j}a\word^{i}_{r, j})$ for some $i, j$. Every disk $u_{\complement}$ contributing to $\word_{\complement}$ can also be cut along $\gamma$ to obtain a disk $u_{l}$ (where $l$ stands for ``left'') contributing to $\partial^{-}(\word^{i}_{l, j}a\word^{i}_{r, j})$. These $u_{l}$ have $a$ as a positive puncture and cannot have $b$ as a negative puncture. In other words, the $u_{l}$ count exactly the contributions to the $y^{i}_{j}$ part of $\partial^{-}(\word^{i}_{l, j}a\word^{i}_{r, j})$.

\begin{figure}[h]
\begin{overpic}[scale=.4]{double_point_curve_split.eps}
\put(50, 40){$\planarDiagram(\word_{\complement})$}
\put(41, 38){$\gamma$}
\put(12, 12){$\planarDiagram(u_{l})$}
\put(24, 12){$a^{+}$}
\put(75, 12){$\planarDiagram(u_{r})$}
\put(65, 12){$b^{-}$}
\put(85, 35){$\planarDiagram(\word)$}
\put(83, 36){\vector(-1, 0){8}}
\put(90, 32){\vector(0, -1){8}}
\end{overpic}
\caption{Inscriptions $\planarDiagram(\word_{\complement}), \planarDiagram(u_{r}) \subset \planarDiagram(\word)$, and $\planarDiagram(u_{l})\subset \planarDiagram((\word^{i}_{l, j}a\word^{i}_{r, j}))$.}
\label{Fig:DoublePointCurveSplit}
\end{figure}

After cutting $u_{\complement}$ along $\gamma$ we also have a $u_{r}$ which contributes to $\partial^{-}\word$ and have $b$ as a negative puncture. These disks are responsible for the $\word_{b}$ part of $\partial^{-}\word$.

According to Equation \eqref{Eq:DoublePointPhiNot}, the last term $\pi\Phi_{0}\word_{b}$ is obtained by replacing every $(\word^{i}_{l, j}b\word^{i}_{r, j})$ in $\word_{b}$ with its associated $y^{i}_{j}$ and we must show that this term agrees with $\word_{\complement}$.\footnote{The corresponding $x^{i}_{j}$ terms from Equation \eqref{Eq:DoublePointPhiNot} are annihilated by $\pi$ as each summand of $\Phi_{0}x^{i}_{j}$ will have $e^{i}_{j, 1}$ as a factor.} By the above cutting and pasting argument, this is exactly the count of contributions to $\word_{\complement}$. The $u_{l}$ and $u_{r}$ can both be patched together to get a disk contributing to $\word_{\complement}$ if and only if all disks involved are admissible. Thus $\word_{\complement}$ is exactly $\pi\Phi_{0}\word_{b}$ as desired.
\end{proof}

Lemma \ref{Lemma:PhiZeroChainMap} shows that $\Phi_{0}$ is a chain map when restricted to $\Algebra^{-}_{\leq \action(\word^{1}_{l, N_{1}}a\word^{1}_{r, N_{1}})}$. We will now define maps $\Phi_{k}$ for $k > 0$ which will be chain maps when restricted to successively larger subalgebras of $\Algebra^{-}$ using the action orderings of words appearing in Equation \eqref{Eq:DoublePointActionOrdering}.

A degree $1$ endomorphism $\widehat{h}$ of the vector space underlying $\Stab\Algebra^{+}$ is determined by the following formula for generators,
\begin{equation*}
\widehat{h}(\word) = \begin{cases}
e^{i}_{j, 1} & \word = e^{i}_{j, 2},\\
0 & \text{otherwise}.
\end{cases}
\end{equation*}
Then $\widehat{h}$ determines a degree $1$ map $h: \Stab\Algebra^{+} \to \Stab\Algebra^{+}$ by linear extension of the formulas
\begin{equation*}
h(1) = 0, \quad h(\word \thicc{\word}) = \widehat{h}(\word)\thicc{\word} + \word h(\thicc{\word})
\end{equation*}
for generators $\word$ and monomials $\thicc{\word}$. Then
\begin{equation}\label{Eq:StabChainHomotopy}
\Id_{\Stab\Algebra^{+}} - \pi = \partial^{+}h + h\partial^{+}.
\end{equation}

Looking back to Equation \eqref{Eq:DoublePointActionOrdering}, $\Algebra^{+} \subset \Algebra^{-}$ is generated by the words $\word_{k}$ not containing $a$ or a $b$, and are indexed such that $\action(\word_{i}) \leq \action(\word_{i+1})$. Starting with $\Phi_{0}$ as in Equation \eqref{Eq:DoublePointPhiNot} above, inductively define unital algebra morphisms
\begin{equation*}
\psi_{k}: \Stab\Algebra^{+} \rightarrow \Stab\Algebra^{+}, \quad \Phi_{k} = \psi_{k}\Phi_{k-1}: \Algebra^{-} \rightarrow \Stab\Algebra^{+}
\end{equation*}
with $\psi_{k}$ defined on generators of $\Stab\Algebra^{+}$ by
\begin{equation}\label{Eq:DoublePointPhik}
\psi_{k}\word = \begin{cases}
\word & \word \neq \word_{k}\\
\word_{k} + h(\partial^{+}\word_{k} - \Phi_{i-1}\partial^{-}\word_{k}) & \word = \word_{k}.
\end{cases}
\end{equation}

Then we have the identities,
\begin{equation*}
\pi h = 0, \quad \pi\Phi_{k} = \pi\Phi_{k-1} = \cdots = \pi \Phi_{0}, \quad \partial^{-}\word_{k} \in \Algebra^{-}_{<\action(\word_{k})}.
\end{equation*}
Now assume that $\partial^{+}\Phi_{k-1} = \Phi_{k-1}\partial^{-}$ when restricted to $\Algebra^{-}_{< \action(\word_{k})}$. Then if $\word$ satisfies $\partial^{-}\word \in \Algebra^{-}_{< \action(\word_{k})}$
\begin{equation*}
\begin{aligned}
\Phi_{k}\partial^{-}\word &= \Phi_{k-1}\partial^{-}\word \\
&= (\partial^{+}h + h\partial^{+} + \pi)\Phi_{k-1}\partial^{-}\word\\
&= \partial^{+}h\Phi_{k-1}\partial^{-}\word + h\partial^{+}\partial^{+}\Phi_{k-1}\word + \pi\Phi_{0}\partial^{-}\word\\
&= \partial^{+}h\Phi_{k-1}\partial^{-}\word + \pi\Phi_{0}\partial^{-}\word\\
&= \partial^{+}h\Phi_{k-1}\partial^{-}\word + \pi\partial^{+}\Phi_{0}\word\\
&= \partial^{+}h\Phi_{k-1}\partial^{-}\word + (\Id_{\Stab\Algebra^{+}} + \partial^{+}h + h\partial^{+})\partial^{+}\word\\
&= \partial^{+}(\Id_{\Stab\Algebra^{+}} + h\Phi_{k-1}\partial^{-} + h\partial^{+})\word\\
&= \partial^{+}\Phi_{k}\word.
\end{aligned}
\end{equation*}
Here the first line uses the fact that $\Phi_{k} = \Phi_{k-1}$ on $\Algebra^{-}_{< \action(\word_{k-1})}$. The second uses Equation \eqref{Eq:StabChainHomotopy}. The third line uses $\pi\Phi_{k-1} = \pi\Phi_{0}$, the fourth uses $\partial^{+}\partial^{+} = 0$, and the fifth uses Lemma \ref{Lemma:PartialChain}. In the sixth line we again apply Equation \eqref{Eq:StabChainHomotopy} together with $\Phi_{0}\word_{k} = \word_{k}$. Finally, we rearrange some terms and apply the definition of $\Phi_{k}$.

The above computation applies to $\word = \word_{k}$ or $(\word^{i}_{j}b\word^{i}_{j})$ for which $(\word^{i}_{j}b\word^{i}_{j}), (\word^{i}_{j}b\word^{i}_{j}) < \action(\word_{k+1})$. To complete the inductive step in our proof, we must ensure that $\Phi_{k}$ is still a chain map on the words $(\word^{i}_{j}a\word^{i}_{j})$ containing an $a$. We compute
\begin{equation*}
\begin{aligned}
\Phi_{k}\partial^{-}(\word^{i}_{l,j}a\word^{i}_{r,j}) &= \Phi_{k}((\word^{i}_{l,j}b\word^{i}_{r,j}) + \check{x}^{i}_{j} + y^{i}_{j})\\
&= \Phi_{k}(\word^{i}_{l,j}b\word^{i}_{r,j}) + \Phi_{k-1}(\check{x}^{i}_{j} + y^{i}_{j})\\
&= e^{i}_{j, 2} = \partial^{+}e^{i}_{j, 1} = \partial^{+}\Phi_{k}(\word^{i}_{l,j}a\word^{i}_{r,j}).
\end{aligned}
\end{equation*}
Here the second line uses the fact that $\action(\check{x}^{i}_{j}), \action(y^{i}_{j}) \leq \action(w_{k-1})$.

This completes the induction step in our argument. Again looking back to the indices of Equation \eqref{Eq:DoublePointActionOrdering}, for $k = n_{N_{i}}$ we will have that $\Phi_{k}$ is a stable tame isomorphism $\Algebra^{-} \rightarrow \Stab\Algebra^{+}$. The proof of Proposition \ref{Prop:DoublePointInv} is then complete.

\subsection{Invariance in high dimensions}\label{Sec:HighDimInvariance}

Now we address stable table isomorphism invariance of $\newRSFTA$ in contact manifolds of the form $\R_{t} \times \SympBase$ with $\dim \SympBase > 2$. Let $\Leg(0)$ and $\Leg(1)$ be a pair of chord generic Legendrians which are Legendrian isotopic. Also let $J_{\SympBase}(0), J_{\SympBase}(1)$ be a pair of almost complex structures for which each $\Leg(0)$ and $\Leg(1)$ are both admissible with respect to their corresponding $J_{\SympBase}(T)$.

\begin{prop}\label{Prop:HighDimInv}
The mfDGAs $\newRSFTA(\Leg(0), J(0))$ and $\newRSFTA(\Leg(1), J(1))$ are filtered stable tame isomorphic.
\end{prop}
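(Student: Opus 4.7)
The plan is to reduce this to a bifurcation analysis along a generic one-parameter family, modeled on the 3-dimensional argument of Sections \ref{Sec:TriplePoint} and \ref{Sec:3dDoublePt} but with the geometric input imported from the $\CE$ literature. First I would choose a generic smooth path $(\Leg(T), J_{\SympBase}(T))_{T \in [0,1]}$ of admissible pairs interpolating between the two endpoints. Standard transversality in $1$-parameter families (as carried out in \cite{EES:LegendriansInR2nPlus1, EES:PtimesR, ENS:Orientations}) guarantees that, away from finitely many instants $T_{1} < \dots < T_{m} \in (0,1)$, each $(\Leg(T), J_{\SympBase}(T))$ is chord-generic and the conclusions of Lemma \ref{Lemma:SympBaseTransversality} hold, and that at each $T_{k}$ exactly one codimension-one degeneration occurs: either a \emph{handle-slide}, where a one-parameter moduli space of admissible index-$1$ disks breaks off an exceptional rigid disk $u_{\ast}$, or a \emph{birth/death}, where a pair of chords $a, b$ with $|a| = |b|+1$ and $\action(a) - \action(b) \to 0$ is created or destroyed via a standard parabolic tangency of Lagrangian projections.

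Second, I would prove by induction on $k$ that crossing each $T_{k}$ induces a filtered stable tame isomorphism of mfDGAs; since the composition of filtered stable tame isomorphisms is a filtered stable tame isomorphism, this would yield the proposition. Between consecutive instants the algebra is constant up to relabeling of holomorphic disk counts: by Lemma \ref{Lemma:LCHShift} every admissible $\ModSpace_{\Leg(T)}$ moduli space is homeomorphic to a $\CE$-type moduli space for the shifted Legendrian $\widetilde{\Leg}(T)$, and the standard cobordism-style argument from \cite{EES:LegendriansInR2nPlus1, ENS:Orientations} upgrades continuity of disk counts across the intervals $(T_{k}, T_{k+1})$ to a tame isomorphism of $\newRSFTA$.

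At a handle-slide instant, the exceptional rigid disk $u_{\ast}$ plays the role of the tiny triangle from Proposition \ref{PropTriplePoint}. I would show that the map $\Phi = \Id + \mu_{u_{\ast}}$, defined exactly as in Section \ref{Sec:TriplePoint} using the operator formalism of Section \ref{Sec:MuOperators}, is the desired tame isomorphism. Admissibility of the output words is automatic from Lemma \ref{Lemma:InscriptionAdmiss}, so the filtration is preserved, and the chain-map verification reduces to the same local bookkeeping as in Equation \eqref{Eq:TriplePointEquivalence}: gluing/splitting of $\ind = 2$ broken configurations at the degeneration is dictated entirely by $\planarDiagram$ combinatorics of moduli-space breakings, which are insensitive to the dimension of $\SympBase$.

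At a birth/death instant, I would run the algebraic machinery of Section \ref{Sec:3dDoublePt} essentially verbatim. The input needed is purely local: the existence, for each generic admissible cyclic word $\word$ containing exactly one of $a$ or $b$, of a thin $\ind = 1$ strip $u_{ab}$ with $\boundaryWord(u_{ab}) = a^{+}b^{-}$ coming from the parabolic tangency. This follows from the standard Morse--Bott picture of chord birth together with Lemma \ref{Lemma:LCHShift}. Once $u_{ab}$ is in hand, Equation \eqref{Eq:DoublePointDiff} holds in any dimension, and the inductive construction of $\Phi_{0}, \Phi_{1}, \dots$ from Equations \eqref{Eq:DoublePointPhiNot} and \eqref{Eq:DoublePointPhik} yields a stable tame isomorphism $\Algebra^{-} \to \Stab\Algebra^{+}$, provided one first verifies the analogues of Lemmas \ref{Lemma:PhiZeroChainMap} and \ref{Lemma:PartialChain}. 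The hard part will be the second of these: in the $3$-dimensional proof the identity $\pi \partial^{+} \Phi_{0} = \pi \Phi_{0} \partial^{-}$ is established by explicit cutting and pasting of disks along an arc $\gamma$ (Figure \ref{Fig:DoublePointCurveSplit}), and in higher dimensions this must be replaced by a gluing/breaking analysis of the compactified $1$-parameter moduli space $\ModSpace_{\Leg(T)}(\boundaryWord)$ for $T$ near the birth instant, with admissibility of all disks produced in the breaking controlled by Lemma \ref{Lemma:InscriptionAdmiss} and the finiteness of the strata controlled by Lemma \ref{Lemma:NoPurePosForAdmissible}. Once this local gluing is established, the rest of Section \ref{Sec:3dDoublePt} applies without modification, completing the induction.
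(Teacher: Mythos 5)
Your overall scheme (generic path, bifurcation at finitely many handle-slide and birth/death instants, induction over the instants) matches the paper, and your treatment of the double point move is essentially the paper's: set up the local quadratic-tangency model, obtain the thin strip $u_{ab}$ and Equation \eqref{Eq:DoublePointDiff} from an energy argument, import the compactness--gluing input for Lemma \ref{Lemma:PartialChain} from \cite[Proposition 2.18]{EES:LegendriansInR2nPlus1}, and then run the algebra of Section \ref{Sec:3dDoublePt} verbatim. You correctly isolate Lemma \ref{Lemma:PartialChain} as the only step needing new analytic input there.

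The gap is in your handle-slide step. You propose $\Phi = \Id + \mu_{u_{\ast}}$ and assert that the chain-map identity ``reduces to the same local bookkeeping as Equation \eqref{Eq:TriplePointEquivalence}.'' That bookkeeping is not available here: in Proposition \ref{PropTriplePoint} the verification rests on an explicit finite enumeration of the disk segments entering and leaving the local picture of the Reidemeister move (Table \ref{Table:DiskSegments}), and this has no analogue for an arbitrary exceptional rigid disk $u_{\ast} \in \ModSpace_{\Leg(T_{0})}$. For a general handle slide, the boundary of the $T$-parametrized index-$1$ moduli space contains buildings in which \emph{several} copies of $u_{\ast}$ attach simultaneously to a single index-$1$ disk (and, since $u_{\ast}$ may itself have several positive punctures, it may attach from above as well as from below). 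The map $\Id + \mu_{u_{\ast}}$ records only single insertions of $u_{\ast}$ and therefore cannot intertwine $\partial^{-}$ and $\partial^{+}$; making this work would require controlling all iterated insertions via a multi-level gluing statement that is not supplied by Lemma \ref{Lemma:LCHShift} or by the $\CE$ references in the form you invoke. The paper avoids this entirely: it realizes the crossing of a handle-slide instant by the exact Lagrangian cobordisms $\Lag_{\pm\epsilon}$ of \cite{EHK:LagrangianCobordisms} and shows the induced morphism $\Phi_{\epsilon}$ is a tame isomorphism by proving the diagonal coefficients $\langle \Phi_{\epsilon}\word, \word \rangle$ are nonzero for small $\epsilon$: were they all zero, the chain homotopy $\Id - \Phi_{-\epsilon}\Phi_{\epsilon} = h_{\epsilon}\partial + \partial h_{\epsilon}$ would force the existence of nontrivial multi-vertex bubble trees whose total energy tends to $0$ as $\epsilon \to 0$, contradicting Gromov compactness. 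You should either adopt that cobordism argument or supply the missing multi-gluing analysis; as written the handle-slide case does not go through. (Separately, the paper first disposes of $T$-dependence of $J_{\SympBase}$ with $\Leg$ fixed by a zero-energy argument for the diagonal coefficients of the continuation map; your proposal folds this into the generic-path setup without comment, which is harmless but worth making explicit.)
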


We will follow the strategies of \cite{EES:LegendriansInR2nPlus1, EHK:LagrangianCobordisms}. By the fact that $\newRSFTA$ moduli spaces can be identified with $\CE$ moduli spaces using Lemma \ref{Lemma:LCHShift}, we need no new analytical tools and can rely on those of \cite{EES:LegendriansInR2nPlus1}.

\begin{rmk}
Invariance proofs in \cite{EES:Orientations, EES:PtimesR} modify the general strategy of \cite{EES:LegendriansInR2nPlus1} and appear more specifically adapted to counting $\CE$ curves with a single positive puncture.
\end{rmk}

By consideration of generic $1$-parameter families $(\Leg(T), J_{\SympBase}(T)), T \in [0, 1]$ we can prove Proposition \ref{Prop:HighDimInv} by establishing invariance under three types of modification:
\be
\item[(I0)] The chords for each $\Leg(T)$ are the same for each $T$ and for each $\boundaryWord$ the $T$-parameterized moduli spaces $\ModSpace(\boundaryWord, J_{\SympBase}(T))$ have only disks $u$ with $\ind(u) \geq 1$.
\item[(I1)] With one of $\Leg(T) = \Leg(0)$ or $J_{\SympBase}(T)$ fixed and the other varying, there may appear \emph{handle slide disks} (ie. $\ind(u) = 0$ disks) in the $T$-parameterized moduli spaces.
\item[(I2)] The $\Leg(T)$ can undergo a \emph{double point move}.\footnote{These are called self-tangencies or birth-death moments in \cite{EES:LegendriansInR2nPlus1, EES:PtimesR, EES:Orientations}.}
\ee

The italicized terminology will be described below. By \cite[Lemma 2.8]{EES:PtimesR} these cases can be treated separately and further broken down into
\be
\item $J_{\SympBase}(T)$ is varying and $\Leg$ is constant or 
\item $J_{\SympBase}$ is constant and $\Leg(T)$ is varying.
\ee

\subsubsection{Varying complex structure}

We first consider time varying $J_{\SympBase}(T)$ with $\Leg$ fixed. Choose a $\R$ family $\hat{J}_{\SympBase}(s)$ of compatible almost complex structures which are constant in $s$ outside of some $[-\const, \const]$ which interpolates between $J_{\SympBase}(0)$ and $J_{\SympBase}(1)$. Define a cobordism map $\Phi: (\Algebra, \partial^{-}) \rightarrow (\Algebra, \partial^{+})$ by counting $\ind = 0$ holomorphic curves with boundary on trivial cylinder $\Lag_{\Leg} = \R_{s} \times \Leg$ and complex structure $J(s)\partial_{s} = \partial_{t}$ and $J(s)|_{T\SympBase} = \check{J}_{\SympBase}(s)$. To acheive genericity, we may apply an arbitrarily small, compactly supported Hamiltonian perturbation to $\Lag_{\Leg}$. Here $\partial^{-}$ is the $\newRSFTA$ differential associated to $J_{\SympBase}(0)$ and $\partial^{+}$ is the $\newRSFTA$ differential associated to $J_{\SympBase}(1)$.

The morphism $\Phi$ is a tame isomorphism if and only if $C_{\word} = \langle \Phi \word, \word\rangle \neq 0$ for every generator $\word \in \Algebra$. The coefficients $C_{\word}$ count unions of holomorphic strips $u$ which have the same positive and negative asymptotics. In other words $\boundaryWord(u) = \chord^{+}\chord^{-}$ for some $\chord$. This means $\action(u) = 0$ so that the $\pi_{\SympBase}u$ must be constant and each $u$ must be \emph{the} trivial strip. So $C_{\word} = 1$ completing the proof.

Moving forward, assume that $J_{\SympBase}$ is constant in $T$.

\subsubsection{Invariance in the presence of handle slide disks}

Consider when $\Leg(T)$ is an isotopy through admissible Legendrians for which the chords vary continuously with $T$. For a boundary word $\boundaryWord$ we write $\ModSpace^{T}_{\SympBase}\left(\boundaryWord\right)$ for the $\ModSpace_{\Leg}$ moduli space of holomorphic disks with asymptotics determined by $\boundaryWord$ and boundary intervals mapped to $\pi_{\SympBase}\Leg(T)$. Likewise
\begin{equation*}
\ModSpace^{[0, 1]}_{\SympBase}\left(\boundaryWord\right) = \left\{ (T, u)\ : T \in [0, 1], u \in \ModSpace^{T}_{\SympBase}\left(\boundaryWord\right) \right\}.
\end{equation*}
Using Lemma \ref{Lemma:LCHShift}, the following lemma is a restatement of \cite[Proposition 2.9]{EES:LegendriansInR2nPlus1}.

\begin{prop}
For a generic admissible isotopy $\Leg(T)$ as above, the following hold:
\be
\item If $\ind\left(\boundaryWord\right) = d \leq 1$ then $\ModSpace^{[0, 1]}_{\SympBase}\left(\boundaryWord\right)$ is a transversely cut out $d$-manifold.
\item When $d = 0$, $\ModSpace^{[0, 1]}_{\SympBase}\left(\boundaryWord\right)$ consists of a finite set $\{ (T_{j}, u_{j}) \}$ with the $T_{j}$ all distinct.
\ee
\end{prop}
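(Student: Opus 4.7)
The plan is to reduce the statement to the $\CE$ setup of \cite{EES:LegendriansInR2nPlus1, EES:PtimesR} via Lemma \ref{Lemma:LCHShift} and then invoke the parametric transversality technology developed there. Given an admissible $\boundaryWord = (\chord_{i_1}^{a_1}\cdots\chord_{i_m}^{a_m})$ (with at least one $a_k$ positive), I would first apply Lemma \ref{Lemma:LCHShift} fiberwise in $T$: for the isotopy $\Leg(T)$, shift the Legendrians $\Leg(T)^{\Partition}_j$ in the $t$-direction according to the ordering of the Lagrangian boundary labels encountered along $\partial\planarDiagram(\boundaryWord)$ so that the new family $\widetilde{\Leg}(T)$ projects to $\SympBase$ the same way and its corresponding boundary word $\widetilde{\boundaryWord}$ is of $\CE$ type, with $\ModSpace^{T}_{\SympBase}(\boundaryWord)\simeq \ModSpace^{T}_{\SympBase}(\widetilde{\boundaryWord})$. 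The isotopy stays admissible throughout because the $t$-translation is a contactomorphism and admissibility is a condition on $(\pi_{\SympBase}\Leg, J_{\SympBase})$, which is unchanged.

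Next I would set up the universal moduli space
\begin{equation*}
\ModSpace^{[0,1]}_{\SympBase}(\widetilde{\boundaryWord}) = \bigcup_{T\in[0,1]} \{T\}\times \ModSpace^{T}_{\SympBase}(\widetilde{\boundaryWord})
\end{equation*}
as the zero set of a parametric $\delbar$-operator on a Banach manifold of maps with prescribed boundary, asymptotic, and continuous-lift conditions. The expected dimension of the fiber over $T$ is $\ind(\widetilde{\boundaryWord})-1 = d-1$ by Lemma \ref{Lemma:SympBaseIndex}, so the expected dimension of the total space is $d$. The key analytical input -- which I would borrow verbatim from \cite[Proposition 2.9]{EES:LegendriansInR2nPlus1} together with \cite[Appendix B]{Ekholm:Z2RSFT} -- is that the parametric linearized operator is surjective for a generic admissible isotopy $\Leg(T)$, provided that every element of $\ModSpace^{[0,1]}_{\SympBase}(\widetilde{\boundaryWord})$ is somewhere injective. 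Somewhere injectivity is exactly what Lemma \ref{Lemma:NoPurePosForAdmissible} guarantees for admissible $\widetilde{\boundaryWord}$, and this is precisely why the reduction to $\CE$ type via Lemma \ref{Lemma:LCHShift} is needed: after shifting, $\widetilde{\boundaryWord}$ has one positive puncture and the remaining punctures are negative and $\Partition$ pure in the new partition structure, hence the admissibility hypothesis propagates.

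From this, $\ModSpace^{[0,1]}_{\SympBase}(\widetilde{\boundaryWord})$ is a transversely cut-out smooth manifold of dimension $d$ for $d\leq 1$. In the case $d=0$, it is a $0$-manifold which is compact by the usual Gromov/SFT compactness argument (using that elements of a fixed moduli space have uniformly bounded energy, cf. \cite[Appendix B]{Ekholm:Z2RSFT}), hence finite. To arrange that the projection to $[0,1]$ is injective on this finite set, I would apply Sard-Smale to the map $\pi_{[0,1]}:\ModSpace^{[0,1]}_{\SympBase}(\widetilde{\boundaryWord})\to [0,1]$: a further generic perturbation within the space of admissible isotopies makes $\pi_{[0,1]}$ a submersion away from a finite set of distinct critical values, and since the source is $0$-dimensional this forces the $T_j$ to be pairwise distinct. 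The main obstacle is the parametric transversality step, which hinges on somewhere injectivity; this is why the admissibility condition, combined with Lemma \ref{Lemma:LCHShift}, is essential, and why the proof does not require any new analysis beyond what appears in \cite{EES:LegendriansInR2nPlus1, Ekholm:Z2RSFT}.
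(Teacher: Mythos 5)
Your proposal is correct and follows the paper's own route: the paper proves this proposition simply by observing that, via Lemma \ref{Lemma:LCHShift}, it is a restatement of \cite[Proposition 2.9]{EES:LegendriansInR2nPlus1}, which is exactly your reduction to $\CE$ type followed by the parametric transversality and finiteness statements of that reference. The only quibble is your phrasing that $\pi_{[0,1]}$ becomes a ``submersion'' off a finite set --- a map from a $0$-manifold to $[0,1]$ is never a submersion; the distinctness of the $T_{j}$ is instead the standard genericity statement (empty fiber product off the diagonal) already contained in the cited proposition.
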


In a generic admissible isotopy as above, we say that a $(T, u)$ with $\ind(u) = 0$ is a \emph{handle slide disk}. Note that Lemma \ref{Lemma:Lifting} implies that if $u \in \ModSpace_{\Lag_{\Leg(T)}}\left(\boundaryWord\right)$ has $\ind(u) = d$ then $\ModSpace^{T}_{\SympBase}\left(\boundaryWord\right)$ has expected dimension $d-1$ and $\ModSpace^{[0, 1]}_{\SympBase}\left(\boundaryWord\right)$ has expected dimension $d$.

Invariance under (I0) modification of $\Leg$ follows from a standard moduli space cobordism argument, cf. \cite[Lemma 2.8]{EES:LegendriansInR2nPlus1}. We continue with the (I1) case, supposing that there is a single handle slide disk appearing at time $T = 0$ for an isotopy $\Leg(T)$ with $T \in [-1, 1]$.

Following \cite[\S 6]{EHK:LagrangianCobordisms}, construct exact Lagrangian cobordisms $\Lag_{\epsilon}: \Leg(- \epsilon) \rightarrow \Leg(\epsilon)$ and inverse cobordisms $\Lag_{-\epsilon}: \Leg(\epsilon) \rightarrow \Leg(- \epsilon)$. We assume genericity so that the $\Lag_{\pm\epsilon}$ induces cobordism maps
\begin{equation*}
\newRSFTA(\Leg(- \epsilon)) \xrightarrow{\Phi_{\epsilon}} \newRSFTA(\epsilon) \xrightarrow{\Phi_{-\epsilon}} \newRSFTA(- \epsilon)
\end{equation*}
by counting $\ind = 0$ holomorphic disks. Define
\begin{equation*}
C_{\word, \pm\epsilon} = \langle \Phi_{\pm\epsilon}\word, \word\rangle
\end{equation*}
to be the ``diagonal coefficients'' of the cobordism maps. If there is some $\epsilon$ for which each $C_{\word, \epsilon}$ is non-zero, then $\Phi_{\epsilon}$ is a tame isomorphism.

So suppose that there is some $\word$ for which $C_{\word, \epsilon} = 0$ for all $\epsilon$. The concatenation of cylindrical cobordisms $\Lag_{-\epsilon}\# \Lag_{\epsilon}$ is isotopic to the trivial cylinder $\Lag_{\Leg(-\epsilon)}$. An isotopy between these cobordisms will induce chain homotopy maps
\begin{equation*}
\Id_{\Algebra} - \Phi_{-\epsilon}\Phi_{\epsilon} = h_{\epsilon}\partial_{-} + \partial_{-} h_{\epsilon}
\end{equation*}
where each $h_{\epsilon}: \Algebra \rightarrow \Algebra$ has degree $1$ and counts perturbed holomorphic disks. See \cite[Lemma 3.13]{EHK:LagrangianCobordisms} (which closely matches the language of this article) or the original construction in \cite[Appendix B]{Ekholm:Z2RSFT}.

If $C_{\word, \epsilon} = 0$ for all $\epsilon$, then the preceding equation tells us
\begin{equation*}
\langle (h_{\epsilon}\partial_{-\epsilon} + \partial_{-\epsilon} h_{\epsilon})\word, \word \rangle = \langle \Id_{\Algebra_{-\epsilon}} \word, \word \rangle = 1
\end{equation*}
for all $\epsilon$ where $\partial_{-\epsilon}$ is the differential for $\Leg(-\epsilon)$. Let's say that the perturbations used to count curves for the $h_{\epsilon}$ maps tend to $0$ with $\epsilon$ so that their contributions converge to unperturbed curves as $\epsilon \rightarrow 0$. Assume that there is a sequence $\epsilon_{N}$ converging to zero for which the $\langle (h_{\epsilon_{N}}\partial_{-\epsilon_{N}} + \partial_{-\epsilon_{N}} h_{\epsilon_{N}})\word, \word \rangle = 1$. Passing to a subsequence of $\epsilon_{N}$, we can assume exactly one of 
\begin{equation*}
\langle \partial_{-\epsilon_{N}} h_{\epsilon_{N}}\word, \word \rangle = 1, \quad \langle h_{\epsilon_{N}}\partial_{-\epsilon_{N}}\word, \word \rangle = 1
\end{equation*}
holds. We'll assume the latter equation is satisfied: The argument in the former case is the essentially same. The quantity $\langle h_{\epsilon_{N}}\partial_{-\epsilon_{N}}\word, \word \rangle$ counts multi-vertex bubble trees with disks contributing to $\partial_{-\epsilon} \word$ on top, so that the bubble tree glues to a strip with exactly one incoming and exactly one outgoing puncture. Each of $h_{\epsilon_{n}}$ and $\partial_{-\epsilon_{n}}$ must be non-zero. The bubble trees will converge to a bubble tree of disks with boundary on $\Leg(0)$ in a Gromov limit, and hence they will have energy tending to $0$ as $n \rightarrow \infty$. Since the energies of disks contributing to $\partial_{-\epsilon}$ are bounded below by the minimal action difference between chords, this is impossible. So we conclude that $C_{\word, \epsilon} = 1$ for $\epsilon$ sufficiently small.

By contradiction, we've shown $C_{\word, \epsilon} \neq 0$ so that $\Phi_{\epsilon}$ is a tame isomorphism. This establishes stable tame isomorphism invariance of $\newRSFTA$ in the presence of handle slide disks.

\subsubsection{High dimensional double point moves}\label{Sec:HighDimDoublePt}

In \cite{EES:LegendriansInR2nPlus1} analytical arguments regarding compactness and gluing of holomorphic disks during a double point isotopy facilitate the use of Chekanov's original algebraic arguments \cite{Chekanov:LCH, ENS:Orientations} in the high dimensional setting. We have already adapted these arguments from the $\CE$ context to be of use to $\newRSFTA$ in \S \ref{Sec:3dDoublePt}. Here we set up the double point move and sketch what modifications are required to establish invariance in the high dimensions.

Double point moves can be described by the following local model. Let
\begin{equation*}
\lambda_{1}(q, T), \lambda_{2}(q, T): [-1, 1] \rightarrow \disk
\end{equation*}
be a $T \in [-1, 1]$ family of paths parameterizing the lower and upper strands of the Legendrians in the top row of Figure \ref{Fig:DoublePoint}. Say that at $T = -1$ we have the left subfigure with chords $a, b$, at $T = 0$ we have a quadratic tangency, and at $T = 1$ we have the right subfigure.\footnote{Any of the aforementioned references provide an explicit model for the $\lambda_{j}(q, T)$ but it will help us to have the picture in mind.} We assume that for $T \neq 0$ the chords $a, b$ are non-degenerate, converging to a degenerate chord $c$ at $T = 0$. Say that within the diagram $\beta_{\disk} \in \Omega^{1}(\disk)$ is such that our contact form is $dt + \beta_{\disk}$ so that (possibly after a $t$ translation), there are some functions $f_{1}, f_{2}: [-1, 1]_{q} \times [0, 1]_{T} \rightarrow \R$ for which the Legendrian strands of $\Leg(T)$ are parameterized by $q\mapsto (f_{1}(q, T)), \lambda_{1}(q, T))$ and $q\mapsto (f_{2}(q, T)), \lambda_{2}(q, T))$ in $\R_{t} \times \disk$ with $\sup f_{1} < \inf f_{2}$.

We upgrade this picture to higher dimensions by considering parameterized, $1$-parameter families of Legendrians
\begin{equation*}
\begin{gathered}
\Leg_{1}(T), \Leg_{2}(T): [-1, 1]_{q} \times \disk^{n-2}_{\vec{q}} \rightarrow \R_{t} \times \disk \times \disk^{2n-2}\\
\Leg_{1}(T)(q, \vec{q}) = (f_{1}(q, T), \lambda_{1}(q, T), \vec{q}), \quad \Leg_{2}(T)(q, \vec{q}) = (f_{2}(q, T), \lambda_{2}(q, T), J_{0}\vec{q})
\end{gathered}
\end{equation*}
Here we are viewing $\disk^{2n-2}$ as the unit disk in $\C^{2n-2}$ with the standard almost complex structure $J_{0}$. Using coordinates $x_{i}, y_{i}$ on $\disk^{2n-2}$, the contact form
\begin{equation*}
dt + \beta_{\disk} + \half \sum (x_{i}dy_{i} - y_{i}dx_{i})
\end{equation*}
on $\R_{t} \times \disk \times \disk^{2n-2}$ is such that the $\Lambda_{j}(T)$ are all Legendrian for $j=0, 1$ and $T \in [0, 1]$. As in the low-dimensional case, $\Leg(-1)$ has two more chords than $\Leg(1)$ which we still call $a, b$, and there is a degenerate chord $c$ for $\Leg(0)$.

Consider use of the local model to make a compact modification of some $\Leg \subset \R_{t} \times \SympBase$ via an inclusion $(\disk \times \disk^{2n-2}, \beta) \subset (\SympBase, \beta)$. We equip $\disk \times \disk^{2n-2}$ with the standard complex structure which we'll also call $J_{0}$ and assume that $J_{\SympBase}$ is an adapted almost complex structure on $\SympBase$ which extends $J_{0}$ to all of $\SympBase$.

We write $(\Algebra^{-}, \partial^{-})$ for the algebra associated to some $\Leg(T), T \in [-1, 0)$ -- these are all the same by (I0) invariance and our assumption that there are no handle slide disks. Likewise write $(\Algebra^{+}, \partial^{+})$ for the algebra associated to some $\Leg(T), T \in (0, 1]$.\footnote{Note that \cite{EES:LegendriansInR2nPlus1} exchanges $(\Algebra^{\pm}, \partial^{\pm})$ for $(\Algebra^{\mp}, \partial^{\mp})$.}

As in \S \ref{Sec:3dDoublePt}, we say that $\Leg_{k}(T)$ belongs to some component $\Leg_{i_{k}}$ for $k = 1, 2$ and we can consider the cases in which these belong to the same or separate pieces $\Leg^{\Partition}_{j}$ of $\Leg$. We again assume the second case, $\Leg_{i_{k}} \subset \Leg^{\Partition}_{k}, k=1, 2$, as the first is essentially identical to that of \cite{Chekanov:LCH, EES:LegendriansInR2nPlus1}. In this case, the chords $a, b$ add many generators to our algebra (in contrast to the citations) and for $T$ close to $0$, the generators of $\Algebra^{-}$ may be ordered as in Equation \eqref{Eq:DoublePointActionOrdering}.

For $T < 0$ the holomorphic disk $u_{ab}: \R \times [0, \pi] \rightarrow \R_{s} \times \R_{t} \times \disk$ of Figure \ref{Fig:DoublePoint} determines a holomorphic disk $U_{ab} = (u_{ab}, 0)$ in $\R_{t} \times \disk \times \disk^{2n-2}$ with boundary on $\Leg(T)$ having energy
\begin{equation*}
\action(U_{ab}) = \action(a) - \action(b)
\end{equation*}
converging to $0$ as $T \rightarrow 0$. Consequently for $T \in [-\epsilon, \epsilon]$ and $\epsilon$ sufficiently small, $U_{ab}$ will be the only holomorphic disk in $\R_{t} \times \SympBase$ with boundary word $a^{+}b^{-}$. Thus we obtain Equation \eqref{Eq:DoublePointDiff} in parallel with \cite[Lemma 2.16]{EES:LegendriansInR2nPlus1} and define $\Phi_{0}$ as in Equation \eqref{Eq:DoublePointPhiNot}. Then Lemma \ref{Lemma:PhiZeroChainMap} follows from purely algebraic considerations.

Next we must prove Lemma \ref{Lemma:PartialChain}. Here we see the combinatorial splitting of holomorphic disks $u_{\complement} \mapsto (u_{l}, u_{r})$ analytically. The details are provided by the compactness-gluing results of \cite[Proposition 2.18]{EES:LegendriansInR2nPlus1}. In the $T \rightarrow 0$ limit, curves $u_{\complement}$ degenerate into nodal disks pinched along the arc $\gamma \subset \planarDiagram(u_{\complement})$. The combinatorics of planar diagrams used to show that $u_{l}$ contributes to the $y^{i}_{j}$ part of $\partial^{-}(\word^{i}_{l, j}a\word^{i}_{r, j})$ and that $u_{r}$ contributes to $\partial^{\pm}\word$ are unchanged.

With Lemmas \ref{Lemma:PhiZeroChainMap} and \ref{Lemma:PartialChain} established, we inductively define maps $\Phi_{k}$ as in Equation \eqref{Eq:DoublePointPhik}. The remainder of the proof follows from algebraic manipulation and does not require modification. Thus we have established the stable tame isomorphism invariance of $\newRSFTA$ under (I2) modification. This completes the proof of Proposition \ref{Prop:HighDimInv}.

\textsc{Institut de Math\'{e}matiques de Jussieu-Paris Rive Gauche, Sorbonne Université, Paris, France}\par\nopagebreak
\textit{Email:} \href{mailto:russell@imj-prg.fr}{russell@imj-prg.fr}\par\nopagebreak
\textit{URL:} \href{https://www.russellavdek.com/}{russellavdek.com}

\end{document}